\NewCommandCopy{\proofqedsymbol}{\qedsymbol}% save the default
\newcommand{\exampleqedsymbol}{$\diamond$}% for end of examples
\theoremstyle{definition}
\theoremstyle{plain}
\newtheorem{theorem}{Theorem}[section]
\newtheorem{lemma}{Lemma}[section]
\newtheorem{proposition}{Proposition}[section]
\newtheorem{corollary}{Corollary}[section]
\newtheorem{question}{Question}[section]
\theoremstyle{remark}
\newtheorem*{remark}{Remark}
\theoremstyle{definition}
\newtheorem{definition}{Definition}[section]
\newtheorem{example}{Example}[section]
\newtheorem{numberedremark}{Remark}[section]
\renewcommand{\qedsymbol}{\exampleqedsymbol}%
\def\makeautorefname#1#2{\expandafter\def\csname#1autorefname\endcsname{#2}}
\let\fullref\autoref
\let\c@lemma=\c@theorem 
\let\c@proposition=\c@theorem 
\let\c@corollary=\c@theorem 
\let\c@definition=\c@theorem 
\let\c@example=\c@theorem 
\let\c@question=\c@theorem 
\let\c@numberedremark=\c@theorem 
\newcommand{\bdry}{\partial} % boundary
\newcommand{\act}{\curvearrowright} % group action
\newcommand{\from}{\colon\thinspace} % colon for maps
\DeclareRobustCommand{\mod}{%
  \mathbin{\mathpalette\scaletoplus@{\mkern1mu\%\mkern1mu}}%
}
\newcommand{\scaletoplus@}[2]{%
  \vcenter{%
    \hbox{\sbox\z@{$\m@th#1+$}\resizebox{\wd\z@}{!}{$\m@th#1#2$}}%
  }%
}
\DeclareMathOperator{\diam}{diam}
\DeclareMathOperator{\lk}{lk}
\DeclareMathOperator{\st}{st}
\DeclareMathOperator{\supp}{supp}
\newcommand{\X}{\mathcal{X}}
\newcommand{\Davis}{\Sigma}
\newcommand{\Salvetti}{\Sigma}
\newcommand{\compliant}{\mathcal{C}}
\newcommand{\diag}{\boxslash}
\newcommand{\nbhd}{\mathcal{N}}
\newcommand{\join}{*}
\newcommand{\edge}{\mathbin{\tikz[baseline=-\the\dimexpr\fontdimen22\textfont2\relax
    ]{\filldraw (0,0) circle (1pt) (.2,0) circle (1pt);\draw
      (0,0)--(.2,0);}}}
\newcommand{\longdash}{\mathbin{\tikz[baseline=-\the\dimexpr\fontdimen22\textfont2\relax ]{\draw (0,0)--(1,0);}}}
\DeclareMathOperator{\double}{\mathfrak{D}}
\renewcommand{\setminus}{-}
\newcommand{\bridge}{\mathbin{\tikz[baseline=-.7ex] 
  \draw[line width=0.2mm] (0,.1) -- (0.025,0.05) -- (0.225,0.05) --
  (0.25,0.1)  (0,-.1) -- (0.025,-0.05) -- (0.225,-0.05) -- (0.25,-0.1);}}
\DeclareMathOperator{\ric}{Ric}
\newcommand{\Act}{.}
\newcommand{\mprg}{\Pi}
\newcommand{\wall}{\mathcal{H}}
\newcommand{\cint}{\stackrel{c}{\cap}}
\newcommand{\ceq}{\stackrel{c}{=}}
\newcommand{\quadrat}{Q}
\newcommand{\ten}{\textsc{x}}
\newcommand{\elf}{\textsc{e}}
\begin{document}

\title{RAAGedy right-angled Coxeter groups}
\author[Cashen]{Christopher H.\ Cashen}
\address{Faculty of Mathematics\\University of
  Vienna\\Oskar-Morgenstern-Platz 1\\1090 Vienna, Austria\\
\href{https://orcid.org/0000-0002-6340-469X}{\includegraphics[scale=.75]{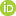}
  0000-0002-6340-469X}}
\email{christopher.cashen@univie.ac.at}
\author[Dani]{Pallavi Dani}
\address{Department of Mathematics, Louisiana State University, Baton
  Rouge, LA 70803–4918, USA\\
 \href{https://orcid.org/0000-0001-7653-2618}{\includegraphics[scale=.75]{ORCID-iD_icon-16x16} 0000-0001-7653-2618}}
\email{pdani@math.lsu.edu}
\author[Edletzberger]{Alexandra Edletzberger}
\address{Faculty of Mathematics\\University of
  Vienna\\Oskar-Morgenstern-Platz 1\\1090 Vienna, Austria\\
\href{https://orcid.org/0000-0002-6584-5149}{\includegraphics[scale=.75]{ORCID-iD_icon-16x16}
  0000-0002-6584-5149}}
\email{alexandra.edletzberger@gmail.com}
\author[Karrer]{Annette Karrer}
\address{Department of Mathematics, The Ohio State University\\231 W. 18th Ave.\\
Columbus, OH 43210, USA\\
\href{https://orcid.org/0000-0003-3761-5861}{\includegraphics[scale=.75]{ORCID-iD_icon-16x16} 0000-0003-3761-5861}}
\email{karrer.14@osu.edu}

\date{\today}

\keywords{Right-angled Coxeter group, right-angled Artin group, RACG,
  RAAG, quasiisometry}
\subjclass{20F65,20F55}

\begin{abstract}
We give criteria for deciding whether or not a triangle-free
simple graph is the presentation graph of a right-angled Coxeter group
that is quasiisometric to some right-angled Artin group, and, if so,
producing a presentation graph for such a right-angled Artin group.

We introduce two new graph modification operations, cloning and
unfolding, to go along with an existing operation called link
doubling. These operations change the presentation graph but not the
quasiisometry type of the resulting group.
We give criteria on the graph that imply it can be transformed by these
operations into a graph that is recognizable as presenting a
right-angled Coxeter group commensurable to a right-angled Artin group. 

In the converse direction we derive coarse geometric obstructions to
being quasiisometric to a right-angled Artin group, first by specializing
existing results from the literature to this setting, then by
developing new approaches using configurations of maximal product
regions.
In all cases we give sufficient graphical conditions that imply these
geometric obstructions. 

We implemented our criteria on a computer and applied them to an
enumeration of small graphs. 
Our methods completely answer the motiving question when
the graph has at most 10 vertices.

\end{abstract}

\maketitle

\tableofcontents

%---------------------------------------
% Start of main body of article
% ---------------------------------------
\section{Introduction}\label{sec:intro}

We would like to understand  the large scale geometry of right-angled
Coxeter groups (RACGs).
Previous work on this problem has focused on understanding a related
collection of quasiisometry invariants (divergence, thickness, and
hypergraph index) \cite{DanTho15,BehFalHag18,Lev18, Lev19, Lev22},
understanding certain hyperbolic and relatively hyperbolic cases
\cite{CriPao08, BehHagSis17cox,DanTho17, DanStaTho18, HruStaTra20,
  BouXie20}, or certain other hyperbolic-like features
\cite{MR3966609, GraKarLaz21, FioKar22, Karrer23}, and cases with nontrivial JSJ
decompositions \cite{DanTho17, NguTra19, Edl24}. See also the survey \cite{DaniRACGsurvey}.

The hyperbolic and relatively hyperbolic cases have exponential
divergence, while linear divergence corresponds to being a product. 
This leaves the quadratic divergence case as the simplest
`interesting' case orthogonal to hyperbolicity, in the sense of having
polynomial divergence.
This class of RACGs is relatively unexplored and wide open for
investigation.
Furthermore, this class of groups contains, up to
commensurability, all 1-ended right-angled Artin groups (RAAGs)~\cite{DavJan00}, and 
there is extensive work on understanding the large scale geometry of
RAAGs \cite{BehCha12,MR2727658,BehNeu12,
  MR2421136, Hua16ii, MR3692971,Hua18, MR3761106,Mar20}. So, it is natural to ask when a given RACG with quadratic divergence is quasiisometric to some RAAG, in which case we say it is
\emph{RAAGedy}.
By extension, we say that a simple graph $\Gamma$ is \emph{RAAGedy} if it is
the presentation graph of a  right-angled Coxeter group $W_\Gamma$ that is RAAGedy.

\begin{question}\label{classification_question}
  Which RACGs are  RAAGedy? 
\end{question}
Surprisingly little is known about this problem.
There is a graph property \emph{CFS} characterizing quadratic
divergence, which is therefore necessary for a graph to
be RAAGedy.
This and other existing results are reviewed in \fullref{sec:previous_work}.
We develop multiple criteria to address \fullref{classification_question}.
These are summarized in \fullref{sec:summary_main_results}.
We take a `hands dirty' approach: large scale
geometric features are interpreted in terms of the presentation graph
of the Coxeter group, so that our criteria are effectively verifiable.
In fact, we have computerized\footnote{Code available at: \href{https://github.com/cashenchris/RACG}{https://github.com/cashenchris/RACG}} all of the constructions, enumerated small
triangle-free CFS graphs, and then checked all of them for
RAAGediness.
The results are shown in \fullref{fig:venn}, where the
region labels are the list items from \fullref{sec:summary_main_results}. 

\begin{figure}[h!]
  \centering\tiny
  \labellist
  \pinlabel {planar} [c] at 0 112
  \pinlabel {non-planar} [c] at 0 100
  \pinlabel {RAAGedy} [c] at 80 135
  \pinlabel {not RAAGedy} [c] at 200 135
  \pinlabel {DL} [c] at 110 90
  \pinlabel {DJ} [c] at 43 90
  \pinlabel {CND} [c] at 70 30
  \pinlabel {ZZ} [c] at 191 76
  \pinlabel {CC} [c] at 240 25
  \pinlabel {NS} [c] at 223 23
  \pinlabel {NRR} [r] at 246 91
  \pinlabel {L} [c] at 258 70
  \pinlabel {$\Xi$R} [c] at 15 15
  \pinlabel {$\Xi$NR} [l] at 250 15
  \pinlabel 67 [c] at 100 110 % planar DL
  \pinlabel 124 [c] at 75 110 % planar CND&DL
  \pinlabel 7 [c] at 56.5 110 % planar DJ
  % total planar RAAGedy 198
  \pinlabel 156 [c] at 115 80 % nonplanar ~CND&DL
  \pinlabel 3 [c] at 40 83 % nonplanar DJ&DL&CND
  \pinlabel 1175 [c] at 70 70 % nonplanar CND&DL&~DJ
  \pinlabel 1 [c] at 28.5 69 % nonplanar CND&DJ&~DL
  \pinlabel 232 [c] at 55 30  % nonplanar CND&~DJ&~DL
  \pinlabel 42 [c] at 25 15 % same Xi component as something RAAGedy
  % total nonplanar RAAGedy 1609
  \pinlabel 56 [r] at 215 110
  % total planar non-RAAGedy 56
  % nonplanar non-RAAGedy conditions ZZ, NS, NRR, L, CC
  \pinlabel 2 [c] at 250 15  % 00000 % same Xi component as something non-RAAGedy
  \pinlabel 341 [c] at 250 33   % 00001
  \pinlabel 53 [c] at 259 65    % 00010
  \pinlabel 299 [c] at 250 52  % 00011
  \pinlabel 7 [l] at 242 90      % 00100
  \pinlabel 87 [c] at 226 44  % 00101
  \pinlabel 46 [c] at 243 75   % 00110
  \pinlabel 140 [c] at 233 63 % 00111
  % 0     % 01000
  \pinlabel 113 [c] at 210 23 % 01001
  % 0      % 01010
  % 0 % 01011
  % 0      % 01100
  \pinlabel 29 [c] at 217 33  % 01101
  % 0      % 01110
  % 0      % 01111
  % 0      % 1***0
  \pinlabel 655 [c] at 190 65 % 10001
  % 0      % 10011
  \pinlabel 74 [c] at 210 53   % 10101
  \pinlabel 192 [c] at 219 72 % 10111
  % 0      % 11001
  % 0      % 11011
  \pinlabel 29 [c] at 208 39  % 11101
  % 0      % 11111
  % total nonplanar non-RAAGedy 2067
  \pinlabel 8? [c] at 135 15 % unknown
  % total 3938
  \endlabellist
  \includegraphics{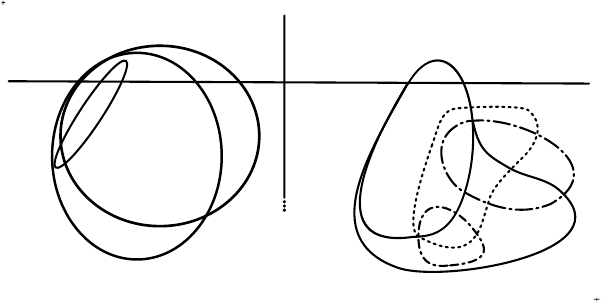}
  \caption{The 3938 isomorphism types of triangle-free CFS graphs with
    at most 11 vertices. There are exactly 8 graphs, all with 11 vertices, for which we do not know if they are
  RAAGedy/non-RAAGedy. They are listed in \fullref{sec:furtherexamples}.}
  \label{fig:venn}
\end{figure}

Preexisting work covers only the `planar' part of the diagram, 
the regions labelled `DL' and `DJ', and a handful of isolated examples, so the figure shows that our new
constructions vastly improve the state of knowledge
about \fullref{classification_question}, and almost completely settle
it for small triangle-free graphs.
It also illustrates the inherent complexity of the problem---
particularly on the non-RAAGedy side we see several overlapping
but independent reasons that a graph can fail to be RAAGedy.

\medskip

Practical criteria for answering
\fullref{classification_question} are an important outcome of this
work, but the techniques developed to establish these criteria also
support our broader aim of understanding the connections
between the combinatorial properties of presentation graphs and the
large scale geometry of the resulting right-angled Coxeter groups.
By systematically identifying and exploiting these connections, we
establish a foundation for exploring more general questions about the
quasiisometric classification for RACGs.

\subsection{Background}\label{sec:previous_work}
We summarize the current state of knowledge of
\fullref{classification_question}:

There is a standard reduction of the quasiisometry problem to the case
of 1--ended factors of the Grushko--Stallings--Dunwoody decomposition of
a finitely presented group.
In both the RAAG and RACG cases these are special subgroups, so they are again RAAGs or RACGs,
respectively.
Thus, for RAAGs we restrict to the case that $\Gamma$ is connected
with more than one vertex, and for RACGs we restrict to the case that
$\Gamma$ is incomplete without separating cliques.

A RAAG $A_\Delta$ has linear divergence when it is a product, which happens
when $\Delta$ is a join, and it has quadratic
divergence whenever $\Delta$ is not a join and $A_\Delta$ is 1--ended \cite[Corollary~4.8]{BehCha12}.
One-ended RACGs have a richer divergence spectrum
\cite{DanTho15,Lev19,Lev22}, and the divergence can be calculated from
the structure of $\Gamma$. In particular, linear
divergence corresponds to $W_\Gamma$ being a product of infinite subgroups, which
occurs when $\Gamma$ is a thick join, and quadratic
divergence occurs when $\Gamma$ is not a join and has property
CFS (component of full support/constructed from squares)
\cite{DanTho15, BehFalHag18, Lev18}.
Thus, for a 1--ended, irreducible RACG $W_\Gamma$, the CFS property
for $\Gamma$ is necessary for it to be RAAGedy.

Behrstock \cite{MR3966609} mentions the `Folk Question' of whether
quadratic divergence for RACGs implies commensurability to a RAAG and gives a
counterexample, that is not even RAAGedy, by constructing a RACG with
quadratic divergence containing a stable subgroup (see
\fullref{fig:Behrstock} and \fullref{sec:morse}).

Nguyen and Tran \cite{NguTra19} show that Behrstock's obstruction is
not the only kind. They give a complete characterization of RAAGedy
graphs that are triangle-free, have no separating cliques, and are planar.
Their proof uses planarity in an essential way to port the problem to
the realm of 3--manifolds.

In the RAAGedy direction there are constructions of Davis and
Januszkiewicz \cite{DavJan00} and Dani and Levcovitz \cite{DanLev24}
that give sufficient conditions on $\Gamma$ for $W_\Gamma$ to be
commensurable to a RAAG.

\subsection{Summary of our main
  results}\label{sec:summary_main_results}
\paragraph{\it RAAGedy criteria:}
There are two existing criteria for showing $W_\Gamma$ is
commensurable to a RAAG:
\begin{enumerate}
\item[(DJ)] The graph is a `double' as in \fullref{def:graphdoubles},
  so $W_\Gamma$ is commensurable to a RAAG by a result of Davis and Januszkiewicz.
  \item[(DL)] The graph satisfies the conditions of Dani and Levcovitz, so
    $W_\Gamma$ has a finite-index RAAG subgroup. 
  \end{enumerate}
There are also some graph operations that change a graph in such a way that 
the resulting RACG is a finite-index subgroup of the one we started
with.
These include \emph{link doubling}
  (\fullref{sec:double}), as well as some others
  (\fullref{sec:other_fi}).
We call a graph a \emph{near double} if it can be turned into a
double via a
sequence of link doubling operations. 
It turns out that there are concise criteria
(\fullref{recognizeneardouble}) for recognizing when a graph $\Gamma$ is a near
double in terms of its \emph{twin graph} $\Gemini(\Gamma)$
(\fullref{def:twingraph}), and when a graph is a near double it is
always possible to change it into a double using only one or two link
doubling steps.

We develop two more graph modification operations, 
\emph{cloning} (\fullref{sec:clone}) and \emph{unfolding} (\fullref{sec:unfolding})
that allow us to change the graph $\Gamma$ without changing the
quasiisometry type of $W_\Gamma$.
Cloning, in particular, leads to a nice generalization of
\fullref{recognizeneardouble} as \fullref{einzelkind}.
We call graphs satisfying the hypotheses of \fullref{einzelkind}
\emph{coarse near doubles}. These include doubles and near doubles. 
\begin{enumerate}
\item[(CND)] If $\Gamma$ is a coarse near double, then $\Gamma$ is
  RAAGedy.
  A particularly simple case is that if every vertex of $\Gamma$ has a
  twin then $\Gamma$ is RAAGedy. See \fullref{einzelkind}.
\end{enumerate}
Finally, any graph that can be changed into one of our known RAAGedy
types using a sequence of graph modification operations is also RAAGedy:
\begin{enumerate}
\item[($\Xi$R)] If $\Gamma$ can be changed into a coarse near double
  or a graph satisfying Dani and Levcovitz's conditions by a sequence
  of link doubles, clonings, and unfoldings, then $\Gamma$ is
  RAAGedy. See \fullref{sec:qiclasses}.
\end{enumerate}

\paragraph{\it Non-RAAGedy crietria:}
We also work in the opposite direction, detecting geometric obstructions
to a graph being RAAGedy. 

In \fullref{sec:minsquare} and \fullref{sec:morse} we describe such
obstructions by exploring the Morse subgroups of 2--dimensional RACGs. The presence of 1--ended,
infinite-index Morse subgroups is a known obstruction to being
RAAGedy.
Having a stable cycle in $\Gamma$ is a sufficient condition
for the existence of such a subgroup.
We show by example that stable cycles can appear after iterated link
doubling, and recount a condition for the Morse boundary of $W_\Gamma$
to be totally disconnected, which rules out such behavior. 

The results of  \fullref{sec:JSJ}  and \fullref{mprg} can be seen as
two incarnations of a general strategy that has been used often in 
quasiisometric rigidity arguments (see
the survey \cite[Chapter~25]{DruKap18}): find a geometrically
distinguished feature and make a combinatorial object encoding
interactions between subspaces so distinguished.
One such geometrically distinguished feature is that of being a
coarsely separating quasiline.
Classes of parallel quasilines are grouped together
into `cylinders', and JSJ theory organizes these cylinders into a
tree, the JSJ tree of cylinders. 
The second version of distinguished feature is top dimensional
quasiflats. These are grouped together into maximal product regions and
there is a maximal
product region graph (MPRG) encoding their intersections.
In both situations quasiisometries of the base groups induce isomorphisms
of the corresponding combinatorial objects. 

In \fullref{sec:JSJ}  we compare JSJ decompositions of RAAGs and RACGs
and derive several quasiisometry obstructions:
\begin{enumerate}
  \item[(NRR)] If the JSJ graph of cylinders of $W_\Gamma$ contains a
    rigid vertex whose group is not quasiisometric to a RAAG, then
    $\Gamma$ is not RAAGedy, see \fullref{sec:JSJ}. 
\item[(ZZ)] If the JSJ graph of cylinders of $W_\Gamma$ contains a virtually
  $\mathbb{Z}^2$ edge incident to a not virtually $\mathbb{Z}^2$ rigid
  vertex then $\Gamma$ is not RAAGedy, by \fullref{ZZobstruction}.
  \item[(CC')] If the JSJ graph of cylinders of $W_\Gamma$ contains a
    collection of cylinders forming a cycle of cuts, as in
    \fullref{no_cycle_of_cuts}, then $\Gamma$ is not RAAGedy. 
\end{enumerate}

 In \fullref{sec:mprg_connectivity} we make precise the fact that the MPRG of a RAAG is a (connected) quasitree with a 
1--bottleneck property. 
Then we give criteria on the presentation graph of a RACG that show
this is not always the case for RACGs.

\begin{enumerate}
  \item[(NS)] $\Gamma$ is not RAAGedy if it is not strongly CFS,
    \fullref{raagedyimpliestrongcfs}, because strongly CFS is
    equivalent to connectivity of the MPRG.
  \item[(L)]
We define a `ladder' in the MPRG as a subgraph that is a coarse axis for
the action of an infinite order element of $W_\Gamma$ on its MPRG that is too wide to be compatible with the 1--bottleneck property.
  We give sufficient conditions for the existence of a ladder in
  \fullref{thm:ladders}, which therefore prevent $\Gamma$ from being
  RAAGedy. 
  A novel point here is that we are really discovering an invariant that has a fine
      dependence on the isometry type of the MPRG, not just its
      quasiisometry type.
      That is, an MPRG containing a ladder might still be a quasitree,
      it is just not a quasitree in precisely the same way that the
      MPRG of a RAAG is. 
    \end{enumerate}
   In \fullref{sec:compliant_cycles} we construct a bespoke
   obstruction that is tailored to our specific problem comparing
   RACGS to RAAGs.
   To do this, we leverage a slight difference between the behavior of
      closest point projection to standard subcomplexes in RAAGs and
      RACGs:  In RAAGs there is a dichotomy, the coarse intersection of two
      standard subcomplexes has diameter either infinte or 0.
      In RACGs it can be finite and nonzero, so many small diameter
      projections can add up to a large total.

   It is a fact that maximal product regions are standard subcomplexes
   in RAAGs and RACGs, so quasiisometries between them coarsely
   preserve this particular family of standard subcomplexes.
   We bootstrap from this fact to inductively define  (\fullref{def:constructible_compliant}) a class of
   \emph{compliant subcomplexes} such that quasiisometries send them
   close to other compliant subcomplexes.
      
    \begin{enumerate}
    \item[(CC)] 
      We deduce an obstruction to being RAAGedy if $\Davis_\Gamma$
      contains a cycle $X_0,\dots, X_{n-1}$ of compliant subcomplexes
      such that consecutive $X_i$ come close to one another and 
      for each $i\neq 0$ the diameter of the  projection of  $X_i$ to $X_0$ is
      finite but the diameter of the union of all the projections is
      large.
      Such a cycle can exist for RAAGedy $\Gamma$ only if $\Gamma$
      satisfies some very restrictive conditions, as described in 
      \fullref{nocycles}, so if we additionally rule these out then
      $\Gamma$ is not RAAGedy.
    \end{enumerate}
    It turns out that condition (CC) generalizes conditions (CC') and
    (ZZ).
    
    Finally, we reconsider the graph modification operations:
      \begin{enumerate}
   \item[($\Xi$NR)] If $\Gamma$ can be changed into one of the above
     non-RAAGedy types  by a sequence
  of link doubles, clonings, and unfoldings, then $\Gamma$ is not
  RAAGedy. 
\end{enumerate}

The practical criteria defining conditions (CC) and, especially, (L) are
designed to take some link doubling into account, and what we actually
computed for the purpose of \fullref{fig:venn} is whether there is a sequence
of at most 3 link doubles after which the presentation graph has the
desired property.
Even allowing this, we see that (CC), (L), and (NRR) appear to be independent.

  \subsection{Comparison to Huang-Kleiner}
  Huang and Kleiner \cite{MR3761106} have results on groups
quasiisometric to RAAGs, but their paper is very different.
They start with a fixed RAAG $A_\Delta$, with the additional hypothesis that
it has finite outer automorphism group, and classify finitely generated groups
quasiisometric to $A_\Delta$.
Our goal is start with a RACG $W_\Gamma$ and decide whether or not it
can be quasiisometric to any RAAG whatsoever, and, if so,  produce $\Delta$
such that $W_\Gamma$ and $A_\Delta$ are quasiisometric.
Furthermore, many of the groups we consider do not have the rigid geometry
associated with RAAGs with finite outer automorphism group.
The extra flexibility makes it harder to find obstructions to being
quasiisometric to some RAAG.

\subsection*{Acknowledgements}
This paper contains material from the PhD thesis of A.E.\ \cite{Edl24thesis}, submitted in partial fulfilment of the requirements for the degree of
Doktorin der Naturwissenschaft (Dr.\ rer.\ nat.) at the University of
Vienna.

This research was funded in whole or in part by the Austrian Science
Fund (FWF) \href{https://doi.org/10.55776/P34214}{10.55776/P34214} and
\href{https://doi.org/10.55776/PAT7799924}{10.55776/PAT7799924}.

This material is based upon work supported by the National Science Foundation under Award No.~2407104 and Award No.~2407438.

Part of this work was conducted during the Erwin Schr\"odinger Institute
Research in Teams program ``Rigidity in Coxeter groups'', July 2023, C.C.\
and P.D.\ members.

We also thank Jingyin Huang and Sangrok Oh.

\section{Preliminaries}\label{sec:prelim}
In this section we fix terminology and notation, and recall some
existing technology that can be used in quasiisometry results, which
we will apply in subsequent sections in the special case of
2--dimensional RACGs and RAAGs.
We also adapt some ideas that are present in the literature to forms
that will be useful to us later on. 
This makes the paper more self-contained, and serves as a warm-up.
We do not regard this material as new.

\subsection{Graphs}\label{prelim:graphs}
A \emph{simple graph}, also known as a simplicial graph, is a
1--dimensional simplicial complex, or, equivalently, it is a
collection of vertices and edges such that every edge connects
distinct vertices and two edges have at most one
vertex in common.
{\bf All graphs in this paper are simple.}
On the rare occasion that we want to entertain the possibility of
self-loops or multiple edges between a pair of vertices we will call
that structure a \emph{multigraph}.
A \emph{full} or \emph{induced} subgraph $\Gamma'$ of a graph $\Gamma$ is one that
has an edge between two vertices if and only if $\Gamma$ does.
A \emph{spanning} subgraph is one that contains every vertex of
$\Gamma$.
The graph is \emph{complete} if it contains an edge between every pair
of distinct vertices, and \emph{incomplete} otherwise. 
A (possibly empty) set of vertices form a \emph{clique} if they induce a complete
subgraph, and an \emph{anticlique} if there is no edge between any
pair of vertices in the set.
The \emph{join} of nonempty $\Gamma$ and $\Gamma'$, written $\Gamma \join \Gamma'$, is the graph made from the
disjoint union of $\Gamma$ and $\Gamma'$ by adding an edge between
every vertex of $\Gamma$ and every vertex of $\Gamma'$.
A graph is a \emph{cone} if it can be written as the join of a subgraph
with a singleton. 
The singleton is then called a \emph{cone vertex}.
We can also \emph{cone-off a subgraph} $\Gamma'$ of $\Gamma$ by adding
to $\Gamma$ a new vertex $c$ connected to every vertex of
$\Gamma'$.
This is written as $\Gamma \join_{\Gamma'} c$.

A graph is a \emph{suspension} if it can be written as a join of a subgraph with a 2 point anticlique.
The anticlique is called the \emph{pole} or the \emph{suspension points} for this
particular realization of the graph as a suspension.

If $\Gamma'$ is a subgraph of $\Gamma$, an \emph{$n$--chord} is a path
of length $n$ in $\Gamma$ whose endpoints are vertices of $\Gamma'$
whose path distance in $\Gamma'$ is strictly greater than $n$.
Saying a subgraph has no 1--chords is equivalent to it being an
induced subgraph. 

If two graphs $\Gamma$ and $\Gamma'$ are isomorphic, we write $\Gamma \cong \Gamma'$.

When treating a connected graph as a metric object, we always use the path
metric induced by metrizing edges as unit intervals.

\subsubsection{Twins and satellites}\label{sec:twins}
A vertex $v\neq w$ is a \emph{satellite} of $w$ if
$\lk(v)\subset\lk(w)$.
In the context of RAAGs there is an existing notion of a relation
between vertices called the \emph{domination relation (of Servatius)},
where $w$ dominates $v$ if $\lk(v)\subset\st(w)$.
In a triangle-free graph $w$ dominates $v$ if $v$ is a satellite of
$w$ or if $v$ is a leaf at $w$.

Vertices $v\neq w$ are \emph{twins} if $\lk(v)=\lk(w)$.\footnote{In
  the literature the definition given here is sometimes called `open
  twins' or `false twins' and there is another definition in which $v$ and $w$ are
twins (`closed twins' or `true twins')  if $\{v\}\cup\lk(v)=\{w\}\cup\lk(w)$, which requires twins to be
adjacent.
In our setting of connected, incomplete, triangle-free graphs, if $v$
and $w$ are adjacent then
$\lk(v)\setminus\{w\}\neq\lk(w)\setminus\{v\}$, so only the definition
with $v$ and $w$ nonadjacent is meaningful.}

A \emph{module} $M$ in a graph $\Gamma$ is a set of vertices such that
for all $v\in \Gamma\setminus M$, either every vertex in $M$ is
adjacent to $v$ or no vertex in $M$ is adjacent to $v$.
A module is \emph{non-trivial} if it is a proper subset containing
more than one vertex.

There is a general fact that any partition of $\Gamma$ into
modules gives a quotient graph with one vertex for each module.
For two modules $M_0$ and $M_1$ in the partition, either
$M_0*M_1\subset \Gamma$ or there are no edges of $\Gamma$ between
$M_0$ and $M_1$.
The quotient graph contains an edge between $M_0$ and $M_1$ when
$M_0*M_1\subset\Gamma$.

Given a vertex $v$ of $\Gamma$, the set consisting of $v$ and all its twins forms a module; call this a \emph{twin module}.
The set of twin modules partitions $\Gamma$ into disjoint anticliques.

\begin{definition}\label{def:twingraph}
 The \emph{twin graph} $\Gemini(\Gamma)$ of $\Gamma$ is the quotient
 graph coming
from the decomposition of $\Gamma$ into twin modules.
\end{definition}

\subsubsection{RACGs and RAAGs}
Given a finite graph $\Gamma$ the \emph{right-angled Artin
  group with presentation graph $\Gamma$} is the group $A_\Gamma$
whose generators are the vertices of $\Gamma$, and whose defining
relations are commutation relations corresponding to pairs of vertices that are connected by
an edge in $\Gamma$. The \emph{right-angled Coxeter group with
  presentation graph $\Gamma$} is the group $W_\Gamma$ that includes
the same generators and relations as $A_\Gamma$ and additional
defining relations saying that each generator has order 2.
In both cases, an induced subgraph $\Gamma'$ of $\Gamma$ determines a
\emph{special subgroup} $A_{\Gamma'}<A_\Gamma$ or $W_{\Gamma'}<W_\Gamma$. 
Tits' solution to the word problem for Coxeter groups
\cite[Sec.~3.4]{davisbook} implies that for $w\in
W_{\Gamma'}<W_\Gamma$, every minimal length word representing $w$ in
$W_\Gamma$ uses generators only from $\Gamma'$.
The same is true in RAAGs \cite{MR1285550}.

We will call a RAAG or RACG \emph{irreducible} if the given
presentation graph is not a join. This is equivalent to saying that
the group does not split as a direct product of nontrivial subgroups \cite{439976,MR2333366}.

Both families of groups admit cocompact actions on CAT(0) cube
complexes, which have the additional property that the 1--skeleton is
the Cayley graph corresponding to the generating set defined by the
presentation graph.
For a RAAG defined by $\Delta$, this cube complex is the universal
cover of the Salvetti complex, denoted $\Salvetti(A_\Delta)$ or
$\Salvetti_\Delta$.
For a RACG defined by $\Gamma$, the cube complex is the Davis complex,
denoted $\Davis(W_\Gamma)$ or $\Davis_\Gamma$.
It will be clear from context whether $\Sigma_\Gamma$ refers to a
Salvetti complex or a Davis complex. We will usually use $\Gamma$ for
the presentation graph of a RACG and $\Delta$ for the presentation
graph of a RAAG.
A consequence of word problem result above is that for
$\Gamma'<\Gamma$ there is a convex subcomplex
$\Davis_{\Gamma'}\subset\Davis_\Gamma$, and, again, the analogue is
also true in RAAGs.

For both RAAGs and RACGs, there is a
bijection between join subgraphs of the presentation graph and special
subgroups that split as direct products. 
We will be interested in the case where both factors of the
direct product are infinite groups.
For RAAGs this is always true, but for RACGs we need an additional
condition that both factors of the join subgraph are incomplete.
In the context of RACGs we use the term \emph{thick join} to mean
$A\join B\subset \Gamma$ with $A$ and
$B$ incomplete.

For further background on RAAGs and RACGs, see \cite{Cha07, davisbook,DaniRACGsurvey,Dav24}.

\subsubsection{Doubles}\label{sec:double}
\begin{definition}\label{def:graphdoubles}
  Let $\Gamma$ be a graph.
  \begin{itemize}
  \item The \emph{double} $\double(\Gamma)$ of $\Gamma$ is the 
graph with vertex set  $\Gamma\times\{0,1\}$ such that for every 
edge $v\edge w$ in $\Gamma$ there are four edges 
$(v,\epsilon)\edge(w,\delta)$ for $\epsilon,\delta\in\{0,1\}$.
\item The \emph{link double of $\Gamma$ at a vertex $v$} is the 
  graph:\[ \double^\circ_v(\Gamma):=\left(\Gamma\times\{0,1\}/\{(w,0)\sim
    (w,1)\mid w\in\lk(v)\}\right)\setminus \left(\{v\}\times\{0,1\}\right)\]
  \item The \emph{star double of $\Gamma$ at a vertex $v$} is the 
  graph: \[\double^*_v(\Gamma):=\Gamma\times\{0,1\}/\{(w,0)\sim(w,1)\mid
    w\in\st(v)\}\]
    \end{itemize}
  \end{definition}

These doubles have the following, well known, algebraic significance:
  \begin{lemma}\label{vertexdouble}
        Let $\Gamma$ and $\Delta$ be graphs. 
  \begin{itemize}
  \item There is an exact sequence:
    \[1\to W_{\double^\circ_v(\Gamma)}\to W_\Gamma\to\mathbb{Z}_2\to
      1\]
    The first map sends $(w,0)\mapsto w$ and
    $(w,1)\mapsto v^{-1}wv$ for all $w\in\Gamma\setminus\{v\}$. The second map sends $v$
    to 1 and all other generators to 0.
  \item There is an exact sequence:
    \[1\to A_{\double^*_v(\Delta)}\to A_\Delta\to\mathbb{Z}_2\to 1\]
    The maps are as in the previous case, with the addition
    that $(v,0)\mapsto v^2$.
  \end{itemize}
\end{lemma}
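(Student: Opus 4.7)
The plan is to verify each piece of both exact sequences; I focus on the RACG case, the RAAG case being parallel. First, the quotient $\rho\from W_\Gamma \to \mathbb{Z}_2$ sending $v\mapsto 1$ and every other generator to $0$ is well-defined because the target is abelian and both kinds of defining relators (squares and commutators) vanish there. It is surjective since $\rho(v)=1$, and its kernel $K$ consists of those elements admitting a representative word that uses an even number of $v$'s.

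Next I check that $\phi\from W_{\double^\circ_v(\Gamma)} \to W_\Gamma$, sending $(w,0)\mapsto w$ and $(w,1)\mapsto v^{-1}wv = vwv$, is well-defined. The identification $(w,0)\sim(w,1)$ for $w\in\lk(v)$ is respected because $vwv = wv^2 = w$ whenever $w$ commutes with $v$. Involution relations pass through: $w^2=1$ and $(vwv)^2 = vw^2v = v^2 = 1$. For each commutation relation, coming from an edge $a\edge b$ of $\Gamma$ with $a,b\neq v$, I case on labels. Same-label edges require $ab=ba$ or $(vav)(vbv)=v(ab)v=v(ba)v=(vbv)(vav)$, both of which hold directly. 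The remaining edges of $\double^\circ_v(\Gamma)$ involve at least one vertex that is glued across labels, i.e.\ lies in $\lk(v)$; say $a\in\lk(v)$, so that $a$ commutes with both $v$ and $b$, giving $a(vbv) = v(ab)v = v(ba)v = (vbv)a$. Hence $\phi$ is a homomorphism, and since every generator maps to a word with an even number of $v$'s, its image lies in $K$.

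For the remaining exactness I apply the Reidemeister--Schreier rewriting to $\rho$ with coset representatives $\{1,v\}$. The resulting Schreier generators for $K$ are precisely $\{w : w\in\Gamma-\{v\}\}\cup\{vwv : w\in\Gamma-\{v\}\}$, which are $\phi((w,0))$ and $\phi((w,1))$, yielding surjectivity of $\phi$ onto $K$. Rewriting the conjugates of the defining relators of $W_\Gamma$ in these Schreier generators returns precisely the defining relations of $W_{\double^\circ_v(\Gamma)}$, which yields injectivity. Equivalently, one may build an explicit inverse via a semidirect product: the label-swap automorphism $\tau$ of $W_{\double^\circ_v(\Gamma)}$ is well-defined because it fixes the identified vertices in $\lk(v)$ and preserves all defining relations, and the assignment $w\mapsto((w,0),0)$ for $w\neq v$ together with $v\mapsto (1,1)$ extends to an isomorphism $W_\Gamma \xrightarrow{\sim} W_{\double^\circ_v(\Gamma)}\rtimes_\tau\mathbb{Z}_2$ whose first-factor projection inverts $\phi$ on $K$.

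The most delicate step is the commutation verification: it is essential that edges of $\double^\circ_v(\Gamma)$ between differently labeled vertices only arise through a glued vertex in $\lk(v)$, since otherwise $vbv$ would not commute with $a$ in $W_\Gamma$. The RAAG case is formally identical, with two adjustments: the identification in $\double^*_v$ extends to $\st(v)$ because $v$ commutes with itself in a RAAG, and accordingly the (now identified) vertex $(v,0)=(v,1)$ maps to $v^2$, which is the natural generator of the kernel of the corresponding sign map $A_\Delta\to\mathbb{Z}_2$ coming from the $v$-exponent.
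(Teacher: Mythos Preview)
Your proof is correct. The paper does not supply a proof of this lemma at all; it introduces the statement as ``well known'' and moves on. Your Reidemeister--Schreier verification (and the equivalent semidirect-product description) is the standard way to establish this, and your edge analysis of $\double^\circ_v(\Gamma)$---in particular the observation that cross-label edges only arise through identified vertices in $\lk(v)$---is exactly what is needed to see that $\phi$ respects the commutation relations.
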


\begin{theorem}[{Davis and Januszkiewicz \cite{DavJan00}}]\label{thm:davisjanuszkiewicz}
  $A_\Gamma$ is commensurable to $W_{\double(\Gamma)}$.
\end{theorem}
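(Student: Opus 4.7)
The plan is to construct an explicit short exact sequence
\[
  1\longrightarrow A_\Gamma\stackrel{\iota}{\longrightarrow}W_{\double(\Gamma)}\stackrel{\pi}{\longrightarrow}(\mathbb{Z}/2)^{V(\Gamma)}\longrightarrow 1
\]
whose right-hand group is finite of order $2^{|V(\Gamma)|}$, so that $A_\Gamma$ sits inside $W_{\double(\Gamma)}$ as a finite-index subgroup. Commensurability then follows immediately.

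The two maps are the natural ones. For $\pi$, set $\pi((v,0))=\pi((v,1))=e_v$, the standard generator of $(\mathbb{Z}/2)^{V(\Gamma)}$ indexed by $v$. This is well-defined since the target is abelian of exponent two, and is evidently surjective. For $\iota$, set $\iota(v)=(v,0)(v,1)$. To see this is well-defined, note that whenever $v\edge w$ in $\Gamma$, \fullref{def:graphdoubles} inserts all four edges $(v,\epsilon)\edge(w,\delta)$ into $\double(\Gamma)$, so the four generators pairwise commute in $W_{\double(\Gamma)}$ and hence so do $\iota(v)$ and $\iota(w)$. By construction $\pi\circ\iota$ is trivial on generators, so $\operatorname{image}(\iota)\subseteq\ker(\pi)$.

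To verify that $\iota$ surjects onto $\ker(\pi)$, I would apply the Reidemeister--Schreier method with transversal $T=\bigl\{\prod_{v\in S}(v,0)\mid S\subseteq V(\Gamma)\bigr\}$, relative to any fixed ordering of the vertices. The resulting generators of $\ker(\pi)$ are of the form $t\cdot s\cdot\overline{ts}^{-1}$, and each one either collapses to the identity via the commutation relations inherited from edges of $\double(\Gamma)$, or is conjugate by a transversal element to some $\iota(v)$. Straightforward Tietze moves then leave a presentation of $\ker(\pi)$ with generators $\{\iota(v)\mid v\in V(\Gamma)\}$ subject only to the commutation relations coming from edges of $\Gamma$, matching the standard presentation of $A_\Gamma$.

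The main obstacle is injectivity of $\iota$. My preferred approach is geometric. First I would verify that the Davis complex $\Davis_{\double(\Gamma)}$ and the universal cover of the Salvetti complex $\Salvetti(A_\Gamma)$ are isomorphic as CAT(0) cube complexes; note that both have cubical dimension equal to the clique number of $\Gamma$, because a clique in $\double(\Gamma)$ can meet each pair $\{(v,0),(v,1)\}$ in at most one vertex, these being nonadjacent. On this common complex $W_{\double(\Gamma)}$ acts as a reflection group while the image of $\iota$ acts by deck transformations of the Salvetti, and freeness of the deck action forces $\iota$ to be injective. A purely combinatorial alternative is to take a reduced RAAG normal form $w$ for a nontrivial element of $A_\Gamma$ and use Tits' solution to the word problem in $W_{\double(\Gamma)}$ to check that $\iota(w)$, a Coxeter word of length $2|w|$, admits no length-reducing moves, so represents a nontrivial element.
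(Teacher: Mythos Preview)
The paper does not prove this theorem; it is quoted from Davis--Januszkiewicz without proof. Your proposed argument, however, contains a genuine error rather than a mere gap.

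The short exact sequence $1\to A_\Gamma\to W_{\double(\Gamma)}\to(\mathbb{Z}/2)^{V(\Gamma)}\to 1$ you posit does not exist in general: $A_\Gamma$ need not embed in $W_{\double(\Gamma)}$ with finite index at all. Take $\Gamma$ to be two non-adjacent vertices, so that $A_\Gamma\cong F_2$ while $\double(\Gamma)$ consists of four isolated vertices and $W_{\double(\Gamma)}\cong \mathbb{Z}/2\ast\mathbb{Z}/2\ast\mathbb{Z}/2\ast\mathbb{Z}/2$. Both groups have rational Euler characteristic $-1$, so a finite-index embedding of $F_2$ into the four-fold free product would be forced to have index~$1$, which is impossible since the latter has torsion. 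In this example $\ker\pi$ is free of rank~$5$, not~$2$, and your map $\iota$ lands in it with infinite index. The Reidemeister--Schreier sketch breaks precisely where you assert that the Schreier generators, being ``conjugate by a transversal element to some $\iota(v)$'', can be eliminated by Tietze moves: when $\Gamma$ has a non-edge, the transversal elements do not normalise $\iota(A_\Gamma)$, so those conjugates are genuinely independent generators of the kernel.

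The geometric observation you make --- that $\Davis_{\double(\Gamma)}$ and the universal cover of the Salvetti complex of $A_\Gamma$ are isomorphic CAT(0) cube complexes --- is correct and is indeed the heart of Davis and Januszkiewicz's argument, but it is used to produce a \emph{common} finite-index subgroup of the two groups, not an embedding of one into the other. Concretely, the kernel of the mod-$2$ abelianisation $A_\Gamma\to(\mathbb{Z}/2)^{V(\Gamma)}$ and your $\ker\pi$ each have index $2^{|V(\Gamma)|}$ in their ambient groups, and one shows they are isomorphic.
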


\subsection{More finite-index subgroup constructions}
\fullref{vertexdouble} gave us one way to pass to a finite-index RACG
subgroup of a RACG, and \fullref{thm:davisjanuszkiewicz} gave us one
way to find a commensurable RAAG.
In this section we give an additional construction of each of these types. 
\subsubsection{Some other finite-index RACG subgroups}\label{sec:other_fi}
In the RAAG case of \fullref{vertexdouble} the `2' is not special; one
can take the kernel of the map $A_\Delta\to\mathbb{Z}_n$ obtained by
killing the $n$--th power of $v$ and all of the other generators, and express
it as the RAAG on the graph obtained by taking $n$--copies of $\Delta$
and identifying them along the star of $v$.
Obviously, we cannot make such a short exact sequence with a RACG, but
actually we can do a similar graph construction to get an index-$n$
subgroup if $\Gamma$ contains a pair of twins, as follows:

\begin{proposition}
Fix $n>2$ and a pair of twins $v$ and $w$ in $\Gamma$.
Let $\{a_0,\dots,a_{\ell-1}\}$ be the vertices of $\lk(v)=\lk(w)$.
Consider $$\Gamma':=\Gamma\times\{0,\dots, n-1\}/\{(x,i)\sim(x,0) \mid
x\in \{v,w\}\join\{a_0,\dots,a_{\ell-1}\}, 1\le i <n\}.$$  Then there is an injective homomorphism $\iota: W_{\Gamma'} \to W_{\Gamma}$ such that $\iota(W_{\Gamma'})$ is an index-$n$ subgroup of $W_{\Gamma}$.
\end{proposition}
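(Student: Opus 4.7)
The plan is to realise the desired index-$n$ subgroup as the preimage of an order-$2$ subgroup under a homomorphism $\phi \from W_\Gamma \to D_n$ to a finite dihedral group, and then identify this preimage with $W_{\Gamma'}$ via Bass--Serre theory.

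First I would define $\phi\from W_\Gamma \to D_n = \langle r, s \mid r^2 = s^2 = (rs)^n = 1\rangle$ by $v\mapsto r$, $w\mapsto s$, and all other generators $\mapsto 1$. Since twins in a triangle-free graph are non-adjacent (see \fullref{sec:twins}), no defining relation of $W_\Gamma$ constrains the pair $(\phi(v),\phi(w))$; and every other commuting pair of generators has at most one non-trivial image, so $\phi$ is a well-defined surjection and $H := \phi^{-1}(\langle r\rangle)$ has index $[D_n:\langle r\rangle]=n$ in $W_\Gamma$.

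Next I would use the decomposition of the presentation graph. Setting $S := \{v,w\}\cup\lk(v)$ and $\Gamma^- := \Gamma\setminus\{v,w\}$, Tits' solution to the word problem gives the amalgamated product
\[
W_\Gamma \;=\; W_S \,*_{W_{\lk(v)}}\, W_{\Gamma^-},
\]
and since $\lk(v)=\lk(w)$ while $\Gamma$ is triangle-free, $W_S \cong D_\infty \times W_{\lk(v)}$ with the $D_\infty$-factor generated by $\{v,w\}$. Applying Bass--Serre theory to the action of $H$ on the tree of this splitting, and using that $\phi(W_{\Gamma^-}) = \phi(W_{\lk(v)}) = \{1\}\subset\langle r\rangle$ while $\phi(W_S) = D_n$, the standard orbit/stabiliser computation produces a star-shaped graph of groups for $H$: a central vertex with group $H\cap W_S$, joined by $n$ edges (each with group $W_{\lk(v)}$) to $n$ leaves with groups $(vw)^i W_{\Gamma^-} (vw)^{-i} \cong W_{\Gamma^-}$ for $i = 0,\dots,n-1$.

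The central vertex group is pinned down by restricting $\phi$ to $D_\infty=\langle v,w\rangle$: the preimage of $\langle r\rangle$ under $D_\infty \twoheadrightarrow D_n$ is again infinite dihedral, generated by $v$ and the reflection $(vw)^n v = v(wv)^n$. Hence $H\cap W_S \cong D_\infty\times W_{\lk(v)}$, with dihedral generators $v$ and $(vw)^n v$ together with $a_0,\dots,a_{\ell-1}$. Comparing this with the analogous Bass--Serre splitting of $W_{\Gamma'}$ along the $W_{\lk(v)}$-subgroups that separate the central copy of $S$ from the $n$ petals of $\Gamma'$, the two graphs of groups match under the generator correspondence
\[
v \leftrightarrow v,\quad w \leftrightarrow (vw)^n v,\quad a_j \leftrightarrow a_j,\quad (b,i) \leftrightarrow (vw)^i\, b\, (vw)^{-i},
\]
which defines the desired injection $\iota \from W_{\Gamma'} \into W_\Gamma$ with image $H$ of index $n$. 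The main obstacle I anticipate is identifying the preimage of $\langle r\rangle$ in $D_\infty$ explicitly as an infinite dihedral group on the stated generators, and then verifying that the edge-group inclusions into the central and leaf vertex groups on the two sides are compatible; once that is in hand, the well-definedness of $\phi$, the Bass--Serre orbit count, and the presentation comparison are routine.
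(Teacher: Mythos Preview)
Your proposal is correct and takes a genuinely different route from the paper. The paper works geometrically in the Davis complex: it fixes the length-$(n-1)$ segment $X$ of the bicolored geodesic $\Sigma_{\{v,w\}}$ starting at $1$, invokes the Dyer--Deodhar theorem to recognise the reflection subgroup generated by walls adjacent to but not crossing $X$ as a RACG on those reflections, and then verifies by a direct word computation that the commutation relations among these reflections reproduce exactly $\Gamma'$. Your approach is purely algebraic: the quotient $\phi\colon W_\Gamma\to D_n$ together with the Bass--Serre analysis of the visual splitting $W_\Gamma = W_S *_{W_{\lk(v)}} W_{\Gamma^-}$ bypasses both the Davis complex and the Tits-style cancellation argument. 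The two index-$n$ subgroups coincide --- one checks that each of the paper's generating reflections lies in your $H=\phi^{-1}(\langle r\rangle)$, and both subgroups have the same index --- though the explicit isomorphisms $\iota$ differ by an automorphism of $W_{\Gamma'}$ (your $(b,i)\mapsto (vw)^i b(vw)^{-i}$ versus the paper's conjugation by the $i$-th vertex of $X$). Your method is cleaner and more structural, and the anticipated obstacle of matching edge inclusions is harmless because $(vw)^i$ centralises $W_{\lk(v)}$; the paper's approach, on the other hand, exhibits the generators directly as reflections and illustrates the general Dyer--Deodhar framework the authors advertise for producing further finite-index RACG subgroups from convex subcomplexes.
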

See \fullref{sec:catzero} for a refresher on walls in cube complexes.
\begin{proof}
Let $\{b_0,\dots, b_{m-1}\}$ be the vertices of $\Gamma$ not in $\{v,w\}\join\{a_0,\dots,a_{\ell-1}\}$. 
Define the homomorphism $\iota$ as follows:  
\begin{align*}
  (v,0)&\mapsto v\\
  (w,0)&\mapsto
         \begin{cases}
           (wv)^{(n-1)/2}w(vw)^{(n-1)/2} &\text{if $n$ is odd}\\
           (wv)^{(n-2)/2}wvw(vw)^{(n-2)/2} &\text{if $n$ is even}
         \end{cases}\\
  (a_i,0)&\mapsto a_i\\
  (b_j,k)&\mapsto  \begin{cases}
           (wv)^{k/2}b_j(vw)^{k/2} &\text{if $k$ is even}\\
           (wv)^{(k-1)/2}wb_jw(vw)^{(k-1)/2} &\text{if $k$ is odd}
         \end{cases}
\end{align*}

To see that the image is really an index-$n$ subgroup, consider the
$vw$--bicolored geodesic $\Davis_{\{v,w\}}$ through the identity vertex
$1$ of $\Davis_\Gamma$.
Let $\wall_v$ be the wall dual to the $1\edge v$ edge, and
similarly for the other generators.
Consider the subsegment $X$ of $\Davis_{\{v,w\}}$ containing vertices $1$,
$w$, $wv$, $wvw$,\dots, $x$, where $x$ is the alternating product of
$w$ and $v$ of length $n-1$, starting with $w$.
This is a finite convex subcomplex of $\Davis_\Gamma$ containing $n$
vertices.
It is a consequence of a theorem of Dyer \cite{MR1076077} and Deodhar
\cite{MR1023969} (in not necessarily right-angled Coxeter groups)  that the reflection
  subgroup of $W_\Gamma$ generated by reflections through walls closest to $X$ that do not cross X is a
  right-angled Coxeter group with those reflections as fundamental
  generators.
 We work through edges incident to $X$ and see that the walls dual to
 such edges correspond to the elements described above, and commuting
 relations between them correspond to the edges described for
 $\Gamma'$. 
 At vertex 1 we have edges corresponding to each of the generators except $w$, since
 the edge $1\edge w$ is contained in $X$.
 The reflections in these walls correspond to the $(y,0)$ for $y\neq w$.
 For each $i$, $a_i$ commutes with both $v$ and $w$, so the single wall
 $\wall_{a_i}$ runs through every $a_i$--colored edge along
 $\Davis_{\{v,w\}}$. Thus, we only need one reflection, given by the
 image of $(a_i,0)$. 
 For each $j$, since $v$ and $w$ are twins and $b_j$ does not commute
 with both of $v$ and $w$, it commutes with neither of them, so every
 $b_j$--colored edge incident to $\Davis_{\{v,w\}}$ is dual to a different
 wall, and we get $n$ different walls for each $j$. 
 The corresponding reflections are $b_j$, $wb_jw$, $wvb_jvw$,\dots,
 which are the images of $(b_j,0)$, $(b_j,1)$, $(b_j,2)$,\dots under the homomorphism $\iota$.
Finally, at vertex $x$ there is one additional edge outside of $X$ that continues along $\Davis_{\{v,w\}}$,
colored either $v$ or $w$ according to whether $n$ is even or odd.
The reflection in the wall dual to this edge is the image of $(w,0)$ under $\iota$.

It remains to argue that $\Gamma'$ is actually the presentation graph
for the subgroup.
It is clear that the edges that are present belong there, as the
corresponding elements of $W_\Gamma$ commute.
In the other direction, the only case of any interest is to show that
the commutator of the images of $(b_i,j)$ and $(b_k,\ell)$ is a
nontrivial element of $W_\Gamma$  when $j\neq
\ell$.
However, such a commutator has incomplete cancellation of the
alternating $v$, $w$ conjugating words if $j\neq \ell$, so it is a
word of the form $z_0 b_i z_1 b_k z_2 b_i z_3 b_k z_4$, where each $z_p$ is
a nontrivial alternating word in $v$ and $w$.
Since neither $b_i$ nor $b_k$ commute with either of $v$ or $w$, and
$v$ and $w$ do not commute with each other, there are no elementary
operations (see \cite[Section~3.4]{davisbook}) that rearrange or
shorten that word, so it represents a nontrivial element of
$W_\Gamma$. 
\end{proof}

Using the theorem of Deodhar and Dyer cited above, one can find other finite-index RACG
subgroups of $W_\Gamma$ by considering various finite convex
subcomplexes of $\Davis_\Gamma$.
The challenge, from the point of view of this paper, would then be to
understand the presentation graph of the subgroup in terms of
operations on $\Gamma$.

We will mention one more case in which we understand the resulting graph: suppose $v$ and $w$ are adjacent in
$\Gamma$, and take $X$ to be the square
$\Davis_{\{v,w\}}\subset\Davis_\Gamma$.
We claim the presentation graph of the corresponding index-4 subgroup
is
$\double^\circ_{(w,0)}\circ\double^\circ_v(\Gamma)\cong\double^\circ_{(v,0)}\circ\double^\circ_w(\Gamma)$.
Explicitly, vertices of those two graphs are of the form
$(\gamma,i,j)$ for $\gamma\in\Gamma$ and $i,j\in\{0,1\}$, and the
isomorphism exchanges the second and third coordinates. 
When $v$ and $w$ are not adjacent it can happen that
the graphs $\double^\circ_{(w,0)}\circ\double^\circ_v(\Gamma)$ and
$\double^\circ_{(v,0)}\circ\double^\circ_w(\Gamma)$ are not
isomorphic.
More generally, in higher dimensional RACGs one can also double over larger
cliques, cf.\ \cite[Section~5]{BouXie20}.

\subsubsection{Visible RAAG subgroups}\label{danilevcovitz}
Let $\Gamma$ be a finite graph, and let $\Gamma^c$ be its
complement: the graph with the same vertex set as $\Gamma$ that has an
edge whenever $\Gamma$ does not.
Let $\Lambda$ be a subgraph of $\Gamma^c$, and let $\Delta$ be its
commuting graph: an edge $a\edge b$ of $\Lambda$ is a vertex $\{a,b\}$
of $\Delta$ and vertices $\{a,b\}$ and $\{c,d\}$ of $\Delta$ span an edge if
they commute, which is to say, $\{a,b\}\join\{c,d\}$ is an induced
square in $\Gamma$.
Obtain a homomorphism $A_\Delta\to W_\Gamma$ by sending a vertex $\{a,b\}$ of
$\Delta$ to the element $ab$ of $W_\Gamma$.
It is easy to see that this homomorphism need
not be injective.
Dani and Levcovitz \cite{DanLev24} give criteria on $\Lambda$ that
imply that the corresponding homomorphism $A_\Delta\to W_\Gamma$ is
injective and has finite-index image.
We refer to a graph $\Lambda$ satisfying their conditions 
as a 
\emph{finite-index Dani-Levcovitz $\Lambda$} (FIDL--$\Lambda$).
The corresponding subgroup $A_\Delta$ is a \emph{visible RAAG
  subgroup}.
It is `visible' (or `visual') in the sense that we see generators of $A_\Delta$ as
(products of the entries of) diagonals of squares of $\Gamma$, and we
see relations of $A_\Delta$ as squares in $\Gamma$. 

The simplest example is to take $\Gamma$ to be a square and
$\Lambda=\Gamma^c$, so that $\Delta$ is a single edge whose vertices
are the two diagonals of $\Gamma$.
Then $A_\Delta\cong\mathbb{Z}^2$ includes into $W_\Gamma\cong
D_\infty\times D_\infty$ as an index 4 subgroup.

{}

\subsection{Coarse geometry}
Let $X$ and $Y$ be metric spaces. 

For $Z\subset X$, let $\nbhd_r(Z):=\{x\in X\mid d(x,Z)<r\}$, and let
$\bar\nbhd_r(Z):=\{x\in X\mid d(x,Z)\leq r\}$.
Two subsets $Z$, $Z'$ of $X$ are \emph{coarsely equivalent} if they
are at finite 
Hausdorff distance, where Hausdorff distance is defined as: \[d_{Haus}(Z,Z'):=\inf\{r\mid Z\subset \bar\nbhd_r(Z'),\,
  Z'\subset\bar\nbhd_r(Z)\}\]
A subset $Z\subset X$ is \emph{coarsely dense} if it is coarsely
equivalent to $X$.

A \emph{coarse map} $\phi\from X\to Y$ is a map
from $X$ to uniformly bounded diameter subsets of $Y$. 
Two coarse maps $\phi,\psi\from X\to Y$ are \emph{coarsely equivalent} if there exists $C$
such that $\diam(\phi(x)\cup\psi(x))\leq C$ for all $x\in X$.
A coarse map $\phi\from X\to Y$ is
  $(L,A)$--\emph{coarse Lipschitz} if $\diam(\phi(x)\cup\phi(x'))\leq
  Ld(x,x')+A$ for all $x,\,x'\in X$.
  It is 
  $(L,A)$--\emph{coarse biLipschitz}, or an $(L,A)$--\emph{quasiisometric
  embedding}, if, in addition,
  $(1/L)d(x,x')-A\leq \diam(\phi(x)\cup\phi(x'))
  $ for all $x,\,x'\in X$.
  It is an \emph{$(L,A)$--quasiisometry} if it is $(L,A)$--biLipschitz with
  $A$--coarsely dense image.

  When we do coarse geometry and $X$ and $Y$ are graphs we take the
  point of view that `the spaces' are discrete metric spaces given by
  the vertex sets of $X$ and $Y$. The edges serve to help visualize
  the discrete metrics.
  Thus, a coarse map $\phi\from X\to Y$ only need be defined on vertices of
  $X$.
  Of course, it is possible to adjust $\phi$ to be an actual map and
  then extend it to interior points of edges of the geometric
  realization of $X$, but such choices are non-canonical, and all reasonable choices yield coarsely equivalent maps. 

The following says that coarsely inverse coarse Lipschitz maps are actually quasiisometries. It can be proved by using the inverse map to establish the quasiisometry lower bounds.
  \begin{lemma}\label{inversecoarselipimpliesqi}
    If $\phi\from X\to Y$ and $\bar\phi\from Y \to X$ are
    coarse Lipschitz and $\bar\phi\circ\phi$ is coarsely equivalent to
    the identity map on $X$ and $\phi\circ\bar\phi$ is coarsely
    equivalent to the identity map on $Y$, then $\phi$ and
    $\bar\phi$ are inverse quasiisometries. 
  \end{lemma}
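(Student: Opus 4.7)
The plan is to chase the definitions and turn the coarse-Lipschitz upper bounds into quasi-isometric embedding lower bounds by using the partial inverse. By symmetry it suffices to prove that $\phi$ is a quasiisometry; the argument for $\bar\phi$ is identical after swapping roles.

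First I would fix constants: let $\phi$ be $(L_1,A_1)$--coarse Lipschitz, let $\bar\phi$ be $(L_2,A_2)$--coarse Lipschitz, and let $C$ be a constant so that $\diam(\{x\}\cup\bar\phi(\phi(x)))\leq C$ for all $x\in X$ and $\diam(\{y\}\cup\phi(\bar\phi(y)))\leq C$ for all $y\in Y$ (using both hypotheses on the compositions and enlarging $C$ if necessary). A short preliminary observation I will record is that if $\bar\phi$ is $(L_2,A_2)$--coarse Lipschitz and $U\subset Y$ is any bounded set, then $\diam(\bar\phi(U))\leq L_2\diam(U)+A_2$; this is immediate from the pointwise coarse Lipschitz condition applied to any pair of points in $U$.

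Next, for the quasi-isometric embedding lower bound, take $x,x'\in X$ and pick arbitrary $z\in\bar\phi(\phi(x))$ and $z'\in\bar\phi(\phi(x'))$. The triangle inequality gives
\[ d(x,x')\leq d(x,z)+d(z,z')+d(z',x')\leq 2C+\diam\bigl(\bar\phi(\phi(x))\cup\bar\phi(\phi(x'))\bigr). \]
Applying the preliminary observation to $U=\phi(x)\cup\phi(x')$ bounds the second term by $L_2\diam(\phi(x)\cup\phi(x'))+A_2$, so
\[ \diam(\phi(x)\cup\phi(x'))\geq \tfrac{1}{L_2}d(x,x')-\tfrac{2C+A_2}{L_2}. \]
Combined with the coarse Lipschitz upper bound from the hypothesis, this shows $\phi$ is an $(L,A)$--quasiisometric embedding for $L:=\max\{L_1,L_2\}$ and a suitable $A$.

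For coarse density of the image, given $y\in Y$ choose any $x\in\bar\phi(y)$; then $\phi(x)\subset\phi(\bar\phi(y))$, which lies within $C$ of $y$ by the hypothesis that $\phi\circ\bar\phi$ is coarsely equivalent to the identity on $Y$. Hence $\phi(X)$ is $C$--coarsely dense, completing the proof that $\phi$ is a quasiisometry. There is no real obstacle here---the argument is entirely mechanical once one keeps track of how coarse Lipschitz behaves on bounded subsets---so the only thing to watch is to enlarge the additive constant $A$ to simultaneously handle the upper bound, the lower bound, and coarse density.
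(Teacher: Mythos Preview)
Your proof is correct and follows exactly the approach the paper indicates: the paper does not give a full proof but only remarks that ``It can be proved by using the inverse map to establish the quasiisometry lower bounds,'' which is precisely what you do. Your write-up supplies the details the paper omits, and there is nothing to add.
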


  A \emph{geodesic} is an isometric embedding of an interval, and a
  \emph{quasigeodesic} is a quasiisometric embedding of an interval.

  Given subsets $Y$ and $Z$ of $X$, if for all sufficiently large $r$
  the sets $\bar\nbhd_r(Y)\cap\bar\nbhd_r(Z)$ are coarsely equivalent
  to one another, then we call that coarse equivalence class the
  \emph{coarse intersection} of $Y$ and $Z$, and denote it $Y\stackrel{c}{\cap}Z$.
  
  \begin{lemma}\label{qi_preserves_coarse_intersection}
    If $\phi\from X\to X'$ is a quasiisometry and $Y,Z<X$ such that
    $Y\stackrel{c}{\cap}Z$ and $\phi(Y)\stackrel{c}{\cap}\phi(Z)$ exist, then $\phi(Y\stackrel{c}{\cap}Z)$ is
    coarsely equivalent to $\phi(Y)\stackrel{c}{\cap}\phi(Z)$.
  \end{lemma}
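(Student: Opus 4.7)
The plan is to directly verify finite Hausdorff distance between representatives of $\phi(Y\cint Z)$ and $\phi(Y)\cint\phi(Z)$, using the coarse Lipschitz property of $\phi$ in one direction and a quasi-inverse $\bar\phi$ in the other. Let $(L,A)$ be constants working for both $\phi$ and some chosen $\bar\phi$, with $\bar\phi\circ\phi$ and $\phi\circ\bar\phi$ within $A$ of the respective identities. Fix $r_0$ large enough that $\bar\nbhd_{r_0}(Y)\cap\bar\nbhd_{r_0}(Z)$ is a representative of $Y\cint Z$, and fix $s_0$ large enough that $\bar\nbhd_{s_0}(\phi(Y))\cap\bar\nbhd_{s_0}(\phi(Z))$ is a representative of $\phi(Y)\cint\phi(Z)$.

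First I would establish the forward containment. If $x\in\bar\nbhd_{r_0}(Y)\cap\bar\nbhd_{r_0}(Z)$, pick $y\in Y$ and $z\in Z$ with $d(x,y),d(x,z)\le r_0$. Then $d(\phi(x),\phi(y))\le Lr_0+A$ and similarly for $z$, so $\phi(x)\in\bar\nbhd_{Lr_0+A}(\phi(Y))\cap\bar\nbhd_{Lr_0+A}(\phi(Z))$. Setting $s:=\max(s_0,Lr_0+A)$ and using that both $\bar\nbhd_{s_0}(\phi(Y))\cap\bar\nbhd_{s_0}(\phi(Z))$ and $\bar\nbhd_{s}(\phi(Y))\cap\bar\nbhd_{s}(\phi(Z))$ represent $\phi(Y)\cint\phi(Z)$, we get that $\phi(\bar\nbhd_{r_0}(Y)\cap\bar\nbhd_{r_0}(Z))$ lies within some fixed Hausdorff distance of $\bar\nbhd_{s_0}(\phi(Y))\cap\bar\nbhd_{s_0}(\phi(Z))$.

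For the reverse containment, let $x'\in\bar\nbhd_{s_0}(\phi(Y))\cap\bar\nbhd_{s_0}(\phi(Z))$. Choose $\phi(y)\in\phi(Y)$ and $\phi(z)\in\phi(Z)$ within $s_0$ of $x'$. Applying $\bar\phi$ and using $d(\bar\phi\circ\phi(y),y)\le A$, one computes $d(\bar\phi(x'),y)\le Ls_0+2A$, and the same bound for $z$. Hence $\bar\phi(x')\in\bar\nbhd_{r}(Y)\cap\bar\nbhd_{r}(Z)$ for $r:=\max(r_0,Ls_0+2A)$. By stability of the coarse intersection of $Y$ and $Z$, this set lies within finite Hausdorff distance of the representative $\bar\nbhd_{r_0}(Y)\cap\bar\nbhd_{r_0}(Z)$; applying $\phi$ (coarse Lipschitz) preserves that, and finally $d(\phi(\bar\phi(x')),x')\le A$ puts $x'$ itself within a uniform distance of $\phi(\bar\nbhd_{r_0}(Y)\cap\bar\nbhd_{r_0}(Z))$.

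The argument is essentially bookkeeping with the quasiisometry constants, and there is no conceptual obstacle; the only thing to watch is that the auxiliary neighborhood radii produced along the way exceed the thresholds $r_0$ and $s_0$ guaranteeing that we are working inside the stable coarse class, which is easily arranged by enlarging $r_0$ and $s_0$ at the start.
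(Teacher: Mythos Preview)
Your argument is correct and is essentially the same as the paper's, just more explicit. The paper compresses everything into a single sandwich inclusion
\[
\bar\nbhd_{r/L-A}(\phi(Y))\cap\bar\nbhd_{r/L-A}(\phi(Z))\subset \phi(\bar\nbhd_r(Y)\cap\bar\nbhd_r(Z))\subset \bar\nbhd_{Lr+A}(\phi(Y))\cap\bar\nbhd_{Lr+A}(\phi(Z))
\]
for $r\ge LA$, using the biLipschitz lower bound of $\phi$ directly for the left inclusion rather than passing through a quasi-inverse as you do; the two devices are interchangeable here.
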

  \begin{proof}
    Let $\phi$ be an $(L,A)$--quasiisometry.
    For any $r\geq LA$:
    \[\bar\nbhd_{\frac{r}{L}-A}(\phi(Y))\cap\bar\nbhd_{\frac{r}{L}-A}(\phi(Z))\subset \phi(\bar\nbhd_r(Y)\cap\bar\nbhd_r(Z))\subset
      \bar\nbhd_{Lr+A}(\phi(Y))\cap\bar\nbhd_{Lr+A}(\phi(Z))\]
      \end{proof}

      \begin{lemma}[Coarse intersections of cosets exist]
        If $X$ is the Cayley graph of a finitely generated group, $a$
        and $b$ are elements of the group, and $G$ and $H$ are subgroups,
        then $aG\stackrel{c}{\cap} bH$ exists and is represented by $aGa^{-1}\cap bHb^{-1}$.
      \end{lemma}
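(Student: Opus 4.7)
The plan is to show that for all sufficiently large $r$, the thickened intersection $\bar\nbhd_r(aG)\cap\bar\nbhd_r(bH)$ lies at bounded Hausdorff distance from the subgroup $K:=aGa^{-1}\cap bHb^{-1}$. This simultaneously gives existence of $aG\stackrel{c}{\cap}bH$ (the thickened intersections will then be pairwise at bounded Hausdorff distance as $r$ varies) and identifies its coarse equivalence class with that of $K$.

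The containment $K\subset\bar\nbhd_{r_0}(aG)\cap\bar\nbhd_{r_0}(bH)$ for $r_0:=\max\{d(e,a),d(e,b)\}$ is a direct calculation. For $k=aga^{-1}\in aGa^{-1}$, left-invariance of the word metric on $X$ gives
\[d(k,ag)=d(aga^{-1},ag)=d(a^{-1},e)=d(e,a),\]
so $ag\in aG$ witnesses $k\in\bar\nbhd_{d(e,a)}(aG)$. Symmetrically, $k\in\bar\nbhd_{d(e,b)}(bH)$ whenever $k\in bHb^{-1}$.

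The reverse containment is the main content of the proof and, I expect, the principal obstacle. Given $x\in\bar\nbhd_r(aG)\cap\bar\nbhd_r(bH)$, write $x=ags_1=bhs_2$ with $g\in G$, $h\in H$, $d(s_i,e)\leq r$. The conjugates $aga^{-1}\in aGa^{-1}$ and $bhb^{-1}\in bHb^{-1}$ each lie within bounded distance of $x$, hence of one another, but neither need belong to $K$; the task is to produce a \emph{single} element of $K$ near $x$. The plan is to exploit that $K$ left-stabilizes both $aG$ and $bH$ setwise, so $K$ acts on $\bar\nbhd_r(aG)\cap\bar\nbhd_r(bH)$ by left multiplication, making it a union of $K$-orbits, and then to bound the diameter of a fundamental domain. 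The slack element $(aga^{-1})^{-1}(bhb^{-1})$ lies in the finite ball of radius $2r+d(e,a)+d(e,b)$ around $e$ in $X$, and a pigeonhole argument based on this finite set of possible values should show that only finitely many $K$-orbit types appear, each having a representative within a uniformly bounded distance of $K$.

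Combining the two containments yields $d_{Haus}(\bar\nbhd_r(aG)\cap\bar\nbhd_r(bH),K)<\infty$ with a bound depending only on $r$, $d(e,a)$, and $d(e,b)$. Hence the thickened intersections for $r\geq r_0$ are pairwise at bounded Hausdorff distance, so $aG\stackrel{c}{\cap}bH$ exists and is represented by $K$.
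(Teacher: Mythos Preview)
Your strategy is sound and more self-contained than the paper's. The paper's proof is two lines: it cites \cite[Lemma~2.2]{MosSagWhy11} for the statement that $G'\cap H'$ represents $G'\stackrel{c}{\cap}H'$ when $G',H'$ are subgroups, then reduces the coset case to this by observing that $aG$ is at Hausdorff distance $d(e,a)$ from the subgroup $aGa^{-1}$ (exactly your first computation), and likewise for $bH$. Your direct argument in effect reproves the cited lemma.

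There is one gap in your pigeonhole step. The slack $(aga^{-1})^{-1}(bhb^{-1})$ equals $as_1s_2^{-1}b^{-1}$ and depends only on $s_1s_2^{-1}$, but this does not separate $K$-orbits: take $G=\mathbb{Z}\times 0$ and $H=0\times\mathbb{Z}$ in $\mathbb{Z}^2$ with $a=b=0$, so $K=0$ and every point is its own orbit, yet many points share the same $s_1-s_2$. Pigeonhole instead on the pair $(s_1,s_2)$. For each of the finitely many such pairs with $|s_i|\le r$, the set $aGs_1\cap bHs_2=(aGa^{-1})(as_1)\cap(bHb^{-1})(bs_2)$ is either empty or a single right coset of $K$, so $\bar\nbhd_r(aG)\cap\bar\nbhd_r(bH)$ is a finite union $Kw_1\cup\cdots\cup Kw_n$. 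Since $d_{Haus}(K,Kw_i)=d(e,Kw_i)<\infty$, taking the maximum finishes the argument; no separate work is needed to locate representatives near $K$.
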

      \begin{proof}
        This is an immediate corollary of
        \cite[Lemma~2.2]{MosSagWhy11}, which says the intersection of
        subgroups represents their coarse intersection.
        Then observe that $aG$ is coarsely equivalent to $aGa^{-1}$,
        and $bH$ is coarsely equivalent to $bHb^{-1}$.
      \end{proof}

  A tree is connected graph with no cycles.
  It is \emph{bushy} if the set of vertices with more than two unbounded
  complementary components is coarsely dense.
  A graph is a \emph{quasitree} if it is quasiisometric to a tree.

  In a tree, if $\gamma$ is a geodesic segment then every path between
  its endpoints touches every point of $\gamma$.
  There are various ways to coarsen this property to characterize
  quasitrees in terms of `bottleneck constants'
  \cite[Theorem~4.6]{Man05}, 
  \cite[Lemma~2.16]{Man06}.
  The formulation we will use is that $X$ is a quasitree if and only
  if  there exist $A$, $B$, $C$ such
  that for every geodesic $\gamma$ of length at least $A$ there is a vertex
  $v$ within distance $B$ of the midpoint of $\gamma$ such that the
  $C$--ball about $v$ separates the endpoints of the geodesic in $X$.
  We refer to $C$ as `the' bottleneck constant.

The following example shows why some additional constraint on the
location of the separating ball is necessary; it is not sufficient to
say a space is a quasitree if every sufficiently long geodesic has its
endpoints separated by a $C$--ball.
\begin{example}
  Let $\Gamma$ be any connected graph that is not a quasitree--- the
  $\mathbb{Z}^2$ grid, for example. 
  Its double $\double(\Gamma)$ is quasiisometric to $\Gamma$, so it is
  also not a quasitree.
  However, points at distance at least 3 in $\double(\Gamma)$ are
  separated by balls of radius 1: For any $x,y$ with $d(x,y)\geq 3$
  let  $z\neq x$ be the vertex that has the same image as $x$ under the projection of $\double(\Gamma)$ onto $\Gamma$.
  Then $d(x,z)=2$ and $\lk(x)=\lk(z)$, so $x,y\notin \bar \nbhd_1(z)$
  but every path from $x$ to $y$ goes through $\bar \nbhd_1(z)$. 
\end{example}

\subsection{Group decompositions and model spaces}\label{prelim:jsj}
A \emph{graph of groups} consists of a finite connected multigraph, \emph{local groups} associated to each vertex and edge, and
injections of each edge group into each of its two incident vertex
groups.
A graph of groups has an associated \emph{fundamental group} obtained
by amalgamating the vertex groups along the edge groups.
Conversely, a graph of groups is said to give a \emph{decomposition}
of $G$ if $G$ is isomorphic to its fundamental group.

An equivalent formulation comes from letting a group $G$ act
cocompactly and without
edge inversions on a tree $T$. 
One gets a graph of groups decomposition of $G$ by considering the
quotient multigraph of $G\act T$, and setting the local groups to be
stabilizers.
In this setting, a subgroup is said to be \emph{elliptic} if it fixes
a vertex of $T$.
In the other direction, given a graph of groups with fundamental group
$G$ there is an associated
\emph{Bass-Serre tree} $T$ with a $G$ action such that the graph of
groups coming from the action of $G$ on $T$ recovers the original graph of groups.

Associated to a graph of groups decomposition of $G$ with Bass-Serre
tree $T$ we can make a geometric model for $G$ as a \emph{tree of
  spaces} $X$ over $T$:
For each vertex and edge of $T$ choose a metric space quasiisometric
to the stabilizer group. 
The space $X$ is built by taking each edge space $X_e$ crossed with a unit
interval $[0,1]$, and gluing $X_e\times\{0\}$ to the vertex space
$X_v$ of the initial vertex $v=\iota(e)$ of $e$ compatibly with the group
inclusion, and similarly for the terminal vertex $\tau(e)$.
See \cite{CasMar17}
for details. 
The resulting metric space $X$ is quasiisometric to the group $G$.

The \emph{Grushko-Stallings-Dunwoody (GSD) decomposition} of a finitely
presented group $G$ is a graph of groups decomposition of $G$ whose
vertex groups are finite or 1--ended, and whose edge groups are
finite. It turns out that this is well-defined, and the quasiisometry
type of $G$ is determined by the quasiisometry types of the 1--ended
vertex groups in its GSD decomposition \cite{PapWhy02}.
Thus, for quasiisometry questions we restrict to 1--ended groups.

The next higher complexity for splittings of 1--ended, finitely
presented groups are the \emph{Jaco-Shalen-Johannson (JSJ)
  decompositions}.
In this paper we always use ``JSJ decomposition'' to mean ``JSJ
decomposition over 2--ended subgroups''.
This is a graph of groups decomposition of $G$ that, in a sense,
captures all possible splittings over 2--ended subgroups.
The edge groups are all 2--ended.
Unlike with finite edge groups, there can be incompatible splittings
over 2--ended subgroups, and these are combined into \emph{hanging
  vertices} in the graph of groups. The other vertices are either
2--ended or they are \emph{rigid}, in the sense that they do not
split further over 2--ended subgroups \emph{relative to the existing
incident 2--ended edge groups}, meaning a splitting in which all of these
subgroups are elliptic. 

The JSJ decomposition of a group $G$ is not a well-defined graph of
groups; instead, it is a deformation space of graphs of groups (see
\cite{MR3758992}).
However, the existence of a non-trivial JSJ decomposition is a
quasiisometry invariant \cite{Pap05}.
One can make a canonical graph of groups decomposition from (any) JSJ
decomposition of $G$ as follows.
Let $T$ be the Bass-Serre tree of some nontrivial JSJ decomposition of
$G$.
A \emph{cylinder} is a maximal collection of edges of $T$ whose stabilizers
are commensurable in $G$.
One makes a \emph{JSJ tree of cylinders} by collapsing all of the
cylinders to single vertices.
These are known as \emph{cylinder vertices}.
Vertices of $T$ that are contained in a single cylinder are absorbed
into that cylinder vertex; vertices that are not, which are
necessarily rigid or hanging, survive in the JSJ tree of cylinders.
The quotient of the JSJ tree of cylinders by the $G$ action gives a
graph of groups decomposition of $G$ known as the \emph{JSJ graph of
  cylinders} of $G$.
It is a bipartite multigraph where one part consists of cylinder vertices,
and the other part is rigid and hanging vertices that are not
contained in a single cylinder. 
It turns out that the JSJ tree of cylinders and the JSJ graph of cylinders are well-defined,
independent of the starting JSJ decomposition of $G$
\cite{MR3758992} (but the edges are not necessarily 2--ended anymore).
Furthermore, it follows from \cite{Pap05} (see, eg, \cite{CasMar17})
that a quasiisometry $\phi\from G\to G'$ between
two groups induces an isomorphism $\phi_*\from T\to T'$ between their JSJ trees of
cylinders, and if $X$ and $X'$ are trees of spaces of $G$ and $G'$
over $T$ and $T'$, respectively, then the restriction of $\phi$ to each vertex space
$X_v$ is uniformly coarsely equivalent to a quasiisometry $\phi_v\from
X_v\to X'_{\phi_*(v)}$ such that for each edge $e$ of $T$ incident to
$v$, the set $\phi_v(X_e)$ is uniformly coarsely equivalent to 
$X'_{\phi_*(e)}$.
 
In \cite{CasMar17} the construction is carried out in the other
direction: If $X$ and $X'$ are trees of spaces of $G$ and $G'$
over some trees $T$ and $T'$ (not necessarily related to JSJ
decompositions) and if there is an isomorphism $\chi\from T\to T'$
and a collection of uniform quasiisometries $\phi_v\from X_v\to
X'_{\chi(v)}$ that uniformly coarsely agree on common edge spaces, then $X$ and
$X'$ are quasiisometric by a map $\phi$ such that
$\phi|_{X_v}$ coarsely agrees with $\phi_v$ for each vertex $v\in T$.
The collection of quasiisometries is
called a \emph{tree of quasiisometries over $\chi$}. 

\begin{proposition}\label{tree_of_quasiisometries}
  Suppose that $G$ and $G'$ are two groups that are defined by graphs
  of groups on the same underlying multigraph $\Gamma$, which is a tree. 
Suppose for each vertex $v$ of $\Gamma$ there is a quasiisometry
$\psi_v$ from the local group $G_v$ of $G$ to the local group $G'_v$ of $G'$.
If the collection of quasiisometries
$\{\psi_v\mid v\in\Gamma\}$ satisfies the following
conditions, then $G$ and $G'$ are quasiisometric. 
\begin{enumerate}
 \item For each edge $e$ of $\Gamma$, $\psi_{\iota(e)}$ and
   $\psi_{\tau(e)}$ coarsely agree on $G_e$.\label{item:base_consistency}
\item For each edge $e$ of $\Gamma$, $\psi_{\iota(e)}$ induces a
  bijection $\psi_{\iota(e)}^*$ between  $G_{\iota(e)}/G_e$ and 
  $G'_{\iota(e)}/G'_e$ by taking each coset of $G_e$ in $G_{\iota(e)}$
  to within uniformly bounded Hausdorff distance of a unique coset of
  $G'_e$ in $G'_{\iota(e)}$, and vice versa. \label{item:coset_bijection}
\item For each edge $e$ of $\Gamma$ and each coset $gG_e$ of $G_e$ in
  $G_{\iota(e)}$
  there exists $h\in gG_e$ and $h'\in \psi_{\iota(e)}^*(gG_e)$  such that $h'\psi_{\iota(e)}h^{-1}$
  and $\psi_{\iota(e)}$ coarsely agree on $gG_e$, and such that the
  coarseness constants are bounded uniformly over all cosets. \label{item:consistency}
\end{enumerate}
\end{proposition}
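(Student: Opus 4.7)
The plan is to construct a tree of quasiisometries in the sense of \cite{CasMar17} between trees of spaces modeling $G$ and $G'$, and invoke their assembly theorem. Let $T$ and $T'$ denote the Bass-Serre trees of the two graph-of-groups decompositions, with $X$ and $X'$ the corresponding trees of spaces whose vertex and edge spaces are cosets of local groups and edge groups, equipped with left-invariant word metrics. Because $\Gamma$ is itself a tree, the combinatorial structures of $T$ and $T'$ can be built inductively from a basepoint with no loop-induced compatibility constraints; I construct the graph isomorphism $\chi\from T\to T'$ and the vertex quasiisometries $\phi_{\tilde v}$ simultaneously in such an outward propagation.

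The data maintained at each vertex $\tilde v$ of $T$ is a choice of representative $g_{\tilde v}\in G$ of the coset $\tilde v$ together with a representative $g'_{\tilde v}\in G'$ of $\chi(\tilde v)$, with the vertex quasiisometry defined as $\phi_{\tilde v}(y) := g'_{\tilde v}\psi_v(g_{\tilde v}^{-1}y)$, which is a quasiisometry from $g_{\tilde v}G_v$ to $g'_{\tilde v}G'_v$ with the same constants as $\psi_v$. Fix $v_0\in\Gamma$ and begin with $g_{\tilde v_0} = 1 = g'_{\tilde v_0}$. Inductively, at a vertex $\tilde v = g_{\tilde v}G_v$ with chosen representatives, and for each edge of $T$ from $\tilde v$ to a neighbor $\tilde w$ across the coset $g_{\tilde v}kG_e$ (where $e$ is incident to $v$ in $\Gamma$ with other endpoint $w$, and $k\in G_v$), apply condition (3) to the coset $kG_e$ of $G_e$ in $G_v$ to obtain elements $k_0\in kG_e$ and $k'_0\in\psi_v^*(kG_e)$ such that the map $x\mapsto k'_0\psi_v(k_0^{-1}x)$ coarsely agrees with $\psi_v$ on $kG_e$. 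Set $\chi(\tilde w) := g'_{\tilde v}k'_0 G'_w$, $g_{\tilde w} := g_{\tilde v}k_0$, and $g'_{\tilde w} := g'_{\tilde v}k'_0$.

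It remains to verify coarse agreement of $\phi_{\tilde v}$ and $\phi_{\tilde w}$ on their common edge space $g_{\tilde v}kG_e$. A point in this coset has the form $y = g_{\tilde v}k_0m$ with $m\in G_e$, so $\phi_{\tilde w}(y) = g'_{\tilde v}k'_0\psi_w(m)$. By condition (1), $\psi_w(m)$ is within bounded distance of $\psi_v(m)$, so this is close to $g'_{\tilde v}k'_0\psi_v(m) = g'_{\tilde v}\cdot(k'_0\psi_v k_0^{-1})(k_0m)$. By condition (3) applied at the coset $kG_e$, the map $k'_0\psi_v k_0^{-1}$ coarsely equals $\psi_v$ on $kG_e$, and $k_0m\in kG_e$, so the expression is close to $g'_{\tilde v}\psi_v(k_0m) = g'_{\tilde v}\psi_v(g_{\tilde v}^{-1}y) = \phi_{\tilde v}(y)$. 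The coarseness constants are uniform in the vertex by the uniformity hypotheses built into (1), (2), and (3). Thus $\{\phi_{\tilde v}\}$ forms a tree of quasiisometries over $\chi$, and the Cashen-Martin construction packages them into a quasiisometry $X\to X'$; since $X$ is quasiisometric to $G$ and $X'$ to $G'$, this proves $G$ and $G'$ are quasiisometric.

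The main technical obstacle is maintaining uniformity of coarseness constants through the induction. The deliberate choice of representatives is crucial: picking $g_{\tilde w} = g_{\tilde v}k_0$ and $g'_{\tilde w} = g'_{\tilde v}k'_0$ using the pair provided by condition (3) is what makes the chain of approximations close up without introducing vertex-dependent errors. The tree assumption on $\Gamma$ is also essential: with a loop in $\Gamma$, propagating $\chi$ around the loop would force an additional compatibility condition on the composition of the coset bijections, which is not among our hypotheses.
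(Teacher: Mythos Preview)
Your proof is correct and follows essentially the same approach as the paper: build a tree of quasiisometries over an inductively constructed isomorphism $\chi\from T\to T'$, with each vertex map a conjugate $g'_{\tilde v}\psi_v g_{\tilde v}^{-1}$ of a base quasiisometry, and invoke \cite{CasMar17}. Your bookkeeping via representatives $g_{\tilde v}, g'_{\tilde v}$ is exactly the paper's $\kappa,\kappa'$, and your verification of coarse agreement on edge spaces matches the paper's computation line for line.
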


\begin{proof}
  We build a
  tree of quasiisometries as described above.
Let $T$ be the Bass-Serre tree corresponding to the given splitting of
$G$, and let $X$ be a tree of spaces for $G$ over $T$.
Since $\Gamma$ is a tree it admits a lift $\widetilde{\Gamma}$ to
$T$, such that if $v\stackrel{e}{\edge}w$ is an edge of $\Gamma$ then
$\tilde{v}\stackrel{\tilde{e}}{\edge}\tilde{w}$ is an edge of
$\widetilde{\Gamma}$.
We use all the same notation with $'$s for the corresponding concepts
with respect to $G'$.

For each vertex $v\in\Gamma$  define $\chi(\tilde{v}):=\tilde{v}'$ and 
define $\phi_{\tilde{v}}\from
X_{\tilde{v}}\to X'_{\tilde{v}'}$ to be $\psi_v\from G_v\to G'_v$,
where we implicitly identify $X_{\tilde{v}}$ with $G_v$ and
$X'_{\tilde{v}'}$ with $G'_v$.
Condition~\eqref{item:base_consistency} implies that for each edge
$e\in \Gamma$, $\phi_{\iota(\tilde{e})}$ and $\phi_{\tau(\tilde{e})}$
coarsely agree on $X_{\tilde{e}}$, which is the intersection of their
domains. 

Now we inductively expand the domain of $\chi$, specifying
local quasiisometries between the corresponding vertex spaces as we go, in such
a way that the maps on adjacent vertex spaces coarsely agree on their
intersection. 
Once this is done, \cite[Corollary~2.16]{CasMar17} says the local
quasiisometries patch together to give a quasiisometry.

As we will go, it will be useful for bookkeeping to define
$\kappa\from T\to G$ and $\kappa'\from T'\to G'$ such that:
\begin{itemize}
\item $\kappa(g\tilde{v})\in gG_v$
  \item $\kappa'(\chi(g\tilde{v}))\in g'G'_v$, where
    $g'\tilde{v}'=\chi(g\tilde{v})$
    \item $\phi_{g\tilde{v}}=\kappa'(\chi(g\tilde{v}))\psi_v\kappa(g\tilde{v})^{-1}$
\end{itemize}
For the base cases of vertices in $\widetilde{\Gamma}$ and
$\widetilde{\Gamma}'$ define $\kappa(\tilde{v})=1$ and
$\kappa'(\tilde{v}')=1$. 
For the initial inductive step, consider an edge $e$ of $\Gamma$ with $\iota(e)=v$.
The edges of $T$ incident to $\tilde{v}$ and covering $e$ are of the form $g_i\tilde{e}$, where $\{g_i\}$ is a set
of coset representatives of $G_v/G_e$.
Similarly, the edges of $T'$ incident to
$\tilde{v}'$ and covering $e$ are translates of $\tilde{e}'$ by coset
representatives of $G'_v/G'_e$.
For each $i$ let $g_i'G'_e=\psi_{\iota(e)}^*(g_iG_e)$ and define
$\chi(g_i\tilde{e}):=g_i'\tilde{e}'$. 
By Condition~\eqref{item:coset_bijection}, this defines a bijection
between edges of $T$ incident to $\tilde{v}$ covering $e$ and edges of
$T'$ incident to $\tilde{v}'=\chi(\tilde{v})$ covering $e$. 
Now choose $h_i$ and $h'_i$ as in Condition~\eqref{item:consistency}
with respect to $g_iG_e$, and
define:
\[\phi_{g_i\tau(\tilde{e})}:=h_i'\psi_{\tau({e})}h_i^{-1}\]
We must check that $\phi_{\tau(g_i\tilde{e})}$ and
$\phi_{\iota(g_i\tilde{e})}$ coarsely agree on $X_{g_i\tilde{e}}=g_iG_{e}$.
For $k\in G_{e}$:
\begin{align*}
  \phi_{\tau(g_i\tilde{e})}(g_ik)&=\phi_{g_i\tau(\tilde{e})}(g_ik)\\
  &=h_i'\psi_{\tau(e)}h_i^{-1}(g_ik)&\\
                                 &=h_i'\psi_{\tau(e)}(h_i^{-1}g_ik)\\
  &\approx h_i'\psi_{\iota(e)}(h_i^{-1}g_ik)&\text{by
                                          Condition~\eqref{item:base_consistency},
                                          since $h_i^{-1}g_ik\in
                                              G_e$}\\
                                 &= h_i'\psi_{\iota(e)}h_i^{-1}(g_ik)\\
   &\approx \psi_{\iota(e)}(g_ik)&\text{by
                                   Condition~\eqref{item:consistency},
                                   since $g_ik\in g_iG_e$}\\
  &=\phi_{\iota(\tilde{e})}(g_ik)=\phi_{\iota(g_i \tilde{e})}(g_ik)
\end{align*}
Define $\kappa(g_i\tau(\tilde{e})):=h_i$ and $\kappa'(\chi(g_i\tau(\tilde{e}))):=h_i'$.

For the general inductive step, suppose $\chi$ is defined on a subtree of $T$
containing $\widetilde{\Gamma}$, and for each vertex $g\tilde{v}$ in
the subtree we have defined a quasiisometry $\phi_{g\tilde{v}}\from
X_{g\tilde{v}}\to X'_{\chi(g\tilde{v})}$ and $\kappa$ and $\kappa'$ as above.
Suppose $g\tilde{v}\notin\widetilde{\Gamma}$ is a leaf of the subtree and  $g\tilde{v}\stackrel{g\tilde{e}}{\edge}g\tilde{w}$ is an
edge of $T$ such that $g\tilde{w}$ is farther from $\widetilde{\Gamma}$ than
$g\tilde{v}$. 
Pull $g\tilde{v}$ back to $\tilde{v}$ by applying
$\kappa(g\tilde{v})^{-1}$, which takes $g\tilde{e}$ to some edge
$\kappa(g\tilde{v})^{-1}g\tilde{e}$ incident to $\tilde{v}$.
Then $\psi_v^*$ identifies this with some edge incident to
$\tilde{v}'$ in the same orbit as $\tilde{e}'$. Push this edge forward
by $\kappa'(\chi(g\tilde{v}))$ to get an outgoing edge of $T'$ at
$\chi(g\tilde{v})$.
Define this edge to be $\chi(g\tilde{e})$, and define
$\chi(g\tilde{w})$ to be its other endpoint.
Similarly, to define $\phi_{g\tilde{w}}$, pull back via
$\kappa(g\tilde{v})^{-1}$, do the construction of the previous
paragraph, and then push forward the result by $\kappa'(\chi(g\tilde{v}))$. 

The key points are that all quasiisometries between vertex spaces are
one of the base quasiisometries $\psi_v$, pre- and post- composed by
multiplication in the groups, which are isometries, so the
quasiisometry constants are bounded by the maxima of the constants for
the base maps. Similarly, the coarse agreement between maps of neighboring
vertex spaces on their common intersection is, up to isometry, the
same as that coarse agreement described in
Condition~\eqref{item:consistency}, which is assumed to be uniform for
each vertex/edge pair in $\Gamma$, of which there are finitely many.
\end{proof}

\subsubsection{JSJ decompositions of RAAGs}\label{jsj}
A graph is \emph{biconnected} if it connected with no cut
vertex.
By this definition, a biconnected graph either has at most two
vertices, or every vertex has valence at least 2. 

It is not hard to see that a RAAG on at least two generators is 1--ended if and only if its
presentation graph is connected.
According to Clay \cite{Cla14} and Margolis \cite[Proposition 3.6]{Mar20}, the JSJ
graph of cylinders of a RAAG $A_\Delta$ can be described ``visually'' in
$\Delta$: cylinders are stars of cut vertices, rigid vertices are
maximal biconnected subgraphs that either contain two cut vertices or
are not contained in any cylinder, and edges between them are
intersections of the corresponding subgraphs. 
From this fact and quasiisometry invariance of the JSJ tree of
cylinders, several quasiisometry invariants appear:

\begin{lemma}\label{RAAGnohanging}
  RAAGs have no hanging vertices in their JSJ decompositions. 
\end{lemma}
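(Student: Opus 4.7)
The plan is to read this off immediately from the visual description of the JSJ graph of cylinders of a RAAG due to Clay and Margolis that was just recalled in the paragraph preceding the lemma statement. That description gives a complete enumeration of the vertex set of the JSJ graph of cylinders of $A_\Delta$: every vertex is either a cylinder vertex, corresponding to the star of a cut vertex of $\Delta$, or a rigid vertex, corresponding to a maximal biconnected induced subgraph of $\Delta$ satisfying the stated containment conditions. The JSJ graph of cylinders is a canonically defined object, produced by collapsing cylinders in any JSJ tree and quotienting by the group action, so this enumeration is exhaustive. In particular, there is no room left for a hanging vertex.

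The only residual concern is to verify that the vertices the visual recipe labels ``rigid'' are not secretly hanging in disguise; this is the step I would spell out carefully. A hanging vertex group in our setting is virtually a closed surface group with boundary. The rigid vertex groups in the visual description are special subgroups $A_{\Delta'}$ where $\Delta'$ is a biconnected induced subgraph of $\Delta$ meeting the hypotheses of the recipe. I would rule out such a special subgroup being virtually a surface-with-boundary group using an invariant that distinguishes the two classes---for example, by comparing centralizers of generators in $A_{\Delta'}$ (which contain the star of the generator, giving a large free abelian subgroup whenever $\Delta'$ is not a single edge) to the cyclic centralizers forced in a surface group. The maximality and non-containment conditions in the visual description prevent $\Delta'$ from degenerating to a configuration where this argument fails.

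The expected main obstacle is essentially nil, since the heavy lifting has been done by Clay and Margolis; the proof amounts to packaging their characterization and dispatching the surface-group compatibility check as above.
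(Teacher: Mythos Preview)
Your approach is correct and matches the paper's: the paper states this lemma with no proof at all, treating it as an immediate consequence of the Clay--Margolis visual description quoted just before. Your ``residual concern'' paragraph is more than the paper bothers with---the Clay--Margolis result already classifies the non-cylinder vertices as rigid, so there is nothing left to rule out---but the instinct is harmless.
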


\begin{lemma}\label{RAAGflat}
  Let $\Delta$ be a connected graph with more than one vertex. 
  The rigid vertex groups of the JSJ graph of cylinders of $A_\Delta$
  are one-ended special subgroups $A_{\Delta'}$ of $A_\Delta$ that do not split
  further over 2--ended subgroups. 
\end{lemma}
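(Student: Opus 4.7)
The plan is to use the visual JSJ description of Clay and Margolis recalled in the paragraph preceding the lemma. By that description, a rigid vertex of the JSJ graph of cylinders of $A_\Delta$ corresponds to a maximal biconnected subgraph $\Delta'$ of $\Delta$ (one that either contains two cut vertices of $\Delta$ or is not contained in the star of any cut vertex). The associated vertex group is then $A_{\Delta'}$, which is a special subgroup of $A_\Delta$ by construction; this takes care of the first part of the claim.

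Next I would verify that $A_{\Delta'}$ is $1$--ended. Since $\Delta$ has more than one vertex and is connected, every vertex of $\Delta$ lies in at least one edge. A single vertex therefore cannot be a maximal biconnected subgraph, so $\Delta'$ contains at least two vertices. Being biconnected, $\Delta'$ is in particular connected, and the remark from the preceding subsection that a RAAG on at least two generators is $1$--ended iff its presentation graph is connected gives that $A_{\Delta'}$ is $1$--ended.

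For the last assertion, the key observation is that the visual JSJ description applies equally well to $A_{\Delta'}$: its JSJ cylinders correspond to cut vertices of $\Delta'$. Since $\Delta'$ is biconnected it has no cut vertex, so $A_{\Delta'}$ has trivial JSJ tree of cylinders, meaning it admits no splitting at all over a $2$--ended subgroup. In particular it does not split further over $2$--ended subgroups relative to the incident edge groups inherited from the JSJ of $A_\Delta$, giving the stated (and in fact slightly stronger) nonsplitting conclusion.

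The main subtle point is the last step: rigidity in the JSJ sense only requires nonsplitting \emph{relative} to the incident edge groups, whereas the lemma asserts absolute nonsplitting over $2$--ended subgroups. The reason one gets the stronger statement for free is the visual nature of the JSJ description, which lets us directly read off from the combinatorics of $\Delta'$ that there are no $2$--ended splittings of $A_{\Delta'}$ whatsoever. This is really a special feature of RAAGs that will not be available for general RACGs later in the paper.
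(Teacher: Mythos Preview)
Your proof is correct and follows essentially the same approach as the paper's: both invoke the Clay--Margolis visual JSJ description, argue that a maximal biconnected $\Delta'$ has at least two vertices (since every vertex lies in an edge), deduce one-endedness from connectedness, and conclude there are no $2$--ended splittings from the absence of cut vertices in $\Delta'$. Your closing remark on relative versus absolute nonsplitting is a nice bit of extra context that the paper does not spell out.
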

\begin{proof}
  $A_\Delta$ is finitely presented and one-ended, so it has a JSJ
  decomposition.
  Any rigid vertices correspond to maximal biconnected subgraphs $\Delta'$ of
  $\Delta$ that either contain at least two cut vertices or are not
contained in the star of any cut vertex.
  In particular, rigid vertex groups are special subgroups.  
   Since $\Delta$ is connected with more than one vertex, every vertex
   is contained in an edge.
   Edges are biconnected, so no single vertex is a maximal biconnected subgraph.
  Thus, $\Delta'$ is connected with more than one vertex, so
  $A_{\Delta'}$ is one-ended.
  Furthermore, since $\Delta'$ is biconnected it contains no cut
  vertex, so $A_{\Delta'}$ has no two-ended splittings. 
\end{proof}

\subsubsection{JSJ decompositions of RACGs}\label{sec:jsjracg}
Mihalik and Tschantz \cite{MihTsc09} show that in some sense all
decompositions of Coxeter groups are visual.

For RACGs it is not hard to see that splittings over finite subgroups
correspond to separating cliques in the presentation graph.
We will also exclude the well understood cases that the presentation
graph $\Gamma$ is complete ($W_\Gamma$ is finite) and $\Gamma$ is a
cycle ($W_\Gamma$ is virtually a surface group).
Assuming that $\Gamma$ is triangle-free, incomplete, with no
separating clique, and is not a cycle, Edletzberger \cite{Edl24},
extending work of Dani and Thomas \cite{DanTho17} from the hyperbolic
case,  gives
a description of the JSJ graph of cylinders in terms of subgraphs of
the defining graph.
In particular, 2--ended splittings arise in two ways:
\begin{itemize}
\item $\{a,b\}$ is a cut pair of $\Gamma$, meaning that
  $\Gamma\setminus\{a,b\}$ is not connected.
  \item $\{a,b\}$ is not a cut pair, but there is a common neighbor
    $c\in\lk(a)\cap\lk(b)$ such that $\Gamma\setminus\{a,b,c\}$ is not
    connected. In this case $a\edge c\edge b$ is called a \emph{cut 2--path}.
\end{itemize}
We combine the two by saying \emph{$\{a-b\}$ is a cut} to mean that
either $\{a,b\}$ is a cut pair or that there exists $c$ such that
$a\edge c\edge b$ is a cut 2--path.
In the first case $\langle ab\rangle\cong \mathbb{Z}$ is an index-2 subgroup of
$W_{\{a,b\}}$, and in the second case it is an index-4 subgroup of
$W_{\{a,b,c\}}$.
A cut is \emph{crossed} if there is another cut containing vertices in
multiple of its complementary components, and \emph{uncrossed} otherwise.
Crossed cuts group together to make hanging vertices of the JSJ graph
of cylinders.
Cylinder vertices of the JSJ graph of cylinders are commensurators of
uncrossed cuts, which can be described explicitly: if $\{a-b\}$ is an
uncrossed cut of $\Gamma$ then there is a corresponding cylinder
vertex with vertex group $W_{\{a,b\}\join (\lk(a)\cap\lk(b))}$.
Such a group is commensurable to one of $\mathbb{Z}$, $\mathbb{Z}^2$,
or $F_2\times \mathbb{Z}$, according to whether $|\lk(a)\cap\lk(b)|$
is less than 2, equal to 2, or greater than 2, respectively.

Rigid vertices can also be described explicitly
(\cite[Proposiiton~3.8]{Edl24}), they are subsets of size at least 4 of
vertices of $\Gamma$, each with valence at least 3, that cannot be separated by any pair of vertices
or 2--path in $\Gamma$, and that are maximal with respect to
inclusion among sets with these properties. 

\subsubsection{Further splittings}
In this paper we are always using `JSJ decomposition' in the sense of
decomposition over 2--ended subgroups.
JSJ theory also exists for other classes of splittings \cite{DunSag99,
  FujPap06, MR3758992}.
In particular, Groves and Hull \cite{MR3728497} describe the structure of splittings of
RAAGs over Abelian groups.
In the 2--dimensional case, it would therefore be interesting to
consider whether the kind of invariants we develop for 2--ended JSJ
decompositions of RACGs vs RAAGs can be extended to splittings over 
virtually $\mathbb{Z}^2$
subgroups.
We leave this line of inquiry for future work.

\subsection{Morse property and stability}\label{prelim:morsestable}

  A subspace $Z$ of a metric space $X$ is \emph{$\mu$--Morse} if for
  every $L$ and $A$ we have that
  every $(L,A)$--quasigeodesic segment $\gamma$ of $X$ with endpoints
  in $Z$ is contained in the $\mu(L,A)$--neighborhood of $Z$.

  A subspace is \emph{Morse} if there is some $\mu$ for which it is $\mu$--Morse.

  Morseness is a quasigeodesic quasiconvexity condition on $Z$ that describes
how it sits in $X$, but says nothing about the intrinsic geometry of
$Z$.
There is a further property, \emph{stability}, that, when $X$ is a
geodesic metric space, is equivalent to $Z$ being Morse and $Z$ itself
being a hyperbolic space. 

The concept of subgroup stability was introduced by Durham and Taylor
\cite{MR3426695} as a geometric group-theoretic interpretation of
convex-cocompactness for subgroups mapping class groups of surfaces.
Another characterization of such subgroups is that the subgroup is
convex cocompact if and only if its orbit map into the curve graph of
the surface is a quasiisometric embedding \cite{MR2465691,Ham05}.
These results inspired work to characterize stability in other
families of groups, and then to search for a \emph{stability
  recognizing space} to play the role of the curve graph, in the sense
that a subgroup of the given group is stable if and only if its orbit
map into the stability recognizing space is a quasiisometric
embedding.

The theory of Morse and stable subgroups of RAAGs and RACGs is well
developed, as we will briefly recall. We will return to the topic of
stability recognizing spaces in \fullref{sec:hhs}.

\begin{theorem}[{\cite[Theorem~F]{CorHum17},\cite[Corollary~7.4]{RusSprTra23}\footnote{This
    result also says that in a strongly CFS RACG every Morse
    subset is either hyperbolic or coarsely dense.}}]\label{Morsehyperbolic}
  In a 1--ended RAAG, every Morse subset is 
  coarsely dense or quasiisometric to a finite valence tree. 
\end{theorem}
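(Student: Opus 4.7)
The plan is to combine coarse geometric features of the Salvetti complex with the structure of its flat subcomplexes. First I would observe that since $A_\Delta$ is 1-ended, the presentation graph $\Delta$ is connected with at least one edge, so $A_\Delta$ contains $\mathbb{Z}^2$ and is not Gromov hyperbolic. Consequently no Morse subset can be coarsely dense (a coarsely dense Morse subset forces the ambient group to be hyperbolic, as can be seen via a stability/divergence argument). So the task reduces to showing that every Morse subset $Z$ is quasi-isometric to a finite valence tree.

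Next I would pass from $Z$ to its cubical convex hull in $\Salvetti_\Delta$, obtaining a path-connected, Morse subcomplex at finite Hausdorff distance from $Z$; replacing $Z$ by this hull is harmless for the conclusion. The key geometric input I would invoke is the behavior of Morse subsets relative to the standard product (join) subcomplexes of $\Salvetti_\Delta$. Any such standard product subcomplex $\Salvetti_{\Delta_1 \join \Delta_2}$ with both factors infinite contains arbitrarily wide flat sectors, and such flats are not Morse. Using the CAT(0) closest-point projection from $\Salvetti_\Delta$ onto each standard subcomplex, one shows that $Z$ must have uniformly bounded image in every such product region, with a bound depending only on the Morse gauge of $Z$. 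This is precisely the step where one would appeal either to the hierarchical hyperbolic structure on $A_\Delta$ in the style of Behrstock-Hagen-Sisto, where Morse sets project to uniformly bounded diameter in every non-maximal coordinate space, or, alternatively, to an extension/contact-graph model in which Morse subsets embed quasi-isometrically.

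Given this bounded projection control, I would then show that $Z$ itself is a quasi-tree by verifying the bottleneck criterion recalled in \fullref{sec:prelim}: any geodesic in $Z$ can only traverse a product region for a uniformly bounded distance before exiting through a hyperplane, and these exit hyperplanes provide the required bottlenecks. To upgrade from \emph{quasi-tree} to \emph{finite valence quasi-tree}, I would use cocompactness of the action $A_\Delta \curvearrowright \Salvetti_\Delta$: up to the $A_\Delta$-action there are only finitely many standard product subcomplexes of each combinatorial type, so the number of directions in which $Z$ can branch at any fixed scale is uniformly bounded.

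The main obstacle is upgrading the local bounded-projection statement to a global quasi-tree conclusion; this is where one cannot avoid invoking the substantial machinery developed in \cite{CorHum17} or \cite{RusSprTra23}, namely the characterization of Morse subsets inside a hyperbolic model space. Once that characterization is in hand, the remaining arguments about valence and quasi-tree structure are comparatively routine, reducing to the combinatorics of the stars of vertices in $\Delta$ and the cocompactness of the action on $\Salvetti_\Delta$.
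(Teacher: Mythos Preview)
The paper does not give its own proof of this theorem; it is quoted with attribution to \cite{CorHum17} and \cite{RusSprTra23} and used as a black box. So there is no in-paper argument to compare against.

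That said, your proposal contains a genuine error in its opening move. You assert that a coarsely dense Morse subset forces the ambient group to be hyperbolic, and therefore the ``coarsely dense'' alternative cannot occur. This is false: the entire space $X$ is always a Morse subset of itself with gauge $\mu\equiv 0$, since any quasigeodesic with endpoints in $X$ is trivially contained in $X$. You are conflating \emph{Morse} with \emph{stable}; it is a coarsely dense \emph{stable} subset (Morse and intrinsically hyperbolic) that would force the ambient group to be hyperbolic. The theorem really does need the disjunction, and the correct reduction is: assume $Z$ is Morse and \emph{not} coarsely dense, then show $Z$ is quasiisometric to a finite valence tree.

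Beyond that, the remainder of your sketch is not really a proof attempt so much as a plausibility outline that ultimately defers to the very references being cited. The substantive step---that a Morse subset of a RAAG which is not coarsely dense must be hyperbolic, indeed a quasitree---is exactly what \cite{CorHum17} and \cite{RusSprTra23} establish via the contact graph / HHS machinery, and you acknowledge as much in your final paragraph. The bounded-projection heuristic you describe is in the right spirit, but turning it into an actual bottleneck argument for $Z$ (rather than for the ambient contact graph) is the hard part, and your sketch does not supply it.
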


The Morse property is easily identifiable for special subgroups of RACGs.
A subgraph $\Gamma'$ of $\Gamma$ is \emph{square complete} if whenever
$\Gamma'$ contains a diagonal of an induced square it contains
the whole square.
An induced subgraph is \emph{minsquare} if it contains an induced square, is
square complete, and is minimal with respect to inclusion among all
subgraphs that satisfy the first two conditions.

\begin{theorem}[{\cite[Theorem~1.11]{Tra19},\cite[Proposition~4.9]{Gen19}}]\label{morsesquarecomplete}
  A special subgroup of a RACG is Morse if and
  only if its presentation subgraph is square complete. 
\end{theorem}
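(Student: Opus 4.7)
The plan is to prove both directions by exploiting the hyperplane structure of the Davis complex $\Davis_\Gamma$. Recall that hyperplanes of $\Davis_\Gamma$ are labeled by vertices of $\Gamma$, two hyperplanes cross iff their labels are adjacent in $\Gamma$, and $\Davis_{\Gamma'}$ is the convex subcomplex whose transverse hyperplanes are exactly those labeled by vertices of $\Gamma'$. Combinatorial distance from a vertex $g$ to $\Davis_{\Gamma'}$ counts the hyperplanes labeled in $\Gamma\setminus\Gamma'$ that separate $g$ from its closest-point projection $\pi(g)$.

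For the direction \emph{not square complete implies not Morse}, I would start by choosing an induced square $a\edge b\edge c\edge d\edge a$ in $\Gamma$ such that $\{a,c\}\subseteq\Gamma'$ but, say, $b\notin\Gamma'$. Then $W_{\{a,b,c,d\}}\cong D_\infty\times D_\infty$ acts geometrically on a flat $F\subseteq\Davis_\Gamma$ inside which the axis of the infinite-order element $ac$ is precisely $\Davis_{\{a,c\}}\subseteq\Davis_{\Gamma'}$. For each $N$, I would build a path in $F$ with uniform quasigeodesic constants whose endpoints lie on this axis and whose middle portion traverses $N$ hyperplanes labeled $b$. Since $b\notin\Gamma'$, no $b$-labeled hyperplane crosses $\Davis_{\Gamma'}$, so each such hyperplane separates the midpoint from all of $\Davis_{\Gamma'}$. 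Hence the path reaches distance at least $N$ from $\Davis_{\Gamma'}$, so $\Davis_{\Gamma'}$ is not Morse.

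For the direction \emph{square complete implies Morse}, the plan is to show that the closest-point projection $\pi\from\Davis_\Gamma\to\Davis_{\Gamma'}$ is strongly contracting, which is a standard sufficient condition for Morseness. For any $g$, every hyperplane separating $g$ from $\pi(g)$ is labeled in $\Gamma\setminus\Gamma'$. If $g'$ is close to $g$ but $\pi(g')$ is far from $\pi(g)$, then $\Gamma'$-hyperplanes $H_{v_1},\dots,H_{v_k}$ separate the two projections, and each $H_{v_i}$ must cross every non-$\Gamma'$-hyperplane that lies between $g$ and $\pi(g)$. Translating these crossings into adjacencies of $\Gamma$, and combining two of the $v_i$ that are non-adjacent in $\Gamma'$, I would extract a pair of non-adjacent vertices in $\Gamma\setminus\Gamma'$ that are both common neighbors of the two $v_i$, producing an induced square whose diagonal is in $\Gamma'$ but whose off-vertices are not, contradicting square completeness. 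This contradiction bounds the diameter of $\pi$ on a ball far from $\Davis_{\Gamma'}$.

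The main obstacle, and where the proof needs the most care, is the Morse direction: one must extract a genuine \emph{induced} square from hyperplane-crossing data in order to apply square completeness, rather than merely a 4-cycle that could collapse to a triangle or edge. I expect this step to use Tits' word-problem solution, which lets one translate between commutation relations in $W_\Gamma$ and adjacencies in $\Gamma$, together with a careful choice of the two $v_i$ and of the two common neighbors to ensure non-adjacency on both diagonals.
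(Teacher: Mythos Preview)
The paper does not supply a proof of this statement; it is quoted from Tran and Genevois and used as a black box, so there is no in-paper argument to compare against.

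Your sketch for ``not square complete implies not Morse'' is correct and standard. For the converse your overall strategy---prove strong contraction and extract an induced square from any failure---is the right one, but one of your stated facts is false: it is \emph{not} true that every hyperplane separating $g$ from $\pi(g)$ is labeled by a vertex of $\Gamma\setminus\Gamma'$. For instance, with $\Gamma'=\{a\}$ and $b\in\Gamma$ not adjacent to $a$, the $a$-labeled hyperplane dual to the edge $b\edge ba$ separates $ba$ from $\Davis_{\Gamma'}=\{1,a\}$. What \emph{is} true is that the first such hyperplane---the one dual to the initial edge of a geodesic from $\pi(g)$ towards $g$---must carry a label outside $\Gamma'$, since otherwise that edge would lie in $\Davis_{\Gamma'}$ and contradict the gate property. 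Your subsequent claim that each $H_{v_i}$ crosses every hyperplane between $g$ and $\pi(g)$ is likewise unjustified; in fact $H_{v_i}$ is only guaranteed to cross those hyperplanes that do \emph{not} separate $g$ from $g'$, and there is no a~priori bound on how many do. A cleaner route, closer to how the cited references argue, is to pass through the Bridge Lemma: a failure of contraction yields an isometrically embedded combinatorial rectangle $[0,n]\times[0,m]$ with $n,m\ge 2$ whose bottom side lies in $\Davis_{\Gamma'}$. The first two horizontal edge labels are then nonadjacent vertices $a_1,a_2\in\Gamma'$, the first vertical label $b_1$ lies in $\Gamma\setminus\Gamma'$ by the gate argument above, the first two vertical labels $b_1,b_2$ are nonadjacent, and $\{a_1,a_2\}\ast\{b_1,b_2\}$ is the induced square violating square completeness.
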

\begin{corollary}\label{cor:stable}
   A special subgroup of a RACG is stable if and
  only if its presentation subgraph is square complete and contains no
  induced square.
\end{corollary}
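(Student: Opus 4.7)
The plan is to combine \fullref{morsesquarecomplete} with the standard equivalence (for geodesic metric spaces) that stability of a subspace $Z \subset X$ is equivalent to $Z$ being Morse in $X$ together with $Z$ being intrinsically Gromov hyperbolic, as discussed in the paragraph just before the statement. This equivalence is recalled in the excerpt, so I would cite it rather than reprove it.

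Given that, the corollary reduces to showing: for a special subgroup $W_{\Gamma'}$ of a RACG $W_\Gamma$, intrinsic hyperbolicity of $W_{\Gamma'}$ is equivalent to $\Gamma'$ containing no induced square. This is a classical fact about RACGs, namely that $W_{\Gamma'}$ is hyperbolic if and only if $\Gamma'$ contains no induced 4--cycle. The forward direction uses that an induced square $\Gamma'' \subset \Gamma'$ yields a special subgroup $W_{\Gamma''}$ isomorphic to $D_\infty \times D_\infty$, which contains a $\mathbb{Z}^2$ and hence obstructs hyperbolicity of $W_{\Gamma'}$ (the Davis complex $\Davis_{\Gamma''}$ is convex in $\Davis_{\Gamma'}$, so the isometric embedding of $\mathbb{Z}^2$ persists). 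The reverse direction is Moussong's hyperbolicity criterion specialized to the right-angled setting.

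Now I would assemble the two directions. For the backward direction of the corollary, assume $\Gamma'$ is square complete and contains no induced square. Then \fullref{morsesquarecomplete} gives that $W_{\Gamma'}$ is Morse in $W_\Gamma$, and the RACG hyperbolicity criterion above gives that $W_{\Gamma'}$ is itself hyperbolic; combining these yields stability. For the forward direction, assume $W_{\Gamma'}$ is stable in $W_\Gamma$. Stability implies the Morse property, so $\Gamma'$ is square complete by \fullref{morsesquarecomplete}. Stability also implies that $W_{\Gamma'}$, with its intrinsic word metric, is hyperbolic, so by the criterion $\Gamma'$ contains no induced square.

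The proof is essentially a bookkeeping argument: the only substantive content is the equivalence ``stable $\iff$ Morse $+$ hyperbolic,'' which is standard, and the hyperbolicity criterion for RACGs, which is classical. There is no real obstacle; the main thing to get right is to verify that when one invokes ``stable implies intrinsic hyperbolicity'' one is using the correct intrinsic metric on $W_{\Gamma'}$, which here is unproblematic since the Davis complex $\Davis_{\Gamma'}$ embeds convexly in $\Davis_\Gamma$ so the induced and intrinsic metrics agree up to quasiisometry.
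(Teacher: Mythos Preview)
Your proposal is correct and matches the paper's intended reasoning: the paper states the corollary without proof, treating it as immediate from \fullref{morsesquarecomplete}, the equivalence ``stable $\iff$ Morse and hyperbolic'' recalled just before, and Moussong's criterion that a RACG is hyperbolic if and only if its presentation graph has no induced square. Your write-up simply unpacks this.
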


\begin{proposition}\label{raagedy_implies_minsquare}
  A 1--ended  RAAGedy RACG  has presentation graph that is a join of a
  clique and a minsquare subgraph. 
\end{proposition}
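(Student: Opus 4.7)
The plan is to extract a minsquare subgraph $\Gamma_0\subseteq\Gamma$, use that $W_{\Gamma_0}$ is then a Morse subgroup to leverage the quasiisometry with a RAAG, and finally translate the resulting coarse density statement back into the desired join decomposition.

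First I would verify that $\Gamma$ contains an induced square. Since $W_\Gamma$ is 1--ended and quasiisometric to some RAAG $A_\Delta$, that RAAG is also 1--ended. Hyperbolic RAAGs are free, hence have $0$ or $\infty$ ends, so $A_\Delta$ is not hyperbolic; by quasiisometry invariance neither is $W_\Gamma$, and this forces $\Gamma$ to contain an induced square. Thus $\Gamma$ itself is a square complete subgraph containing an induced square, and I can choose $\Gamma_0\subseteq\Gamma$ to be a minimal such subgraph; this $\Gamma_0$ is minsquare by definition.

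Next I would apply \fullref{morsesquarecomplete} to conclude that $W_{\Gamma_0}$ is a Morse subgroup of $W_\Gamma$. The induced square in $\Gamma_0$ provides an isometrically embedded special subgroup isomorphic to $D_\infty\times D_\infty$, which is quasiisometric to $\mathbb{Z}^2$. Transporting across the quasiisometry $W_\Gamma\to A_\Delta$, the image of $W_{\Gamma_0}$ is a Morse subset of the 1--ended RAAG $A_\Delta$ containing a quasiisometrically embedded $\mathbb{Z}^2$. \fullref{Morsehyperbolic} says this image is either coarsely dense or quasiisometric to a finite-valence tree; since a tree admits no quasiisometrically embedded $\mathbb{Z}^2$, the second alternative is ruled out, so the image is coarsely dense. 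Pulling back through the quasiisometry, $W_{\Gamma_0}$ is coarsely dense in $W_\Gamma$, hence has finite index.

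Finally I would translate the finite-index conclusion into the claimed graph structure. If some $v\in\Gamma\setminus\Gamma_0$ were non-adjacent to some $w\in\Gamma_0$, then the retraction $W_\Gamma\onto W_{\{v,w\}}=D_\infty$ that kills all other generators would send $W_{\Gamma_0}$ into the finite subgroup $\langle w\rangle$, contradicting finite index. Hence $\Gamma=(\Gamma\setminus\Gamma_0)\join\Gamma_0$, so $W_\Gamma=W_{\Gamma\setminus\Gamma_0}\times W_{\Gamma_0}$, and finite index forces $W_{\Gamma\setminus\Gamma_0}$ to be finite; that is, $K:=\Gamma\setminus\Gamma_0$ is a clique. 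The step I expect to be most delicate is the tree-versus-$\mathbb{Z}^2$ dichotomy applied to the image Morse subset, but it goes through because both Morseness and the presence of a quasiisometrically embedded $\mathbb{Z}^2$ are preserved by the quasiisometry.
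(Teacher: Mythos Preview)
Your proof is correct and follows essentially the same approach as the paper: both use that a minsquare subgraph gives a Morse special subgroup, then invoke \fullref{Morsehyperbolic} to rule out the tree alternative via the embedded $\mathbb{Z}^2$. The only difference is cosmetic: the paper first strips off cone vertices and then argues by contradiction that no proper minsquare subgraph can exist, whereas you pick a minsquare $\Gamma_0$ directly, deduce finite index from coarse density, and then use the retraction to $D_\infty$ to recover the join structure---this last step being exactly the statement that a special subgroup has finite index iff its complement consists of cone vertices.
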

\begin{proof}
  Removal of the clique factor of the join is passage to a finite-index subgroup. The resulting RACG is still 1--ended and
  quasiisometric to a RAAG, so we may assume the presentation graph
  has no cone vertices.

  A 1--ended RAAG contains $\mathbb{Z}^2$, so it  is not hyperbolic.
  Thus, the presentation graph of the RACG contains an induced
  square.
  Given a square, the smallest square complete subgraph containing it
  is a minsquare subgraph, so such subgraphs exist. 
  Suppose the presentation graph contains a proper subgraph that is
  minsquare. 
  Its complement does not consist of cone vertices, since there are
  none, so the special subgroup defined by the subgraph is of infinite
  index and is Morse and 1--ended.
  Its image under the quasiisometry to the RAAG is a Morse subset that
  is neither coarsely dense nor quasiisometric to a tree, contradicting
  \fullref{Morsehyperbolic}.
  Thus, the only minsquare subgraph is the presentation graph itself. 
\end{proof}

\subsection{Maximal product regions}\label{sec:oh}

This section recapitulates the setup for a theorem of Oh \cite{Oh22},
who builds on work of Haglund and Wise \cite{MR2377497} and
Huang \cite{Hua17}.
\begin{definition}\label{def:weaklyspecial}
 A \emph{weakly special square complex} is a non-positively curved
 square complex $X$ such that no wall of $X$ self-osculates or
 self-intersects. 
\end{definition}
Huang \cite[Section~5.3]{Hua17} considers two families of examples of
compact weakly special cube complexes: the Salvetti complex of a
RAAG and the 
\emph{commutator complex}\footnote{Davis \cite{Dav24} calls
  the complex $P_\Gamma$ the \emph{polyhedral
    product of intervals}.} $P_\Gamma$  of a RACG
$W_\Gamma$.
The commutator complex  is a standard construction \cite{MR1747268,davisbook,Dav24} of
a cube complex whose fundamental group is the commutator subgroup of
$W_\Gamma$, so is a finite-index normal subgroup of $W_\Gamma$, and whose universal cover is the Davis complex
$\Sigma_\Gamma$.

Huang asserts that a compact weakly special cube complex has a finite
cover in which walls are 2--sided.
In particular, the universal cover has 2--sided walls.
Oh makes a standing assumption that we have already passed to such a
finite cover.
The examples that we care about in this paper, the Salvetti complex
and commutator complex, already have 2--sided walls, but to 
 accurately quote Oh, we make an auxiliary
definition:
\begin{definition}\label{def:stronglyweaklyspecial}
  A compact cube complex is \emph{weakly special$^*$} if it is weakly
  special with 2--sided walls.
\end{definition}
\begin{remark}
  Compared to  \emph{special}
  \cite[Def.~3.1]{MR2377497},
  \fullref{def:stronglyweaklyspecial} legalizes inter-osculation.
\end{remark}
\begin{definition}\label{def:standard}
  A \emph{standard graph} is a topologically nontrivial graph without
  leaves.
  A \emph{standard product subcomplex} of a compact weakly special$^*$
  square complex $X$ is the image of a local isometry from the
  product of two standard graphs into $X$.
  A \emph{standard product subcomplex/region} of $\tilde{X}$ is a product
  subcomplex of $\tilde{X}$ that is a lift to $\tilde{X}$ by the
  universal covering map of a standard product subcomplex of $X$.
\end{definition}

The following lemma of Oh allows us to work directly with standard product regions
in $\tilde{X}$ without explicitly demonstrating that they are lifts of
standard product subcomplexes of $X$.

\begin{lemma}\cite[Lemma~2.8]{Oh22} \label{Oh:standard}
Every product subcomplex of $\tilde{X}$ that is a product of infinite trees without leaves is a
standard product region.
\end{lemma}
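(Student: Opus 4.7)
The plan is to push the product subcomplex $T_1 \times T_2$ down to $X$ via the universal covering map $p \from \tilde X \to X$ and show its image is a standard product subcomplex whose lift is exactly $T_1 \times T_2$. To that end, let $H \le \pi_1(X)$ be the setwise stabilizer of $T_1 \times T_2$ in $\tilde X$. Since $X$ is compact and $T_1 \times T_2$ is a CAT(0) subcomplex of $\tilde X$ on which $H$ acts freely by cubical isometries, a standard compactness argument shows that $H$ acts cocompactly on $T_1 \times T_2$; in particular, $Z := p(T_1 \times T_2)$ is a compact subcomplex of $X$.

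The next step, which I expect to be the crux, is to show that the $H$--action respects the product decomposition. Each edge of $T_1 \times T_2$ is parallel to exactly one of the two factor directions, and its image in $X$ is dual to a wall. Edges in the $T_1$--direction and $T_2$--direction within a given square give intersecting walls in $X$. By the weakly special$^*$ hypothesis (no wall self-intersects and walls are two-sided), the wall-types in $X$ carried by $T_1$--edges are disjoint from those carried by $T_2$--edges, since otherwise some wall of $X$ would self-intersect. Consequently, every $h \in H$ maps $T_i$--edges to $T_i$--edges for $i=1,2$; combined with $h$ being a cubical isometry, this implies $h = (h_1, h_2) \in \mathrm{Aut}(T_1) \times \mathrm{Aut}(T_2)$. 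Let $H_i$ be the projection of $H$ to $\mathrm{Aut}(T_i)$, and set $Y_i := T_i / H_i$.

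Now I would verify that $Y_1$ and $Y_2$ are standard graphs and that the quotient realizes the desired local isometry. Cocompactness of $H \act T_1 \times T_2$ implies each $Y_i$ is finite. Because $H_i \le \pi_1(X)$ acts freely on $\tilde X$, it acts freely on $T_i$, so $T_i \to Y_i$ is a covering. A leaf of $Y_i$ would lift to leaves of $T_i$, which has none, so $Y_i$ is leafless; since $T_i$ is an infinite cover of the finite graph $Y_i$, the group $H_i$ (hence $\pi_1(Y_i)$) is nontrivial, so $Y_i$ is topologically nontrivial. Thus each $Y_i$ is a standard graph in the sense of \fullref{def:standard}.

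Finally, the quotient map $T_1 \times T_2 \to Y_1 \times Y_2$ is a covering, and $p|_{T_1\times T_2}$ factors through it as $p|_{T_1\times T_2} = \bar p \circ \mathrm{quot}$ for a well-defined map $\bar p \from Y_1 \times Y_2 \to X$. Since $p$ is a local isometry, so is $\bar p$, and its image is the compact subcomplex $Z$. Hence $Z$ is a standard product subcomplex of $X$, and $T_1 \times T_2$ is a lift of $\bar p(Y_1 \times Y_2)$ to $\tilde X$ (the tree $T_i$ is the universal cover of $Y_i$, so $\widetilde{Y_1 \times Y_2} \to \tilde X$ has image $T_1 \times T_2$). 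The principal difficulty, as noted, is the splitting of $H$ as a subgroup of $\mathrm{Aut}(T_1) \times \mathrm{Aut}(T_2)$, which is exactly where the weakly special$^*$ hypothesis is indispensable.
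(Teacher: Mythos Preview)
The paper does not give its own proof of this lemma; it is quoted directly from \cite[Lemma~2.8]{Oh22}, so there is no argument in the present paper to compare yours against.

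Your overall strategy is the natural one, but there is a genuine gap in the last step. You set $Y_i=T_i/H_i$ where $H_i$ is the \emph{projection} of $H$ to $\mathrm{Aut}(T_i)$, and then assert that $p|_{T_1\times T_2}$ factors through $T_1\times T_2\to Y_1\times Y_2$. But that quotient is by $H_1\times H_2$, which can strictly contain $H$, and elements of $(H_1\times H_2)\setminus H$ need not be deck transformations, so $\bar p$ need not be well defined. For a concrete instance, let $Y$ be a $4$--cycle and $X=(Y\times Y)/\langle(\sigma,\sigma)\rangle$ with $\sigma$ the half-turn; then $X$ is weakly special$^*$, $\tilde X=\mathbb{R}^2$, $H=\pi_1(X)=\langle(4,0),(2,2)\rangle$, while $H_1\times H_2=2\mathbb{Z}\times2\mathbb{Z}\supsetneq H$, and indeed $p$ does not descend to $(\mathbb{R}/2\mathbb{Z})^2$. (Relatedly, the line ``$H_i\le\pi_1(X)$ acts freely on $\tilde X$'' is a category error: $H_i$ is a quotient of $H$, not a subgroup of $\pi_1(X)$. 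Freeness of $H_i$ on $T_i$ is true, but its proof uses no-self-osculation and two-sidedness, not just freeness of $H$.) The repair is to replace $H_i$ by the kernel $K_i=\{h_i:(h_i,1)\in H\}$ of the other projection. Then $K_1\times K_2\le H$, so $p$ genuinely factors through $(T_1/K_1)\times(T_2/K_2)$; cocompactness of $H$ on $T_1\times T_2$ together with freeness of $H_{3-i}$ on $T_{3-i}$ forces each $K_i$ to act cocompactly (hence nontrivially) on $T_i$, so $T_i/K_i$ is a standard graph and the rest of your argument goes through.
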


\begin{definition}
  The \emph{intersection graph}, or  \emph{maximal standard product region graph (MPRG)}, $\mprg(X)$ of a compact, weakly
  special$^*$ square complex $X$ is the graph whose vertices are maximal
  standard product regions in $\tilde X$ such that two vertices are
  connected by an edge if the corresponding product regions intersect in a standard product region. 
\end{definition}

\begin{theorem}[{\cite[Corollary~3.3]{Oh22}}]\label{Oh}
For each $L$ and $A$ there is $C$ such that if  $X$ and $Y$ are compact, weakly special$^*$ square complexes and
 $\phi\from \tilde X\to \tilde Y$ is an $(L,A)$--quasiisometry between
 their universal covers then $\phi$ induces a bijection $\phi_*$ between
 maximal standard product subcomplexes of $\tilde X$ and $\tilde Y$
 such that for each maximal standard product subcomplex $P$ of $\tilde X$, $d_{Haus}(\phi(P),\phi_*(P))\leq C$. It follows that $P$
 and $\phi_*(P)$ are quasiisometric. 
\end{theorem}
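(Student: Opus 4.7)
The plan is to characterize maximal standard product regions as coarse-geometric objects and then exploit their invariance under quasiisometries. Since both $\tilde X$ and $\tilde Y$ are 2-dimensional CAT(0) cube complexes, the correct coarse feature is the top-dimensional quasiflat: a quasiisometrically embedded copy of $\mathbb{R}^2$. Each maximal standard product region $P\subset\tilde X$ contains an abundance of standard flats (products of bi-infinite geodesics in its two factor trees), all of which are uniform quasiflats.

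First I would establish a quasiflat theorem for weakly special$^*$ square complexes, in the spirit of Huang's theorem for RAAGs: every top-dimensional $(L,A)$--quasiflat in $\tilde X$ is at Hausdorff distance at most $C_0(L,A)$ from a finite union of standard flats, and these standard flats all lie in a uniformly bounded number of maximal standard product regions with controlled overlap. Combined with \fullref{Oh:standard}, this lets me recognize each maximal standard product region $P$ coarsely: it is the ``coarse closure'' of any one of its standard flats under the relation that chains of overlapping standard flats stay within a uniform neighborhood of $P$.

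Now fix $\phi$ and a maximal standard product region $P\subset \tilde X$. Choose a standard flat $F\subset P$; then $\phi(F)$ is a uniform quasiflat in $\tilde Y$, so by the quasiflat theorem it lies in a uniform neighborhood of standard flats contained in maximal product regions of $\tilde Y$. I would argue that $\phi(F)$ lies uniformly close to a single such maximal region $P'$: any decomposition into two or more would, after pulling back by a coarse inverse $\bar\phi$ and again invoking the quasiflat theorem, force $F$ to decompose in $\tilde X$, contradicting that $F$ lies in the single maximal $P$. Set $\phi_*(P):=P'$. Well-definedness (independence of $F$) follows because any two standard flats in $P$ are joined by a chain of overlapping standard flats still inside $P$, and chains with uniform overlap are preserved under $\phi$ up to a uniform error, so their $\phi$-images cannot jump between distinct maximal product regions of $\tilde Y$. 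Iterating this argument over a coarsely dense family of standard flats in $P$ yields the uniform Hausdorff bound $d_{Haus}(\phi(P),\phi_*(P))\leq C$.

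Applying the same construction to $\bar\phi$ gives a map in the other direction, and the compositions are coarsely the identity on maximal standard product regions, so $\phi_*$ is a bijection. Finally, since $\phi|_P$ is an $(L,A)$--quasiisometric embedding whose image is $C$--Hausdorff close to $\phi_*(P)$, composition with the nearest-point projection $\phi_*(P)\to\phi(P)$ exhibits $P$ and $\phi_*(P)$ as quasiisometric via \fullref{inversecoarselipimpliesqi}. The main obstacle is the quasiflat theorem in step one, and in particular the upgrade from ``coarsely a finite union of standard flats'' to ``coarsely a single maximal product region''; this is where the weakly special$^*$ hypothesis is essential, since it rules out pathological gluings of standard flats from distinct maximal product regions into a single quasiflat that would otherwise break the bijection.
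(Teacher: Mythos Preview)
The paper does not give its own proof of this statement: it is quoted verbatim as a result of Oh \cite[Corollary~3.3]{Oh22}, with no argument supplied beyond the citation. So there is no ``paper's own proof'' to compare against.

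That said, your sketch is broadly in the right spirit. Oh's argument, extending Huang's work on RAAGs, does rest on a top-dimensional quasiflat theorem in this setting: quasiflats are Hausdorff-close to finite unions of standard flats, and one then packages these into maximal standard product regions. Your outline of defining $\phi_*(P)$ via the image of a standard flat, showing independence of the choice via chains of overlapping flats, and inverting with $\bar\phi$ is the correct high-level strategy. The step you flag as the main obstacle---upgrading from ``finite union of standard flats'' to ``single maximal product region''---is indeed where the work lies, and Oh handles it through a more careful analysis of how standard flats in the image assemble, rather than the pull-back contradiction you describe; your version would need more justification to rule out that $\phi(F)$ genuinely spreads across two maximal regions in a way compatible with $F$ sitting in one. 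But since the paper simply cites the result, there is nothing further to reconcile here.
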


\begin{corollary}\label{mpgqiinvariant}
  The maximal standard product region graph of a compact weakly special$^*$
  cube complex can be decorated by adding to each vertex $v$ the
  quasiisometry type of the maximal product region corresponding to
  $v$. 
 A quasiisometry between universal covers of compact, weakly special$^*$
 square complexes induces an isomorphism of their maximal standard product
 regions graphs that respects these decorations. 
\end{corollary}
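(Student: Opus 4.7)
The first statement is a matter of well-definedness: each vertex $v$ of $\mprg(X)$ corresponds canonically to a maximal standard product region of $\tilde{X}$, so its quasiisometry type is a well-defined decoration.

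For the second statement, let $\phi\from\tilde X\to\tilde Y$ be an $(L,A)$--quasiisometry. \fullref{Oh} furnishes a bijection $\phi_*$ between maximal standard product regions of $\tilde{X}$ and $\tilde{Y}$ such that $d_{\mathrm{Haus}}(\phi(P),\phi_*(P))\le C$, and it also asserts that $P$ and $\phi_*(P)$ are quasiisometric. The latter is precisely what it means for $\phi_*$ to preserve the decoration, so the remaining task is to verify that $\phi_*$ preserves adjacency in both directions.

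Suppose $P$ and $Q$ are adjacent in $\mprg(X)$, so $P\cap Q$ is a standard product region, say a product $T_1\times T_2$ of two non-trivial trees without leaves. In particular $P\cap Q$ contains a product of two bi-infinite geodesics, and the coarse intersection $P\stackrel{c}{\cap}Q$ exists and is represented by $P\cap Q$. By \fullref{qi_preserves_coarse_intersection} together with the Hausdorff bound $d_{\mathrm{Haus}}(\phi(P),\phi_*(P))\le C$, the coarse intersection $\phi_*(P)\stackrel{c}{\cap}\phi_*(Q)$ exists and is coarsely equivalent to $\phi(P\cap Q)$, which is a quasiisometrically embedded product of two trees. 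Because $\phi_*(P)$ and $\phi_*(Q)$ are convex subcomplexes of the CAT(0) cube complex $\tilde Y$, their set-theoretic intersection is itself a convex subcomplex; the coarse product-of-two-trees inside forces this convex subcomplex to be unbounded in two transverse combinatorial directions, whence \fullref{Oh:standard} recognizes it as a standard product region. Thus $\phi_*(P)$ and $\phi_*(Q)$ are adjacent in $\mprg(Y)$. The reverse implication follows by running the same argument for a quasi-inverse $\bar\phi$, whose induced $\bar\phi_*$ is inverse to $\phi_*$ by the bijection part of \fullref{Oh}.

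The most delicate step, and the one I would scrutinize carefully in the full writeup, is the passage from ``$\phi_*(P)\cap\phi_*(Q)$ coarsely contains a product of two trees'' to ``the actual intersection literally contains a product of two infinite trees without leaves'' so that \fullref{Oh:standard} applies. The standard way to bridge this gap is to work one factor at a time: inside each of $\phi_*(P)$ and $\phi_*(Q)$, the image of a factor axis of $P\cap Q$ is a combinatorial quasi-line, and in a CAT(0) cube complex a convex subcomplex containing a quasi-line in its coarse neighborhood already contains a combinatorial geodesic line parallel to it. Performing this straightening in each of the two factor directions inside the convex intersection $\phi_*(P)\cap\phi_*(Q)$ produces the required product of two leafless infinite trees.
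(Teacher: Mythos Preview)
The paper offers no proof of this corollary, treating it as immediate from \fullref{Oh} (and presumably from the fuller development in Oh's paper). Your argument supplies the missing edge-preservation step, and the overall strategy---track the coarse intersection under $\phi$ and recognise a standard product region on the other side---is the right one.

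There is one genuine gap. You assert that ``their set-theoretic intersection is itself a convex subcomplex'' and then argue inside it, but you have not shown $\phi_*(P)\cap\phi_*(Q)\neq\emptyset$. The coarse intersection $\phi_*(P)\cint\phi_*(Q)$ is represented by the projection $\pi_{\phi_*(P)}(\phi_*(Q))$, and this coincides with the set-theoretic intersection only when the two subcomplexes already meet; two disjoint parallel flats show that a large projection does not force nonempty intersection. Your straightening argument in the final paragraph, and indeed the phrase ``$\phi_*(P)\cap\phi_*(Q)$ coarsely contains a product of two trees'', already presuppose this.

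The fix uses that $\tilde Y$ is a \emph{square} complex. Since the coarse intersection contains a $2$--quasiflat, $\pi_{\phi_*(P)}(\phi_*(Q))$ is not coarsely a tree; being a convex subcomplex of the product region $\phi_*(P)$, it is itself a product $S_1\times S_2$ of convex subtrees, and hence contains a square. By \fullref{bridge} the two walls dual to that square traverse the bridge $\phi_*(P)\bridge\phi_*(Q)$ and so cross every wall transecting it; if $d(\phi_*(P),\phi_*(Q))>0$ this produces three pairwise-crossing walls and a $3$--cube, contradicting $2$--dimensionality. Thus the intersection is nonempty and equals $S_1\times S_2$. The $2$--quasiflat now forces each $S_i$ to contain a bi-infinite geodesic, and passing to leafless cores yields a product of infinite leafless trees to which \fullref{Oh:standard} applies. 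This makes your ``straightening in each factor'' precise and closes the gap.
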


\begin{definition}\label{def:ric_weakly_special}
  The \emph{reduced intersection graph} $\ric(X)$ of a compact, weakly special$^*$
  square complex $X$ is the graph whose vertices are maximal standard
  product subcomplexes of $X$ such that two vertices are connected by
  an edge if the corresponding product subcomplexes intersect in a
  standard product subcomplex. 
\end{definition}

We will work with $\mprg(X)$ and $\ric(X)$ as graphs, but actually
these graphs are the 1--skeleta of higher dimensional complexes that 
Oh calls the \emph{intersection
  complex} and \emph{reduced intersection complex}, respectively.
Higher dimensional cells are made by filling in a simplex whenever there is a
clique of maximal standard product subcomplexes all containing a common standard
product subcomplex. 

\begin{theorem}[{\cite[Theorem~3.9]{Oh22}}]\label{ric_is_the_fundamental_domain}
  If $X$ is a compact, weakly special$^*$ square complex, $\mprg(X)$ is
  its intersection complex, and $\ric(X)$ is its reduced
  intersection complex, then the $\pi_1(X)$ action on $\tilde X$ by
  deck transformations 
  induces an action on $\mprg(X)$ with
  fundamental domain isomorphic to $\ric(X)$.
  Furthermore, this isomorphism is compatible with the definitions of
  $\mprg(X)$ and $\ric(X)$ in the sense that if $v\in\ric(X)$ and
  $\tilde{v}$ is the corresponding vertex in the fundamental domain of
  $\mprg(X)$ then the maximal standard product region of $\tilde X$
  corresponding to $\tilde v$ is a lift to $\tilde X$ of the maximal
  standard product
  subcomplex corresponding to $v$ in $X$. 
\end{theorem}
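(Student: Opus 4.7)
The plan is to construct a $\pi_1(X)$-equivariant projection $\mprg(X) \to \ric(X)$ induced by the universal covering $p\from \tilde X \to X$, identify $\ric(X)$ with the quotient, and then read off a fundamental domain from any section.

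First I would observe that $\pi_1(X)$ acts on $\mprg(X)$ by simplicial automorphisms. Deck transformations are cellular isometries of $\tilde X$, so they permute the connected components of preimages of standard product subcomplexes of $X$; in particular they permute maximal standard product regions. They also preserve the subcomplex-intersection relation that defines the higher cells of $\mprg(X)$, so the action is by automorphisms of the whole complex.

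Next I would define $p_*\from \mprg(X)\to\ric(X)$ at the vertex level by $\tilde v \mapsto p(\tilde v)$. By the definition of a standard product region, each $\tilde v$ is a connected component of the preimage of a unique maximal standard product subcomplex of $X$, so $p_*$ is well defined on vertices and is constant on $\pi_1(X)$-orbits. To extend to higher-dimensional cells, I would check that if vertices $\tilde v_0,\dots,\tilde v_k$ of $\mprg(X)$ span a simplex, meaning they share a common standard product region $\tilde w \subset \bigcap_i \tilde v_i$, then $p(\tilde w)$ is a standard product subcomplex of $X$ contained in every $p(\tilde v_i)$, so the $p(\tilde v_i)$ span a simplex of $\ric(X)$. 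Thus $p_*$ factors through a map $\mprg(X)/\pi_1(X) \to \ric(X)$.

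The core step is showing this induced map is an isomorphism. For injectivity on vertices, two lifts $\tilde v_1,\tilde v_2$ of a common $v \in \ric(X)$ are connected components of $p^{-1}(v)$, and these are permuted transitively by the deck group. For surjectivity onto simplices, given $v_0,\dots,v_k \in \ric(X)$ sharing a common standard product subcomplex $w$, I would pick any lift $\tilde w$ of $w$ to $\tilde X$: by \fullref{Oh:standard} this is a standard product region, and for each $i$ the connected component of $p^{-1}(v_i)$ containing $\tilde w$ is a maximal standard product region $\tilde v_i$ lifting $v_i$, so $\{\tilde v_0,\dots,\tilde v_k\}$ spans a simplex of $\mprg(X)$ projecting to the given one. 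For injectivity on higher simplices, a fixed choice of common subregion $\tilde w$ determines each $\tilde v_i$ uniquely, and the remaining ambiguity is absorbed by the transitivity of the deck group on lifts of $w$.

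Finally, the fundamental domain is produced by any section $s\from \ric(X)\to\mprg(X)$ of $p_*$ on vertices, extended simplicially; the compatibility claim of the theorem is the tautology $p(s(v))=v$. The most delicate point, and what I expect to be the main obstacle to make rigorous, is verifying that each lift $\tilde v_i$ remains \emph{maximal} as a product region in $\tilde X$: maximality in the compact $X$ does not obviously descend to the universal cover without knowing that an enlargement of $\tilde v_i$ inside $\tilde X$ would have to be a product of infinite leafless trees and therefore itself standard, which is precisely the content of \fullref{Oh:standard}.
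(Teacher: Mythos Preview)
The paper does not give its own proof of this statement: it is quoted verbatim as \cite[Theorem~3.9]{Oh22} and used as a black box, with no proof environment following the theorem. There is therefore nothing in the paper to compare your proposal against; your sketch is a plausible reconstruction of how such a result is established, but the authors simply defer to Oh's argument rather than reproducing or reproving it.
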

\begin{lemma}\label{weakconvexity}
  With notation as in the previous theorem, if
  $\gamma=e_0,\dots,e_{n-1}$ is an edge path in $\mprg(X)$ then there are
  $g_i\in\pi_1(X)$ such that $\gamma'=g_0e_0,\dots,g_{n-1}e_{n-1}$ is a path
  in $\ric(X)$.
  Furthermore, vertices and edges of $\gamma$ contained in $\ric(X)$
  are invariant under this projection. 
\end{lemma}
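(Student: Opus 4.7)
The plan is to apply the simplicial quotient map $q\from \mprg(X) \to \mprg(X)/\pi_1(X)$ to the path $\gamma$; by \fullref{ric_is_the_fundamental_domain} this quotient is naturally identified with $\ric(X)$, realized as a fundamental domain inside $\mprg(X)$. First I would observe that the $\pi_1(X)$--action on $\mprg(X)$ is simplicial, since group elements permute maximal standard product regions and preserve the intersection relations defining edges. Hence $q$ is a graph homomorphism, and it sends the edge path $\gamma$ to an edge path $q(e_0),\dots,q(e_{n-1})$ in $\ric(X)$.

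To extract the desired group elements, I would unpack the fundamental domain description: for each edge $e_i$ of $\mprg(X)$, its $\pi_1(X)$--orbit meets $\ric(X)$ in exactly one edge, namely $q(e_i)$. Thus there exists $g_i \in \pi_1(X)$ with $g_i e_i = q(e_i)$. The concatenation condition for $\gamma'$ is then automatic: if $e_i$ and $e_{i+1}$ share the common vertex $v = \tau(e_i) = \iota(e_{i+1})$, then because $q$ is a graph homomorphism both $g_i e_i$ and $g_{i+1} e_{i+1}$ are incident to the single vertex $q(v)$ in $\ric(X)$, so consecutive edges of $\gamma'$ are properly joined.

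For the invariance claim, any edge $e_i$ already lying in $\ric(X)$ is the unique orbit representative in its $\pi_1(X)$--orbit that belongs to $\ric(X)$, so $q(e_i)=e_i$ and one may take $g_i=1$; the same reasoning handles vertices of $\gamma$ that lie in $\ric(X)$. There is no serious obstacle here---the lemma is essentially a reformulation of the fundamental domain identification in \fullref{ric_is_the_fundamental_domain}. The only minor subtlety is ensuring the $\pi_1(X)$--action does not invert any edge of $\mprg(X)$, which would complicate the quotient as a simplicial map, but this is implicit in the assertion of \fullref{ric_is_the_fundamental_domain} that the quotient is simplicially isomorphic to $\ric(X)$.
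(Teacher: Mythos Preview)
Your proposal is correct and takes essentially the same approach as the paper. Both arguments rest on the fundamental domain property from \fullref{ric_is_the_fundamental_domain}: each edge of $\mprg(X)$ has a unique translate in $\ric(X)$, and two vertices of $\ric(X)$ in the same orbit must coincide, so translating each $e_i$ into $\ric(X)$ automatically makes the shared endpoints agree. You package this via the quotient map $q$, whereas the paper unwinds it directly (first tracing an edge $\tilde e$ back to a standard product subcomplex of $X$ to find its representative, then matching endpoints by the orbit argument), but the content is the same.
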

\begin{proof}
  By definition, an edge $\tilde e$ in $\mprg(X)$ represents a
  standard product region $\tilde P_{\tilde e}$ of $\tilde X$ that is the intersection of two maximal
  standard product regions.
  Standard product regions of $\tilde X$ are, by definition, lifts to
  $\tilde X$ of standard product subcomplexes of $X$, so there is a
  standard product subcomplex $P_e$ of $X$ and corresponding edge $e$
  of $\ric(X)$ such that the quotient by the $\pi_1(X)$--action sends
  $\tilde P_{\tilde e}$ to $P_e$.
  Thus, identifying $\ric(X)$ with the fundamental domain of
  $\pi_1(X)\backslash \mprg(X)$ as in
  \fullref{ric_is_the_fundamental_domain}, for each of the edges $e_i=v_i\edge v_{i+1}$ of $\gamma$ 
  there exists $g_i\in\pi_1(X)$ such that $g_ie_i\in\ric(X)$.
  Then $g_iv_i$ and $g_{i+1}v_i$ are vertices of $\ric(X)$ in the same
  $\pi_1(X)$--orbit, so they are the same vertex, so $\gamma'$ is a path.
\end{proof}

\begin{proposition}[Visibility of reduced intersection graphs for RAAGs]\label{icuricraag}
  Let $\Delta$ be a connected, triangle-free graph. The reduced
  intersection graph $\ric_\Delta$ of the Salvetti complex of $A_\Delta$ is
  isomorphic to the graph obtained from $\Delta$ by taking a vertex
  for each maximal join subgraph and connecting them by an edge if
  the join subgraphs have an edge in common. 
\end{proposition}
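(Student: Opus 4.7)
The plan is to establish the claimed isomorphism by matching vertices and then edges: the vertex correspondence sends a maximal join subgraph $J\subseteq \Delta$ to the special subcomplex $\Sigma_J\subseteq\Sigma_\Delta$, and the edge correspondence says that two such $\Sigma_{J_1},\Sigma_{J_2}$ intersect in a standard product subcomplex exactly when $J_1$ and $J_2$ share an edge of $\Delta$.

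For the vertex bijection, the easy direction is that a nonempty join subgraph $A\join B\subseteq\Delta$ produces $\Sigma_{A\join B}=\Sigma_A\times\Sigma_B$, where each factor is a wedge of circles (since triangle-freeness forces $A,B$ to be anticliques) and so a standard graph, making $\Sigma_{A\join B}$ a standard product subcomplex. For the converse, given a local isometry $\phi\from G_1\times G_2\to\Sigma_\Delta$ with $G_i$ standard, I collect the labels of $G_1$- and $G_2$-edges into sets $A,B\subseteq V(\Delta)$. The full-link condition at any vertex $(u,v)$ forces the local label sets $A_u,B_v$ to be anticliques of $\Delta$ with $A_u\join B_v\subseteq \Delta$, which globalizes to $A\join B\subseteq\Delta$ by choosing appropriate vertex pairs $(u,v)$. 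Now $B$ is nonempty because $G_2$ is topologically nontrivial and has no leaves; fixing any $b\in B$ and applying the join condition to arbitrary $a,a'\in A$ yields the triangle $a,a',b$ unless $a$ and $a'$ are nonadjacent, so $A$ is an anticlique by triangle-freeness, and symmetrically $B$ is an anticlique. Hence the image of $\phi$ lies in $\Sigma_{A\join B}$ for the induced join $A\join B$, and maximality transfers between the two combinatorial descriptions.

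For the edge bijection, a direct computation gives $\Sigma_{J_1}\cap\Sigma_{J_2}=\Sigma_K$, where $K$ is the induced subgraph of $\Delta$ on $V(J_1)\cap V(J_2)$, and by the vertex analysis this is a standard product subcomplex exactly when $K$ is a nontrivial join. The ``only if'' direction is immediate, since a nontrivial join contains a cross edge common to $J_1$ and $J_2$. For the ``if'' direction, write $J_i=A_i\join B_i$ and pick a shared edge $v\edge w$ with $v\in A_1,w\in B_1$; triangle-freeness places $v$ and $w$ on opposite sides of $J_2$ as well, so up to swapping we have $v\in A_2$ and $w\in B_2$. For any other $u\in V(K)$, the join condition in $J_1$ makes $u$ adjacent in $\Delta$ to at least one of $v,w$, and triangle-freeness forbids $u$ from being adjacent to both; hence this neighbor is unique, and running the same argument in $J_2$ places $u$ on the same side in both decompositions, yielding $V(K)=(A_1\cap A_2)\sqcup(B_1\cap B_2)$. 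Triangle-freeness makes each part an anticlique of $\Delta$, and the $J_i$-join conditions provide all cross edges, so $K=(A_1\cap A_2)\join(B_1\cap B_2)$ is a nontrivial join witnessed by $v$ and $w$.

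The main obstacle is the converse of the vertex bijection: ruling out exotic standard product subcomplexes not coming from join subgraphs. The approach above works at the level of a single vertex of $G_1\times G_2$ using the full-link condition, and then promotes this local data to a global join subgraph using the triangle-free hypothesis, which is much cleaner than tracking label variation along paths in $G_1\times G_2$.
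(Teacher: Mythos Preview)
Your proof is correct and follows essentially the same approach as the paper's: both identify standard product subcomplexes with join subgraphs via the label sets of the two factors, using triangle-freeness to force each factor to be an anticlique. The only differences are that you work directly in the compact Salvetti complex via local isometries while the paper works in the universal cover, and your edge-criterion argument (showing $J_1\cap J_2=(A_1\cap A_2)\join(B_1\cap B_2)$ by bipartitioning according to adjacency with the endpoints of the shared edge) spells out what the paper compresses into the one-liner ``edges are joins.''
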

\begin{proof}
  Edges of the universal cover $\Salvetti_\Delta$ of the Salvetti
  complex are colored by the corresponding vertex of $\Delta$. 
  Let $\Theta=\Theta_1\times \Theta_2$ be a product region of
  $\Salvetti_\Delta$.
Let $\Delta_i$ be the set of vertices  $v\in \Delta$ such that some edge
of $\Theta_i$ has color $v$.
Since $\Theta$ is a product, every edge in $\Theta_1$ spans a square with every
edge in $\Theta_2$, which means that $\Delta_1\join\Delta_2$ is a join
subgraph of $\Delta$.
Conversely, if $\Delta_1\join\Delta_2$ is a subgraph of $\Delta$, then
$\Sigma_{\Delta_1}\times\Sigma_{\Delta_2}$ is a product region of
$\Sigma_\Delta$, some translate of which contains $\Theta$.
If $\Delta'_1\join\Delta'_2$ is a strictly larger join subgraph
containing $\Delta_1\join\Delta_2$ then
$\Sigma_{\Delta_1'}\times\Sigma_{\Delta_2'}$ is a product region
strictly containing $\Sigma_{\Delta_1}\times\Sigma_{\Delta_2}$, hence,
up to translation, $\Theta$. 
Thus, $\Theta$ is a maximal product region if and only if it is a translate
of $\Sigma_{\Delta_1}\times\Sigma_{\Delta_2}$, such that
$\Delta_1\join\Delta_2$ is a maximal join in $\Delta$.

Edges in the product region graph correspond to having a common
product sub-region, which equates to two maximal join subgraphs of
$\Delta$ having a common join subgraph. Every join subgraph contains
an edge, and edges are joins, so it suffices to consider edges. 
\end{proof}

\begin{proposition}[Visibility of reduced intersection graphs for RACGs]\label{icuricracg}
  Let $\Gamma$ be a  triangle-free graph without separating cliques.
  The reduced intersection graph $\ric_\Gamma$ of the commutator complex $P_\Gamma$ of $\Gamma$ is
  isomorphic to the graph obtained from $\Gamma$ by taking a vertex
  for each maximal thick join subgraph, and connecting them by an edge if
  the join subgraphs have a square in common.

  Furthermore, the action of $W_\Gamma$ on the Davis complex
  $\Sigma_\Gamma=\widetilde{P_\Gamma}$ induces an action of $W_\Gamma$
  on the maximal standard product region graph
  $\mprg(W_\Gamma):=\mprg(P_\Gamma)$ that has a fundamental domain
  isomorphic to $\ric_\Gamma$.
\end{proposition}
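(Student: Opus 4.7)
The plan is to parallel the proof of \fullref{icuricraag} for RAAGs, with two key adaptations. First, in the RACG setting the special subgroup $W_{\Gamma'}$ is infinite only when $\Gamma'$ is incomplete, so we must restrict from joins to thick joins. Second, since $\Gamma$ is triangle-free, the minimal thick join is an induced square rather than an edge, so squares play the role that edges did in the RAAG case.

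First I would characterize product subcomplexes of $\Sigma_\Gamma$ using the coloring of edges by vertices of $\Gamma$. Given a product subcomplex $\Theta=\Theta_1\times\Theta_2$ of $\Sigma_\Gamma$, let $\Gamma_i$ be the set of vertices coloring some edge of $\Theta_i$. The product structure forces every edge in $\Theta_1$ to span a square with every edge in $\Theta_2$, so $\Gamma_1\join\Gamma_2$ is a join subgraph of $\Gamma$ and $\Theta$ lies in a translate of $\Sigma_{\Gamma_1}\times\Sigma_{\Gamma_2}$. Conversely, any join $\Gamma_1\join\Gamma_2\subset\Gamma$ gives such a product subcomplex. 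Because $\Gamma$ is triangle-free, both $\Gamma_i$ must be anticliques.

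Next I would invoke Oh's \fullref{Oh:standard} to identify standard product regions: $\Theta_1\times\Theta_2$ is standard exactly when each $\Theta_i$ is an infinite tree without leaves, which requires $W_{\Gamma_i}$ to be infinite. For an anticlique $\Gamma_i$, this means $|\Gamma_i|\geq 2$. Hence standard product regions of $\Sigma_\Gamma$ are precisely the translates of $\Sigma_{A\join B}$ for $A\join B$ a thick join of $\Gamma$, and maximality among standard product regions corresponds to maximality of the thick join subgraph. Two such regions meet in a standard product region exactly when the corresponding maximal thick joins share a common thick sub-join, which in the triangle-free case reduces to sharing a common induced square. This establishes the combinatorial description of $\ric_\Gamma$.

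For the ``furthermore'' statement, the $W_\Gamma$-action on $\Sigma_\Gamma$ is transitive on the translates of $\Sigma_{A\join B}$ for each thick join $A\join B$, so there is exactly one $W_\Gamma$-orbit of maximal standard product regions per maximal thick join of $\Gamma$. Because the action preserves the adjacency relation in $\mprg(P_\Gamma)$, the $W_\Gamma$-quotient inherits the combinatorial structure of $\ric_\Gamma$ just described. The main point needing care is that, for any two maximal thick joins sharing a common square, one can choose translates of the corresponding maximal product regions in $\Sigma_\Gamma$ that intersect in a standard product subregion; this follows from the fact that both can be chosen to contain a common translate of the sub-region determined by the shared square.
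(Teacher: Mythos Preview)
Your treatment of the first part is essentially the paper's own argument: parallel \fullref{icuricraag}, replace joins by thick joins, and observe that in a triangle-free graph the minimal thick join is a square.

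For the ``furthermore'' part, however, the paper takes a different and shorter route. Rather than analyzing the $W_\Gamma$--action directly, it invokes \fullref{ric_is_the_fundamental_domain}, which already gives $\ric_\Gamma$ as a fundamental domain for the $\pi_1(P_\Gamma)$--action on $\mprg(P_\Gamma)$. Since $W_\Gamma\supset\pi_1(P_\Gamma)$, the $W_\Gamma$--fundamental domain could a priori be strictly smaller; the only thing left to check is that no two vertices of the $\pi_1(P_\Gamma)$--fundamental domain lie in the same $W_\Gamma$--orbit. This follows in one line from the fact that $W_\Gamma\act\Sigma_\Gamma$ preserves the edge coloring, so two maximal product regions in the same $W_\Gamma$--orbit must have the same color set and hence correspond to the same maximal thick join.

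Your direct approach also works, but has a small gap as written. You argue that for any \emph{pair} of adjacent maximal thick joins one can choose translates of the corresponding regions that intersect, but to produce a fundamental domain you need a \emph{simultaneous} choice of one representative per orbit realizing all edges at once. This is easily fixed: take the maximal standard product regions containing the identity vertex~$1$. These are exactly the $\Sigma_J$ for $J$ a maximal thick join, and whenever $J_1$ and $J_2$ share a square $S$, both $\Sigma_{J_1}$ and $\Sigma_{J_2}$ contain $\Sigma_S\ni 1$, so the required edge is present. Note also that the injectivity claim ``exactly one $W_\Gamma$--orbit per maximal thick join'' relies on the same color-preservation fact the paper uses; you are implicitly using it but should make it explicit.
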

\begin{proof}
  The argument for identifying $\ric_\Gamma$ with the graph of thick joins in
  $\Gamma$ is the same as for \fullref{icuricraag}, except that
  product regions are supposed to be infinite in both factors, so we
  add the requirement that the join subgraphs are thick.

  The content of the second part is that
  \fullref{ric_is_the_fundamental_domain} tells us that $\ric_\Gamma$ is the
  fundamental domain for the action of $\pi_1(P_\Gamma)$ on
  $\Sigma_\Gamma$, and $W_\Gamma$ is a supergroup of
  $\pi_1(P_\Gamma)$, so a priori might have had smaller fundamental
  domain, but it does not, because $W_\Gamma\act\Sigma_\Gamma$
  preserves the edge coloring.  
\end{proof}
\begin{definition}\label{def:ric}
  If $\Upsilon$ is the presentation graph of a RAAG/RACG, let
  $\ric_\Upsilon$ be a choice of lift of the reduced intersection
  graph of the Salvetti complex/commutator complex to the maximal
  standard product region graph $\mprg_\Upsilon$, which gives a
  fundamental domain for the group action. 

  For $v\in\ric_\Upsilon$ let $J_v$ be the corresponding maximal
  (thick, in the RACG case) join subgraph of
  $\Upsilon$, as described in \fullref{icuricraag} and \fullref{icuricracg}.
\end{definition}

\subsection{Convex sets and projections in CAT(0) cube complexes}\label{sec:catzero}

Huang calls a subcomplex of the universal cover $\Salvetti_\Delta$ of
a RAAG $A_\Delta$ \emph{standard} if it is a translate of
$\Salvetti_{\Delta'}$ for some $\Delta'\subset \Delta$.
After fixing an identity vertex, thus identifying the 1--skeleton of
$\Salvetti_\Delta$ with the Cayley graph of $A_\Delta$, vertex sets of
standard subcomplexes are precisely cosets of special subgroups of
$A_\Delta$.
This description applies equally well to RACGs, so we can also speak
of standard subcomplexes of Davis complexes of RACGs. 
By \fullref{icuricraag} and \fullref{icuricracg}, this is consistent
with the `standard product subcomplex' terminology of Oh, in the sense
that standard product subcomplexes are examples of standard subcomplexes. 

In \fullref{sec:compliant_cycles} we need some results about coarse
intersections of standard subcomplexes.
These follow from some properties of projections to
convex sets in (finite dimensional) CAT(0) cube complexes.
We will work in the combinatorial metric: the metric on the vertex set
obtained by restricting the path metric on the 1--skeleton.

\begin{lemma}\label{gate}
  Let $X$ be a CAT(0) cube complex with its combinatorial metric, and
  let $Y$ be a convex subcomplex. 
 There is a Lipschitz map
$\pi_Y\from X\to Y$ sending each vertex $x\in X$ to the vertex of $Y$
that is closest to $x$.
The map $\pi_Y$ is a \emph{gate projection}: for all $x\in X$ and $y\in Y$ the vertex $\pi_Y(x)$ is on
a geodesic from $x$ to $y$.
\end{lemma}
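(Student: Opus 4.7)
The plan is to use the hyperplane (wall) structure of $X$. Vertices of $X$ correspond to consistent orientations of the walls of $X$ (a choice of one side of each wall, subject to the standard consistency and DCC conditions of the Sageev construction). Convexity of $Y$ is equivalent to the statement that every wall not crossing $Y$ (i.e., not dual to any edge of $Y$) has $Y$ lying entirely on one side; equivalently, the walls separating any two vertices of $Y$ are exactly those that cross $Y$.

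First I define $\pi_Y(x)$ to be the unique vertex of $Y$ that lies on the same side as $x$ of every wall crossing $Y$. To produce such a vertex, take the orientation that agrees with $x$ on walls crossing $Y$ and is forced toward $Y$ on walls not crossing $Y$. This orientation is consistent because $x$ and $Y$ each come from consistent orientations and the prescription simply copies the $x$--orientation on walls crossing $Y$ and the $Y$--orientation on the rest; by construction the resulting vertex of $X$ lies on the $Y$--side of every wall not crossing $Y$ and hence lies in $Y$. Uniqueness is immediate because distinct vertices of $Y$ must disagree on some wall crossing $Y$.

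Next I prove the gate property, from which the nearest--point statement follows. For any $y\in Y$, the set of walls separating $x$ from $y$ splits into two disjoint pieces: walls not crossing $Y$ that separate $x$ from $Y$ (which are exactly the walls separating $x$ from $\pi_Y(x)$), and walls crossing $Y$ that separate $\pi_Y(x)$ from $y$ (since $\pi_Y(x)$ lies on the $x$--side of every wall crossing $Y$). Counting walls gives $d(x,y)=d(x,\pi_Y(x))+d(\pi_Y(x),y)$, so $\pi_Y(x)$ is the unique closest vertex of $Y$ to $x$ and lies on a geodesic from $x$ to each $y\in Y$.

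Finally, the Lipschitz property (with constant $1$) is immediate: adjacent vertices $x,x'$ differ in the orientation of exactly one wall $H$, so $\pi_Y(x)$ and $\pi_Y(x')$ either coincide (when $H$ does not cross $Y$) or differ in exactly the orientation of $H$ (when $H$ does cross $Y$). The only delicate step is verifying that the prescribed orientation in the first paragraph really corresponds to a vertex, i.e.\ is consistent; everything after that is bookkeeping with walls.
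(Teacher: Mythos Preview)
Your argument is correct and is the standard wall/hyperplane proof of the existence of gate projections in CAT(0) cube complexes. Note, however, that the paper does not actually prove this lemma: it is stated in \fullref{sec:catzero} as background without proof (the surrounding discussion of walls, halfspaces, and the Bridge Lemma treats all of this as well known). So there is no ``paper's own proof'' to compare against; you have supplied a proof where the authors chose to cite the result as folklore.

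One small point: you flag the consistency of the prescribed orientation as ``the only delicate step'' but leave it essentially unargued. For completeness: the three cases (both walls cross $Y$; neither crosses $Y$; exactly one crosses $Y$) are each easy, the key in the mixed case being that a wall crossing $Y$ has vertices of $Y$ on both sides, so the $x$--side meets $Y$ and hence meets the $Y$--side of any wall not crossing $Y$. You should also remark that the orientation satisfies the descending chain condition (only finitely many chosen halfspaces miss any fixed vertex of $Y$), so that it really does correspond to a vertex of $X$ and not merely a point of the Roller boundary.
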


In a CAT(0) cube complex, consider the equivalence relation on edges generated by the condition that two edges are equivalent if they are opposite edges of some
square in the complex.
Dual to each equivalence class is a \emph{wall} (also called a \emph{hyperplane}), which can be thought of geometrically as the union of
midcube hyperplanes dual to these edges in the cubes containing them.
A wall $\wall$ separates the cube complex into two complementary
 sets of vertices, each of which spans a convex subcomplex, called \emph{halfspaces} and usually denoted $\wall^+$
and $\wall^-$, such that every combinatorial geodesic from $\wall^-$
to $\wall^+$ contains an edge in the equivalence class defining the
wall. 
The number of walls separating two vertices is equal to the
combinatorial distance between them.
Combinatorial geodesics cross each wall at most
once, and two combinatorial geodesics between the same points cross
the same set of walls.
Two walls \emph{cross} if there is a cube containing an edge dual to
each of them, or, equivalently, if all four possible intersections of
their halfspaces are nonempty. 

The following result is well known to experts. 
Similar results have been proved and reproved several
times in different settings.
In the CAT(0) metric, compare \cite[Lemma~2.3]{MR2421136}, \cite[Lemma~2.10]{Hua17}, also \cite{BehCha12}.
Chatterji, Fern\'os, and Iozzi \cite[Lemma~2.18]{ChaFerIoz16} prove a version of \fullref{bridge} in the
combinatorial metric, but only state it for halfspaces $Y$ and $Z$.
Full proofs of the more general statement appear in unpublished
sources \footnote{We thank Anthony Genevois for the references.}
\cite[Theorem~1.22]{Hag19}, \cite[Proposition~1.5.2]{Gen23}.
A proof can also be deduced from results of
Isbell on parallelism between gate projections in discrete median algebras \cite[cf.\ Proclamation~2.5 and the
Corollary to Theorem~5.12]{MR574784},
together with the well known equivalence between discrete median
algebras and 0-skeleta of CAT(0) cube complexes 
\cite{MR1624115,Rol98,MR1748966}.
\begin{proposition}\label{bridge}[Bridge Lemma]
  Let $X$ be a CAT(0) cube complex with its combinatorial metric and
  let $Y$ and $Z$ be convex subcomplexes.
  The \emph{bridge} $Y\bridge Z$, consisting of the subcomplex
  spanned by vertices that lie on some minimal length geodesic
  between $Y$ and $Z$, is a combinatorially convex subcomplex isomorphic
  to $\pi_Y(Z)\times [y,\pi_Z(y)]$, where $y$ is any vertex in
  $\pi_Y(Z)$ and $[y,\pi_Z(y)]$ is the subcomplex spanned by vertices that lie on a
  combinatorial geodesic from $y$ to $\pi_Z(y)$.

  The walls that meet $Y\bridge Z$  are partitioned into the set that
  traverse the bridge and the set that transect the bridge.
  A wall traverses the bridge if and only if meets both $\pi_Z(Y)$ and
  $\pi_Y(Z)$, which is true if and only if it meets both $Y$ and $Z$.
  A wall transects the bridge if and only if it separates $Y$ and $Z$,
  which is true if and only if its intersection with
  $Y\bridge Z$ separates $\pi_Y(Z)$ from $\pi_Z(Y)$ in $Y\bridge Z$.
  Every wall that traverses the bridge crosses every wall that
  transects the bridge, and vice versa. 
\end{proposition}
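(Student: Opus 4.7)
The plan is to approach everything through wall combinatorics. The fundamental tool is the characterization of the gate projection: $\pi_Y(x)$ is the unique vertex of $Y$ agreeing with $x$ on every wall that does not separate $x$ from $Y$, and a combinatorial geodesic from $x$ to $\pi_Y(x)$ crosses exactly the walls separating $x$ from $Y$. A standard consequence is that $d(Y,Z)$ equals the number of walls $\wall$ with $Y \subset \wall^-$ and $Z \subset \wall^+$ (or vice versa), and that a geodesic realizes $d(Y,Z)$ if and only if it starts on $\pi_Y(Z)$, ends on $\pi_Z(Y)$, and uses only walls separating $Y$ from $Z$.

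First I would prove that $\pi_Y$ restricts to an isomorphism $\pi_Z(Y) \to \pi_Y(Z)$, inverse to $\pi_Z$, and that the walls meeting $\pi_Y(Z)$ are exactly those meeting both $Y$ and $Z$. The forward direction is because a wall $\wall$ crossing $\pi_Y(Z) \subset Y$ must cross $Z$: otherwise all of $Z$ sits in $\wall^-$, so every $\pi_Y(z)$ lies in $\wall^-$, contradiction. Conversely, if $\wall$ crosses both $Y$ and $Z$, then for vertices $z_\pm \in Z \cap \wall^\pm$ the gate characterization forces $\pi_Y(z_\pm) \in \wall^\pm$, so $\wall$ meets $\pi_Y(Z)$. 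This identifies $\pi_Y(Z)$ and $\pi_Z(Y)$ as the common ``core'' of mutual projection, with matched wall-sets.

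Next I would describe $Y \bridge Z$ via halfspace assignments: a vertex $v$ is in $Y \bridge Z$ if and only if $\pi_Y(v) \in \pi_Y(Z)$, $\pi_Z(v) \in \pi_Z(Y)$, and the geodesic from $\pi_Y(v)$ through $v$ to $\pi_Z(v)$ crosses only walls separating $Y$ from $Z$. Unpacking, the walls meeting $Y \bridge Z$ are precisely (i) \emph{traversing walls} crossing both $Y$ and $Z$, equivalently crossing $\pi_Y(Z)$, and (ii) \emph{transecting walls} separating $Y$ from $Z$; all other walls have $Y \bridge Z$ on a single side. This exhibits $Y \bridge Z$ as the intersection of the halfspaces of walls not meeting it, and so as a combinatorially convex subcomplex.

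Finally, for the product decomposition and crossing statement, I would verify that every traversing wall $\wall_1$ crosses every transecting wall $\wall_2$: since $Y \subset \wall_2^-$ and $Z \subset \wall_2^+$ while $\wall_1^\pm$ each meet both $Y$ and $Z$, all four quarters $\wall_1^\pm \cap \wall_2^\pm$ are nonempty, so the walls cross. The walls of $Y \bridge Z$ thus split into two families with every wall in one crossing every wall in the other, which is precisely the wall-theoretic criterion for a CAT(0) cube complex to decompose as a product: the traversing walls give the $\pi_Y(Z)$ factor (they restrict bijectively to walls of $\pi_Y(Z)$) and the transecting walls give the $[y,\pi_Z(y)]$ factor. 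The main technical obstacle will be verifying that no other walls meet $Y \bridge Z$, which requires showing that any wall crossing a vertex of the bridge either belongs to the traversing family or lies between $Y$ and $Z$; this is most cleanly obtained from the halfspace description above by ruling out the third possibility that $\wall$ crosses exactly one of $Y$ and $Z$.
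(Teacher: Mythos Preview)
The paper does not actually prove this proposition: it is stated as a known result with a paragraph of references (Behrstock--Charney, Huang, Chatterji--Fern\'os--Iozzi for halfspaces, Haglund, Genevois, and Isbell via median algebras), but no proof is given in the paper itself. So there is nothing to compare your argument against in the paper proper.

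Your wall-combinatorics outline is the standard approach and is essentially what one finds in the cited unpublished sources (Haglund, Genevois). The argument is sound. One comment: the ``main technical obstacle'' you flag at the end---ruling out walls crossing exactly one of $Y$ and $Z$---is not really an obstacle once you have the characterization you already stated, namely that a minimal-length geodesic from $Y$ to $Z$ crosses only the walls separating $Y$ from $Z$. If $\wall$ crosses $Y$ but $Z\subset\wall^-$, then as you argued $\pi_Y(Z)\subset\wall^-$; since any minimal geodesic from $Y$ to $Z$ starts in $\pi_Y(Z)\subset\wall^-$ and never crosses $\wall$ (it not being a separating wall), every vertex of the bridge lies in $\wall^-$. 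So this case falls out immediately rather than requiring separate work.
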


\begin{corollary}\label{coarse_intersection_in_cube_cpx}
  For $X$, $Y$, $Z$ as in \fullref{bridge}, 
$\pi_Y(Z)$, $\pi_Z(Y)$, and $Y\bridge Z$ are coarsely equivalent and
  represent $Y\stackrel{c}{\cap}Z$.
\end{corollary}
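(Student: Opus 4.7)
The plan is to show directly that $\pi_Y(Z)$ and $\pi_Z(Y)$ each sit at finite Hausdorff distance from $\bar\nbhd_r(Y)\cap\bar\nbhd_r(Z)$ for every sufficiently large $r$, with a bound depending only on $r$ and $d(Y,Z)$, and then to deduce the statement about $Y\bridge Z$ from its product structure given by \fullref{bridge}.

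First I would use \fullref{bridge}: writing $d:=d(Y,Z)$, every vertex of $\pi_Y(Z)$ is the $Y$-endpoint of a minimal geodesic of length $d$ to $Z$, so $\pi_Y(Z)\subset\bar\nbhd_d(Z)\cap Y\subset\bar\nbhd_r(Y)\cap\bar\nbhd_r(Z)$ for all $r\geq d$. This gives one containment for free.

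For the reverse containment, I would invoke the Lipschitz property of gate projection from \fullref{gate}. Given $x\in\bar\nbhd_r(Y)\cap\bar\nbhd_r(Z)$, pick $z\in Z$ with $d(x,z)\leq r$ and let $y':=\pi_Y(x)$, $y'':=\pi_Y(z)\in\pi_Y(Z)$. Since $\pi_Y$ is $1$-Lipschitz, $d(y',y'')\leq d(x,z)\leq r$, and $d(x,y')\leq r$ since $y'$ is the closest point of $Y$ to $x$. Thus $d(x,\pi_Y(Z))\leq 2r$, so $\bar\nbhd_r(Y)\cap\bar\nbhd_r(Z)\subset\bar\nbhd_{2r}(\pi_Y(Z))$. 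This shows $\pi_Y(Z)$ represents $Y\cint Z$. By the symmetric argument (swapping $Y$ and $Z$), $\pi_Z(Y)$ also represents $Y\cint Z$, so the two projections are coarsely equivalent to each other.

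Finally, for $Y\bridge Z$, the product description $\pi_Y(Z)\times[y,\pi_Z(y)]$ from \fullref{bridge} shows that every vertex of $Y\bridge Z$ is within distance $d$ of $\pi_Y(Z)$ (project onto the first factor), while $\pi_Y(Z)\subset Y\bridge Z$ by definition. Hence $d_{Haus}(Y\bridge Z,\pi_Y(Z))\leq d$, completing the proof. There is no real obstacle here; the only point requiring care is making sure the gate projection argument uses that projections in the combinatorial metric are $1$-Lipschitz on vertices, which is precisely what \fullref{gate} provides.
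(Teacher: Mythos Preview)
Your proof is correct. The approach is close to the paper's but with the two halves reversed: the paper first observes $\pi_Y(Z)\subset Y\bridge Z\subset\bar\nbhd_{d(Y,Z)}(\pi_Y(Z))$ from the product structure, and then shows that $Y\bridge Z$ represents $Y\cint Z$ by using the gate property directly (for $x$ in the intersection of neighborhoods, with $y=\pi_Y(x)$ and $z=\pi_Z(x)$, the point $\pi_Z(y)\in Y\bridge Z$ lies on a geodesic from $y$ to $z$, forcing one of $y,z$ to be within $r$ of the bridge). You instead show $\pi_Y(Z)$ represents $Y\cint Z$ first, using the $1$-Lipschitz property of $\pi_Y$, and then pick up $Y\bridge Z$ from the product structure. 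Both arguments are short and use the same ingredients; yours is arguably the more direct of the two. One small remark: \fullref{gate} as stated only says ``Lipschitz'', not $1$-Lipschitz, though $1$-Lipschitz follows immediately from the gate property, and in any case any Lipschitz constant would suffice for the coarse conclusion.
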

\begin{proof}
 Coarse equivalence is the observation $\pi_Y(Z)\subset Y\bridge Z\subset
 \bar\nbhd_{d(Y,Z)}(\pi_Y(Z))$.
 
  For $r\geq \lceil d(Y,Z)/2\rceil$, consider $x\in
  \bar\nbhd_r(Y)\cap\bar\nbhd_r(Z)\neq \emptyset$, and let
  $y:=\pi_Y(x)$ and $z:=\pi_Z(x)$.
  By \fullref{gate}, $\pi_Z(y)$ is on a geodesic from $y$ to $z$,
  so $2r\geq d(y,z)\geq d(y,Y\bridge Z)+d(z,Y\bridge Z)$, so at least one of
  them is bounded above by $r$, which implies $d(x,Y\bridge Z)\leq 2r$.
  Thus, $Y\bridge Z\subset\bar\nbhd_{r}(Y)\cap
    \bar\nbhd_{r}(Z)\subset \bar\nbhd_{2r}(Y\bridge Z)$.
\end{proof}

\subsection{Coarse geometry of standard subcomplexes in RAAGs and
  RACGs}\label{sec:coarse_geometry_standard}

The Davis complex of a RACG and the universal cover of the Salvetti
complex of a RAAG are CAT(0) cube complexes in which every  edge is
labelled/colored by a generator of the group.
Furthermore, since all squares correspond to commutation relations in the group,
opposite sides of any square have the same label. 

\begin{lemma}\label{commutator_labelling}
  If $\Sigma$ is the Davis complex of a RACG or the universal cover of the Salvetti
complex of a RAAG, $X_0$ and $X_1$ are convex subcomplexes, $e$
is an edge of $\pi_{X_0}(X_1)$ labelled $a$,
and $\gamma\from[0,L]\to\Sigma$ is a geodesic from
$\iota(e)$
to $\pi_{X_1}(\iota(e))$, then $\Sigma$ contains a subcomplex
$\gamma\times[0,1]$ such that $\gamma(0)\times[0,1]=e$,
$\gamma(L)\times[0,1]$ is an edge of $X_1$, every edge $\gamma(i)\times [0,1]$
has label $a$, and for every $0<i\leq L$, the two edges
$\gamma([i-1,i])\times\{0,1\}$ have the same label $b_i$, which commutes with $a$. 
\end{lemma}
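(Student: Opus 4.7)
The plan is to apply the Bridge Lemma (\fullref{bridge}) to the convex subcomplexes $X_0$ and $X_1$. Since $e$ is an edge of $\pi_{X_0}(X_1)$, the wall $\wall_a$ dual to $e$ traverses the bridge $X_0\bridge X_1$. The geodesic $\gamma$ has endpoints $\iota(e)\in\pi_{X_0}(X_1)\subset X_0\bridge X_1$ and $\pi_{X_1}(\iota(e))\in\pi_{X_1}(X_0)\subset X_0\bridge X_1$, and since the bridge is a convex subcomplex, $\gamma$ is entirely contained in it.

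Taking $y:=\iota(e)$ in \fullref{bridge} yields an isomorphism $X_0\bridge X_1\cong \pi_{X_0}(X_1)\times[\iota(e),\pi_{X_1}(\iota(e))]$. Under this isomorphism, $\gamma$ corresponds to a geodesic of the form $\{\iota(e)\}\times\gamma'$ in the second factor, while $e$ corresponds to $e\times\{\iota(e)\}$ in the first factor. The subcomplex $e\times\gamma'$ of the product is a strip of $L$ squares whose image in $\Sigma$ is the desired subcomplex $\gamma\times[0,1]$: its left edge is $e$ itself, its bottom path is $\gamma$, and its right edge $e\times\{\pi_{X_1}(\iota(e))\}$ lies in $\pi_{X_1}(X_0)\subset X_1$, so it is an edge of $X_1$.

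To verify the labels we use the fact that in both $\Davis_\Gamma$ and $\Salvetti_\Delta$ every edge carries a label from the generating set, and opposite sides of every square share the same label (because each wall has a well-defined label and every square corresponds to a commutation relation between the labels of its two dual walls). Each vertical edge $\gamma(i)\times[0,1]$ is parallel to $e$ through the chain of squares of the strip, so it is dual to $\wall_a$ and hence labelled $a$. For each $i$, the two horizontal edges $\gamma([i-1,i])\times\{0,1\}$ are opposite sides of the $i$th square and so share a common label $b_i$; since that square has adjacent sides labelled $a$ and $b_i$, the generators $a$ and $b_i$ commute.

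The only substantive step is invoking the Bridge Lemma; once the product decomposition is in hand, no separate inductive argument is needed to construct the squares or to check that consecutive walls cross. The rest is bookkeeping about how edge labels are determined by their dual walls and how labels pair up across squares of $\Sigma$, both of which are immediate from the definitions of the Davis and Salvetti complexes.
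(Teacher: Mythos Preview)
Your proof is correct and follows essentially the same approach as the paper: invoke the Bridge Lemma to obtain the product strip, then read off the edge labels from the fact that opposite sides of each square share a label and the two labels in a square commute. The only cosmetic difference is that the paper phrases the label check iteratively (square by square along $\gamma$), while you phrase it globally (all vertical edges are dual to the single wall $\wall_a$); these are equivalent observations.
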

\begin{proof}
  If $X_0$ and $X_1$ intersect there is nothing to prove: $L=0$ and
  there is a single edge $e\in X_0\cap X_1$ labelled $a$ and the set
  of commuting $b_i$'s is empty.
  Otherwise, \fullref{bridge} implies that $\Sigma$ contains a
  subcomplex $\gamma\times [0,1]$ with $\gamma(0)\times[0,1]=e$ and
  $\gamma(L)\times[0,1]\subset X_1$.
  Since squares are labelled by commutators, 
$\gamma(1)\times[0,1]$ has label $a$, and $\gamma([0,1])\times\{0\}$
and $\gamma([0,1])\times\{1\}$ are edges with the same label, which
commutes with $a$. Repeat for each edge of $\gamma$. 
\end{proof}
\begin{proposition}\label{projection_of_standard_is_standard}
  Let $\Upsilon$ be a graph, let $G_\Upsilon$ be either a RACG or RAAG
  with presentation graph $\Upsilon$, and let $\Sigma_\Upsilon$ be its
  Davis complex or the universal cover of its Salvetti complex,
  respectively.
  Consider  $S_0,S_1\subset\Upsilon$ and $g_0,g_1\in G_\Upsilon$.
  Let $w$ be a minimal length representative of 
  $G_{S_0}(g_0^{-1}g_1)G_{S_1}$, and let $T$ be the set of generators that appear in $w$.
  Then either $T=\emptyset$, which occurs when $g_0\Sigma_{S_0}\cap
  g_1\Sigma_{S_1}\neq\emptyset$, or $T$ contains an element not in
  $S_0$ and an element not in $S_1$.
  Furthermore:
  \[\pi_{g_0\Sigma_{S_0}}(g_1\Sigma_{S_1})=g_0\Sigma_{S_0\cap
      S_1\cap\bigcap_{t\in T}\lk(t)}\]
In particular, projection of a standard subcomplex to a standard
subcomplex is a standard subcomplex.
\end{proposition}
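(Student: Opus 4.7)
The plan is to establish the three assertions in sequence: the characterization of the empty support case, the two non-containment properties, and finally the formula itself. Throughout, I will translate by $g_0^{-1}$ so that $g_0=1$ and write $g:=g_1$. Moreover, since the subcomplexes $g_0\Sigma_{S_0}$ and $g_1\Sigma_{S_1}$ are unchanged when $g_0$ is replaced by $g_0 k$ for $k\in G_{S_0}$ and $g_1$ by $g_1 h$ for $h\in G_{S_1}$, and since such replacement sends $g_0^{-1}g_1$ to $k^{-1}(g_0^{-1}g_1)h$ (which remains in the same double coset), we may further assume that $g=w$ is itself a minimal length double coset representative. Under this reduction the vertices $1\in\Sigma_{S_0}$ and $w\in w\Sigma_{S_1}$ realize the minimum distance $d(\Sigma_{S_0},w\Sigma_{S_1})=|w|$, and the set $T$ coincides with the support of $w$ as an element of $G_\Upsilon$ (by Tits' solution to the word problem in the RACG case, or the analogous result for RAAGs).

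For the dichotomy, $T=\emptyset$ if and only if $w=1$, which happens if and only if $1\in G_{S_0}gG_{S_1}$, i.e.\ $g=k^{-1}h$ with $k\in G_{S_0}$, $h\in G_{S_1}$; this is equivalent to saying the vertex $k\in\Sigma_{S_0}$ equals $gh^{-1}\in g\Sigma_{S_1}$, which is precisely the intersection condition. When $T\neq\emptyset$, suppose for contradiction that $T\subset S_0$. Then $w\in G_{S_{0}}$, so $1=w^{-1}\cdot w\cdot 1\in G_{S_0}wG_{S_1}=G_{S_0}gG_{S_1}$, giving a shorter representative of the double coset and contradicting minimality. The case $T\not\subset S_1$ is symmetric.

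For the projection formula, we apply \fullref{bridge} to get $\Sigma_{S_0}\bridge w\Sigma_{S_1}\cong\pi_{\Sigma_{S_0}}(w\Sigma_{S_1})\times[1,w]$. Let $\gamma$ be a geodesic from $1$ to $w$; its edge labels are exactly the elements of $T$. For an edge $e$ of $\Sigma_{S_0}$ at $1$ with label $a\in S_0$, \fullref{commutator_labelling} says $e$ lies in $\pi_{\Sigma_{S_0}}(w\Sigma_{S_1})$ precisely when there is a product subcomplex $\gamma\times e$ in $\Sigma_\Upsilon$ whose far side $w\cdot e$ is an edge of $w\Sigma_{S_1}$. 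The existence of this subcomplex requires $a$ to commute with every label along $\gamma$, i.e.\ $a\in\bigcap_{t\in T}\lk(t)$; and the inclusion of the far edge in $w\Sigma_{S_1}$ requires $a\in S_1$. Combining these with $a\in S_0$, the labels of edges of $\pi_{\Sigma_{S_0}}(w\Sigma_{S_1})$ at $1$ are exactly those in $S:=S_0\cap S_1\cap\bigcap_{t\in T}\lk(t)$, showing that $\Sigma_S$ and the projection have identical edge structure at $1$.

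For the reverse inclusion I would argue that the projection is invariant under left-translation by $G_S$: if $u\in G_S$ then $u$ commutes with $w$ (since $u$ commutes with each generator in $T$), so conjugation by $u$ preserves both subcomplexes and $u$-translation of the bridge is again a minimal geodesic from $\Sigma_{S_0}$ to $w\Sigma_{S_1}$, hence $u\cdot 1=u$ lies in the projection. This shows $\Sigma_S\subset\pi_{\Sigma_{S_0}}(w\Sigma_{S_1})$. For the opposite containment, any vertex $v$ in the projection gives $v^{-1}w$ as a further minimal-length double coset representative of the same length as $w$; by the uniqueness of minimal length double coset representatives in Coxeter groups (and the RAAG analogue), combined with the local label analysis repeated at $v$, one concludes $v\in G_S$. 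The main obstacle is this last uniqueness/local-to-global step: one must carefully invoke the structure theory of reduced words to ensure that the support-commutation conditions indeed characterize membership in the projection at all vertices, not merely near $1$.
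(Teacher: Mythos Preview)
Your reduction and your treatment of the $T=\emptyset$ dichotomy and the non-containment $T\not\subset S_i$ are essentially the same as the paper's. The divergence is in the projection formula, and there your argument has a real gap.

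First, the specific claim that ``$v^{-1}w$ is a minimal-length double coset representative of the same length as $w$'' is false. In the bridge $\pi_{\Sigma_{S_0}}(w\Sigma_{S_1})\times[1,w]$ the $\ell^1$ product metric gives $d(v,w)=d(v,1)+d(1,w)=|v|+|w|$, which exceeds $|w|$ unless $v=1$. What \emph{is} minimal is $v^{-1}\pi_{w\Sigma_{S_1}}(v)$, and even granting uniqueness of minimal representatives (true in Coxeter groups, and in RAAGs), you would only get $\pi_{w\Sigma_{S_1}}(v)=vw$, i.e.\ $w^{-1}vw\in G_{S_1}$; deducing $v\in G_S$ from this still needs an argument you have not supplied. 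So the ``main obstacle'' you flag is genuine, and your sketch does not overcome it.

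The paper avoids this difficulty by a small but decisive change of viewpoint: instead of analysing edge labels only at $1$ and then trying to propagate, it lets $S'$ be the set of labels of \emph{all} edges of $\pi_{\Sigma_{S_0}}(w\Sigma_{S_1})$ and shows $S'=S$ directly. The point is that for any edge $e$ of the projection at any vertex $v$, the geodesic from $v$ to $\pi_{w\Sigma_{S_1}}(v)$ is a bridge fibre, hence crosses exactly the transecting walls, whose label set is exactly $T$; so \fullref{commutator_labelling} forces the label of $e$ to lie in $\bigcap_{t\in T}\lk(t)$, and parallelism forces it into $S_1$ as well. Once $S'=S$, convexity (hence connectedness) of the projection together with $1\in\pi_{\Sigma_{S_0}}(w\Sigma_{S_1})$ gives $\pi_{\Sigma_{S_0}}(w\Sigma_{S_1})\subset\Sigma_{S'}$, and your $G_S$--invariance argument (equivalently, parallelism of $\Sigma_{S'}$ and $w\Sigma_{S'}$) gives the reverse inclusion. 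This is the missing idea: run your local label computation at an arbitrary vertex, using the bridge product structure to see that the constraint is the same everywhere.
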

\begin{proof}
  The description of $w$ is equivalent to that of a word read on the
  edges of a minimal length geodesic connecting $g_0\Sigma_{S_0}$ to
  $g_1\Sigma_{S_1}$.
\fullref{bridge} implies that the set $T$ does not depend on the
choice of such a geodesic.
Using the group action, take $h\in
\pi_{g_0\Sigma_{S_0}}(g_1\Sigma_{S_1})\subset g_0\Sigma_{S_0}$ so that  $1\in
h^{-1}\pi_{g_0\Sigma_{S_0}}(g_1\Sigma_{S_1})=\pi_{h^{-1}g_0\Sigma_{S_0}}(h^{-1}g_1\Sigma_{S_1})=\pi_{\Sigma_{S_0}}(h^{-1}g_1\Davis_{S_1})$.
Again by \fullref{bridge}, every vertex in $\pi_{\Sigma_{S_0}}(h^{-1}g_1\Sigma_{S_1})$ is
connected to a vertex in $\pi_{h^{-1}g_1\Sigma_{S_1}}(\Sigma_{S_0})$
by a geodesic labelled $w$, so the geodesic starting at 1 labelled $w$
ends at $w\in \pi_{h^{-1}g_1\Sigma_{S_1}}(\Sigma_{S_0})\subset h^{-1}g_1\Sigma_{S_1}$.
Thus, it suffices to consider the case that $g_0=1$ and $g_1=w$.

  The set $T$ is empty, and $w$ is the empty word, precisely when
$\Sigma_{S_0}\cap \Sigma_{S_1}\neq\emptyset$.
In this case $S_0\cap S_1\cap\bigcap_{t\in T}\lk(t)=S_0\cap S_1$, and
$\pi_{\Sigma_{S_0}}(\Sigma_{S_1})=\Sigma_{S_0\cap S_1}$. 

Suppose that $T$ is not empty and $w$ is not the empty word.
Minimality of $w$ implies that it does not start with an element of
$S_0$ or end with an element of $S_1$, so $T$ contains an element not
in $S_0$ and an element not in $S_1$. 

Let $S'$ be the set of labels that occur on edges of
$\pi_{\Davis_{S_0}}(w\Sigma_{S_1})$.
By \fullref{commutator_labelling} every $s\in S'$ commutes with every
letter of $w$, and if $e\in \pi_{\Davis_{S_0}}(w\Sigma_{S_1})$ is an edge
labelled $s$ then the edge $we\in \pi_{w\Sigma_{S_1}}(\Sigma_{S_0})$
is also labelled $s$, so $S'\subset S_0\cap S_1\cap\bigcap_{t\in
  T}\lk(t)$.
Conversely, if $s\in S_0\cap S_1\cap\bigcap_{t\in
  T}\lk(t)$ then $1\edge s$ is an edge of $\Sigma_{S_0}$ that is
parallel via $w$ to the edge $w\edge ws$ in $w\Sigma_{S_1}$, so 
$S'= S_0\cap S_1\cap\bigcap_{t\in
  T}\lk(t)$.
Furthermore, $\Sigma_{S'}\subset \Sigma_{S_0}$ is parallel to
$w\Sigma_{S'}\subset w\Sigma_{S_1}$ via $w$, so $\Sigma_{S'}=
\pi_{\Davis_{S_0}}(w\Sigma_{S_1})$.
\end{proof}
\begin{corollary}\label{parallelgeodesics}
  If $X_0$ and $X_1$ are standard subcomplexes in either the Davis
  complex of a RACG or the universal cover of the Salvetti complex of
  a RAAG, then $X_0$ and $X_1$ have unbounded coarse intersection if
  and only if there is an unbounded standard subcomplex $X_2$ such
  that $X_0$ and $X_1$ contain parallel copies of $X_2$. 
\end{corollary}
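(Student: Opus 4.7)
The plan is to observe that the corollary is essentially a direct consequence of the two preceding results, Corollary \ref{coarse_intersection_in_cube_cpx} and Proposition \ref{projection_of_standard_is_standard}, together with the Bridge Lemma. Write $X_0 = g_0\Sigma_{S_0}$ and $X_1 = g_1\Sigma_{S_1}$. By Corollary \ref{coarse_intersection_in_cube_cpx}, the coarse intersection $X_0 \stackrel{c}{\cap} X_1$ is represented by $\pi_{X_0}(X_1)$, and by Proposition \ref{projection_of_standard_is_standard} this gate projection equals $g_0 \Sigma_{S'}$ for $S' = S_0 \cap S_1 \cap \bigcap_{t\in T}\lk(t)$, where $T$ is the set of letters in a minimal-length double coset representative. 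In particular, $\pi_{X_0}(X_1)$ is itself a standard subcomplex, and it is unbounded if and only if $X_0 \stackrel{c}{\cap} X_1$ is.

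For the backward implication, suppose $X_0$ and $X_1$ contain parallel copies of an unbounded standard subcomplex $X_2$; call them $X_2^0 \subset X_0$ and $X_2^1 \subset X_1$. Parallelism provides an isomorphism $X_2^0 \to X_2^1$ moving every vertex by the same distance $D = d(X_2^0, X_2^1)$, so $X_2^0 \subset \bar\nbhd_D(X_1) \cap X_0$. Thus $X_2^0$ is an unbounded subset of $\bar\nbhd_r(X_0) \cap \bar\nbhd_r(X_1)$ for all $r \geq D$, showing that $X_0 \stackrel{c}{\cap} X_1$ is unbounded.

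For the forward implication, take $X_2 := g_0\Sigma_{S'}$, which lies inside $X_0$ and is unbounded by the first paragraph. The proof of Proposition \ref{projection_of_standard_is_standard} already establishes the required parallelism: after reducing via the group action to the case $g_0 = 1$ and $g_1 = w$, it is shown that $\Sigma_{S'} \subset \Sigma_{S_0}$ is parallel via $w$ to $w\Sigma_{S'} \subset w\Sigma_{S_1}$. Translating back by $g_0$, this says that $g_0\Sigma_{S'} \subset X_0$ is parallel via the appropriate element to a copy of $\Sigma_{S'}$ contained in $X_1$. The Bridge Lemma (Proposition \ref{bridge}) gives the geometric underpinning: $X_0 \bridge X_1$ is isomorphic to $\pi_{X_0}(X_1) \times I$, with $\pi_{X_0}(X_1)$ and $\pi_{X_1}(X_0)$ appearing as the two faces and manifestly parallel.

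The only real content beyond quoting prior results is checking that the parallel copy inside $X_1$ produced by the bridge structure is itself a standard subcomplex isomorphic to $X_2$ — but this is immediate from Proposition \ref{projection_of_standard_is_standard} applied symmetrically (or from the explicit description of $w\Sigma_{S'}$ in its proof). So no genuine obstacle arises; the corollary is mainly a convenient repackaging of Propositions \ref{bridge} and \ref{projection_of_standard_is_standard} in a form suited to the coarse-geometric arguments of \fullref{sec:compliant_cycles}.
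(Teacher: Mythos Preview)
Your proof is correct and follows essentially the same approach as the paper: both invoke \fullref{coarse_intersection_in_cube_cpx} and \fullref{projection_of_standard_is_standard} to identify $\pi_{X_0}(X_1)$ and $\pi_{X_1}(X_0)$ as parallel standard subcomplexes representing $X_0\stackrel{c}{\cap}X_1$. The paper's proof is a single sentence to this effect; you have simply unpacked the two implications in more detail.
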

\begin{proof}
  \fullref{coarse_intersection_in_cube_cpx} and
  \fullref{projection_of_standard_is_standard} say 
  $\pi_{X_0}(X_1)$
  and $\pi_{X_1}(X_0)$ are parallel standard subcomplexes representing $X_0\stackrel{c}{\cap}X_1$.
\end{proof}
\begin{corollary}\label{raagtrivialprojection}
  Two standard subcomplexes $X_0$ and $X_1$ of the universal cover of the Salvetti complex of
  a RAAG  have bounded coarse intersection if and only if
  the combinatorial closest point projection map from $X_0$ to $X_1$
  is constant. 
\end{corollary}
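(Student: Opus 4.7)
The plan is to combine \fullref{coarse_intersection_in_cube_cpx} and \fullref{projection_of_standard_is_standard} with the specific feature of RAAGs that every nonempty special subgroup is infinite.

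First I would invoke \fullref{projection_of_standard_is_standard} to write $\pi_{X_1}(X_0)$ as a standard subcomplex, say $g\Salvetti_{S'}$ where $S' = S_0 \cap S_1 \cap \bigcap_{t \in T} \lk(t)$ in the notation of that proposition, with $X_i = g_i \Salvetti_{S_i}$. Next, \fullref{coarse_intersection_in_cube_cpx} tells us that this standard subcomplex represents $X_0 \stackrel{c}{\cap} X_1$, so $X_0$ and $X_1$ have bounded coarse intersection if and only if $g\Salvetti_{S'}$ is bounded.

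The key RAAG-specific observation is then that a standard subcomplex $g\Salvetti_{S'}$ of $\Salvetti_\Delta$ is bounded if and only if $S' = \emptyset$, since whenever $S' \neq \emptyset$ the special subgroup $A_{S'}$ contains an infinite cyclic subgroup generated by any vertex in $S'$, and hence $\Salvetti_{S'}$ is unbounded. When $S' = \emptyset$, the subcomplex $g\Salvetti_{S'}$ consists of the single vertex $g$, so $\pi_{X_1}(X_0)$ is a single vertex, which is exactly the statement that the projection map from $X_0$ to $X_1$ is constant. Conversely, if the projection is constant then its image is a single vertex, which is certainly bounded, and hence so is $X_0 \stackrel{c}{\cap} X_1$.

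I do not expect a significant obstacle: the whole argument is a direct application of the preceding two results together with the remark that special subgroups of a RAAG on a nonempty vertex set are always infinite. It is worth noting in the writeup that this is precisely where the RAAG hypothesis is used and where the analogous RACG statement would fail, since in a RACG a standard subcomplex $\Davis_{S'}$ with $S'$ a nonempty clique is bounded (in fact a cube) without being a single vertex.
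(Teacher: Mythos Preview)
Your proposal is correct and takes essentially the same approach as the paper's proof. The paper phrases the key RAAG-specific point slightly differently---``once a standard subcomplex contains an edge it contains the entire bi-infinite monochrome geodesic containing that edge''---but this is exactly your observation that $\Salvetti_{S'}$ is bounded if and only if $S'=\emptyset$, so the projections are either single vertices or unbounded.
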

\begin{proof}
In a RAAG, once a standard subcomplex contains an edge it contains the
entire bi-infinite monochrome geodesic containing that edge, so
$\pi_{X_0}(X_1)$ and  $\pi_{X_1}(X_0)$ are either single vertices or
unbounded.
\end{proof}

\begin{lemma}\label{joinclique}
If $W_\Gamma$ is a RACG and $A,B\subset\Gamma$ then  $\Davis_B\subset \Davis_A\ceq\Davis_B$
if and only if $A=B\join C$, where $C$ is a clique.
\end{lemma}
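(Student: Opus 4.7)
The plan is to prove the two directions of the biconditional separately; the ``if'' direction is a direct computation, and the ``only if'' direction reduces, via a quasi-density argument in the Cayley graph of $W_A$, to showing that finite index $[W_A:W_B]<\infty$ forces the join decomposition $A=B\join C$ with $C$ a clique.

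For the ``if'' direction, assume $A=B\join C$ with $C$ a clique. The join relation makes every generator in $B$ commute with every generator in $C$, so $W_A\cong W_B\times W_C$, and since $C$ is a clique $W_C\cong(\mathbb{Z}/2\mathbb{Z})^{|C|}$ is finite. Correspondingly, $\Davis_A$ is the metric product $\Davis_B\times\Davis_C$ with $\Davis_C$ a bounded cube, so $\Davis_B\subset\Davis_A$ and every point of $\Davis_A$ lies within $\diam(\Davis_C)$ of $\Davis_B$, giving $\Davis_A\ceq\Davis_B$.

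For the ``only if'' direction, assume $\Davis_B\subset\Davis_A\ceq\Davis_B$. The inclusion forces $B\subset A$, and the coarse equality yields $r\geq 0$ with $\Davis_A\subset\bar\nbhd_r(\Davis_B)$, so every element of $W_A$ lies within combinatorial distance $r$ of $W_B$ in the Cayley graph of $W_A$. Thus $W_A$ is covered by the finitely many right cosets $W_B u$ with $|u|\leq r$, so $[W_A:W_B]<\infty$. Setting $C:=A\setminus B$, it then suffices to show (i) every $v\in C$ is adjacent in $\Gamma$ to every $b\in B$, forcing $A=B\join C$ as induced subgraphs, and (ii) $C$ induces a clique.

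For (i), suppose some $v\in C$ is not adjacent to some $b\in B$; then $vb$ has infinite order in $W_A$. The word $(vb)^n$ for $n\neq 0$ is already reduced, and by Tits' theorem \cite[Sec.~3.4]{davisbook} every reduced expression for it uses the same letters with the same multiplicities, hence uses $v\notin B$; thus $(vb)^n\notin W_B$, and applying the same observation to $(vb)^{n-m}$ shows the cosets $(vb)^n W_B$ for $n\in\mathbb{Z}$ are pairwise distinct, contradicting finite index. Once (i) is in hand we have $W_A=W_B\times W_C$, so $W_C\cong W_A/W_B$ must be finite; a non-adjacent pair in $C$ would generate an infinite dihedral subgroup of $W_C$, so $C$ is a clique, proving (ii). The only delicate step is the appeal to Tits' theorem in (i); everything else is elementary bookkeeping.
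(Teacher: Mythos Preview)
Your proof is correct. The paper's argument for the converse is more geometric and slightly quicker: rather than passing through finite index and coset counting, it observes that if $A\neq B\join C$ for a clique $C$ then there is a non-adjacent pair $\{a,b\}\subset A$ with at least one vertex outside $B$, so $\Davis_{\{a,b\}}$ is a line sitting inside $\Davis_A$; the Lipschitz projection $\pi_{\Davis_{\{a,b\}}}$ then sends $\Davis_A$ onto the whole line but $\Davis_B$ to a bounded set, forcing $d_{Haus}(\Davis_A,\Davis_B)=\infty$. This handles your cases (i) and (ii) in one stroke and stays in the language of projections in cube complexes used throughout the paper. Your route---translating $\Davis_A\ceq\Davis_B$ into $[W_A:W_B]<\infty$ and then using Tits' theorem to rule out infinite cyclic coset families---is perfectly sound and perhaps more transparently algebraic; it has the mild advantage of making the equivalence with finite index explicit, which could be useful elsewhere, at the cost of invoking the word problem rather than the projection machinery already set up.
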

\begin{proof}
  If $C$ is a clique then $d_{Haus}(\Davis_B,\Davis_{B\join C})=|C|$.
  Conversely, if $\Davis_B\subsetneq \Davis_A\ceq\Davis_B$ and $A\neq
  B\join C$ for a clique $C$, then either there is an $a\in A\setminus B$ not adjacent
  to some $b\in B$, or there are $a$ and $b$ in $A\setminus B$ not
  adjacent to each other. In either case $\Davis_{\{a,b\}}$ is a line.
  Since $\pi_{\Davis_{\{a,b\}}}$ is Lipschitz,
  $d_{Haus}(\Davis_A,\Davis_B)\geq
  d_{Haus}(\pi_{\Davis_{\{a,b\}}}(\Davis_A),
  \pi_{\Davis_{\{a,b\}}}(\Davis_B))$, the latter of which is infinite,
  since $\Davis_{\{a,b\}}\subset\Davis_A$, but
  $\pi_{\Davis_{\{a,b\}}}(\Davis_B)$ is finite.
\end{proof}
\begin{corollary}\label{cor:joinclique}
  If $W_\Gamma$ is a RACG and $S_0,S_1\subset\Gamma$ then
  $\Davis_{S_0}\ceq\Davis_{S_1}$ implies  $d_{Haus}(\Davis_{S_0},\Davis_{S_1})$ is
  bounded above by the size of the largest clique in $\Gamma$.
\end{corollary}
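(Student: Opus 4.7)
The plan is to exploit the fact that, as standard subcomplexes, both $\Davis_{S_0}$ and $\Davis_{S_1}$ contain the identity vertex, so they actually intersect combinatorially. First I would observe that Tits's solution to the word problem in Coxeter groups, mentioned earlier in the preliminaries, gives $W_{S_0}\cap W_{S_1}=W_{S_0\cap S_1}$, hence $\Davis_{S_0}\cap\Davis_{S_1}=\Davis_{S_0\cap S_1}$. Because the two subcomplexes meet, the bridge between them has length zero, and \fullref{bridge} together with \fullref{coarse_intersection_in_cube_cpx} give $\pi_{\Davis_{S_0}}(\Davis_{S_1})=\Davis_{S_0\cap S_1}$, which represents $\Davis_{S_0}\cint\Davis_{S_1}$. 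This is consistent with the computation in \fullref{projection_of_standard_is_standard} in the case where the set $T$ of new generators is empty.

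Next I would upgrade the hypothesis $\Davis_{S_0}\ceq\Davis_{S_1}$ to the statement $\Davis_{S_0\cap S_1}\ceq\Davis_{S_0}\ceq\Davis_{S_1}$. This follows because finite Hausdorff distance between $\Davis_{S_0}$ and $\Davis_{S_1}$ means that for sufficiently large $r$ the set $\bar\nbhd_r(\Davis_{S_0})\cap\bar\nbhd_r(\Davis_{S_1})$ contains $\Davis_{S_0}\cup\Davis_{S_1}$, so the coarse intersection — represented by $\Davis_{S_0\cap S_1}$ — is coarsely equivalent to each of $\Davis_{S_0}$ and $\Davis_{S_1}$. Then \fullref{joinclique} applies twice to produce cliques $C_0,C_1\subset\Gamma$ with $S_0=(S_0\cap S_1)\join C_0$ and $S_1=(S_0\cap S_1)\join C_1$.

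For the numerical bound I would use the resulting product decomposition of special subgroups: $W_{S_0}=W_{S_0\cap S_1}\times W_{C_0}$, and correspondingly $\Davis_{S_0}\cong\Davis_{S_0\cap S_1}\times\Davis_{C_0}$, where $\Davis_{C_0}$ is a single $|C_0|$-dimensional cube because $W_{C_0}\cong(\mathbb{Z}/2)^{|C_0|}$. Writing any vertex $v\in\Davis_{S_0}$ as $v=bc$ with $b\in W_{S_0\cap S_1}\subset W_{S_1}$ and $c\in W_{C_0}$, convexity of $\Davis_{S_0}$ gives $d(v,\Davis_{S_1})\leq d(v,b)=|c|\leq|C_0|$. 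The symmetric argument gives $d(w,\Davis_{S_0})\leq|C_1|$ for each $w\in\Davis_{S_1}$, so $d_{Haus}(\Davis_{S_0},\Davis_{S_1})\leq\max(|C_0|,|C_1|)$, which is at most the size of a largest clique in $\Gamma$.

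I do not expect any serious obstacle: the delicate point is just the passage from $\Davis_{S_0}\ceq\Davis_{S_1}$ to $\Davis_{S_0\cap S_1}\ceq\Davis_{S_0}$, and this is immediate from the coarse-intersection characterization. Everything else is bookkeeping with the product structure that a join imposes on the corresponding special subgroup.
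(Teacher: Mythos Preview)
Your proposal is correct and follows essentially the same approach as the paper: use that $\Davis_{S_0\cap S_1}$ represents $\Davis_{S_0}\cint\Davis_{S_1}$, conclude $\Davis_{S_0\cap S_1}\ceq\Davis_{S_i}$, and apply \fullref{joinclique} to extract the clique factors. The only cosmetic difference is that the paper first strips cone vertices from each $S_i$ to obtain $S_i'$ and then deduces $S_0'=S_1'$, whereas you apply \fullref{joinclique} directly to $S_0\cap S_1\subset S_i$; both routes yield the same bound.
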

\begin{proof}
  Let $C$ be the largest clique of $\Gamma$.
  Let $S_i'$ be $S_i$ minus its cone vertices. 
  By \fullref{joinclique}, $d_{Haus}(\Davis_{S_i},\Davis_{S_i'})\leq
  |C|$, so $\Davis_{S_0}\ceq\Davis_{S_1}\implies
  \Davis_{S_0'}\ceq\Davis_{S_1'}$.
  But then $\Davis_{S_0'\cap S_1'}\subset \Davis_{S_0'}\ceq
  \Davis_{S_0'\cap S_1'}$, so \fullref{joinclique} says $S_0'\cap
  S_1'=S_0'$.
  Repeat the argument for $S_1'$, and conclude $S:=S_0'=S_1'$.
  Now every vertex of $\Davis_{S_i}$  is distance at
  most $|C|$ from $\Davis_S\subset\Davis_{S_0}\cap \Davis_{S_1}$.
\end{proof}
\begin{corollary}\label{cor:coarse_intersection_of_standard}
    If $W_\Gamma$ is a RACG, $g\in W_\Gamma$, and $S_0,S_1\subset\Gamma$ then
  $\Davis_{S_0}\ceq g\Davis_{S_1}$ implies there exists $S$ and
  cliques $C_0$ and $C_1$ such that $S_0=S\join C_0$, $S_1=S\join C_1$
  and $g$ centralizes $S$. 
\end{corollary}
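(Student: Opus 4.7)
The plan is to use \fullref{projection_of_standard_is_standard} to identify the coarse intersection of $\Davis_{S_0}$ and $g\Davis_{S_1}$ explicitly as a standard subcomplex $\Davis_S$, then apply \fullref{joinclique} to read off the join decompositions of $S_0$ and $S_1$, and finally extract the centralizing property from the set of letters in a minimal length representative.

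First I would let $w$ be a minimal length representative of the double coset $W_{S_0} g W_{S_1}$ and $T$ its set of letters. Applying \fullref{projection_of_standard_is_standard} with $g_0 = 1$ and $g_1 = g$ identifies
\[ \pi_{\Davis_{S_0}}(g\Davis_{S_1}) = \Davis_S, \qquad S := S_0 \cap S_1 \cap \bigcap_{t \in T} \lk(t). \]
By \fullref{coarse_intersection_in_cube_cpx} this projection represents $\Davis_{S_0} \cint g\Davis_{S_1}$, which by hypothesis coincides coarsely with $\Davis_{S_0}$. Since $S \subset S_0$ gives $\Davis_S \subset \Davis_{S_0}$, \fullref{joinclique} yields a clique $C_0$ with $S_0 = S \join C_0$.

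For $S_1$ I would run the symmetric argument. The double coset $W_{S_1} g^{-1} W_{S_0}$ has minimal length representative $w^{-1}$, with the same letter set $T$, so \fullref{projection_of_standard_is_standard} gives $\pi_{g\Davis_{S_1}}(\Davis_{S_0}) = g \Davis_S$. Translating by $g^{-1}$, this exhibits $\Davis_S$ as a convex subcomplex of $\Davis_{S_1}$ representing $\Davis_{S_0} \cint g\Davis_{S_1}$ translated back, hence coarsely equivalent to $\Davis_{S_1}$, so \fullref{joinclique} again delivers a clique $C_1$ with $S_1 = S \join C_1$.

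For the last clause, by the definition of $S$ every $s \in S$ lies in $\lk(t)$ for every $t \in T$, so each letter of $w$ commutes with every generator of $W_S$; consequently $w$ itself centralizes $W_S$. Since left multiplication by $W_{S_0}$ preserves $\Davis_{S_0}$ and right multiplication by $W_{S_1}$ preserves $g\Davis_{S_1}$, the hypothesis depends only on the double coset $W_{S_0} g W_{S_1}$, so we may replace $g$ by the minimal length representative $w$ to obtain a centralizing representative. The one subtle point is the interpretation of "$g$ centralizes $S$": the property is not preserved by arbitrary double-coset modifications of $g$, so the statement should be read as asserting the existence of such a representative within the double coset, and the minimal length $w$ picked out by \fullref{projection_of_standard_is_standard} is the natural candidate.
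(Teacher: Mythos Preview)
Your argument is correct and follows essentially the same route as the paper: compute the coarse intersection via \fullref{projection_of_standard_is_standard} and \fullref{coarse_intersection_in_cube_cpx}, then apply \fullref{joinclique}. The only cosmetic difference is that the paper first strips cone vertices from $S_0$ and $S_1$ (via \fullref{cor:joinclique}) and then concludes $S_0=S_1=S$, whereas you apply \fullref{joinclique} directly to obtain $S_0=S\join C_0$ and $S_1=S\join C_1$; these are equivalent.

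Your caution about the final clause is well placed and in fact more careful than the paper's own proof, which concludes only that $T$ centralizes $S$. The literal assertion ``$g$ centralizes $S$'' need not hold for the given $g$: already with $S_0=S_1=\{a,b\}$, $a$ and $b$ non-adjacent, and $g=a\in W_{S_0}$ one has $\Davis_{S_0}=g\Davis_{S_1}$ while $a$ does not centralize $b$. Your reading (that some representative of the double coset, namely the minimal one, centralizes $S$) is the correct interpretation and matches what the paper actually proves.
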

\begin{proof}
  By \fullref{cor:joinclique}, we may assume $S_0$ and $S_1$ have no
  cone vertices.
By  \fullref{coarse_intersection_in_cube_cpx}, if
  if $T$ is the set of generators appearing in minimal length
  elements of $W_{S_0}gW_{S_1}$ then:
  \[\Davis_{S_0}\cint
  g\Davis_{S_1}\ceq\pi_{\Davis_{S_0}}(g\Davis_{S_1})=\Davis_{S_0\cap
    S_1\cap\bigcap_{t\in T}\lk(t)}\]
So if $\Davis_{S_0}\ceq g\Davis_{S_1}$ then:
\[
\Davis_{S_0\cap
    S_1\cap\bigcap_{t\in T}\lk(t)}\subset\Davis_{S_0}\ceq(\Davis_{S_0}\cint g\Davis_{S_1})\ceq \Davis_{S_0\cap
    S_1\cap\bigcap_{t\in T}\lk(t)}\]
 \fullref{joinclique} says $S_0=(S_0\cap
    S_1\cap\bigcap_{t\in T}\lk(t))\join C$ for a clique $C$, but
  $S_0$ has no cone vertex, so $C=\emptyset$, $S_0\subset S_1$, and
  $S_0\subset\bigcap_{t\in T}\lk(t)$.
The same argument applies to $S_1$, so we conclude that $S:=S_0=S_1$
and $T$ centralizes $S$.  
\end{proof}
\begin{lemma}
  If $A_\Delta$ is a RAAG, $g\in A_\Delta$, and $S_0,S_1\subset\Delta$
  then $\Salvetti_{S_0}\ceq g\Salvetti_{S_1}$ implies $S:=S_0=S_1$ and
  $g$ centralizes $S$.
\end{lemma}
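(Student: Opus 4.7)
The plan is to adapt the argument for \fullref{cor:coarse_intersection_of_standard}, noting that the conclusion is strictly simpler for RAAGs because there are no nontrivial finite special subgroups, hence no ``clique factors'' to strip off.

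First I would combine \fullref{projection_of_standard_is_standard} and \fullref{coarse_intersection_in_cube_cpx}: letting $T$ be the set of generators appearing in a minimal length representative of $A_{S_0} g A_{S_1}$, the hypothesis $\Salvetti_{S_0}\ceq g\Salvetti_{S_1}$ gives
\[
\Salvetti_{S_0\cap S_1\cap\bigcap_{t\in T}\lk(t)}\ceq\Salvetti_{S_0}.
\]

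The main step, which I expect to be the only real obstacle, is a RAAG counterpart of \fullref{joinclique}: for $B\subset A\subset\Delta$, $\Salvetti_B\ceq\Salvetti_A$ forces $B=A$. To prove it I would fix $a\in A\setminus B$ and apply \fullref{projection_of_standard_is_standard} to compute the gate projections to the monochrome line $\Salvetti_{\{a\}}$: the image of $\Salvetti_B$ is the single vertex corresponding to $\{a\}\cap B=\emptyset$, while the image of $\Salvetti_A$ contains $\Salvetti_{\{a\}}\subset\Salvetti_A$ itself and is therefore unbounded. The Lipschitz property of gate projection (\fullref{gate}) then rules out finite Hausdorff distance between $\Salvetti_A$ and $\Salvetti_B$. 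Applied with $A=S_0$ and $B=S_0\cap S_1\cap\bigcap_{t\in T}\lk(t)$, and then symmetrically with $S_0$ and $S_1$ swapped, this yields $S:=S_0=S_1$ together with every element of $T$ commuting with every element of $S$.

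To conclude, write $g=\alpha w\beta$ with $\alpha,\beta\in A_S$ and $w\in A_T$ a minimal length double-coset representative. Since $w$ centralizes $A_S$, the computation $gA_S=\alpha w\beta A_S=\alpha A_S w=A_S w=wA_S$ shows $w\in gA_{S_1}=gA_S$; replacing $g$ by $w$ does not change the subcomplex $g\Salvetti_{S_1}$, and $w$ centralizes $S$, as required. The arithmetic of double cosets is elementary once the RAAG joinclique is in hand, so the only place the argument meaningfully diverges from the RACG proof is in that lemma.
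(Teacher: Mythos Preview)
Your approach is essentially the paper's: both use \fullref{projection_of_standard_is_standard} and \fullref{coarse_intersection_in_cube_cpx} to obtain $\Salvetti_{S_0}\ceq\Salvetti_{S_0\cap S_1\cap\bigcap_{t\in T}\lk(t)}$, and then invoke the fact that in a RAAG no standard subcomplex is coarsely equivalent to a proper standard subcomplex---the paper states this in one line (``since generators have infinite order''), while your monochrome-line projection argument makes it explicit. One remark on your final paragraph: you establish that $w$ centralizes $S$ and that $w\Salvetti_S=g\Salvetti_S$, but not literally that $g$ centralizes $S$; in fact any noncentral $g\in A_S$ already shows the literal statement fails, so what is actually provable (and what the paper's terse conclusion also tacitly yields) is that $T$, equivalently $w$, centralizes $S$.
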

\begin{proof}
  Let $\Salvetti_S:=\pi_{\Salvetti_{S_0}}(g\Salvetti_{S_1})$. 
Since $\Salvetti_{S_0}\ceq g\Salvetti_{S_1}$, $\Salvetti_{S_0}\ceq
\Salvetti_{S_0}\cint g\Salvetti_{S_1}$, which, by
\fullref{coarse_intersection_in_cube_cpx}, is coarsely equivalent to
$\Salvetti_S$.
By \fullref{projection_of_standard_is_standard}, $S=S_0\cap
S_1\cap\bigcap_{t\in T}\lk(t)$, where $T$ is the letters appearing in
a minimal word in $A_{S_0}gA_{S_1}$. 
But in a RAAG no standard subcomplex is coarsely equivalent to one of
its proper standard subcomplexes, since generators have infinite
order, so $S=S_0$, which implies $S_0=S_1$ and $g$ centralizes $S$.
\end{proof}

\section{CFS graphs}\label{sec:firstexamples}
We know from Dani and Thomas \cite{DanTho15} that an incomplete,
triangle-free graph without separating cliques is RAAGedy only if it
is CFS.
In this section we establish some basic results about the structure of
CFS graphs.
\subsection{The diagonal graph}
\begin{definition}
  The \emph{diagonal graph} $\diag(\Gamma)$ of
  $\Gamma$ is the graph with a vertex $\{a,b\}$ if
  $a$ and $b$ are vertices of $\Gamma$ that are the diagonal vertices
  of some induced square. There is an edge $\{a,b\}\edge\{c,d\}$ in
  $\diag(\Gamma)$ when $\{a,b\}\join\{c,d\}$ is an induced square of
  $\Gamma$. 
\end{definition}
The \emph{support} of a vertex $\{a,b\}$ of $\diag(\Gamma)$ is the set
$\{a,b\}\subset\Gamma$. The support of a subset of $\diag(\Gamma)$ is
the union of supports of its vertices.

\begin{definition}\footnote{This is equivalent to other definitions of CFS 
    \cite{DanTho15,BehFalHag18, BehCicFal25} and strongly CFS
    \cite{RusSprTra23}. These sources make definitions in terms of a
    `square graph' of $\Gamma$ that is slightly different than
    $\diag(\Gamma)$ but ultimately contains the same information.}$^,$\footnote{It is possible \cite{BehCicFal25}
  to produce CFS graphs with multiple components of full support.}
  $\Gamma$ is \emph{CFS} if $\diag(\Gamma)$ contains a connected
  component whose support is all non-cone vertices of $\Gamma$.

  $\Gamma$ is \emph{strongly CFS} if it is CFS and $\diag(\Gamma)$ is
  connected. 
\end{definition}

\begin{lemma}
  A triangle in $\diag(\Gamma)$ corresponds to an induced octahedron
  in $\Gamma$.
\end{lemma}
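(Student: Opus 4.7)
The plan is to unpack the definition of the diagonal graph and trace through what each of the three edges in the purported triangle imposes on $\Gamma$, then check that these three constraints together pin down exactly an induced copy of $K_{2,2,2}$ (the 1-skeleton of the octahedron).

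First I would fix a triangle in $\diag(\Gamma)$ with vertices $\{a_i,b_i\}$ for $i=1,2,3$. By definition of $\diag(\Gamma)$, each $\{a_i,b_i\}$ is a pair of diagonals of some induced square, so $a_i \neq b_i$ and $a_i \not\edge b_i$ in $\Gamma$. For each pair $i \neq j$, the edge $\{a_i,b_i\} \edge \{a_j,b_j\}$ in $\diag(\Gamma)$ says that $\{a_i,b_i\} \join \{a_j,b_j\}$ is an induced square in $\Gamma$. This unpacks to two facts: first, the four vertices $a_i,b_i,a_j,b_j$ are distinct (so the pairs are disjoint as sets); second, every vertex of $\{a_i,b_i\}$ is adjacent to every vertex of $\{a_j,b_j\}$ in $\Gamma$.

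Next I would assemble these three edge conditions. Pairwise disjointness of the three pairs forces the six vertices $a_1,b_1,a_2,b_2,a_3,b_3$ to be distinct. Among these six, the within-pair relations are all non-edges and the nine cross-pair relations are all edges. Since every unordered pair of distinct vertices from the six is either a within-pair or a cross-pair, we have fully determined the induced subgraph of $\Gamma$ on these six vertices: it is precisely the complete tripartite graph $K_{2,2,2}$, which is the 1-skeleton of the octahedron.

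For the converse direction, given an induced octahedron in $\Gamma$, its three antipodal pairs are nonadjacent and any two such pairs span an induced square; these three pairs are therefore vertices of $\diag(\Gamma)$ forming a triangle. The main (and only mildly subtle) step is the disjointness argument in the previous paragraph: one has to rule out collisions like $a_1 = a_3$, but such a collision would make the induced square condition on $\{a_1,b_1\}$ and $\{a_3,b_3\}$ fail to produce four distinct vertices, contradicting the definition of an induced square. Everything else is a direct expansion of definitions.
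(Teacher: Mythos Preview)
Your proof is correct. The paper states this lemma without proof, treating it as an elementary observation; your argument is exactly the routine unpacking of definitions one would supply, and there is nothing to compare.
\newpage
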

  \begin{corollary}
      A triangle-free graph has a triangle-free diagonal graph. 
    \end{corollary}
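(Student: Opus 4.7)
The plan is to invoke the preceding lemma and observe that an octahedron is not triangle-free. Recall that the octahedron, viewed as a simple graph, is the join of three 2-point anticliques, i.e.\ the complete tripartite graph $K_{2,2,2}$. Picking one vertex from each of the three anticliques in the join yields three vertices that are pairwise adjacent, so every octahedron contains a triangle.

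Therefore, if $\Gamma$ is triangle-free, it contains no induced octahedron. By the preceding lemma, every triangle in $\diag(\Gamma)$ would force an induced octahedron in $\Gamma$, so $\diag(\Gamma)$ has no triangles. There is no real obstacle here; the only thing to be slightly careful about is confirming that the ``octahedron'' appearing in the lemma is understood as $K_{2,2,2}$ (the join of three anticliques of size $2$), which is the natural graph-theoretic meaning consistent with the setup of the diagonal graph (three pairs of diagonally opposite vertices, each pair forming an induced square with the other two pairs).
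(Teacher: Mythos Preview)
Your proof is correct and takes essentially the same approach as the paper, which simply states the corollary as an immediate consequence of the preceding lemma without further elaboration. Your observation that the octahedron $K_{2,2,2}$ contains triangles is exactly the missing one-line justification.
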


    If $A$ is a set, let $\binom{A}{2}$ be the collection of 2--element
    subsets of $A$.
    The following relates joins in $\Gamma$ and joins in
    $\diag(\Gamma)$, and will be used later. 
    \begin{lemma}\label{diagonaljoins}
      Let $\Gamma$ be triangle-free. 
If $A\join B$ is a thick join in $\Gamma$ then $\binom{A}{2} \join \binom{B}{2}$ is
a join in $\diag(\Gamma)$.
If $A\join B$ is a join in $\diag(\Gamma)$ then $\supp(A)\join\supp(B)$ is a thick join in $\Gamma$ and
$\binom{\supp(A)}{2}\join\binom{\supp(B)}{2}$ is a join in $\diag(\Gamma)$. 
  \end{lemma}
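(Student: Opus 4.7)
The plan is to unpack the definitions and use the fact that in a triangle-free graph any two non-adjacent vertices with two common neighbors are automatically the diagonal of an induced square (their common neighbors cannot be adjacent, because that would create a triangle).

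For the first statement, assume $A\join B$ is a thick join in $\Gamma$. Triangle-freeness forces both $A$ and $B$ to be anticliques, and thickness gives $|A|,|B|\geq 2$. Take any pair of distinct vertices $a,a'\in A$ and any pair of distinct vertices $b,b'\in B$. They are pairwise related by: $a\not\sim a'$, $b\not\sim b'$ (anticliques), and $a\sim b$, $a\sim b'$, $a'\sim b$, $a'\sim b'$ (join). Hence $\{a,a',b,b'\}$ spans an induced square with $\{a,a'\}$ and $\{b,b'\}$ as its diagonals, so both are vertices of $\diag(\Gamma)$ connected by an edge. This is exactly what is needed for $\binom{A}{2}\join \binom{B}{2}$ to be a join in $\diag(\Gamma)$.

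For the second statement, assume $A\join B$ is a join in $\diag(\Gamma)$. Given any $a\in\supp(A)$ and $b\in\supp(B)$, pick vertices $\alpha\in A$ and $\beta\in B$ of $\diag(\Gamma)$ containing $a$ and $b$ respectively; the edge $\alpha\edge\beta$ in $\diag(\Gamma)$ gives an induced square in $\Gamma$ spanned by $\supp(\alpha)\cup\supp(\beta)$, which in particular contains the edge $a\edge b$. This shows $\supp(A)\join\supp(B)\subset\Gamma$. Thickness follows because each vertex $\{a_0,a_1\}\in A$ consists of non-adjacent vertices of $\Gamma$ (diagonal vertices of squares are non-adjacent), so $\supp(A)$ already contains two non-adjacent vertices and is therefore incomplete; likewise for $\supp(B)$.

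The final assertion in the second statement, that $\binom{\supp(A)}{2}\join\binom{\supp(B)}{2}$ is a join in $\diag(\Gamma)$, is then immediate from the first statement applied to the thick join $\supp(A)\join\supp(B)$. There is no real obstacle here; the only thing to keep in mind is the standing triangle-free hypothesis, which both identifies anticliques with join factors and automatically upgrades ``common neighbor pair'' to ``induced square.''
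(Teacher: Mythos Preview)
Your proof is correct and follows essentially the same approach as the paper: pick arbitrary pairs, use triangle-freeness to upgrade a 4-cycle to an induced square, and then apply the first statement to deduce the final assertion. The only cosmetic difference is that you spell out the anticlique observation more explicitly, while the paper phrases thickness via ``each factor has size at least two'' and triangle-freeness.
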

  \begin{proof}
    Suppose $A\join B$ is a thick join in $\Gamma$.
    Let $\{a_0,a_1\}$ be any 2--element subset of $A$, and let
    $\{b_0,b_1\}$ be any 2--element subset of $B$.
    Since $A\join B$ is a join, there is a square
    $\{a_0,a_1\}\join\{b_0,b_1\}$, which is induced, since $\Gamma$ is
    triangle-free, so there is an edge $\{a_0,a_1\}\edge\{b_0,b_1\}$ in
    $\diag(\Gamma)$.

    Conversely, suppose $A\join B$ is a join in $\diag(\Gamma)$.
    Let $a\in\supp(A)$ and $b\in \supp(B)$.
    Then there exists $a'$ and $b'$ such that $\{a,a'\}\in A$ and
    $\{b,b'\}\in B$, so $\{a,a'\}\edge\{b,b'\}$ is an edge in
    $A\join B$, hence in 
    $\diag(\Gamma)$, so $\{a,a'\}\join \{b,b'\}$ is an induced square in
    $\Gamma$.
    In particular, $a$ and $b$ are adjacent in $\Gamma$.
    Since this was true for arbitrary elements of $\supp(A)$ and
    $\supp(B)$, $\Gamma$ contains $\supp(A)\join \supp(B)$.
    Moreover, each factor has size at least two, since supports of
    single vertices of $\diag(\Gamma)$ have size two, and since
    $\Gamma$ is triangle-free this implies each factor is incomplete, so this is a
    thick join in $\Gamma$.
    Finally, by the first part of this lemma, $\binom{\supp(A)}{2}\join\binom{\supp(B)}{2}$ is a join in $\diag(\Gamma)$.
    \end{proof}

Next we state two easy lemmas to formalize constructions that have
appeared in examples elsewhere in the literature.
We use these results without further comment. 
\begin{lemma}
   Let $\Gamma'$ be a CFS graph and let $V$ be a subset of the vertices
  of $\Gamma'$.
  Let $\Gamma$ be obtained from $\Gamma'$ by adding some new edges
  between vertices of $V$.
  If no new edge connects the diagonals of a square of $\Gamma'$ then
  $\Gamma$ is CFS.
\end{lemma}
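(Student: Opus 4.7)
The plan is to show that $\diag(\Gamma')$ embeds as a subgraph of $\diag(\Gamma)$, so the full-support component of the former sits inside some component of the latter; then checking that ``non-cone'' behaves well under adding edges will finish it.

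First I would verify the subgraph inclusion $\diag(\Gamma')\subseteq \diag(\Gamma)$. A vertex of $\diag(\Gamma')$ is a pair $\{a,b\}$ that forms a diagonal of some induced square $\{a,b\}\join\{c,d\}$ in $\Gamma'$. The hypothesis says the non-edges $ab$ and $cd$ are not among the new edges added to form $\Gamma$, so they remain non-edges in $\Gamma$; the four side-edges are preserved (edges are not removed); hence $\{a,b\}\join\{c,d\}$ is still an induced square in $\Gamma$, so $\{a,b\}\in\diag(\Gamma)$ and moreover $\{a,b\}\edge\{c,d\}$ persists as an edge of $\diag(\Gamma)$. Thus every vertex and edge of $\diag(\Gamma')$ survives in $\diag(\Gamma)$.

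Next I would use this inclusion to locate a large connected component of $\diag(\Gamma)$. Since $\Gamma'$ is CFS, $\diag(\Gamma')$ has a connected component $C'$ whose support equals the set of non-cone vertices of $\Gamma'$. By the embedding, $C'$ is contained in a single connected component $C$ of $\diag(\Gamma)$, and $\supp(C)\supseteq \supp(C')$.

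Finally, I would compare the cone-vertex sets of $\Gamma$ and $\Gamma'$. Adding edges can only enlarge the link of a vertex, so every cone vertex of $\Gamma'$ is still a cone vertex of $\Gamma$; equivalently, every non-cone vertex of $\Gamma$ is a non-cone vertex of $\Gamma'$, i.e.\ lies in $\supp(C')\subseteq\supp(C)$. In the other direction, any vertex in $\supp(C)$ is a diagonal of some induced square in $\Gamma$ and so has two non-adjacent neighbors, hence is not a cone vertex of $\Gamma$. Combining these two inclusions, $\supp(C)$ equals the set of non-cone vertices of $\Gamma$, and $\Gamma$ is CFS. The only delicate point is the subgraph inclusion in the first step, and the ``no new diagonal'' hypothesis is exactly what makes it go through.
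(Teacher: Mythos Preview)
Your proof is correct and follows the natural approach; the paper in fact states this as one of ``two easy lemmas'' without providing any proof, so there is nothing to compare against. One minor point of phrasing: in your last paragraph, the fact that a vertex in $\supp(C)$ is not a cone vertex follows directly from it being non-adjacent to the opposite diagonal vertex $b$ of its induced square, not from having ``two non-adjacent neighbors''; but the conclusion is right.
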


In particular, if $V$ is a set of vertices of $\Gamma'$ that are
pairwise at distance at least 3 from one another, then no squares of $\Gamma'$
are killed, so $\Gamma$ is CFS.
If we space $V$ even wider, the new edges do not interact with squares
of $\Gamma'$ at all:
\begin{lemma}\label{largediameterleavesroomforsubgraphs}
  Let $\Gamma'$ be a CFS graph and let $V$ be a subset of the vertices
  of $\Gamma'$ that are pairwise at distance at least 4 apart.
  Let $\Gamma$ be obtained from $\Gamma'$ by adding some new edges
  between vertices of $V$, and let $\Gamma''$ be the subgraph induced
  by the new edges. 
Then $\Gamma$ is CFS and $\diag(\Gamma)$ is a disjoint union of $\diag(\Gamma')$ and $\diag(\Gamma'')$. 
\end{lemma}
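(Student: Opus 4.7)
The plan is to exploit the distance-$\geq 4$ hypothesis to show that the newly added edges are geometrically isolated from the existing squares of $\Gamma'$, so induced squares of $\Gamma$ split into those inherited from $\Gamma'$ and those coming from $\Gamma''$. From this the decomposition of the diagonal graph and the CFS conclusion will follow formally.

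First I analyze induced squares of $\Gamma$ that contain a new edge $v \edge w$ (so $v, w \in V$). Since diagonals of induced squares are non-edges, $v \edge w$ must appear as a side edge of the 4-cycle, say $v, d, c, w$, leaving three path-edges $vd$, $dc$, $cw$ to be classified as either old (in $\Gamma'$) or new. The key observation is that any one of them lying in $\Gamma'$ produces a contradiction. Suppose $vd \in \Gamma'$: then $d_{\Gamma'}(v,d) = 1$ forces $d \notin V$, so $dc$ cannot be a new edge (those require both endpoints in $V$), hence $dc \in \Gamma'$; this forces $c \notin V$ (else $d_{\Gamma'}(v,c) \le 2$ contradicts $v,c \in V$); then by the same reasoning $cw \in \Gamma'$, producing a $\Gamma'$-path $v - d - c - w$ of length $3$, contradicting $d_{\Gamma'}(v,w) \ge 4$. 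The case $cw \in \Gamma'$ is symmetric, while if only $dc \in \Gamma'$ (with $vd$ and $cw$ new) then $d, c \in V$ and $d_{\Gamma'}(d,c) = 1 < 4$ gives the contradiction. Hence all three path-edges are new, all four vertices lie in $V$, and the entire square is contained in $\Gamma''$.

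Next I verify that induced squares of $\Gamma'$ and of $\Gamma''$ each remain induced in $\Gamma$. For a $\Gamma'$-induced square the diagonals are joined by a length-$2$ $\Gamma'$-path, so they cannot both lie in $V$, and no new edge can connect them. For a $\Gamma''$-induced square the diagonals lie in $V$, so no $\Gamma'$-edge connects them, while being induced in $\Gamma''$ excludes new edges between them. Combined with Step~1, this gives a bijection between induced squares of $\Gamma$ and the disjoint union of induced squares of $\Gamma'$ and of $\Gamma''$. Since the diagonals of a $\Gamma'$-square lie in $\Gamma' \setminus V$ while those of a $\Gamma''$-square lie in $V$, the vertex sets of $\diag(\Gamma')$ and $\diag(\Gamma'')$ are disjoint, and the square-to-edge correspondence yields $\diag(\Gamma) = \diag(\Gamma') \sqcup \diag(\Gamma'')$ as graphs.

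For CFS, since $\Gamma$ and $\Gamma'$ have the same vertex set and $\Gamma' \subseteq \Gamma$, every cone vertex of $\Gamma'$ remains a cone vertex of $\Gamma$, so non-cone vertices of $\Gamma$ form a subset of those of $\Gamma'$; on the other hand, the support of any component of $\diag(\Gamma)$ consists of diagonals of induced squares and is therefore contained in the non-cone vertices of $\Gamma$. The full-support component $C'$ of $\diag(\Gamma')$ guaranteed by the CFS hypothesis is, by the decomposition above, a connected component of $\diag(\Gamma)$ whose support is the set of non-cone vertices of $\Gamma'$; this set is then squeezed between the non-cone vertices of $\Gamma$ and those of $\Gamma'$, forcing equality and verifying CFS for $\Gamma$. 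The main obstacle is the case analysis in Step~1, where one must carefully track which endpoints of each path-edge can lie in $V$; once this is settled, the remaining conclusions are essentially bookkeeping.
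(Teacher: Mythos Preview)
The paper states this lemma without proof, calling it an ``easy lemma'' and using it without further comment. Your argument is correct and fills in the details the paper omits. The three-case analysis of induced squares containing a new edge is clean and complete, and the CFS deduction via the squeeze on non-cone vertices is exactly right.

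One phrasing slip: when you write ``the diagonals of a $\Gamma'$-square lie in $\Gamma' \setminus V$'', that is not quite true---one diagonal vertex may lie in $V$. What you actually need (and what your argument in Step~2 already establishes) is that each diagonal \emph{pair} $\{a,b\}$ of a $\Gamma'$-square satisfies $\{a,b\}\not\subseteq V$, since $d_{\Gamma'}(a,b)=2$. This is still disjoint from the condition $\{a,b\}\subseteq V$ satisfied by diagonal pairs of $\Gamma''$-squares, so your disjointness conclusion stands.
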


In this way one can build all kinds of exotic examples of CFS graphs
starting from ones with sufficiently large diameter.
This is the content of an example of Behrstock \cite{MR3966609}, which
can be interpreted as taking $\Gamma'$
to be the large diameter CFS graph obtained by doubling a path graph of
length at least 12 and taking $V$ to be 5 vertices at pairwise
distance at least 3 in $\Gamma'$ whose induced graph
is a pentagon, see \fullref{fig:Behrstock}.
Russell, Spriano, and Tran do something similar \cite[Proposition~7.6]{RusSprTra23}.

\begin{figure}[h]
  \centering
  \includegraphics{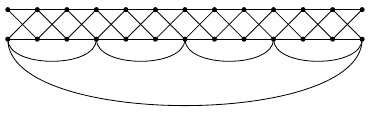}
  \caption{The example of Behrstock of a strongly CFS graph containing
    a stable cycle. }
  \label{fig:Behrstock}
\end{figure}

\subsection{First examples}
\subsubsection{Large diameter contructions}
Here are some simple examples of strongly CFS graphs that
can be constructed to have large diameter.
They serve as basic building blocks for further examples.

\begin{example}\label{snakewormspider}
  Consider the path graph $P_n$ of length $n>0$.
Doubling gives a strongly CFS
  graph of diameter $n$.

  For example $\double(P_7):=$
  \begin{tikzpicture}[scale=.5]
    \tiny
    \coordinate (a0) at (0,0);
    \coordinate (a1) at (0,1);
    \coordinate (b0) at (1,0);
    \coordinate (b1) at (1,1);
    \coordinate (c0) at (2,0);
    \coordinate (c1) at (2,1);
    \coordinate (d0) at (3,0);
    \coordinate (d1) at (3,1);
    \coordinate (e0) at (4,0);
    \coordinate (e1) at (4,1);
     \coordinate (f0) at (5,0);
     \coordinate (f1) at (5,1);
      \coordinate (g0) at (6,0);
      \coordinate (g1) at (6,1);
       \coordinate (h0) at (7,0);
    \coordinate (h1) at (7,1);
    \filldraw (a0) circle (1pt) (a1) circle (1pt) (b0) circle (1pt)
    (b1) circle (1pt) (c0) circle (1pt) (c1) circle (1pt) (d0) circle
    (1pt) (d1) circle (1pt) (e0) circle (1pt) (e1) circle (1pt) (f0)
    circle (1pt) (f1) circle (1pt) (g0) circle (1pt) (g1) circle (1pt)
    (h0) circle (1pt) (h1) circle (1pt);
    \draw (a0)--(b0)--(c0)--(d0)--(e0)--(f0)--(g0)--(h0) (a1)--(b1)--(c1)--(d1)--(e1)--(f1)--(g1)--(h1)
    (a0)--(b1)--(c0)--(d1)--(e0)--(f1)--(g0)--(h1) (a1)--(b0)--(c1)--(d0)--(e1)--(f0)--(g1)--(h0);
  \end{tikzpicture}

  Deleting a vertex from one or both ends does not change the strong CFS
  property:
 the graphs 
    \begin{tikzpicture}[scale=.5]
    \tiny
    \coordinate (a0) at (.5,0.5);
    \coordinate (b0) at (1,0);
    \coordinate (b1) at (1,1);
    \coordinate (c0) at (2,0);
    \coordinate (c1) at (2,1);
    \coordinate (d0) at (3,0);
    \coordinate (d1) at (3,1);
    \coordinate (e0) at (4,0);
    \coordinate (e1) at (4,1);
     \coordinate (f0) at (5,0);
     \coordinate (f1) at (5,1);
      \coordinate (g0) at (6,0);
      \coordinate (g1) at (6,1);
       \coordinate (h0) at (7,0);
    \coordinate (h1) at (7,1);
    \filldraw (a0) circle (1pt)  (b0) circle (1pt)
    (b1) circle (1pt) (c0) circle (1pt) (c1) circle (1pt) (d0) circle
    (1pt) (d1) circle (1pt) (e0) circle (1pt) (e1) circle (1pt) (f0)
    circle (1pt) (f1) circle (1pt) (g0) circle (1pt) (g1) circle (1pt)
    (h0) circle (1pt) (h1) circle (1pt);
    \draw (a0)--(b0)--(c0)--(d0)--(e0)--(f0)--(g0)--(h0) (b1)--(c1)--(d1)--(e1)--(f1)--(g1)--(h1)
    (a0)--(b1)--(c0)--(d1)--(e0)--(f1)--(g0)--(h1) (b0)--(c1)--(d0)--(e1)--(f0)--(g1)--(h0);
  \end{tikzpicture}
 and 
  \begin{tikzpicture}[scale=.5]
    \tiny
    \coordinate (a0) at (.5,0.5);
    \coordinate (b0) at (1,0);
    \coordinate (b1) at (1,1);
    \coordinate (c0) at (2,0);
    \coordinate (c1) at (2,1);
    \coordinate (d0) at (3,0);
    \coordinate (d1) at (3,1);
    \coordinate (e0) at (4,0);
    \coordinate (e1) at (4,1);
     \coordinate (f0) at (5,0);
     \coordinate (f1) at (5,1);
      \coordinate (g0) at (6,0);
      \coordinate (g1) at (6,1);
       \coordinate (h0) at (6.5,.5);
    \filldraw (a0) circle (1pt)  (b0) circle (1pt)
    (b1) circle (1pt) (c0) circle (1pt) (c1) circle (1pt) (d0) circle
    (1pt) (d1) circle (1pt) (e0) circle (1pt) (e1) circle (1pt) (f0)
    circle (1pt) (f1) circle (1pt) (g0) circle (1pt) (g1) circle (1pt)
    (h0) circle (1pt);
    \draw (a0)--(b0)--(c0)--(d0)--(e0)--(f0)--(g0)--(h0) (b1)--(c1)--(d1)--(e1)--(f1)--(g1)
    (a0)--(b1)--(c0)--(d1)--(e0)--(f1)--(g0) (b0)--(c1)--(d0)--(e1)--(f0)--(g1)--(h0);
  \end{tikzpicture}
  are both strongly CFS.
    
Consider a graph $\Gamma$ that is a star with $m>2$ legs
of length $n\geq 2$, so that there are $m$ leaves that are pairwise at
distance $2n$ from one another. Take $\double(\Gamma)$.
This is the $(m,n)$--\emph{spider}.
Optionally, from each foot we can either leave both valence 2 vertices,
and call this a pincer foot, or delete one of them. 
The result is commensurable to a RAAG tree, and has the property that
vertices on different feet have pairwise distance $2n$. 

Here is the $(4,3)$--spider with one pincer foot:
     \begin{tikzpicture}[scale=.4]
    \tiny
    \coordinate (x) at (0,0);
    \coordinate (y) at (0,.75);
    \coordinate (a00) at (3,1);
    \coordinate (a01) at (2.5,1.5);
    \coordinate (a10) at (3.25,-.5);
    \coordinate (a11) at (2.75,0);
    \coordinate (a20) at (2.75,-1.5);
    \coordinate (a21) at (2.5,-1.25);
     \coordinate (b00) at (3.5,3);
    \coordinate (b01) at (3,3.5);
    \coordinate (b10) at (4.25,1);
    \coordinate (b11) at (3.75,1);
    \coordinate (b2) at (3.75,-1);
      \coordinate (c00) at (-3,1);
    \coordinate (c01) at (-2.5,1.5);
    \coordinate (c10) at (-3.25,-.5);
    \coordinate (c11) at (-2.75,0);
    \coordinate (c20) at (-2.75,-1.5);
      \coordinate (d00) at (-3.5,3);
    \coordinate (d01) at (-3,3.5);
    \coordinate (d10) at (-4.25,1);
    \coordinate (d11) at (-3.75,1);
    \coordinate (d2) at (-3.75,-1);
    \filldraw (x) circle (1pt)  (y) circle (1pt)
    (a00) circle (1pt) (a01) circle (1pt) (a10) circle (1pt) (a11) circle
    (1pt) (a20) circle (1pt) (a21) circle (1pt) (b00) circle (1pt) (b01) circle (1pt) (b10) circle (1pt) (b11) circle
    (1pt) (b2) circle (1pt) (c00) circle (1pt) (c01) circle (1pt) (c10) circle (1pt) (c11) circle
    (1pt) (c20) circle (1pt) (d00) circle (1pt) (d01) circle (1pt) (d10) circle (1pt) (d11) circle
    (1pt) (d2) circle (1pt);
    \draw (x)--(a00)--(a10)--(a20) (y)--(a01)--(a11)--(a21)
    (x)--(a01)--(a10)--(a21) (y)--(a00)--(a11)--(a20);
    \draw (x)--(b00)--(b10)--(b2) (y)--(b01)--(b11)--(b2)
    (x)--(b01)--(b10) (y)--(b00)--(b11);
    \draw (x)--(c00)--(c10)--(c20) (y)--(c01)--(c11)
    (x)--(c01)--(c10) (y)--(c00)--(c11)--(c20);
    \draw (x)--(d00)--(d10)--(d2) (y)--(d01)--(d11)--(d2)
    (x)--(d01)--(d10) (y)--(d00)--(d11);
  \end{tikzpicture}
  \end{example}

\begin{proposition}\label{embeddingintostrong}
  Every finite connected graph $\Gamma$ can be isometrically embedded in a
  strongly CFS graph $\Gamma'$.
  Furthermore, if $\Gamma$ is triangle-free then so is $\Gamma'$, and the construction can be arranged so that 
  $v,w\in\Gamma$ are the diagonal vertices of an induced square of
  $\Gamma'$ if and only if they are diagonal vertices of an induced
  square of $\Gamma$.
\end{proposition}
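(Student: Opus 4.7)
The plan is to build $\Gamma'$ by attaching a long, doubled-path ``spider'' to $\Gamma$ and then adding small bridge gadgets to secure connectivity of the diagonal graph. Choose $n\ge\diam(\Gamma)+2$, introduce a hub consisting of a pair of twin vertices, and glue to each $v\in\Gamma$ a doubled path of length $n$ from $v$ (as a foot) to the hub, as in \fullref{snakewormspider}. This spider alone is triangle-free and strongly CFS, and the leg length $n>\diam(\Gamma)$ guarantees that any path in $\Gamma'$ between two distinct vertices of $\Gamma$ that passes through the spider is strictly longer than the corresponding $\Gamma$-path, so $\Gamma\hookrightarrow\Gamma'$ is isometric. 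Because the spider-neighborhoods of distinct $\Gamma$-vertices lie on disjoint legs, the spider contributes no new common neighbor to any pair in $\Gamma$; hence the diagonal pairs among $\Gamma$-vertices are the same in $\Gamma'$ as in $\Gamma$, and no new triangles are created.

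The remaining obstacle is the connectivity of $\diag(\Gamma')$: if $\Gamma$ itself contains induced squares then the corresponding $\Gamma$-internal diagonals form a subgraph of $\diag(\Gamma')$ that cannot reach the spider's diagonal component, because any common neighbor of two $\Gamma$-vertices must itself lie in $\Gamma$. To resolve this, for each pair $\{v,w\}\in\diag(\Gamma)$ I add a twin pair of bridge vertices $d_{vw}$, $d'_{vw}$, both adjacent to $v$, $w$, and to the two twin vertices two steps down the $v$-leg. The bridge produces an induced square with diagonals $\{v,w\}$ and $\{d_{vw},d'_{vw}\}$, and a second induced square with diagonals $\{d_{vw},d'_{vw}\}$ and the spider-diagonal $\{v,z\}$, where $z$ is the chosen vertex two steps down the $v$-leg. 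This links the $\Gamma$-diagonal $\{v,w\}$ to the spider's $\diag$-component via a $\diag$-path of length two, so $\diag(\Gamma')$ is connected and $\Gamma'$ is strongly CFS.

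The hardest part is the bookkeeping that the bridges do not break the earlier conditions. Triangle-freeness holds because the four neighbors of each bridge vertex are pairwise non-adjacent: $v$ and $w$ are non-adjacent as diagonals, the two twins on the leg are non-adjacent, and $v$ is at spider-distance two from each leg vertex at position $n-2$ once $n\ge3$. Diagonal preservation persists because each bridge $d_{xy}$ is adjacent to exactly two $\Gamma$-vertices, namely the pair $\{x,y\}$ that was already a diagonal in $\Gamma$; hence no new common neighbor is contributed to any $\Gamma$-pair that was not already diagonal. I anticipate the subtlest check to be verifying that no bridge introduces, via chains of adjacencies through the spider or through other bridges, a hidden common neighbor between two $\Gamma$-vertices from distinct diagonal pairs.
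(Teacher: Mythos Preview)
Your construction works but differs from the paper's. Rather than giving every vertex of $\Gamma$ its own leg and then adding bridge gadgets, the paper attaches one single-vertex foot per vertex of $\Gamma$ \emph{not} lying in any induced square, and one \emph{pincer} foot (a pair of twin leaves) per connected component of $\diag(\Gamma)$, identifying the two pincer vertices with a chosen diagonal pair $\{a,b\}$ from that component. The pincer does the job of your bridges for free: $\{a,b\}$ is then simultaneously a vertex of $\diag(\Gamma)$ and of $\diag(\text{spider})$, directly joining the two pieces, and the only mixed squares are $\{a,b\}\join\{c_i,d_j\}$ with $c_i$ a $\Gamma$--common-neighbor and $d_j$ a spider--common-neighbor of $a,b$. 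This makes the strongly-CFS check and the no-new-diagonals check essentially one line each, and yields a smaller $\Gamma'$.

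Your strongly-CFS argument has a gap. Linking each $\{v,w\}\in\diag(\Gamma)$ to the spider component via $\{d_{vw},d'_{vw}\}$ is not by itself enough to conclude that $\diag(\Gamma')$ is connected, because the bridges create many \emph{further} vertices of $\diag(\Gamma')$ that you never mention: pairs $\{d_{vw},x\}$ for assorted $x$, and pairs like $\{w,\ell_i\}$, which lies in neither $\diag(\Gamma)$ nor $\diag(\text{spider})$ since $w$ and $\ell_i$ sit on different legs. These are all easily absorbed---any square containing a bridge vertex has one diagonal inside $\{v,w,\ell_1,\ell_2\}$, and each of the six such pairs is either already linked or, in the case of $\{w,\ell_i\}$, has $\{d_{vw},d'_{vw}\}$ as the companion diagonal of its unique square---but the verification must be written out. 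Your closing worry, by contrast, is already settled by your own observation that each bridge vertex is adjacent to exactly the two $\Gamma$-vertices of its defining diagonal pair; no chain through spider or bridge vertices can manufacture a new common neighbor for a non-diagonal $\Gamma$-pair.
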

\begin{proof}
  Let $\Gamma$ be a connected finite graph. Let $\Delta$ be a spider
  with legs of length $\max\{3,\lceil \frac{\diam(\Gamma)}{2}\rceil\}$, with one
  single-vertex foot for each vertex of $\Gamma$ that is not contained
  in an induced square of $\Gamma$ and one pincer foot
  for each component of $\diag(\Gamma)$.
  Identify each single vertex foot with its corresponding vertex of
  $\Gamma$.
  For each component of $\diag(\Gamma)$, choose a vertex, which is a
  pair of vertices of $\Gamma$ that are the diagonal of some induced
  square, and identify these two vertices with the two vertices of the
  corresponding pincer foot of the spider. 
  The resulting graph $\Gamma'$ contains $\Gamma$ as an
  isometrically embedded subgraph because the legs of the spider are
  long, so we have not created any shortcuts between vertices of
  $\Gamma$. In particular, all of the squares in $\Gamma$ survive as squares in $\Gamma'$.
  Some vertices of $\Delta$ may get identified, but those that do
  come from different feet, so they have distance at least 6 in $\Delta$; 
  thus, all of the squares of $\Delta$ survive as squares in
  $\Gamma'$, and we did not create any triangles.

 By construction, every vertex of $\Gamma'$ lies in a square, so to establish that $\Gamma'$ is strongly CFS, it is enough to show that $\diag(\Gamma')$ is connected.
Since $\Delta$ is strongly CFS, we conclude that vertices of
$\diag(\Gamma')$ that are the diagonal of a square with edges only
from $\Gamma$ or only from $\Delta$ are contained in a common
connected component of $\diag(\Gamma')$. 
  This leaves us to consider vertices of $\diag(\Gamma')$ coming from squares that use edges from
  both $\Gamma$ and $\Delta$.
  The only vertices of $\Delta$ that attach to $\Gamma$ and
  are at distance less than $6$ from one another inside $\Delta$ are the pincers of a
  foot, so we may assume $a$ and $b$ are the pincers, and take $d_0$ and $d_1$ to be
  their two common neighbors in $\Delta$.
By construction, $\{a,b\}$ is also the diagonal of a square in
$\Gamma$, so $a$ and $b$ have common neighbors $c_0,c_1,\dots$ in
$\Gamma$. 
 For any $i$ and $j$, there is a square $\{a,b\}\join\{c_i,d_j\}$ with
 $\Gamma$--edges $c_i\join\{a,b\}$ and $\Delta$ edges
 $d_j\join\{a,b\}$.
 We have that $\{c_i,d_j\}$ is a vertex of $\diag(\Gamma')$ that does
 not occur in either $\diag(\Delta)$ or $\diag(\Gamma)$, but it is
 adjacent to the vertex $\{a,b\}$, which occurs in both.

  The further claim about squares follows from the construction, as the `mixed'
  squares with edges from $\Gamma$ and $\Delta$ only occur when a
  pincer foot attached to the diagonal of an existing square of $\Gamma$.
\end{proof}

We remark that $\Gamma$ embeds isometrically in $\double(\Gamma)$,
which is strongly CFS, but $\double(\Gamma)$ does not satisfy the further claim about squares. 

\cite[Proposition~7.6]{RusSprTra23} says
any graph $\Gamma$ can be isometrically embedded into a CFS graph
$\Gamma'$ in such a way that $\Gamma$ is square complete in
$\Gamma'$.
If $\Gamma$ contains a square then the $\Gamma'$ produced by that construction will not be strongly CFS, whereas
the $\Gamma'$ produced by 
\fullref{embeddingintostrong} will not contain $\Gamma$ as a square
complete subgraph. 
This tradeoff is unavoidable; it is not possible to embed a subgraph
containing a square into a strongly CFS graph and have it be square
complete, see
\fullref{strongly_CFS_and_no_cone_implies_minsquare}.

\subsubsection{Blow-up graphs}
\begin{definition}\label{def:mixedmultiple}
  Let $(\Delta,\omega)$ be a \emph{weighted graph}, consisting of a
  graph $\Delta$ and a weight function $\omega\from
  \mathrm{Vertices}(\Delta)\to\mathbb{N}$.
  The \emph{blow-up graph} $\Delta^\omega$ is the graph with $\omega(v)$--many vertices
  $(v,0)$,\dots,$(v,\omega(v)-1)$ for each vertex $v\in \Delta$, and
  such that $(v,i)$ and $(w,j)$ are connected by an edge in $\Delta^\omega$
  if and only if $v$ and $w$ are connected by an edge in $\Delta$.
\end{definition}
\begin{remark}
 $\Gamma\cong
  \Gemini(\Gamma)^\omega$, where $\omega(M)$ is the number
  of vertices in the twin module $M$. 
\end{remark}
\begin{numberedremark}\label{twinfree}
Without changing the graph structure of $\Delta^\omega$, 
  we may assume that $\Delta$ is twin-free by replacing
  $\Delta$ with $\Gemini(\Delta)$ and labelling each twin module by
  the sum of the labels of its vertices.
  This only changes the labelling of vertices of $\Delta^\omega$.

Assuming $\Delta$ is twin-free implies, in particular, that every vertex of $\Delta$ has at most one  adjacent leaf.
Furthermore, if $\Delta$ is twin-free then for $\Gamma:=\Delta^\omega$
we have that $\Gemini(\Gamma)\cong\Delta$ and that $\omega$ gives
  the cardinality of each twin module of $\Gamma$.
Thus, if $\omega$ takes any odd values then $\Gamma$ is not a graph
double.
On the other hand, if $\omega$ takes only even values then $\Gamma\cong\double(\Delta^{\omega/2})$.
\end{numberedremark}
\begin{proposition}\label{mixed_multiples_are_strongly_CFS}
 Let $\Gamma:=\Delta^\omega$ be a blow-up graph, with the conditions that $\Delta$ is connected, triangle-free,
 twin-free, has at least two vertices, and 
 $\omega$ takes the value 1 only on leaves of $\Delta$, and if
 $\Delta$ is a single edge then $\omega$ does not take the value 1.  
  Then $\Gamma$ is triangle-free and strongly CFS.
\end{proposition}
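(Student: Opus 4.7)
The plan is to analyze induced squares of $\Gamma=\Delta^\omega$ via the projection $\pi\from\Gamma\to\Delta$ sending $(v,i)\mapsto v$. Triangle-freeness is immediate: vertices sharing a $\pi$--image are twins in $\Gamma$, hence non-adjacent, so any triangle in $\Gamma$ would project to a triangle in $\Delta$, which is forbidden. A routine case analysis of which pairs of corners of an induced square of $\Gamma$ share a $\pi$--image shows there are three types: (i) the square projects to an induced square of $\Delta$; (ii) the square projects to a single edge $v\edge w$ and uses two twins over each of $v,w$ (requiring $\omega(v),\omega(w)\geq 2$); or (iii) the square projects to a length--two path $b\edge v\edge d$ in $\Delta$ and uses two twins over the center $v$ (requiring $\omega(v)\geq 2$) together with one representative each of $b,d$. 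Dually, every vertex of $\diag(\Gamma)$ is either a \emph{twin pair} $\{(v,i),(v,j)\}$ over some $v$ with $\omega(v)\geq 2$, or a \emph{lifted diagonal} $\{(v,i),(w,j)\}$ projecting to a diagonal of $\diag(\Delta)$.

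I next verify that $\Gamma$ has no cone vertex and that every vertex of $\Gamma$ lies in some induced square. A cone vertex of $\Gamma$ would force some $v$ with $\omega(v)=1$ to be a cone vertex of $\Delta$; but the hypothesis makes such $v$ a leaf and excludes the single-edge case for $\Delta$, so $\Delta$ has at least three vertices and a leaf of it is not a cone vertex. Placing each vertex in an induced square is a short check using type (ii) or type (iii) squares. Together these ensure that the support of any connected component of $\diag(\Gamma)$ that meets every vertex is exactly the set of non-cone vertices of $\Gamma$ (which here is all of $\Gamma$), so to complete the proof it suffices to show $\diag(\Gamma)$ is connected.

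Connectedness of $\diag(\Gamma)$ is proved in three stages. First, all twin pairs over a fixed $v$ with $\omega(v)\geq 2$ lie in a single component: I show below that $v$ has a neighbor $w$ with $\omega(w)\geq 2$, and then type (ii) squares make every twin pair over $v$ adjacent in $\diag(\Gamma)$ to every twin pair over $w$, so all twin pairs over $v$ are linked through any one twin pair over $w$. Second, twin pairs over distinct vertices $v,v'$ with $\omega(v),\omega(v')\geq 2$ are connected by iterating the first stage along a shortest $v$--$v'$ path in $\Delta$; the internal vertices of such a path all have degree at least two, hence are non-leaves, hence have weight at least two. Third, every lifted diagonal $\{(v,i),(w,j)\}$ is connected via a type (iii) square to a twin pair over a common $\Delta$--neighbor $u$ of $v$ and $w$; such $u$ has degree at least two, hence $\omega(u)\geq 2$.

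The main obstacle is the first stage: guaranteeing the required neighbor $w$ of $v$ with $\omega(w)\geq 2$. If $v$ has degree at least two in $\Delta$, twin-freeness of $\Delta$ prevents it from having two leaf neighbors (two leaves sharing a common neighbor would themselves be twins), so at least one neighbor is a non-leaf and therefore has weight at least two. If instead $v$ is itself a leaf, its unique neighbor $w$ is either a non-leaf (and so $\omega(w)\geq 2$) or also a leaf, in which case $\Delta$ is the single edge $v\edge w$ and the single-edge hypothesis directly prohibits $\omega=1$. Thus the twin-freeness and single-edge exclusion hypotheses are exactly what is needed to close off these degenerate configurations.
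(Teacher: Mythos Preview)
Your proof is correct and follows essentially the same approach as the paper: both classify the induced squares of $\Gamma$ into three types via the projection $\pi$, use twin-freeness of $\Delta$ to guarantee that every vertex has a neighbor of weight at least two, and then show $\diag(\Gamma)$ is connected by exhibiting the twin pairs as a connected backbone to which all remaining diagonals attach via type~(iii) squares. The paper packages the backbone as the image of an explicit map $\phi\from\Delta\to\diag(\Gamma)$ while you build it by walking paths in $\Delta$, but the content is the same.

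One small imprecision worth noting (it does not affect your argument): your dichotomy asserts that every non-twin-pair vertex of $\diag(\Gamma)$ ``projects to a diagonal of $\diag(\Delta)$'', but a pair $\{(b,k),(d,\ell)\}$ arising from a type~(iii) square with center $v$ need only have $b,d$ at distance two in $\Delta$, not diagonal in any $\Delta$-square. Your stage-three argument is unaffected because it only uses the existence of a common $\Delta$-neighbor $u$ of $b,d$, which then has degree at least two and hence $\omega(u)\ge 2$.
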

\begin{proof}
  Projection to the first coordinate gives a map $\Gamma\to\Delta$
  that sends edges to edges, and the preimage of a vertex is an
  anticlique.
  It follows that, when $\Delta$ is not a single vertex, $\Gamma$ is
  connected and triangle-free if and only if $\Delta$ is.

  If $\Delta$ is a single edge then by hypothesis $\Gamma\cong
  K_{m,n}$ with $m,n\geq 2$.
  It is easy to see that this graph is strongly CFS, so assume
  $\Delta$ is not a single edge.

  For each edge $v\edge w$ in $\Delta$ such that both of $\omega(v)$ and
  $\omega(w)$ are greater than one there are induced squares
  $\{(v,i),(v,j)\}\join\{(w,k),(w,\ell)\}$ in $\Gamma$ for all $i\neq j$ and
  $k\neq \ell$.
  Also, since weight 1 only occurs on leaves, for every embedded segment $v\edge w\edge x$ in
  $\Delta$ there are squares $\{(v,i),(x,j)\}\join\{(w,k),(w,\ell)\}$
  in $\Gamma$ for all $k\neq\ell$.
Finally, if $\{u,v\}\join\{w,x\}$ is an induced square in $\Delta$
then there are induced squares
$\{(u,i),(v,j)\}\join\{(w,k),(x,\ell)\}$ in $\Gamma$.
Conversely, each induced square in $\Gamma$ projects to either a
square or a path of length 1 or 2 in $\Delta$, so 
the squares constructed above account for 
all of the induced squares of $\Gamma$, and this, in turn,  accounts for
all vertices and edges of $\diag(\Gamma)$.
Since $\Delta$ is not a single edge, every vertex lies on some embedded
segment of length 2, so every vertex belongs to some square.
Thus, the support of $\diag(\Gamma)$ is all of $\Gamma$.
We will show that $\diag(\Gamma)$ is connected, which then implies
$\Gamma$ is strongly CFS.

  Since $\omega$ takes the value 1 only on leaves and $\Delta$ is
  twin-free, each vertex is adjacent to at most one vertex with
  weight 1.
  Furthermore, since $\Delta$ is twin-free and not a single edge,
  every vertex is adjacent to a non-leaf, so every vertex is adjacent
  to a vertex with weight greater than 1.

Define $\phi\from\Delta\to\diag(\Gamma)$ by $\phi(v)=\{(v,0),(v,1)\}$ if $\omega(v)>1$.
If $\omega(v)=1$ then the hypotheses on $\Delta$ imply there is a unique
neighbor $w$ of $v$, $\omega(w)>1$, and there exist vertices at distance 2 from $v$.
Choose any $x$ at distance 2 from $v$, and define
$\phi(v)=\{(v,0),(x,0)\}$, which is a diagonal of the induced square
$\{(v,0),(x,0)\}\join\{(w,0),(w,1)\}$ of $\Gamma$. 
Then $\phi$ sends edges of $\Delta$ to edges of $\diag(\Gamma)$ and is
injective on vertices; the only potential source of collisions would
be if $v$ and $x$ are both leaves weighted 1 at distance 2 from each
other such that $\phi(v)=\{(v,0),(x,0)\}=\phi(x)$, but this is ruled
out by the assumption that $\Delta$ is twin-free.
Thus, $\phi(\Delta)$ is connected.

We claim that every vertex of $\diag(\Gamma)$ is in $\phi(\Delta)$
or is adjacent to a vertex of $\phi(\Delta)$.
Since vertices of $\diag(\Gamma)$ come from one of the three types of
induced squares in $\Gamma$ enumerated above, we only need to consider the following three cases. 
If $\omega(v)>1$ then, since $v$ has some neighbor $w$ not weighted 1,
$\{(v,i),(v,j)\}\edge\{(w,0),(w,1)\}=\phi(w)$ in $\diag(\Gamma)$ for all
$i\neq j$. 
Similarly, if $d(v,x)=2$ then they have a common neighbor $w$, which
is not a leaf, so not weighted 1, and
$\{(v,i),(x,j)\}\edge\{(w,0),(w,1)\}=\phi(w)$ in $\diag(\Gamma)$ for
all $i\neq j$.
Finally, if $\{(u,i),(v,j)\}\join\{(w,k),(x,\ell)\}$ is an induced
square in $\Gamma$ coming from an induced square
$\{u,v\}\join\{w,x\}$ of $\Delta$ then none of the vertices are
leaves in $\Delta$, so none are weighted 1, so
$\{(v,i),(x,j)\}\edge\{(w,0),(w,1)\}=\phi(w)$. 
\end{proof}

In \fullref{sec:clone} we upgrade the conclusion of 
\fullref{mixed_multiples_are_strongly_CFS} to `RAAGedy'.

\subsection{Enumeration of small triangle-free CFS graphs}\label{enumeration}
We enumerated triangle-free CFS graphs of small order.
\fullref{cfsenumeration} gives the number
of isomorphism types of graph by number of vertices (V) and edges (E). Blank entries
are 0.

\begin{table}[h!]
  \tiny
\begin{tabular}[h]{c|cccccccccc}
  E\textbackslash V& 4 & 5&6&7&8&9&10&11&12\\
  \hline \cline{2-2}
  4  &1&&&&&&&&\\
  5  &&&&&&&&&\\\cline{3-3}
  6  &&1&&&&&&&\\
  7  &&&&&&&&&\\\cline{4-4}
  8  &&&2&&&&&&\\
  9  &&&1&&&&&&\\
  10&&&&3&&&&&\\\cline{5-5}
  11&&&&1&&&&&\\
  12&&&&1&8&&&&\\
  13&&&&&6&&&&\\\cline{6-6}
  14&&&&&3&19&&&\\
  15&&&&&2&21&&&\\
  16&&&&&1&17&61&&\\
  17&&&&&&7&115&&\\\cline{7-7}
  18&&&&&&4&119&207&\\
  19&&&&&&1&71&616&\\
  20&&&&&&1&37&950&828\\
  21&&&&&&&17&782&3820\\\cline{8-8}
  22&&&&&&&7&461&8722\\
  23&&&&&&&3&212&10863\\
  24&&&&&&&2&103&8492\\
  25&&&&&&&1&42&4856\\
  26&&&&&&&&19&2385\\\cline{9-9}
  27&&&&&&&&7&1082\\
  28&&&&&&&&4&477\\
  29&&&&&&&&1&204\\
  30&&&&&&&&1&89\\
  31&&&&&&&&&38\\\cline{10-10}
  32&&&&&&&&&17\\
  33&&&&&&&&&7\\
  34&&&&&&&&&3\\
  35&&&&&&&&&2\\
  36&&&&&&&&&1\\
\end{tabular}
\caption{Number of isomorphism types of triangle-free CFS graphs by
  numbers of vertices and edges. In each column the horizontal line designates the point
  below  which all graphs are forced to be bipartite. 
  }
\label{cfsenumeration}
\end{table}

Mantel's Theorem says that a triangle-free graph
with $n$ vertices has at most $\lfloor\frac{n^2}{4}\rfloor$ edges,
with equality if and only if the graph is $K_{\frac{n}{2},\frac{n}{2}}$
when $n$ is even or $K_{\frac{n+1}{2},\frac{n-1}{2}}$ when $n$ is
odd.
A complete bipartite graph is strongly CFS when both parts are non-singletons,
so this gives us the unique largest triangle-free CFS graph for
each fixed number of vertices.
Similarly, it is an exercise in extremal graph theory to show that
  a triangle-free graph with $n$ vertices and strictly greater than 
  $\frac{(n-1)^2}{4}+1$ edges is bipartite.
  Further, it can be shown that these conditions also imply the graph
  is strongly CFS.

\subsection{Inductive construction}
\cite[Theorem~II]{BehHagSis17cox} says that the class of thick RACGs
is the class whose defining graphs belong to the smallest class of
graphs containing the square and closed under the following
operations:
\begin{itemize}
\item Cone off a subgraph that is not a clique.
\item Amalgamate two graphs $\Gamma_1$, $\Gamma_2$ in the class over a
  common subgraph $\Gamma'$ that is not a
  clique. 
\item Add additional edges to the previous item between vertices of
  $\Gamma_1-\Gamma'$ and vertices of $\Gamma_2-\Gamma'$.
\end{itemize}

CFS graphs are thick, so they can be built iteratively as above.
However:
\begin{itemize}
\item Not all thick graphs are CFS.
  \item The `amalgamate and add edges' operation is useful for
    constructing examples, but makes inductive
    arguments more complicated. 
  \end{itemize}
  We show that for CFS graphs only the cone-off operation is needed.

We say that a property
$\mathcal{P}$ is \emph{constructible by coning from a square} if for
every graph $\Gamma$ with property $\mathcal{P}$ there exists a
sequence $\Gamma_0\subset\cdots\subset\Gamma_n$ where each $\Gamma_i$
has property $\mathcal{P}$, $\Gamma_0$ is a
square, $\Gamma_n=\Gamma$, and $\Gamma_{i+1}=\Gamma_i\join_{N_i} v_i$
is obtained from $\Gamma_i$ by coning off some subgraph $N_i$.
  
\begin{proposition}\label{inductiveconstruction}
 The property of being an incomplete CFS graph is constructible by
 coning from a square:
An incomplete CFS graph can be built from a square as a sequence of CFS graphs
such that each successive step is a cone-off of an incomplete
subgraph of the previous graph. 
\end{proposition}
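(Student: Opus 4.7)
The plan is to proceed by induction on $|V(\Gamma)|$. The base case is $|V(\Gamma)| = 4$: since a CFS graph contains an induced square and $\Gamma$ has only four vertices, $\Gamma$ is itself a square, giving the trivial sequence.

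For the inductive step with $|V(\Gamma)| > 4$, the goal is to find a vertex $v \in V(\Gamma)$ that can be removed leaving $\Gamma - v$ incomplete and CFS, and such that $\lk_\Gamma(v)$ is not a clique. Then $\Gamma = (\Gamma - v) \join_{\lk_\Gamma(v)} v$, and the inductive hypothesis applied to $\Gamma - v$ yields the desired sequence. The easy subcase is when $\Gamma$ has a cone vertex $v$: cone vertices cannot be diagonal vertices of induced squares, so $\diag(\Gamma - v) = \diag(\Gamma)$ and $\Gamma - v$ is immediately CFS. Moreover, $\lk_\Gamma(v) = V(\Gamma) - \{v\}$ must be incomplete, else $\Gamma$ itself would be complete, so it is not a clique and $\Gamma - v$ is itself incomplete.

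The substantive subcase is when $\Gamma$ has no cone vertex. Then every vertex appears in the support of a full-support component $C$ of $\diag(\Gamma)$, so in particular every vertex lies in some induced square. Hence for any $v$ the link $\lk_\Gamma(v)$ contains the two non-adjacent vertices forming the opposite diagonal of a square containing $v$, so $\lk_\Gamma(v)$ is not a clique. Moreover $\Gamma - v$ cannot be complete, for otherwise every non-edge of $\Gamma$ would be incident to $v$, which would preclude the existence of an induced square in $\Gamma$. So the only remaining requirement is to exhibit $v$ for which $\Gamma - v$ is CFS.

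The main obstacle is thus the following claim: an incomplete CFS graph with no cone vertex and more than four vertices always admits some vertex $v$ such that $\diag(\Gamma - v)$ has a connected component whose support is all non-cone vertices of $\Gamma - v$. I would try to pick $v$ as a \emph{peripheral} vertex of $C$, for instance by analyzing a spanning tree of $C$ and choosing $v$ so that the diagonals in $C$ containing $v$ can be discarded without disconnecting $C$ or depriving any other vertex of a surviving diagonal. The subtle point is that a diagonal $\{x,y\} \in C$ with $x, y \neq v$ can still be destroyed in $\diag(\Gamma - v)$ if every induced square realizing $\{x,y\}$ happens to use $v$; so the chosen $v$ must avoid such obstructions for every remaining non-cone vertex simultaneously. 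I expect this is where the bulk of the work lies, likely via a careful combinatorial argument exploiting the connectedness of $C$ and the hypothesis $|V(\Gamma)| > 4$.
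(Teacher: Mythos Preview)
Your reduction is sound, but the proof is incomplete at exactly the point you flag: you need that every incomplete CFS graph with more than four vertices and no cone vertex has a vertex $v$ with $\Gamma - v$ still CFS, and you do not prove this. The spanning-tree heuristic for choosing a ``peripheral'' $v$ is not obviously adequate: as you note, removing $v$ can kill diagonals $\{x,y\}$ with $x,y\neq v$ whenever every square realizing $\{x,y\}$ passes through $v$, and controlling this simultaneously for all remaining vertices is precisely the hard combinatorics. So the proposal is a correct outline with the crux left as an exercise.

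The paper avoids this difficulty by building up rather than tearing down. Starting from a single square $\Gamma_0\subset\Gamma$ and a single edge $\Delta_0$ in a full-support component $C\subset\diag(\Gamma)$, it maintains at each stage a connected $\Delta_i\subset C$ and an induced $\Gamma_i\subset\Gamma$ with $\supp(\Delta_i)$ covering all non-cone vertices of $\Gamma_i$ (so $\Gamma_i$ is CFS by construction). To extend, it looks at a vertex of $C$ adjacent to $\Delta_i$ whose support is not yet in $\Gamma_i$: if the new diagonal is $\{v,w\}$ with $w$ already in $\Gamma_i$, cone off $\lk_\Gamma(v)\cap\Gamma_i$ (Case~A); if both $v,w$ are new, an argument shows both are adjacent to everything in $\Gamma_i$, so first cone by $v$ (Case~B, making $v$ temporarily a cone vertex), then the next step is forced to be Case~A adding $w$. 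Tracking $\Delta_i$ explicitly gives direct control over the CFS property at every stage, sidestepping the existence question your top-down approach must confront.
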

\begin{proof}
  Let $\Gamma$ be an incomplete CFS graph.
 Since cone vertices can be added last and not every vertex is a cone
 vertex, it suffices to assume that $\Gamma$ has no cone vertex.
 Let $C$ be a component of $\diag(\Gamma)$ with $\supp(C)=\Gamma$.

  Pick an edge $\Delta_0:=\{a,b\}\edge\{c,d\}$ in $C$.
  By definition, this corresponds to an induced square
  $\Gamma_0:=\{a,b\}\join\{c,d\}$ in $\Gamma$.
  If $\Gamma$ is a square there is nothing more to prove, so assume
  $\Delta_0$ is not all of $C$. 

  We inductively construct a sequence of connected subgraphs
  $\Delta_0\subset\cdots\subset\Delta_i\subset C$ and induced CFS
  subgraphs $\Gamma_0\subset\cdots\subset\Gamma_i\subset \Gamma$, such
  that each $\Gamma_{i}$ is obtained from $\Gamma_{i-1}$ by coning off
  an incomplete subgraph, and 
  with the property that for each $i$ one of the following is true:
  \begin{itemize}
  \item $\Delta_i$ is a full support connected
    subgraph of $\diag(\Gamma_i)$ (so $\Gamma_i$ has no cone vertex).
   \item $\Gamma_i$ has exactly one cone vertex, $v$, and $\Delta_i$
     is a connected subgraph of $\diag(\Gamma_i)$ with support
     $\Gamma_i\setminus v$.
   \end{itemize}
   
Suppose $\Delta_i$ and $\Gamma_i$ have been constructed. 
  Suppose $\Delta_i$ is not all of $C$. 
Extend $\Delta_i$ to the largest connected subgraph
 $\Delta_i'$ of $\diag(\Gamma)$ that contains $\Delta_i$ and whose
 support is contained in $\Gamma_i$.
 Suppose $\Delta_i'$ is not all of $C$. 
 We extend to $\Delta_{i+1}$ and $\Gamma_{i+1}$ according to two 
 (exhaustive)
 cases:
 
 Case A: There is a vertex $\{v,w\}$ adjacent to some $\{e,f\}\in\Delta_i'$ in
 $\diag(\Gamma)$ such that $v\notin \Gamma_i$ and $w\in\Gamma_i$.

Case B: There is no such vertex as in Case A, but there is a vertex $\{v,w\}$ adjacent to some $\{e,f\}\in\Delta_i'$ in
 $\diag(\Gamma)$ such that $v\notin \Gamma_i$ and $w\notin\Gamma_i$.

 In Case A, let $\Delta_{i+1}:=\Delta_i'\cup\{v,w\}$, and let $\Gamma_{i+1}$ be
 the induced subgraph of $\Gamma$ containing $\Gamma_i$ and $v$.
 We have $\Gamma_{i+1}= \Gamma_i\join_{\lk_\Gamma(v)\cap\Gamma_i}v$.
 Since $\{e, f\}* \{v, w\}$ is an induced square in $\Gamma$, the subset $\lk_\Gamma(v)\cap\Gamma_i\subset \Gamma_i$ contains the non-adjacent vertices $e$
 and $f$, but does not contain $w$.  So it is a proper, incomplete
 subgraph of $\Gamma_i$.

 In Case B, supposing $\Gamma_i$ has no cone vertex, we claim that both  $v$ and $w$ are adjacent to every
 vertex of $\Gamma_i$ and define $\Delta_{i+1}:=\Delta_i'$ and
 $\Gamma_{i+1}:=\Gamma_i \join v$.
 By construction, $\Gamma_i$ is not a clique.
 
 To see the claim, suppose, to the contrary, that there is a vertex
 $x$ of $\Gamma_i$ that is not adjacent to $v$.
Since $x$ is not a cone vertex of $\Gamma_i$, there exist vertices of
$\Delta_i'$ with support containing $x$.
We may assume $x$ and $\{x,y\}\in\Delta_i'$ have been chosen such that
that  $\{x,y\}$ is a closest vertex to $\{e,f\}$
 in $\Delta_i'$ among those whose support contains a vertex not
 adjacent to $v$ or $w$. 
 Take a geodesic
 $\{e,f\}=\{e_0,f_0\}$, $\{e_1,f_1\}$,\dots, $\{e_n,f_n\}=\{x,y\}$ in
 $\diag(\Gamma)$.
 The `closest' hypothesis implies that for $j<n$ both of $e_j$ and $f_j$ are adjacent
to both of 
 $v$ and $w$,
so $\{v,w\}\edge \{e_j,f_j\}$ is an edge in
 $\diag(\Gamma)$.
 Thus, by replacing $\{e,f\}$ with $\{e_{n-1},f_{n-1}\}$, we may assume $\{x,y\}$ and $\{e,f\}$ are adjacent in
 $\diag(\Gamma)$.
 But this gives a join $\{v,w,x,y\}\join\{e,f\}$ in $\Gamma$ with $v$ not adjacent to $x$, yielding an induced square
 $\{v,x\}*\{e,f\}$ in $\Gamma$,
 which is a contradiction, because in that case 
 $\{v,x\}$ is available for Case A.

 Notice that after performing a Case B extension, the vertex $\{v,w\}$
 has $v\in\Gamma_{i+1}$ and $w\notin\Gamma_{i+1}$, so the subsequent
 inductive step will be a Case A extension.
 This justifies the supposition that $\Gamma_i$ has no cone vertex
 when Case B is applied. 

We now check that if the induction hypotheses are true up to stage $i$
then they are true at $i+1$.
Since the $\Gamma_i$ are induced subgraphs of $\Gamma$, their diagonal
graphs are subgraphs of $\diag(\Gamma)$.

Suppose $\Gamma_i$ was obtained from
$\Gamma_{i-1}$ by a Case B extension.
Since we never perform consecutive Case B extensions, the support of
$\Delta_{i-1}$ is all of $\Gamma_{i-1}$, so the support of $\Delta_i$
is all of $\Gamma_i$ except the cone vertex added in the last
extension. 

 Suppose $\Gamma_i$ is obtained by a Case A extension and
 $\Gamma_{i-1}$ has no cone vertex.
 Let $\{v,w\}$ and $\{e,f\}$ be as in the description of Case A. 
 Since $\Gamma_{i-1}$ has no cone vertex, $\Delta_{i-1}$ has support
 all of $\Gamma_{i-1}$.
 Since $\Delta_i$ contains $\Delta_{i-1}$ and $\{v,w\}$, its support
 contains all vertices of $\Gamma_i$.

 Finally, suppose $\Gamma_{i-1}$ has a cone vertex.
 This occurs when $\Gamma_{i-1}$ is constructed from $\Gamma_{i-2}$ by
 a Case B extension.
 Let $\{v,w\}$ and $\{e,f\}$ be as in the description of Case B, so
 that $\Gamma_{i-1}=\Gamma_{i-2}\join v$ and $\Gamma_i$ is obtained from
 $\Gamma_{i-1}$ by coning-off the proper subgraph $\Gamma_{i-2}
 \subset \Gamma_{i-1}$ to $w$.
Since we never perform consecutive Case B extensions, the support of
$\Delta_{i-2}$ contains all vertices of $\Gamma_{i-2}$.
Since $\Delta_i$ contains $\Delta_{i-2}$ and $\{v,w\}$, its support is
all vertices of $\Gamma_i$. 
\end{proof}

\begin{corollary}
  An incomplete CFS graph with $n\geq 4$ vertices has at least $2n-4$ edges.
\end{corollary}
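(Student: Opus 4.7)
The plan is to argue by induction using the constructive characterization provided by \fullref{inductiveconstruction}. Given an incomplete CFS graph $\Gamma$ with $n\geq 4$ vertices, that proposition gives a sequence $\Gamma_0\subset\cdots\subset\Gamma_k=\Gamma$ of incomplete CFS graphs such that $\Gamma_0$ is a square and each $\Gamma_{i+1}=\Gamma_i\join_{N_i} v_i$ is obtained by coning off an incomplete subgraph $N_i$ of $\Gamma_i$. Let $n_i$ and $e_i$ denote the number of vertices and edges of $\Gamma_i$; I want to show $e_i\geq 2n_i-4$ for every $i$.

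For the base case, $\Gamma_0$ is a square, so $n_0=4$ and $e_0=4=2n_0-4$. For the inductive step, note that the subgraph $N_i$ being coned off is, by hypothesis, \emph{incomplete}, hence not a clique. Since a single vertex (and the empty graph) is vacuously a clique, $N_i$ must contain at least two vertices. Therefore the cone-off operation adds one new vertex $v_i$ and at least two new edges (one from $v_i$ to each vertex of $N_i$), giving $n_{i+1}=n_i+1$ and $e_{i+1}\geq e_i+2$. Combining with the inductive hypothesis:
\[
e_{i+1}\geq e_i+2\geq (2n_i-4)+2 = 2(n_i+1)-4 = 2n_{i+1}-4.
\]

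Since $\Gamma=\Gamma_k$, we conclude $e_k\geq 2n_k-4$, which is the desired bound. The argument is essentially complete once \fullref{inductiveconstruction} is invoked; the only subtlety is the observation that ``incomplete'' forces the coned-off subgraph to have at least two vertices, which is what makes each inductive step contribute the needed $+2$ edges against the $+1$ vertex. No further obstacles arise.
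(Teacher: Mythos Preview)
Your proof is correct and follows exactly the same approach as the paper's: induct on the cone-off construction from \fullref{inductiveconstruction}, observing that each step adds one vertex and at least two edges since the coned-off subgraph is incomplete. The paper's version is simply a terser statement of the same argument.
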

\begin{proof}
  The claim is true for a square. Proceed by induction on the
  construction by cone-offs, each of which adds
  one vertex and at least two edges. 
\end{proof}
\begin{corollary}
  A triangle-free CFS graph with $n\geq 4$ vertices has between $2n-4$ and
  $\lfloor\frac{n^2}{4}\rfloor$ edges, and both extremes are
  realized. 
\end{corollary}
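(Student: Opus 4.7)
The plan is to assemble the two bounds from results already in hand and then exhibit explicit families that attain each extremum.

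For the lower bound, I would observe that any triangle-free graph on $n\geq 4$ vertices is incomplete (since $K_n$ has triangles), so the immediately preceding corollary directly yields at least $2n-4$ edges. For the upper bound, the triangle-free hypothesis puts us in the setting of Mantel's theorem, which is recalled in the enumeration subsection: a triangle-free graph on $n$ vertices has at most $\lfloor n^2/4\rfloor$ edges. So both bounds are immediate.

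To realize the upper extremum I would use the complete bipartite graph $K_{\lfloor n/2\rfloor,\lceil n/2\rceil}$, which is triangle-free, has exactly $\lfloor n^2/4\rfloor$ edges (the equality case of Mantel), and, since $n\geq 4$ ensures both parts have size at least $2$, is strongly CFS by the observation recorded in the enumeration subsection.

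To realize the lower extremum, I would use the doubled paths from \fullref{snakewormspider}. The graph $\double(P_k)$ has $2(k+1)$ vertices and $4k$ edges (each edge of $P_k$ lifts to four edges), and is triangle-free and strongly CFS. For even $n\geq 4$, choose $k=\tfrac{n}{2}-1$, giving $n$ vertices and $4k=2n-4$ edges. For odd $n\geq 5$, take $k=\tfrac{n-1}{2}$ and delete one of the two valence-$2$ end vertices of $\double(P_k)$, as is done in \fullref{snakewormspider}; this removes one vertex and exactly two edges, producing a triangle-free strongly CFS graph on $n$ vertices with $2n-4$ edges. Since all inputs are straightforward—the bounds are immediate corollaries of existing results and the realizing families are already constructed and verified in the preceding subsections—there is no substantive obstacle; the only thing to get right is the parity bookkeeping in the lower-extreme construction.
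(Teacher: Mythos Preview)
Your argument is correct. The bounds are derived exactly as in the paper (the preceding corollary for the lower bound, Mantel's theorem for the upper), and your upper-extreme family $K_{\lfloor n/2\rfloor,\lceil n/2\rceil}$ matches the paper's.

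The only difference is your lower-extreme family: you use doubled paths $\double(P_k)$, handling even and odd $n$ separately by deleting an end vertex in the odd case. The paper instead uses the suspension $K_{2,n-2}$ uniformly for all $n\geq 4$; this has $2(n-2)=2n-4$ edges, is triangle-free, and is strongly CFS since both parts have size at least $2$. Your construction is perfectly valid, but $K_{2,n-2}$ avoids the parity bookkeeping entirely and is worth knowing as the canonical minimal example.
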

\begin{proof}
  The minimum number of edges occurs for the suspension  $K_{2,n-2}$, and the maximum
  occurs for either $K_{\frac{n}{2},\frac{n}{2}}$ or
  $K_{\frac{n-1}{2},\frac{n+1}{2}}$, according to the parity of $n$.
\end{proof}
\begin{corollary}
  The property of being a triangle-free CFS graph is constructible by
  coning from a square.
\end{corollary}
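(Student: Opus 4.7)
The plan is to invoke \fullref{inductiveconstruction} and show that, for triangle-free $\Gamma$, the sequence it produces already stays inside the triangle-free world. First I would observe that a triangle-free CFS graph is automatically incomplete: it contains an induced square, so it has at least four vertices, and $K_n$ for $n\geq 3$ contains triangles. Thus \fullref{inductiveconstruction} applies and yields a sequence $\Gamma_0\subset\cdots\subset\Gamma_n=\Gamma$ of incomplete CFS graphs obtained from a square by successive cone-offs of incomplete subgraphs.

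The remaining work is to verify that when $\Gamma$ is triangle-free the construction in the proof of \fullref{inductiveconstruction} can always be carried out using only Case A extensions. In Case A, $\Gamma_{i+1}$ is by definition the induced subgraph of $\Gamma$ on $\Gamma_i\cup\{v\}$, which is triangle-free whenever $\Gamma$ is. In Case B, however, $\Gamma_{i+1}=\Gamma_i\join v$, and the proof of \fullref{inductiveconstruction} verifies that the cone vertex $v$ really is adjacent in $\Gamma$ to every vertex of $\Gamma_i$. Since $\Gamma_i\supset\Gamma_0$ always contains at least one edge $a\edge b$, a Case B extension would immediately produce a triangle $\{v,a,b\}$ in $\Gamma$.

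The main thing to check is then that Case A remains available whenever $\Delta_i'\neq C$, and this is where the triangle-free hypothesis does the real work. The Case B argument in the proof of \fullref{inductiveconstruction} derives its key conclusion --- that $v$ is adjacent in $\Gamma$ to every vertex of $\Gamma_i$ --- precisely under the assumption that no Case A vertex exists. In the triangle-free setting, that conclusion is incompatible with $\Gamma_i$ containing an edge, so the argument becomes a genuine contradiction and instead produces a Case A vertex. Consequently only Case A extensions occur, every $\Gamma_i$ is an induced subgraph of $\Gamma$, and each is therefore triangle-free. Combined with the CFS property guaranteed by \fullref{inductiveconstruction}, this shows that each intermediate $\Gamma_i$ is triangle-free CFS, establishing the corollary.
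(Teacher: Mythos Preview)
Your argument is correct, but it does more work than needed. The proof of \fullref{inductiveconstruction} already establishes that every $\Gamma_i$ is an \emph{induced subgraph of $\Gamma$}---this is part of the inductive hypothesis stated there, and it holds in Case~B as well: once the proof verifies that $v$ is adjacent in $\Gamma$ to every vertex of $\Gamma_i$, the cone $\Gamma_i\join v$ \emph{is} the induced subgraph of $\Gamma$ on $\Gamma_i\cup\{v\}$. So the paper's proof of this corollary is a single sentence: subgraphs of triangle-free graphs are triangle-free, hence every $\Gamma_i$ produced by \fullref{inductiveconstruction} is triangle-free.

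Your route---arguing that Case~B is impossible when $\Gamma$ is triangle-free because it would force a triangle on $v$ and any edge of $\Gamma_i$---is valid and yields the mildly stronger conclusion that the construction proceeds purely through Case~A steps. That is a nice observation, but it is not required for the corollary, and it makes the proof longer than the one-line argument the paper gives.
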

\begin{proof}
  If $\Gamma$ is triangle-free then so are all subgraphs, so the proof
  of \fullref{inductiveconstruction} produces a triangle-free graph at
  each stage.
\end{proof}
\begin{example}
The property of being a cone-vertex-free CFS graph is not
  constructible by coning from a square:  The 1--skeleton of the
  octahedron is a CFS graph with no cone vertex, but removing any vertex leaves a cone on a
  square.  
\end{example}
\begin{example}
  The property of being strongly CFS is not constructible by coning
  from a square:
  The graph
  $\Gamma:=\texttt{circulant}(11,\{1,3\})=Cay(\mathbb{Z}_{11},\{1,3\})$
  shown in \fullref{fig:circulant}
  is the smallest example of a triangle-free strongly CFS graph such
  that for every vertex $v$ the graph $\Gamma\setminus\{v\}$ is not strongly CFS. Thus, $\Gamma$ cannot be built by coning from a square while
  remaining strongly CFS at each step.
  \begin{figure}[h]
    \centering
    \includegraphics{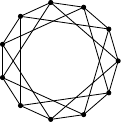}
    \caption{$\texttt{circulant}(11,\{1,3\})$}
    \label{fig:circulant}
  \end{figure}
\end{example}

The graph of \fullref{fig:circulant} turns out not to be RAAGedy.
In fact, we do not know an example of a triangle-free graph without
separating cliques that is RAAGedy but
cannot be constructed by coning from a square through RAAGedy graphs. 
\begin{question}\label{question:inductive_construction_raagedy}
  Is the property of being triangle-free and RAAGedy constructible
  by coning from a square?
\end{question}

\section{Establishing that a graph is RAAGedy}\label{sec:positive}
In \fullref{sec:neardouble} we consider the possibility that a graph
that is not a graph double might become a graph double after applying
a sequence of link doubling operations.
At the group level this corresponds to a sequence of passages to index-two subgroups, ending with one that is commensurable to a RAAG, so the
original group is also commensurable to a RAAG.
It turns out, \fullref{recognizeneardouble}, that if such a sequence
exists, then at most two link doubling steps are necessary, and the
existence of the sequence and identification of which vertices to
double over the links of are recognizable in the presentation graph. 

In \fullref{sec:clone} and \fullref{sec:unfolding} we introduce two
new operations, \emph{cloning} and \emph{unfolding}, that change a
graph $\Gamma$ without changing the quasiisometry type of $W_\Gamma$. 
This gives us three such operations, the third being link doubling.
Unlike link doubling, for cloning and unfolding we only know that they
produce quasiisometric groups, not whether the resulting group is
commensurable to the one we started with. 
In \fullref{sec:qiclasses} we explore the connections within our
enumeration of small CFS graphs given by these three graph
modification operations. 

\subsection{Near doubles}\label{sec:neardouble}
Recall the graph constructions of doubling, link doubling, and star
doubling of \fullref{prelim:graphs} and that
  $A_\Delta$ is commensurable to $W_{\double(\Delta)}$, which is \fullref{thm:davisjanuszkiewicz}.
  The proof of the following is elementary and is left to the reader.
We will not use the lemma directly, but its interpretation as a statement that
the property of being a graph double is stable under link doubling
inspired \fullref{def:neardouble}.
  \begin{lemma}\label{exchangedoubles}
  $\double^\circ_{(v,1)}(\double(\Gamma))\cong\double(\double^*_v(\Gamma))$, that is, they are isomorphic graphs.
\end{lemma}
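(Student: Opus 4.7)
The plan is to establish the isomorphism by an explicit bijection of vertex sets, after which the adjacency check is essentially automatic because on both sides an edge depends only on whether the underlying vertices of $\Gamma$ are adjacent.

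First I will unpack the two vertex sets. On the left, $\double(\Gamma)$ has vertex set $\{(w,\epsilon) : w\in\Gamma,\,\epsilon\in\{0,1\}\}$, and $\lk_{\double(\Gamma)}((v,1))=\{(w,\epsilon) : w\in\lk(v),\,\epsilon\in\{0,1\}\}$. The link-double at $(v,1)$ therefore yields three types of vertices: (I) $((w,\epsilon),i)$ for $w\notin\st(v)$ and $\epsilon,i\in\{0,1\}$, all distinct; (II) $((w,\epsilon),\ast)$ for $w\in\lk(v)$ with $((w,\epsilon),0)$ and $((w,\epsilon),1)$ identified; (III) $((v,0),0)$ and $((v,0),1)$, with $((v,1),0)$ and $((v,1),1)$ deleted. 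On the right, $\double^*_v(\Gamma)$ has vertices $[w,\epsilon]$ with $[w,0]=[w,1]=:[w,\ast]$ for $w\in\st(v)$, and doubling gives: (I$'$) $([w,\epsilon],i)$ for $w\notin\st(v)$; (II$'$) $([w,\ast],i)$ for $w\in\lk(v)$; (III$'$) $([v,\ast],i)$. A quick count of each type on the two sides matches, confirming equal vertex cardinalities.

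Next I will define the bijection $\phi$ by the natural assignment
$$((w,\epsilon),i)\mapsto([w,\epsilon],i)\text{ on type (I)},\quad ((w,\epsilon),\ast)\mapsto([w,\ast],\epsilon)\text{ on type (II)},\quad ((v,0),i)\mapsto([v,\ast],i)\text{ on type (III).}$$
This map is well-defined because on type (II) the source identifies the last coordinate and the target identifies the middle one, so the two collapsed pairs of coordinates are available to record each other's information without conflict.

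Finally I will verify that $\phi$ preserves adjacency. The key point is that on the left, $\double(\double(\Gamma))$ has $(w,\epsilon,i)\edge(x,\delta,j)$ iff $w\edge x$ in $\Gamma$, and this property passes cleanly through the identifications and the removal of $((v,1),0),((v,1),1)$ because deleted vertices are only adjacent to things in $\lk(v)$ which still survive. On the right, $\double(\double^*_v(\Gamma))$ has $([w,\epsilon],i)\edge([x,\delta],j)$ iff $w\edge x$ in $\Gamma$, by the same reasoning. Since $\phi$ preserves the underlying $\Gamma$-vertex in every case, adjacency transfers.

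The only real obstacle is bookkeeping the identifications on types (II) and (III); once one writes out the type decomposition, the map and its edge-preservation are forced. The simple-graph convention prevents any loss or duplication of edges under the identifications, since no $\Gamma$-vertex is adjacent to itself.
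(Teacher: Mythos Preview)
The paper leaves this lemma to the reader, so there is no proof to compare against; I will simply assess your argument.

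Your vertex bookkeeping is correct, but the adjacency check fails because you have misread the edge set of the link double. The construction $\double^\circ_{(v,1)}(\double(\Gamma))$ is \emph{not} a quotient of $\double(\double(\Gamma))$: in the definition of $\double^\circ$, the expression $\Gamma\times\{0,1\}$ denotes two \emph{disjoint} copies of $\Gamma$ (edges only within each copy), which are then glued along $\lk(v)$. This is confirmed by the proof of \fullref{doublingmodules}, where it is explicitly used that $(x,0)\edge(u,1)$ requires $x\in\lk(v)$. Consequently, on the left side, for type-I vertices (those with $w,x\notin\st(v)$) one has
\[
((w,\epsilon),i)\edge((x,\delta),j)\quad\Longleftrightarrow\quad w\edge x \text{ and } i=j,
\]
whereas on the right side, for the images under your map $\phi$,
\[
([w,\epsilon],i)\edge([x,\delta],j)\quad\Longleftrightarrow\quad w\edge x \text{ and } \epsilon=\delta,
\]
since in $\double^*_v(\Gamma)$ the two copies are glued along $\st(v)$, and the outer $\double$ makes the last coordinate free. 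These conditions do not match, so your $\phi$ is not an isomorphism.

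The fix is simple: swap the two binary coordinates on type I, sending $((w,\epsilon),i)\mapsto([w,i],\epsilon)$, and keep your maps on types II and III. With this change every case of the adjacency check goes through; the essential reason is that the link double freezes the \emph{outer} coordinate (off the link) while the star double freezes the \emph{inner} coordinate (off the star), and the outer $\double$ on each side liberates the remaining one.
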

\begin{definition}\label{def:neardouble}
  A \emph{near double} is a graph $\Gamma$ such that there
  exists a sequence of link doubles of $\Gamma$ such that the result
  is isomorphic to some $\double(\Delta)$. 
\end{definition}
\begin{proposition}\label{neardoublecommraag}
  If $\Gamma$ is a near double then $W_\Gamma$ is commensurable to a RAAG.
\end{proposition}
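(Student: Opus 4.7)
The plan is a direct chain-of-commensurabilities argument, combining the existing exact sequence from \fullref{vertexdouble} with Davis--Januszkiewicz's \fullref{thm:davisjanuszkiewicz}, exploiting the transitivity of the commensurability relation.

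First I would unpack the definition of near double: by \fullref{def:neardouble}, there is a finite sequence of vertices $v_0, v_1, \ldots, v_{n-1}$ and graphs $\Gamma = \Gamma_0, \Gamma_1, \ldots, \Gamma_n$ such that $\Gamma_{i+1} = \double^\circ_{v_i}(\Gamma_i)$ for each $i$, and $\Gamma_n \cong \double(\Delta)$ for some graph $\Delta$. Applying the first bullet of \fullref{vertexdouble} at each stage, $W_{\Gamma_{i+1}}$ sits as the kernel of a surjection $W_{\Gamma_i} \to \mathbb{Z}_2$, so it is an index-$2$ subgroup of $W_{\Gamma_i}$. Composing these inclusions, $W_{\Gamma_n}$ is an index-$2^n$ subgroup of $W_\Gamma$; in particular, $W_{\double(\Delta)}$ is commensurable to $W_\Gamma$.

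Next I would invoke \fullref{thm:davisjanuszkiewicz}, which says that $A_\Delta$ is commensurable to $W_{\double(\Delta)}$. Since commensurability is an equivalence relation, transitivity of commensurability through the intermediate group $W_{\double(\Delta)} = W_{\Gamma_n}$ gives that $W_\Gamma$ is commensurable to $A_\Delta$, completing the proof.

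There is no real obstacle here: the proposition is a two-line consequence of the definitions and the two cited results. The only subtlety worth spelling out is that one is iterating the link-doubling construction, which requires observing that the output of a link double is itself a graph (so that the next link double is defined), and that the finite-index inclusions compose to give a finite-index inclusion. Everything else is bookkeeping.
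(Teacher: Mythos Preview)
Your proof is correct and is essentially identical to the paper's own proof, which simply reads ``Apply \fullref{vertexdouble} and \fullref{thm:davisjanuszkiewicz}.'' You have just spelled out the details that the paper leaves implicit.
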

\begin{proof}
  Apply  \fullref{vertexdouble} and \fullref{thm:davisjanuszkiewicz}.
\end{proof}

We first give a characterization
of graph doubles, which then enables us to 
give an example of a near double that is not a
double.
After that we characterize near doubles in
\fullref{recognizeneardouble}.
It turns out that the length of the necessary sequence of link doubles
is bounded by 2, but \fullref{recognizeneardouble} is a more concrete
description than just testing all such bounded sequences.

Recall the definition of a twin module in \fullref{sec:twins}.

\begin{proposition}\label{characterizedouble}
  Let $\Gamma$ be a graph. The following are equivalent:
  \begin{enumerate}
  \item $\Gamma$ is a double, ie, there exists $\Delta$ such
    that $\Gamma\cong\double(\Delta)$.\label{item:isdouble}
      \item Every twin module in $\Gamma$ has even order. \label{item:evenorder}
    \item $\Gamma$ admits a fixed-point-free involution that fixes
      each twin module.\label{item:involution}
  \end{enumerate}
\end{proposition}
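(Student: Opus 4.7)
The plan is to prove the cycle of implications $(\ref{item:isdouble})\Rightarrow(\ref{item:involution})\Rightarrow(\ref{item:evenorder})\Rightarrow(\ref{item:isdouble})$.

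For $(\ref{item:isdouble})\Rightarrow(\ref{item:involution})$, suppose $\Gamma\cong \double(\Delta)$. First I would describe the twin modules of $\double(\Delta)$: since the link of $(v,\epsilon)$ in $\double(\Delta)$ is $\lk_\Delta(v)\times\{0,1\}$, two vertices $(v,\epsilon)$ and $(w,\delta)$ are twins in $\double(\Delta)$ if and only if $\lk_\Delta(v)=\lk_\Delta(w)$, so the twin modules of $\double(\Delta)$ are exactly $M\times\{0,1\}$ as $M$ ranges over twin modules of $\Delta$. Then the swap $\sigma\from (v,\epsilon)\mapsto(v,1-\epsilon)$ is a fixed-point-free involution of $\double(\Delta)$ that preserves each twin module setwise.

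The implication $(\ref{item:involution})\Rightarrow(\ref{item:evenorder})$ is immediate: a fixed-point-free involution that preserves a finite set $M$ setwise partitions $M$ into 2-element orbits, so $|M|$ is even.

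The main content is $(\ref{item:evenorder})\Rightarrow(\ref{item:isdouble})$. Assume every twin module of $\Gamma$ has even order. For each twin module $M$, choose a fixed-point-free involution $\sigma_M$ on $M$, and let $\sigma$ be the resulting fixed-point-free involution of $\Gamma$. Define $\Delta$ to be the quotient graph whose vertices are the $\sigma$-orbits $\{v,\sigma v\}$, with an edge between orbits $\{v,\sigma v\}$ and $\{w,\sigma w\}$ precisely when $v\edge w$ in $\Gamma$. The key claim to verify here is that this is well-defined: if $v\edge w$ in $\Gamma$, then also $\sigma v\edge w$, $v\edge\sigma w$, and $\sigma v\edge\sigma w$. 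This follows from the fact that $\sigma v$ and $v$ lie in the same twin module, so $\lk(v)=\lk(\sigma v)$, hence $w\in\lk(v)\Leftrightarrow w\in\lk(\sigma v)$, and similarly swapping roles of $v,w$. I should also note that the case $v=w$ cannot occur with $v\edge \sigma v$, because $v$ and $\sigma v$ lie in the common twin module $M$, which is always an anticlique (since $v\in\lk(\sigma v)=\lk(v)$ would force the loop $v\edge v$). Finally I would define $\phi\from\Gamma\to\double(\Delta)$ by $\phi(v):=(\{v,\sigma v\},0)$ and $\phi(\sigma v):=(\{v,\sigma v\},1)$ after fixing, for each orbit, a choice of representative; this is a bijection on vertex sets, and the equivalences in the previous line show it is an edge-preserving bijection, hence a graph isomorphism.

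The only delicate point is the well-definedness of adjacency on the quotient $\Delta$, which is exactly where the twin hypothesis is used; everything else is a direct unpacking of \fullref{def:graphdoubles}.
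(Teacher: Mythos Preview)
Your proof is correct and follows essentially the same approach as the paper's, just with the implication cycle traversed in the opposite direction: the paper proves $(\ref{item:isdouble})\Rightarrow(\ref{item:evenorder})\Rightarrow(\ref{item:involution})\Rightarrow(\ref{item:isdouble})$, but the substantive step---constructing $\Delta$ as the quotient of $\Gamma$ by the involution and verifying the adjacency relation is well-defined via the twin property---is the same in both.
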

\begin{proof}
  \eqref{item:isdouble}$\implies$\eqref{item:evenorder}, since
  vertices of a double come in pairs with equal links. 

  \eqref{item:evenorder}$\implies$\eqref{item:involution} by choosing
  a pairing of the vertices within each twin module. 
  The map that exchanges the vertices of each such pair satisfies \eqref{item:involution}.

  \eqref{item:involution}$\implies$\eqref{item:isdouble} by defining a
  graph $\Delta$ by taking a
  vertex for each orbit of the involution $\phi$,  and connecting two vertices
  $v$ and $w$ by an edge if the twin module of $v$ is adjacent to the
  twin module of $w$ in the twin graph $\Gemini(\Gamma)$.
  By construction, there is a bijection between 
  vertices of $\double(\Delta)$ and vertices of $\Gamma$; it is
  $(v,0)\mapsto v$ and $(v,1)\mapsto \phi(v)$.
Every edge of $\double(\Delta)$ belongs to an induced square of the
form $\{(v,0),(v,1)\}*\{(w,0),(w,1)\}$, where $v\edge w$ is an edge of
$\Delta$, and corresponds to an induced square
$\{v,\phi(v)\}*\{w,\phi(w)\}$ of $\Gamma$.
Conversely, every edge of $\Gamma$ belongs to such an induced square,
since twin modules are anticliques and $v\edge w$ implies $v\edge
\phi(w)$, since $w$ and $\phi(w)$ are twins.
\end{proof}

 \begin{example}
    The graph $\Gamma$ on the left in \fullref{fig:first_link_doubling_example} is not a double; vertices 6 and 7 have no twin.
Two link doubles turn $\Gamma$ into a doubled octagon, as shown in \fullref{fig:first_link_doubling_example}.  We compactify notation by writing
  $(v,\alpha)$ as $v_\alpha$, 
  $(v,\alpha,\beta)$ as $v_{\alpha\beta}$, etc.
  \begin{figure}[h]
    \centering
    \begin{subfigure}{.18\textwidth}
      \centering
      \raisebox{-40pt}{\includegraphics{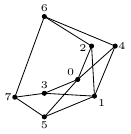}}
    \end{subfigure}
\hfill$\stackrel{\double^\circ_7}{\longrightarrow}$\hfill
      \begin{subfigure}{.25\textwidth}
      \centering
      \raisebox{-40pt}{\includegraphics{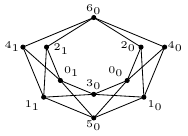}}
    \end{subfigure}
\hfill$\stackrel{\double^\circ_{(6,0)}}{\longrightarrow}$\hfill
      \begin{subfigure}{.27\textwidth}
      \centering
      \raisebox{-40pt}{\includegraphics{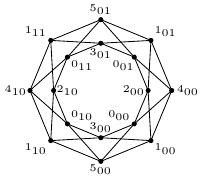}}
     \end{subfigure}
    \caption{First link doubling example.}
    \label{fig:first_link_doubling_example}
  \end{figure}
\end{example}

In Proposition \ref{recognizeneardouble} below, we describe twin modules as \emph{even/odd} according to their
orders.
We refer to the \emph{link} $\lk(M)$ of a twin module $M$, which
we take to mean $\lk(M):=\lk_{\Gemini(\Gamma)}(M)$, that is, 
the link of the vertex $M$ in the twin graph $\Gemini(\Gamma)$ (recall \fullref{def:twingraph}).
Let  $M_\Gamma(v)$ denote the twin module of $\Gamma$ containing $v$.

\begin{lemma}\label{doublingmodules}
  Let $\Gamma$ be an incomplete triangle-free graph without separating cliques.
  For $v\in \Gamma$,   
   the twin modules of $\double^\circ_v(\Gamma)$ are as follows:
    \begin{itemize}
    \item If $A=M_\Gamma(v)$ and $|A|>1$ then there is a single twin
      module corresponding to $A$  in $\double^\circ_v(\Gamma)$  of
      size $2(|A|-1)$, given by $(A\setminus\{v\})\times\{0,1\}$.
    \item If $A$ is a satellite of $M_\Gamma(v)$ in $\Gemini(\Gamma)$
      then there is a single twin module corresponding to $A$  in $\double^\circ_v(\Gamma)$  of
      size $2|A|$, given by $A\times\{0,1\}$.
   \item If $A\in\lk_{\Gemini(\Gamma)}M_\Gamma(v)$ then
      there is a single twin module corresponding to $A$ in  $\double^\circ_v(\Gamma)$ of
      size $|A|$, given by $A\times\{0\}=A\times\{1\}$.
     \item Otherwise, there are two distinct twin modules of size
       $|A|$ 
       corresponding to $A$  in $\double^\circ_v(\Gamma)$, given by
       $A\times\{0\}$ and $A\times\{1\}$.
     \end{itemize}
   
  \end{lemma}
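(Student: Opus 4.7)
My plan is to compute links in $\double^\circ_v(\Gamma)$ directly from the commutation relations inherited from \fullref{vertexdouble}, and then compare these links to identify twin modules, using triangle-freeness of $\Gamma$ to rule out spurious coincidences.

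I would start by establishing the edge rule: $(w,\epsilon)$ and $(w',\epsilon')$ are adjacent in $\double^\circ_v(\Gamma)$ if and only if $w\edge w'$ in $\Gamma$ and at least one of $\epsilon=\epsilon'$, $w\in\lk(v)$, or $w'\in\lk(v)$ holds. By \fullref{vertexdouble}, adjacency of two generators in $\double^\circ_v(\Gamma)$ encodes commutativity of the corresponding images $w$ or $vwv$ in $W_\Gamma$, so the equal-$\epsilon$ case reduces to conjugation preserving commutators and the mixed-$\epsilon$ case with one of $w,w'$ in $\lk(v)$ collapses via the identification to the equal-$\epsilon$ case. In the remaining case $w,w'\notin\lk(v)\cup\{v\}$ with $\epsilon\neq\epsilon'$, one must show $[w,vw'v]\neq 1$: expanding and using $v^2=1$, the commutator written over $W_{\{v,w,w'\}}$ is an alternating syllable word in the free product $\langle v\rangle * \langle w,w'\rangle$, whose right factor is $\mathbb{Z}_2\times\mathbb{Z}_2$ or $\mathbb{Z}_2*\mathbb{Z}_2$ according as $w\edge w'$ or not, and in either case the word is reduced and nontrivial by Tits.

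From this edge rule I can read off the link of each vertex. For $u\in\lk(v)$, triangle-freeness forces $\lk(u)\cap\lk(v)=\emptyset$ (otherwise $v,u,w'$ form a triangle), so the identification $(w',0)\sim(w',1)$ for $w'\in\lk(v)$ affects nothing in $\lk(u)$, and the link of $u$ consists of both copies of each $w'\in\lk(u)\setminus\{v\}$. For $u\notin\lk(v)\cup\{v\}$, the link of $(u,\epsilon)$ consists of $(w',\epsilon)$ for each $w'\in\lk(u)$, with the usual identifications. A direct comparison then shows that $(u,0)$ and $(u,1)$ have the same link if and only if $\lk(u)\subset\lk(v)$.

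With these link formulas, each of the four bullets follows by inspection: bullets 1 and 2 correspond to the two sub-cases---equality or strict inclusion---of $\lk(u)\subset\lk(v)$ with $u\notin\lk(v)$; bullet 3 is $u\in\lk(v)$ (single copy whose link is determined by $\lk(u)$ together with triangle-freeness); bullet 4 is the complementary case $u\notin\lk(v)$ with $\lk(u)\not\subset\lk(v)$, where $(u,0)$ and $(u,1)$ have distinct links and so sit in the distinct twin modules $A\times\{0\}$ and $A\times\{1\}$. The main obstacle is ruling out spurious mergers between bullets: the dangerous would-be merger is between a satellite vertex $(u,0)$ (bullet 2) and a vertex $w\in\lk(v)$ (bullet 3), which would force $\lk(w)=\lk(u)\cup\{v\}$; but then every element of $\lk(u)\subset\lk(v)$ is adjacent to $v$, so such an element together with $v$ and $w$ yields a triangle, contradicting triangle-freeness. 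A handful of analogous case checks---each turning on the dichotomy between single-copy and double-copy contributions in the link---complete the verification.
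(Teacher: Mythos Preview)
Your proposal is correct and follows essentially the same approach as the paper: compute links in $\double^\circ_v(\Gamma)$ directly and compare them case by case. The only difference is that the paper reads the edge rule straight from \fullref{def:graphdoubles} (two disjoint copies of $\Gamma$ glued along $\lk(v)$, then delete $v$) rather than re-deriving it via the group inclusion of \fullref{vertexdouble}; your group-theoretic detour is valid but unnecessary, and after that point your link computations and merger checks match the paper's case analysis.
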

  \begin{proof}
  The link of every vertex of $\Gamma$ is an anticlique of size at least 2.
    
    If $w \in M_{\Gamma}(v)\setminus \{v\}$, i.e., $w\neq v$ and $\lk_\Gamma(v)=\lk_\Gamma(w)$, then $w\notin
    \lk_\Gamma(v)$, so $w$ has distinct preimages $(w,0)$ and
    $(w,1)$ in
    $\double^\circ_v(\Gamma)$, but they have the same link, $\lk_{\double^\circ_v(\Gamma) }(w,0)=\lk_{\double^\circ_v(\Gamma) }(w,1)=\lk_\Gamma(v)\times\{0\}$, so
    the twin module $M_{\double_v^\circ(\Gamma)}(w,0)$ contains 2 copies of each vertex in
    $M_\Gamma(v)\setminus\{v\}$.

     If $\lk_\Gamma(w)\subsetneq\lk_\Gamma(v)$, then
     $w\notin\lk_\Gamma(v)$, but all of its neighbors are, so there 
     are two copies $(w,0)$ and $(w,1)$ of $w$ whose links
     are both $\lk_\Gamma(w)\times\{0\}$.
     Suppose $u$ is another vertex of $M_{\Gamma}(w)$, so $(u,0)$ has
     the same link as $(w,0)$ in $\double^\circ_v(\Gamma)$.
     Then $u\neq v$, since $\lk_\Gamma(w)\subsetneq\lk_\Gamma(v)$ is
     proper containment, and $u\notin\lk_\Gamma(v)$ by our hypotheses
     on $\Gamma$, since if it were then $u$ would be adjacent only to
     $v$, or to something also in $\lk_\Gamma(v)$, contradicting
     either that $\Gamma$ is connected without cut vertices, or that
     it is triangle-free.
     Therefore, $u\notin\lk_\Gamma(v)$, so $u$ contributes
     vertices $(u,0)$ and $(u,1)$ to $M_{\double^\circ_v(\Gamma)}(w,0)$. Thus, $M_{\double_v^\circ(\Gamma)}(w,0)$ contains 2 copies of each vertex in
    $M_\Gamma(w)$.

     If $w\in\lk_\Gamma(v)$ then there is only one vertex $(w,0)$ in
     the preimage of $w$.
     The fact that $\Gamma$ is triangle-free and connected without cut
     vertices implies that $\lk_\Gamma(w)$ contains only $v$ and a
     nonempty set of vertices from outside $\lk_\Gamma(v)$. 
     Choose $u\in\lk(w)\setminus\{v\}$, so $(w,0)$
     is adjacent to a distinct pair $(u,0)$ and $(u,1)$.
     Now, a vertex $(x,0)$ in $\double_v^\circ(\Gamma)$ is
     adjacent to $(u,0)$ if and only if $x$ and $u$ are adjacent in
     $\Gamma$, but $(x,0)$ and $(u,1)$ are adjacent if and only if $x$
     and $u$ are adjacent in $\Gamma$ and $x\in\lk_\Gamma(v)$.
     So, the vertices adjacent to both $(u,0)$ and $(u,1)$ are
     $(\lk_\Gamma(u)\cap\lk_\Gamma(v))\times\{0\}$.
     Thus, vertices with the same link as $(w,0)$ are of the
     form $(x,0)$ with $x\in\lk_\Gamma(v)$, where $\lk_\Gamma(w)=\lk_\Gamma(x)$.
     That is, $M_{\double^\circ_v(\Gamma)}(w,0)=M_\Gamma(w)\times\{0\}$.

     The remaining case is that $w\notin\lk_\Gamma(v)$ and $w$ is
     adjacent to at least one vertex $x \notin\lk_\Gamma(v)$.
     Then $(w,0)$ is adjacent to $(x,0)$ but not to $(x,1)$.
Suppose
$\lk_{\double^\circ_v(\Gamma)}(w,0)=\lk_{\double^\circ_v(\Gamma)}(u,\epsilon)$.
This implies $\lk_\Gamma(w)=\lk_\Gamma(u)\setminus\{v\}$.
However, it also implies $(u,\epsilon)$ is adjacent to $(x,0)$ but not
$(x,1)$, which does not happen if $u\in\lk_\Gamma(v)$, so, actually,
$\lk_\Gamma(w)=\lk_\Gamma(u)$.
Furthermore, $\epsilon=0$, and $(u,1)$ is adjacent to $(x,1)$ but not
$(x,0)$, so $(u,1) \notin M_{\double_v^\circ(\Gamma)}(w,0)$; we have
that $M_{\double_v^\circ(\Gamma)}(w,0)=M_\Gamma(w)\times\{0\}$ and
$M_{\double_v^\circ(\Gamma)}(w,1)=M_\Gamma(w)\times\{1\}$ are distinct
twin modules.
  \end{proof}

\begin{proposition}[Recognizing near doubles]\label{recognizeneardouble}
  A triangle-free graph $\Gamma$ without separating cliques is a near double if and only if one of the following is true:
  \begin{enumerate}
      \setcounter{enumi}{-1}
    \item There are no odd twin modules.\label{item:no_odd_modules} 
    \item There exists a twin module $A$ of $\Gamma$ such that every odd twin
      module of $\Gamma$ is either $A$ or a satellite of $A$ in $\Gemini(\Gamma)$. \label{item:all_odd_modules_satellites_of_one}
  \item There exist twin modules $A$ and $B$ of $\Gamma$ such that $A$
    and $B$ are adjacent in $\Gemini(\Gamma)$ and such that every odd
    twin module of $\Gamma$ is either $A$ or $B$ or a satellite of $A$ or
    $B$ in $\Gemini(\Gamma)$. \label{item:all_odd_modules_satellites_of_pair}
  \end{enumerate}
\end{proposition}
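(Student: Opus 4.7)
The plan is to prove both directions of the equivalence by tracking parities of twin modules under link doubling via \fullref{doublingmodules}. Condition (0) is immediate from \fullref{characterizedouble}. For the sufficiency of (1), I would link-double at any $v \in A$: by \fullref{doublingmodules}, $A$ becomes even of size $2(|A|-1)$ (or disappears if $|A|=1$), each satellite $S$ of $A$ acquires even size $2|S|$, and every other module keeps its parity, which is even by hypothesis. So the resulting graph has no odd twin modules and is a graph double by \fullref{characterizedouble}.

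For the sufficiency of (2), my strategy is to reduce to case (1) after one link double at $v_A \in A$. Since $B \in \lk(A)$, the image of $B$ in $\Gamma' := \double^\circ_{v_A}(\Gamma)$ is a single module of the same size as $B$, and the remaining odd modules of $\Gamma'$ come from odd satellites of $B$ in $\Gamma$ that are not already satellites of $A$. The main technical verification is that each such image is a satellite of the image of $B$ in $\Gemini(\Gamma')$. This reduces to the observation that a module $Y'$ of $\Gamma'$ neighbors the image of a module $X$ in $\Gemini(\Gamma')$ precisely when the underlying $Y$ of $\Gamma$ satisfies $Y \edge X$ in $\Gemini(\Gamma)$; so the satellite inclusion $\lk_{\Gemini(\Gamma)}(S) \subset \lk_{\Gemini(\Gamma)}(B)$ transports to $\Gemini(\Gamma')$, and (1) applied to $\Gamma'$ produces a double after one more link double.

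For the necessity direction, I would induct on the length $n$ of a link-doubling sequence making $\Gamma$ a double. Cases $n=0$ and $n=1$ yield (0) and (1). For $n \geq 2$, apply the induction hypothesis to $\Gamma_1 := \double^\circ_{v_1}(\Gamma)$ to see that $\Gamma_1$ satisfies one of (0), (1), or (2). If $\Gamma_1$ satisfies (0), then $\Gamma$ satisfies (1) with $A = M_\Gamma(v_1)$. If $\Gamma_1$ satisfies (1) with parent module $A_1$, let $A_0$ be the module of $\Gamma$ whose image is $A_1$; using \fullref{doublingmodules} one checks that either $A_0$ has even image in $\Gamma_1$ (in which case $\Gamma$ satisfies (1) with $A = M_\Gamma(v_1)$) or $A_0 \in \lk(M_\Gamma(v_1))$ in $\Gemini(\Gamma)$, and then $\Gamma$ satisfies (2) with $A = M_\Gamma(v_1)$ and $B = A_0$.

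The hard part will be the remaining subcase where $\Gamma_1$ satisfies (2) but neither (0) nor (1): one must argue that in this situation the sequence can be reorganized to produce a length-two sequence on $\Gamma$ itself. The subtlety is that when $|M_\Gamma(v_1)|=1$, distinct modules of $\Gamma$ whose links differ only in whether they contain $M_\Gamma(v_1)$ fuse into a single twin module of $\Gamma_1$, so satellite relations can appear in $\Gamma_1$ that are not visible in $\Gamma$. Showing that the apparent flexibility of three distinguished modules in $\Gamma_1$ always reduces to two in $\Gamma$, via careful tracking of which modules are preserved, duplicated, or fused under \fullref{doublingmodules}, is the crux of the necessity argument.
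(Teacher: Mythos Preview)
Your overall plan matches the paper's: the sufficiency direction is the same (link double once for case~(1), twice for case~(2)), and your inductive necessity argument is equivalent to the paper's formulation, which proves directly that if $\double^\circ_v(\Gamma)$ satisfies condition~(2) then $\Gamma$ already does. Since $(0)\Rightarrow(1)\Rightarrow(2)$, your case split on which condition $\Gamma_1$ satisfies is just a finer version of the same descent claim.

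However, your diagnosis of the ``hard part'' is off. The fusion phenomenon you describe does not occur under the standing hypotheses. If $X$ and $Y$ are twin modules of $\Gamma$ with $\lk(X)=\lk(Y)\cup\{M_\Gamma(v_1)\}$ and $|M_\Gamma(v_1)|=1$, then $X\subset\lk(v_1)$, and triangle-freeness plus the absence of cut vertices force $\lk(X)\setminus\{v_1\}$ to lie entirely outside $\lk(v_1)$; it follows that in $\Gamma_1$ the vertex $(x,0)=(x,1)$ is adjacent to both $(u,0)$ and $(u,1)$ for each $u\in\lk(Y)$, while $(y,0)$ is adjacent only to the $(u,0)$'s. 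So $X$ and $Y$ give distinct modules in $\Gamma_1$, exactly as \fullref{doublingmodules} asserts.

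The actual subtlety, which the paper handles carefully, lies elsewhere. One must show that the projection $\pi\from\Gemini(\double_v^\circ(\Gamma))\to\Gemini(\Gamma)$ preserves the satellite relation; the delicate point is ruling out that $M_\Gamma(v)$ lies in $\lk(\pi(B))\setminus\lk(\pi(A))$ when $B$ is a satellite of $A$ upstairs. Once this is established, a case analysis on the position of $\pi(A)$ and $\pi(B)$ relative to $M_\Gamma(v)$ (equal to it, satellite of it, in its link, or none of these) shows that the odd modules of $\Gamma$ are controlled by an adjacent pair, possibly with $M_\Gamma(v)$ replacing one of $\pi(A),\pi(B)$. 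A key step there is showing that if $\pi(B)$ is neither in $\st(M_\Gamma(v))$ nor a satellite of it, then $B$ and all its satellites are forced to be even, so they contribute nothing. This is where the work is, not in untangling module fusion.
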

\begin{proof}
  Clearly
  \eqref{item:no_odd_modules}$\implies$\eqref{item:all_odd_modules_satellites_of_one}$\implies$\eqref{item:all_odd_modules_satellites_of_pair},
  but the conditions provide case distinctions.
  
  If $\Gamma$ is complete then it is either empty, a single vertex, or
  a single edge, and it is reduced to the empty graph by
  $|\Gamma|$--many link doublings. The empty graph is the double of
  itself, so the proposition is true in this case. Now assume $\Gamma$
  is incomplete so that we can apply \fullref{doublingmodules}. 

 In case~\eqref{item:all_odd_modules_satellites_of_pair}, pick $v\in
 A$ and consider $\double_v^\circ(\Gamma)$.
 From \fullref{doublingmodules} we see that the only possible odd
 modules of $\double_v^\circ(\Gamma)$ come from odd modules of
 $\Gamma$ that are not $A$ or one of its satellites in
 $\Gemini(\Gamma)$.
By \eqref{item:all_odd_modules_satellites_of_pair} the only possible
choices are $B$ or a satellite of $B$.  
Since $B$ is adjacent to $A$ in $\Gemini(\Gamma)$ there is a unique module $B\times\{0\}=B\times\{1\}$ in
$\double_v^\circ(\Gamma)$ corresponding to $B$.
If $C$ is a satellite of $B$ in $\Gemini(\Gamma)$ then for every $c\in
C$ and every $b\in B$ we have $\lk(c)\subset\lk(b)$, so for every
$\ell\in\lk(c)$ there are edges $c\edge\ell\edge b$ in $\Gamma$.
If $\ell=v$ then it does not appear in $\double_v^\circ(\Gamma)$.
Otherwise, $\double_v^\circ(\Gamma)$ contains edges:
\[(c,0)\edge (\ell,0)\edge(b,0)=(b,1)\edge(\ell,1)\edge(c,1)\]
Thus any module of  $\double_v^\circ(\Gamma)$ corresponding to a
satellite of $B$ in $\Gemini(\Gamma)$
is a satellite of $B\times\{0\}$ in $\Gemini(\double_v^\circ(\Gamma))$.
In particular,  every odd module of $\double_v^\circ(\Gamma)$ is
either $B\times\{0\}$ or a satellite of $B\times\{0\}$, so
$\double_v^\circ(\Gamma)$ belongs to case~\eqref{item:all_odd_modules_satellites_of_one}.

 In case~\eqref{item:all_odd_modules_satellites_of_one}, pick $v\in A$, and consider $\double_v^\circ(\Gamma)$.
 Again, by \fullref{doublingmodules} the possible odd
 modules of $\double_v^\circ(\Gamma)$ come from odd modules of
 $\Gamma$ that are not $A$ or one of its satellites in $\Gemini(\Gamma)$.
But \eqref{item:all_odd_modules_satellites_of_one} says there are none of these, so $\double_v^\circ(\Gamma)$
belongs to case~\eqref{item:no_odd_modules} 
  
In case~\eqref{item:no_odd_modules}  there are no odd modules, so $\Gamma$ is a double by
  \fullref{characterizedouble}.

\medskip

In the other direction we show that if there exists $v\in\Gamma$ such
that \eqref{item:all_odd_modules_satellites_of_pair} is true for
$\double_v^\circ(\Gamma)$ then
\eqref{item:all_odd_modules_satellites_of_pair}  was already true
in $\Gamma$.
Thus, by induction on the link doubling sequence, if \eqref{item:all_odd_modules_satellites_of_pair}  is false in
$\Gamma$ then it remains false in any iterated link double of
$\Gamma$.
Therefore, every iterated link double of $\Gamma$ contains odd twin
modules, so is not a graph double, so $\Gamma$ is not a near double.

Suppose there is $v\in\Gamma$ such that \eqref{item:all_odd_modules_satellites_of_pair} is true for
$\double_v^\circ(\Gamma)$.
Let $\sigma\from \double_v^\circ(\Gamma)\to \double_v^\circ(\Gamma)$
be the involution that fixes the first coordinate and exchanges 0 and
1 in the second coordinate. 
Let $\pi\from \double_v^\circ(\Gamma)\to\Gamma$ be projection to the first
coordinate.
By construction of $\double_v^\circ(\Gamma)$,  the map $\pi$ sends edges to
edges.
It follows that $\pi$ sends twin modules to twin modules, so it induces $\pi\from
\Gemini(\double_v^\circ(\Gamma))\to\Gemini(\Gamma)$ that sends
vertices to vertices and edges
to edges.

Furthermore, $\pi$ preserves the satellite relationship, as follows.
Suppose $B$ is a satellite of $A$ in
$\Gemini(\double_v^\circ(\Gamma))$.
Then $\lk(B)\subset\lk(A)$, so $\pi(\lk(B)))\subset\pi(\lk(A))\subset\lk(\pi(A))$.
However, it is possible that $\pi(\lk(B))\subsetneq
\lk(\pi(B))$, which happens when $\{v\}=M_\Gamma(v)\in\lk(\pi(B))$, so that
$\pi^{-1}(M_\Gamma(v))=\emptyset$.
To show $\pi(B)$ is a satellite
of $\pi(A)$ we must rule out the possibility that 
 $\{v\}=M_\Gamma(v)\in\lk(\pi(B))\setminus\lk(\pi(A))$.
 Suppose this is the case.
 Clearly $\pi(A)\notin\st(M_\Gamma(v))$. 
We cannot have $\pi(A)$ as a satellite of $M_\Gamma(v)$, because that
would mean that
$\lk(\pi(B))\setminus\{v\}\subset\lk(\pi(A))\subset\lk(M_\Gamma(v))$,
but then triangle-freeness implies $\lk(\pi(B))=\{v\}$, which would
make $v$ a cut vertex of $\Gamma$, which is a contradiction.
By \fullref{doublingmodules}, the remaining option is that $A$ and
$\sigma(A)$ are distinct.
The condition $\pi(B)\in\lk(M_\Gamma(v))$ implies $\sigma$ fixes the vertices in $B$.
For any $C\in\lk(B)\subset\lk(A)$ we have $\pi(C)\notin\st(M_\Gamma(v))$ by
triangle-freeness, so for all $a\in A$, $b\in B$, and $c\in C$ there
are segments $a\edge c\edge b$ and $\sigma(b)\edge\sigma(c)\edge\sigma(a)$,
with $b = \sigma(b) $, $\sigma(c)\neq c$, and $\sigma(a)\neq a$.
By construction of $\double_v^\circ(\Gamma)$, if $a\neq\sigma(a)$ and
$c\neq\sigma(c)$ and there exist edges
$a\edge c$ and $\sigma(a)\edge\sigma(c)$ then there do not exist edges
$a\edge\sigma(c)$ and $c\edge\sigma(a)$.
This contradicts $\sigma(c)\in\lk(b)\subset\lk(a)$.
Thus, $\pi(B)$ is a satellite of $\pi(A)$. 

\smallskip

Now we argue that \eqref{item:all_odd_modules_satellites_of_pair} for
$\double_v^\circ(\Gamma)$ implies \eqref{item:all_odd_modules_satellites_of_pair}  for $\Gamma$. 
Since \eqref{item:all_odd_modules_satellites_of_pair} is true for
$\double_v^\circ(\Gamma)$, there are modules $A$ and $B$ of $\double_v^\circ(\Gamma)$ that are
neighbors in $\Gemini(\double_v^\circ(\Gamma))$, and such that every
odd module of $\double_v^\circ(\Gamma)$  is either $A$ or $B$ or a
satellite of $A$ or $B$ in $\Gemini(\double_v^\circ(\Gamma))$.
Since $\pi\from
\Gemini(\double_v^\circ(\Gamma))\to\Gemini(\Gamma)$ preserves
satellites, the potential odd modules of $\Gamma$
are, according to \fullref{doublingmodules}, either $M_\Gamma(v)$ or
one of its satellites, or the projection of an odd module of
$\double_v^\circ(\Gamma)$, which by hypothesis must be 
$\pi(A)$ or $\pi(B)$ or a satellite of one of these two.  

Since $A$ and $B$ are adjacent, so are $\pi(A)$ and $\pi(B)$.
By triangle-freeness, they cannot both be in $\lk(M_\Gamma(v))$.
Suppose $\pi(B)$ is not in $\lk(M_\Gamma(v))$.
The alternatives are $\pi(B)=M_\Gamma(v)$, $\pi(B)$ is a satellite of
$M_\Gamma(v)$, or $\pi(B)$ is neither in $\st(M_\Gamma(v))$ nor a
satellite of $M_\Gamma(v)$.

If $\pi(B)=M_\Gamma(v)$ or $\pi(B)$ is a satellite of $M_\Gamma(v)$
then since $A$ and $B$ are adjacent, $\pi(A)\in\lk(M_\Gamma(v))$. 
If $C$ is a satellite of $B$ then $\lk(\pi(C))\subset\lk(\pi(B))\subset\lk(M_\Gamma(v))$, 
so $\pi(C)$ is a satellite of $M_\Gamma(v)$, so every odd module
of $\Gamma$ is either $M_\Gamma(v)$ or $\pi(A)$ or a satellite of one of
these. Since $\pi(A)\in\lk(M_\Gamma(v))$, vertices $\pi(A)$ and $M_\Gamma(v)$ are adjacent in $\Gemini(\Gamma)$ so \eqref{item:all_odd_modules_satellites_of_pair} is true for
$\Gamma$.

Suppose $\pi(B)$ is neither in $\st(M_\Gamma(v))$ nor a satellite of $M_\Gamma(v)$.
Then $\pi(B)\times\{0\}\neq\pi(B)\times\{1\}$ are distinct and
symmetric, so we may assume
$B=\pi(B)\times\{0\}$. We will show that $B$ and its satellites are even. 
Suppose $C$ is an odd satellite of $B$. 
Since $C$ is odd,  $\pi(C)$ is not $M_\Gamma(v)$ or a satellite of
$M_\Gamma(v)$, by~\fullref{doublingmodules}.
Since $C$ is a satellite of $B$ and $\pi$ preserves satellites,
$M_\Gamma(v)\notin\lk(\pi(B))\supset \lk(\pi(C))$, so  
$\pi(C)\times\{0\}\neq\pi(C)\times\{1\}$ are distinct odd modules.
Triangle-freeness implies that $\pi(B)\times\{0\}$, $\pi(B)\times\{1\}$, and
satellites of either of these two are distinct from $A$ and satellites
of $A$. 
So, by property \eqref{item:all_odd_modules_satellites_of_pair} for  $\double_v^\circ(\Gamma)$, both of $\pi(C)\times\{0\}$ and $\pi(C)\times\{1\}$ are satellites
of $B$. 
However, for $\pi(C)\times\{1\}$ to be a satellite of
$B=\pi(B)\times\{0\}$
implies $\lk(\pi(C)\times\{1\})\subset \pi^{-1}(\lk(M_\Gamma(v)))$, so $\pi(C)$ is a
satellite of $M_\Gamma(v)$ in $\Gemini(\Gamma)$, a contradiction, as we have already ruled this out. 
Similarly, if $B$ is odd then $\pi(B)\times\{1\}$ is a satellite of
$B=\pi(B)\times\{0\}$, so the same argument implies $\pi(B)$ is a
satellite of $M_\Gamma(v)$ in $\Gemini(\Gamma)$, which contradicts
that $B$ is odd.
Thus, $B$ and its satellites are even.

The same arguments apply after swapping the roles of $A$ and $B$.

In conclusion, if one of $\pi(A)$ or $\pi(B)$ is equal to or a satellite of  $M_\Gamma(v)$ then \eqref{item:all_odd_modules_satellites_of_pair} is true for
$\Gamma$ and we are done.
Otherwise, since $\pi(A)$ and $\pi(B)$ cannot both be in $\lk(M_\Gamma(v))$, we may assume that $\pi(B)$ is not, which implies that $B$ and its satellites are even. 
If $\pi(A)$ is also not in $\lk(M_\Gamma(v))$ then $A$ and its satellites are also even, leaving only $M_\Gamma(v)$ and its satellites as possible odd modules of $\Gamma$, so \eqref{item:all_odd_modules_satellites_of_one} is true for $\Gamma$.
On the other hand, if $\pi(A)\in\lk(M_\Gamma(v))$ then the potential odd modules of $\Gamma$ are $M_\Gamma(v)$, $\pi(A)$, or a satellite of one of these adjacent vertices, so \eqref{item:all_odd_modules_satellites_of_pair} is true for
$\Gamma$.
\end{proof}

\subsection{Cloning}\label{sec:clone}
\begin{definition}
  Let $v$ be a vertex of a graph $\Gamma$.
 To \emph{clone} $v$ means to add a new vertex $v'$ and
  connect it by an edge to each vertex of $\lk(v)$ to make a new graph
  $\Gamma':=\Gamma\join_{\lk(v)}v'$ in which $v$ and $v'$ are twins.
\end{definition}

\begin{definition}
  A vertex of a graph is a \emph{singleton} if it has no twins.
  A vertex is \emph{clonable} if it is a satellite of at least two
  other vertices, and \emph{unclonable} otherwise. 
\end{definition}

\begin{proposition}\label{prop:cloning}
Let $v$ be a clonable vertex of a connected,  triangle-free graph $\Gamma$.
  Cloning $v$ produces a quasiisometric RACG: the
  Davis complex
  $\Davis_\Gamma$ is quasiisometric to $\Davis_{\Gamma\join_{\lk(v)}v'}$ by a quasiisometry that restricts to a
  color preserving isomorphism on each translate of
  $\Davis_{\Gamma\setminus\{v\}}$. 
\end{proposition}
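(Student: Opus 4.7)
The plan is to build a quasi-isometry $f\colon\Davis_{\Gamma'}\to\Davis_\Gamma$ that matches translates of $\Davis_H$, where $H := W_{\Gamma\setminus\{v\}} = W_{\Gamma'\setminus\{v,v'\}}$. First I would fix distinct $u_1, u_2\in\Gamma\setminus\{v\}$ witnessing clonability, so $\lk(v)\subset\lk(u_1)\cap\lk(u_2)$. Since $\Gamma$ is connected with more than one vertex, $\lk(v)$ is nonempty; triangle-freeness then forces $\lk(v)$ to be an anticlique and the three vertices $v, u_1, u_2$ to be pairwise non-adjacent (no vertex lies in its own link, and any two of them being adjacent would, together with a common neighbor in $\lk(v)$, form a triangle). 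Consequently, $W_{\{v,u_1\}\cup\lk(v)}\cong D_\infty\times W_{\lk(v)}$ is a special subgroup of $W_\Gamma$ isomorphic to the special subgroup $W_{\{v,v'\}\cup\lk(v)}\cong D_\infty\times W_{\lk(v)}$ of $W_{\Gamma'}$. This isomorphism is the algebraic heart of the argument: the $D_\infty$-direction that cloning creates externally in $W_{\Gamma'}$ (via the new twin $v'$) is already present internally in $W_\Gamma$ through $\langle v, u_1\rangle$.

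Both groups split as amalgamations $H*_{W_{\lk(v)}}K$ with $K=W_{\st(v)}$ for $W_\Gamma$ and $K=W_{\{v,v'\}\cup\lk(v)}$ for $W_{\Gamma'}$, yielding tree-of-spaces decompositions of $\Davis_\Gamma$ and $\Davis_{\Gamma'}$ over Bass-Serre trees $T$ and $T'$, with vertex spaces translates of $\Davis_H$ glued along translates of $\Davis_{\lk(v)}$. I would define $f$ first on $H$-cosets by rewriting: given $w\in W_{\Gamma'}$ in Bass-Serre normal form $w=h_0 x_1 h_1\cdots x_n h_n$ (with $h_i\in H$ and $x_i\in\{v,v'\}$), set $f(wH)$ to be the $H$-coset of the word in $W_\Gamma$ obtained by leaving each $v$ unchanged and replacing each $v'$ at position $i$ by $u_{j(i)} v u_{j(i)}$, where $j(i)\in\{1,2\}$ alternates so that distinct $\langle v,v'\rangle$-orbits of cosets at distinct type-$1$ vertices of $T'$ are sent to distinct $\langle v, u_1, u_2\rangle$-orbits in $T$. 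The commutation of $u_i$ with $W_{\lk(v)}$ (from $\lk(v)\subset\lk(u_i)$) ensures the rewriting descends to a well-defined map on $H$-cosets, independent of the chosen normal form. Once the coset map is fixed, $f$ extends to each translate of $\Davis_H$ as a uniquely determined left-translation, hence a color-preserving isomorphism. A symmetric rewriting produces a coarsely inverse coarse Lipschitz map $\Davis_\Gamma\to\Davis_{\Gamma'}$, and \fullref{inversecoarselipimpliesqi} promotes the pair to a quasi-isometry.

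The technical heart is verifying that the rewriting gives an injective, bounded-distortion map at the Bass-Serre tree level. The hypothesis of \emph{two} satellite parents is essential here: the alternation between $u_1$ and $u_2$ separates $\langle v, v'\rangle$-orbits at distinct type-$1$ vertices of $T'$ into distinct orbits in $T$, whereas using only one satellite parent would collapse them onto a common $\langle v, u_1\rangle$-orbit. Indeed, cloning on a vertex with only one satellite parent can change quasi-isometry type---for instance, the vertex $v$ in $K_{2,2}$ has only the single satellite $u$, and cloning produces $K_{2,3}$, whose Davis complex $T_3\times\mathbb{R}$ is not quasi-isometric to the $\mathbb{R}^2$ Davis complex of $K_{2,2}$. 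The detailed verification rests on the normal form theorem for amalgamated products together with explicit bookkeeping of how the commutation relations $[u_i, s]=e$ for $s\in\lk(v)$ interact with the tree-of-spaces gluings, and is the main combinatorial obstacle in the proof.
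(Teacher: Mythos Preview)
Your approach is genuinely different from the paper's, and as written it has real gaps.

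The paper does not split over $W_{\lk(v)}$ alone. With $L=\lk(v)$ and $C=\{c\neq v:L\subset\lk(c)\}$ (so $|C|\ge 2$ and your $u_1,u_2\in C$), it splits both groups over the larger subgroup $W_L\times W_C$:
\[
W_\Gamma=W_{\Gamma\setminus\{v\}}\,*_{W_L\times W_C}\,\bigl(W_L\times W_{C\cup\{v\}}\bigr),\qquad
W_{\Gamma'}=W_{\Gamma\setminus\{v\}}\,*_{W_L\times W_C}\,\bigl(W_L\times W_{C\cup\{v,v'\}}\bigr).
\]
The two right-hand vertex groups now differ only in their tree factor: $\Sigma_{C\cup\{v\}}$ versus $\Sigma_{C\cup\{v,v'\}}$, regular trees of valence $|C|+1$ and $|C|+2$. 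A standalone combinatorial lemma (\fullref{lem:coloredtree1}) produces an explicit biLipschitz vertex bijection between these colored trees that restricts to a color-preserving isomorphism on each $C$-colored component; one then crosses with $\mathrm{Id}_{\Sigma_L}$ and applies \fullref{tree_of_quasiisometries}. Absorbing \emph{all} of $C$ into the edge group is precisely what reduces the problem to a finite-valence tree comparison.

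Your splitting over $W_L$ creates a structural mismatch you never resolve: the $K$-vertex in the Bass--Serre tree has valence $[W_{\st(v)}:W_L]=2$ for $W_\Gamma$ but $[D_\infty\times W_L:W_L]=\infty$ for $W_{\Gamma'}$. Several concrete problems follow. The form $h_0 x_1 h_1\cdots$ with $x_i\in\{v,v'\}$ is \emph{not} the normal form for the amalgam $H*_{W_L}(D_\infty\times W_L)$ you wrote down---its $K'$-syllables range over all of $D_\infty$; at best this is a normal form for a different three-vertex graph of groups. The alternation rule for $j(i)$ is a property you want, not a definition; you never say what $j(i)$ actually is. The rewriting is not injective on $H$-cosets: with $j(1)=1$ one gets $f(v'H)=u_1vu_1H=u_1vH=f(u_1vH)$ although $v'H\neq u_1vH$ in $W_{\Gamma'}$, so coarse injectivity (bounded fibers) must be proved rather than assumed. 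And there is no ``symmetric'' rewriting for the coarse inverse, since $W_\Gamma$ has no $v'$ to replace. These are not bookkeeping issues; they constitute the entire content of the argument.
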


The proof of \fullref{prop:cloning} has two parts: we define base quasiisometries in
\fullref{lem:coloredtree1} that are designed to facilitate an
application of \fullref{tree_of_quasiisometries}.

  \begin{lemma}\label{lem:coloredtree1}
   For every $n\geq 2$  there is a quasiisometry $\phi$ between the $(n+1)$--valent tree $T_{n+1}$ with
    edges colored $a$, $b$, $c$, $x_1$,\dots,$x_{n-2}$ (with exactly one edge of each color at
    each vertex) and the $(n+2)$--valent tree $T_{n+2}$ with edges
    colored $a$, $b$, $c$, $d$, $x_1$,\dots,$x_{n-2}$ with the following properties:
    \begin{itemize}
    \item $\phi$ is bijective on vertices.
      \item $\phi$ induces a bijection between the set of 
        $S$--colored components of $T_{n+1}$ and
        the set of $S$--colored components of
        $T_{n+2}$, where $S:=\{a,b,x_1,\dots,x_{n-2}\}$ is the set of `static'
        colors. 
        \item $\phi$ restricts to a color-preserving isomorphism on each
           $S$--colored component.
    \end{itemize}
  \end{lemma}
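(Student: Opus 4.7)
My plan is to construct $\phi$ recursively along the tree of $S$-colored components. Each $S$-component of $T_{n+1}$ and of $T_{n+2}$ is a copy of the same $n$-valent edge-colored tree, and these components are joined by the non-$S$-edges: just $c$-edges in $T_{n+1}$, but both $c$- and $d$-edges in $T_{n+2}$. The combinatorial heart of the lemma is that each vertex of an $S$-component in $T_{n+1}$ is the endpoint of a single outgoing non-$S$-edge, while in $T_{n+2}$ each vertex is the endpoint of two.

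Fix basepoints $v_0 \in T_{n+1}$ and $v_0' \in T_{n+2}$ with $S$-components $C_0, C_0'$, and define $\phi|_{C_0}$ as the unique color-preserving isomorphism $C_0 \to C_0'$ sending $v_0 \mapsto v_0'$. I extend $\phi$ recursively to the remaining $S$-components by a BFS along the non-$S$-edges, following two alternating rules. \emph{Direct rule:} for a non-parent-direction vertex $u$ in an already-treated component $C$, declare that the $S$-component adjacent to $C$ via the $c$-edge at $u$ is mapped to the $S$-component adjacent to $\Psi(C)$ via the $c$-edge at $\phi(u)$ in $T_{n+2}$, by the unique color-preserving isomorphism matching entering vertices. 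This sends the $c$-edge at $u$ in $T_{n+1}$ to a single edge of $T_{n+2}$. \emph{Shift rule:} the $d$-neighbor of $\Psi(C)$ at $\phi(u)$ would otherwise be missing from the image, so I cover it by assigning it to the $\Psi$-image of a specific grandchild of $C$: fixing once and for all a generator $a \in S$, I let this $d$-neighbor be matched to the component reached from $C$ by the $c$-edge at $u$, then the $a$-edge, then the next $c$-edge. Under $\phi$ the ``grandchild-side'' $c$-edge then maps to a path of length $3$ in $T_{n+2}$ spelling out the reduced word $acd$. At non-root components the parent-direction vertex is excluded, and a second shift using a distinct generator $b \in S$ (available since $|S| = n \geq 2$) handles residual child positions that arise because the parent directions in $T_{n+1}$ and in $T_{n+2}$ do not match up one-to-one.

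The resulting $\phi$ is by construction a bijection on vertices inducing a bijection on $S$-components, whose restriction to each component is a color-preserving isomorphism. To verify that it is a quasi-isometry, I would check that every edge of $T_{n+1}$ maps under $\phi$ to a path of length at most $3$ in $T_{n+2}$ (length $1$ for an $S$-colored edge or a direct-rule $c$-edge, and length $3$ for a shift-rule $c$-edge), and symmetrically that every edge of $T_{n+2}$ has $\phi$-preimage vertices at distance at most $3$ in $T_{n+1}$. By \fullref{inversecoarselipimpliesqi}, this suffices to conclude that $\phi$ is a quasi-isometry.

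The main obstacle will be the combinatorial bookkeeping: ensuring the two rules together assign exactly one $T_{n+1}$-preimage to every $S$-component of $T_{n+2}$ (so that $\phi$ really is a bijection), handling the parent-direction exceptions at non-root components consistently so that the direct rule at a component does not conflict with the shift rule inherited from its parent, and choosing the secondary shift generator $b$ so that all leftover child positions are covered with uniformly bounded distortion. The required case analysis is routine once the recursion is set up correctly.
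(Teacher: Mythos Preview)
Your approach shares its core idea with the paper's proof: both use the observation that the $c$-edge at a vertex $v$, together with the ``grandchild'' $c$-edge reached by $c$, then $a$, then $c$, can supply the two non-$S$-edges needed at $\phi(v)$ in $T_{n+2}$. The paper phrases this as an \emph{edge-slide}: it fixes the vertex set of $T_{n+1}$ and, for each vertex $v$ with an outgoing $c$-edge, deletes the $c$-edge from $vca$ to $vcac$ and inserts a new $c$-edge from $v$ to $vcac$. This is $3$-biLipschitz, matching your length-$3$ estimate for the shift rule. After this Phase~1 almost every vertex has two $c$-edges, and a final recoloring (alternate $c$ and $d$ along each bi-infinite $c$-path) produces $T_{n+2}$.

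The gap in your proposal is the ``second shift using $b$'' you invoke for residual positions. After Phase~1 the exceptional vertices are, for each non-root $S$-component with closest-to-basepoint vertex $w_i$, exactly $w_i$ (only one $c$-edge) and possibly $w_ia$ (possibly none). The paper's Phase~2 repairs these not by a local shift but by a Hilbert's-Hotel move along \emph{infinite} rays: every vertex along $w_ib, w_iba, w_ibab,\dots$ passes one $c$-edge to its predecessor, and if $w_ia$ is deficient then every vertex along $w_iab, w_iaba,\dots$ passes both. The individual slides have length~$1$ and do not interfere, so the phase is $2$-biLipschitz. Your description suggests a second local shift of the same flavor as the first, but a finite local rule cannot supply the missing edge at $w_i$ without creating a new deficiency one step over; the infinite passing is what absorbs it. This is precisely the step you labelled ``routine bookkeeping,'' and it is the part that must be made explicit.

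Once the vague second shift is replaced by the ray-passing construction, your recursive component-by-component framing and the paper's edge-slide framing become equivalent. The paper's version has the mild advantage that its edge-slides never touch $S$-colored edges, so preservation of the $S$-component structure is automatic rather than something to be tracked by hand.
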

  In the proofs of \fullref{lem:coloredtree1} and
  \fullref{lem:coloredtree2} we use without further explanation the
  following well-known construction.
   If $p$, $q$, $r$ are vertices of a tree such that $q\in\lk(r)$ is between
   $p$ and $r$ 
then the map that replaces the edge $q\edge r$ with an edge
$p\edge r$ is $(1+d(p,q))$--biLipschitz.
To see this, consider a geodesic $v_0,\dots,v_n$ in the original tree
structure such that $v_m=q$ and $v_{m+1}=r$, for some $0 \le m < n$.
In the new tree structure there is a path from $v_0$ to $v_n$ given by
$v_0,\dots,v_m$ followed by a geodesic from $v_m=q$ to $p$, then the
new edge $p\edge r=v_{m+1}$, then continuing with
$v_{m+2}$,\dots,$v_n$.  
Thus, the distance from $v_0$ to $v_n$ increased by at
most an additive factor of $d(p,q)$. Since vertex distances are
integral, distances increase by a multiplicative factor of at most
$1+d(p,q)$. The other inequality can be proved by making the same
argument for the inverse map.

In the literature this map is sometimes described as an `edge-slide'
move, where it is imagined that the target of the initial incidence
map for $q\edge r$
continuously `slides' along a specified edge path, which in this case
is the geodesic from $q$ to $p$. 
More generally, we can make infinitely many such edge-slides simultaneously
without compounding the biLipschitz constant, provided that no edge-to-be-slid is contained in the slide path of another. 

  \begin{proof}[Proof of \fullref{lem:coloredtree1}]
    Pick a base vertex $w_0$ in $T_{n+1}$. Given a vertex $v$ we speak of `incoming' and `outgoing' edges
    with respect to the chosen basepoint; ie, an outgoing edge at $v$
    leads farther from $w_0$.
    Since we are working in a tree, for every vertex $v$ and every
    word in letters $a$, $b$, $c$, $x_1$,\dots there is a unique edge path in the
    tree starting from $v$ whose edges are colored by successive
    letters of the given word.
    Words without repeated successive letters are geodesics, and if we
    identify $w_0$ with the empty word, then every vertex can be
    described uniquely by a geodesic word.

     Each  $S$--colored component has a unique vertex  $w_i$ closest to $w_0$. We assume that the vertices $w_i$ are ordered by increasing distance from $w_0$; that is, if $i<j$, then the distance from $w_j$ to $w_0$ is greater than or equal to the distance from $w_i$ to $w_0$. 
    For phase 1 of the construction, consider the following map that fixes all of the vertices and
    rearranges the $c$--colored edges. 
    All of the vertices $w_1,w_2,\dots$ have an incoming $c$--edge,
    and every other vertex has an outgoing $c$--edge.
    Working inductively with increasing distance from $w_0$, 
    for every vertex $v$ with an outgoing $c$--edge, remove the $c$--edge between $vca$ and
    $vcac$ and connect $v$ and $vcac$ by a $c$--edge.
    This map is  $3$--biLipschitz on the vertex set.
    In the new tree structure, $w_1, w_2,\dots$ still have one
    incoming $c$--edge, the vertices $w_1a, w_2a,\dots$ have either zero or two $c$--edges, and
    every other vertex has two outgoing $c$--edges.

    For phase 2 of the construction, 
    for each $i\geq 1$ consider the two geodesic rays 
        $w_i,w_ib,w_iba,w_ibab,\dots$ and $w_i,w_ia,w_iab,w_iaba,\dots$
     based at $w_i$.
    On the first ray, let every vertex after $w_i$ pass
    one of its $c$--edges to its predecessor.
    On the second ray, if $w_ia$ has two $c$--edges, do nothing, and if it has no $c$--edges, then let every vertex after $w_ia$ pass both of its $c$--edges its predecessor.
    Thus, $w_i$ receives one additional $c$--edge and passes none,
    $w_ia$ already has or receives two $c$--edges and passes none, and every other vertex
    receives the same number of $c$--edges from its successor as it
    passes to its predecessor.
    Hence, every vertex now has two incident $c$--edges.
    The phase 2 map is  $2$--biLipschitz on the vertex set.

The composition of the two phases is a $6$--biLipschitz map on the
vertex set taking $T_{n+1}$ to an $(n+2)$--valent tree such that every
vertex in the image has exactly two incident $c$--edges and one of
each color from $S$. 
        By construction, the map is a color preserving isometry on
        each $S$--colored component.
        The $\{c\}$--colored components of the image are biinfinite geodesics.
        On each such geodesic, recolor the edges so that they
        alternate $c$ and $d$.
        The result is isomorphic to $T_{n+2}$ as colored trees. 
  \end{proof}
  \begin{proof}[Proof of \fullref{prop:cloning}]
    Since $\Gamma$ is connected, $\emptyset\neq L:=\lk(v)$.
    Since $v$ is clonable, $C:=\{c\in\Gamma\setminus\{v\}\mid L\subset
    \lk(c)\}$ contains at least two vertices.
  Since $\Gamma$ is triangle-free and $L\neq\emptyset$, $C\cup\{v\}$ is an
  anticlique.
       Since $|C|\geq 2$, there is a quasiisometry  $\psi\from \Sigma_{C\cup\{v\}}\to \Sigma_{C\cup\{v,v'\}}$ given by \fullref{lem:coloredtree1} that
        restricts to color preserving isomorphisms on each copy of
        $\Sigma_C$. We may assume, up to pre-composing by translation in $W_{C\cup \{v\}}$, that $\psi$ fixes $\Sigma_C$.
        Crossing with the identity on $\Sigma_L$ gives a
        quasiisometry $\mathrm{Id}_{\Sigma_L}\times\psi$ from $\Sigma_L\times\Sigma_{C\cup\{v\}}=\Sigma_{L\cup
          C\cup \{v\}}$ to $\Sigma_L\times\Sigma_{C\cup\{v,v'\}}=\Sigma_{L\cup
          C\cup \{v,v'\}}$ that preserves edge colors for all edges
        colored by $L\cup C$. In particular, the map matches up
        translates of $\Sigma_{L\cup C}$ bijectively, and restricts to
        a color-preserving isomorphism on each such translate.

       If the vertex set of $\Gamma$ is $\{v\}\cup C \cup L$, we are done. Otherwise, we can write $W_\Gamma$ and $W_{\Gamma*_L v'}$ as amalgams:
        \begin{align*}
        W_\Gamma&=W_{\Gamma\setminus\{v\}}*_{W_L\times 
                  W_C}(W_L\times (W_C*W_{\{v\}}))\\
                  W_{\Gamma*_L {v'}}&=W_{\Gamma\setminus\{v\}}*_{W_L\times 
          W_C}(W_L\times(W_C*W_{\{v,v'\}}))\\
        \end{align*}
Apply \fullref{tree_of_quasiisometries} with base maps
$\mathrm{Id}_{W_{\Gamma\setminus\{v\}}}$ and
$\mathrm{Id}_{\Sigma_L}\times\psi$ on the vertex groups of the given
splittings.
We check that the hypotheses of \fullref{tree_of_quasiisometries} are
satisfied for $\mathrm{Id}_{\Sigma_L}\times\psi$.
The check for $\mathrm{Id}_{W_{\Gamma\setminus\{v\}}}$ is similar. 
The construction has been arranged so that Conditions~\eqref{item:base_consistency}  and~\eqref{item:coset_bijection} 
\fullref{tree_of_quasiisometries} are
satisfied.
For \eqref{item:consistency}, note that $W_{L\cup C}$
acts by color
preserving isomorphisms of $\Davis_{L\cup C}$, freely and transitively
on vertices.
Thus, if $(\mathrm{Id}_{\Sigma_L}\times\psi)((1,g)\Davis_{L\cup
  C})=(1,g')\Davis_{L\cup C}$ then let
$h'=(\mathrm{Id}_{\Sigma_L}\times\psi)((1,g)1)\in (1,g')W_{L\cup C}$
so that $\mathrm{Id}_{\Sigma_L}\times\psi$ and $h'
(\mathrm{Id}_{\Sigma_L}\times\psi) (1,g)^{-1}$ 
are color preserving isomorphisms of $(1,g)\Davis_{L\cup
  C}$ that agree on one vertex, hence on all.
\end{proof}

\begin{proposition}\label{modified_weighted_twin_graph}
  Let $\Gamma$ be a triangle-free graph without separating cliques. Then
  $W_\Gamma$ is quasiisometric to $W_{\Gamma'}$, where $\Gamma'$ is the
  blow-up graph $\Gemini(\Gamma)^{\omega'}$ for the
   weight function  $\omega'$ derived from the weight function
  $\omega(M):=|M|$ of $\Gemini(\Gamma)$ as follows:
  \begin{itemize}
    \item $\omega'(M)=1$ if $M$ is an unclonable singleton.
  \item $\omega'(M)=2$ if $M$:
    \begin{itemize}
    \item is a clonable singleton, or
    \item $\omega(M)=2$, or
      \item $\omega(M)>2$ and $M$ is a satellite in $\Gemini(\Gamma)$.
    \end{itemize}
  \item  $\omega'(M)=4$ otherwise.
  \end{itemize}
\end{proposition}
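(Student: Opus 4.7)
The strategy is to introduce an auxiliary blow-up $\Gamma_0$ that sits weight-wise below both $\Gamma$ and $\Gamma'$, and from which both may be recovered by sequences of cloning operations; invoking \fullref{prop:cloning} then yields $W_\Gamma$ quasiisometric to $W_{\Gamma_0}$ quasiisometric to $W_{\Gamma'}$. Specifically, define $\omega_0(M) := \min(\omega(M), \omega'(M))$ for each twin module $M$ of $\Gamma$, and set $\Gamma_0 := \Gemini(\Gamma)^{\omega_0}$. Since $\omega_0(M) \geq 1$ for every $M$, the graph $\Gamma_0$ is well-defined.

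First I would verify that $\Gamma_0$ meets the hypotheses of \fullref{prop:cloning}. Triangle-freeness is immediate because $\Gemini(\Gamma)$ is triangle-free. For connectedness, observe that $\Gamma_0$ arises from $\Gamma$ by deleting some vertices from twin modules whose size is $\geq 2$; such deletions preserve connectivity because each deleted vertex has a remaining twin supplying identical adjacencies to the rest of the graph.

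The main work is a case analysis checking, for each module $M$, that the cardinality in $M$ can be raised from $\omega_0(M)$ up to $\omega(M)$ and to $\omega'(M)$ by sequences of clone moves, each one applied to a vertex that is a satellite of at least two others at that stage. The trivial cases---$M$ an unclonable singleton, $\omega(M) = 2$, or $|M|=4$ with $M$ non-satellite---require no cloning, since $\omega_0(M) = \omega(M) = \omega'(M)$. When $M$ is a satellite in $\Gemini(\Gamma)$ with $|M| \geq 3$, we have $\omega_0(M) = \omega'(M) = 2$ and $\omega(M) = |M|$; I clone $|M|-2$ times within $M$ to reach $\Gamma$, each vertex at each intermediate step having at least one twin plus at least one satellite in a dominating module. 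When $M$ is a non-satellite with $|M| \geq 5$ we have $\omega_0(M) = \omega'(M) = 4$ and clone up to $|M|$, each vertex retaining at least three twins. When $|M| = 3$ with $M$ non-satellite I clone once to pass from $\omega_0(M) = 3$ to $\omega'(M) = 4$; the vertex has two twins at the moment of cloning.

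The delicate case is when $M$ is a clonable singleton, so $\omega(M) = \omega_0(M) = 1$ and $\omega'(M) = 2$: I must clone the unique vertex $v \in M$ inside $\Gamma_0$. Let $\mathcal{N}$ be the set of modules in $\Gemini(\Gamma)$ dominating $M$; clonability of $v$ in $\Gamma$ gives $\sum_{N \in \mathcal{N}} \omega(N) \geq 2$. An easy inspection of the cases defining $\omega'$ shows $\omega_0(N) \geq 2$ whenever $\omega(N) \geq 2$, so either some $N \in \mathcal{N}$ already contributes at least two, or every $N \in \mathcal{N}$ is a singleton with $\omega(N) = 1$ and $|\mathcal{N}| \geq 2$; either way $\sum_{N \in \mathcal{N}} \omega_0(N) \geq 2$, so $v$ has at least two satellites in $\Gamma_0$ and is clonable there. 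Since cloning only adds satellite relations and never removes them, the clone moves required to build up $\Gamma$ and $\Gamma'$ from $\Gamma_0$ can be performed in any order without invalidating prior clonability checks, and repeated application of \fullref{prop:cloning} completes the proof.
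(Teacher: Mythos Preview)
Your proof is correct and follows essentially the same approach as the paper: a module-by-module case analysis invoking \fullref{prop:cloning}. The only difference is organizational---the paper goes directly from $\Gamma$ to $\Gamma'$ using both cloning and its inverse (deleting a vertex that remains clonable in the result), whereas you route through the common lower bound $\Gamma_0 = \Gemini(\Gamma)^{\min(\omega,\omega')}$ and use only cloning from $\Gamma_0$ up to each of $\Gamma$ and $\Gamma'$, which is a clean way to avoid explicitly justifying the deletion moves.
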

\begin{proof}
  We will change $\Gamma$ into $\Gamma'$ by cloning or retiring clones
  in such a way that $W_\Gamma$ and $W_\Gamma'$ are quasiisometric by \fullref{prop:cloning}.

  If $M$ is an unclonable singleton or if $|M| = 2$, then no change is necessary. 

  If $M=\{v\}$ is a clonable singleton then cloning $v$ creates a
  graph in which $v$ belongs to a twin module of size 2.

  If $M$ is a twin module of $\Gamma$ containing a vertex $v$ where $|M|>2$ and $M$ is a
  satellite of $N$ in $\Gemini(\Gamma)$ then for any vertex $w\in N$,
  $v$ is a satellite of $w$ and has at least two twins $v'$ and
  $v''$.
  Delete $v''$.
  In the resulting graph $v$ is clonable, since
  it is a satellite of $v'$ and $w$.
  Cloning $v$ is therefore a quasiisometry on the level of Coxeter
  groups, and results in a graph isomorphic to $\Gamma$, so deleting
  $v''$ induces the inverse quasiisometry on the level of Coxeter
  groups. Iterating this process, we can arrange $\omega'(M) = 2.$
  Similarly, if $M$ is not a satellite but $|M|>4$ then deleting a
  vertex from $M$ induces a quasiisometry of Coxeter groups, so we can
  arrange $\omega' (M)= 4$.

  Finally, if $|M|=3$ but $M$ is not a satellite in $\Gemini(\Gamma)$ then we
  can clone a vertex of $M$ without changing the quasiisometry type of
  $W_\Gamma$,  so we make $\omega'(M)=4$.
\end{proof}
\begin{corollary}\label{canoncai_weighted_representative}
  If $\Gamma_i$ and $(\Gemini(\Gamma_i),\omega'_i)$ for $i=1,2$ are as
  in  \fullref{modified_weighted_twin_graph} and
  $(\Gemini(\Gamma_1),\omega'_1)$ and $(\Gemini(\Gamma_2),\omega'_2)$
  are isomorphic as weighted graphs then $W_{\Gamma_1}$ and
  $W_{\Gamma_2}$ are quasiisometric. 
\end{corollary}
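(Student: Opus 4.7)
The plan is to reduce the statement to an immediate consequence of \fullref{modified_weighted_twin_graph} together with the definition of a blow-up graph. The corollary is really just the transitivity of quasiisometry applied to the canonical weighted representative produced by the preceding proposition, so the proof will be short and essentially formal.

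First I would apply \fullref{modified_weighted_twin_graph} separately to $\Gamma_1$ and $\Gamma_2$: for each $i$, set $\Gamma_i' := \Gemini(\Gamma_i)^{\omega'_i}$ and record that $W_{\Gamma_i}$ is quasiisometric to $W_{\Gamma_i'}$ via the sequence of clonings and clone-retirings built in the proof of that proposition. At this point the two original groups $W_{\Gamma_1}$ and $W_{\Gamma_2}$ are each tethered by an explicit quasiisometry to the RACG determined by their canonical weighted twin graphs.

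Next I would use the hypothesis that $(\Gemini(\Gamma_1),\omega'_1)$ and $(\Gemini(\Gamma_2),\omega'_2)$ are isomorphic as weighted graphs. By the definition of the blow-up construction (\fullref{def:mixedmultiple}), a weighted-graph isomorphism $(\Gemini(\Gamma_1),\omega'_1)\to(\Gemini(\Gamma_2),\omega'_2)$ induces a bijection on vertices of $\Gamma_1'$ and $\Gamma_2'$ (sending $(M,i)$ to $(\phi(M),i)$ after choosing an enumeration of each twin module compatible with the equality $\omega'_1(M)=\omega'_2(\phi(M))$) and this bijection carries edges to edges, so $\Gamma_1'\cong\Gamma_2'$ as simple graphs. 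Consequently $W_{\Gamma_1'}$ and $W_{\Gamma_2'}$ are isomorphic groups, hence quasiisometric.

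Finally, I would compose: $W_{\Gamma_1}$ is quasiisometric to $W_{\Gamma_1'}\cong W_{\Gamma_2'}$, which is quasiisometric to $W_{\Gamma_2}$, so $W_{\Gamma_1}$ is quasiisometric to $W_{\Gamma_2}$. There is no real obstacle here; the only thing to be slightly careful about is that the weighted-graph isomorphism genuinely produces an isomorphism of the associated blow-up graphs, but this is immediate from \fullref{def:mixedmultiple} since the edge-set of $\Delta^\omega$ depends only on the edge-set of $\Delta$ and the values of $\omega$ on each vertex.
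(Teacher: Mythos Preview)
Your proposal is correct and is exactly the argument the paper intends: the corollary is stated without proof immediately after \fullref{modified_weighted_twin_graph}, so the paper regards it as the formal transitivity step you wrote out. There is nothing to add.
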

\begin{corollary}\label{einzelkind0}
  A triangle-free graph with no separating cliques and no unclonable singletons is RAAGedy.
\end{corollary}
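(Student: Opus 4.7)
The plan is to combine \fullref{modified_weighted_twin_graph} with Theorem~\ref{thm:davisjanuszkiewicz} via the observation in Remark~\ref{twinfree} that a blow-up with only even weights is a graph double. Concretely, let $\Gamma$ be triangle-free with no separating cliques and no unclonable singletons, and let $\Gamma' := \Gemini(\Gamma)^{\omega'}$ be the blow-up graph produced by \fullref{modified_weighted_twin_graph}. Under the hypothesis that every singleton of $\Gamma$ is clonable, the first case in the definition of $\omega'$ does not occur, so $\omega'(M)\in\{2,4\}$ for every twin module $M$ of $\Gamma$. In particular, $\omega'$ takes only even values.

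Next I would invoke Remark~\ref{twinfree}: since $\Gemini(\Gamma)$ is already twin-free by construction, the blow-up $\Gamma'$ is isomorphic to $\double(\Gemini(\Gamma)^{\omega'/2})$, where $\omega'/2$ takes values in $\{1,2\}$. Applying Theorem~\ref{thm:davisjanuszkiewicz} with $\Delta := \Gemini(\Gamma)^{\omega'/2}$ shows that $W_{\Gamma'} \cong W_{\double(\Delta)}$ is commensurable (hence quasiisometric) to the RAAG $A_{\Delta}$.

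Finally, \fullref{modified_weighted_twin_graph} itself provides a quasiisometry $W_\Gamma \to W_{\Gamma'}$. Composing, we obtain a quasiisometry from $W_\Gamma$ to $A_{\Delta}$, so $\Gamma$ is RAAGedy by definition.

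There is essentially no hard step here: all the work has been bundled into \fullref{modified_weighted_twin_graph} (which uses \fullref{prop:cloning} to adjust twin-module sizes without changing the quasiisometry type of $W_\Gamma$) and into the elementary identity $\Delta^{\omega}\cong\double(\Delta^{\omega/2})$ for even $\omega$ from Remark~\ref{twinfree}. The only subtlety worth flagging is to verify that the hypothesis of no unclonable singletons is precisely what rules out the weight-$1$ case of $\omega'$; everything else is bookkeeping.
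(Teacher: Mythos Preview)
Your proposal is correct and follows essentially the same approach as the paper's proof: apply \fullref{modified_weighted_twin_graph}, observe that the absence of unclonable singletons forces $\omega'$ to take only even values, recognize the resulting blow-up as a graph double, and invoke \fullref{thm:davisjanuszkiewicz}. The paper's proof is slightly terser but the logic is identical.
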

\begin{proof}
  Let $\Gamma':=\Gemini(\Gamma)^{\omega'}$ as in
  \fullref{modified_weighted_twin_graph}, so that $W_\Gamma$ is
  quasiisometric to $W_{\Gamma'}$ and $\omega'$ does not take the
  value 1. 
  Since $\omega'$ takes only even
  values, $\Gamma'$ is isomorphic to the graph double of the
  blow-up graph $\Gemini(\Gamma)^{\omega'/2}$.
  Apply \fullref{thm:davisjanuszkiewicz}.
\end{proof}

As an application, we upgrade the conclusion of \fullref{mixed_multiples_are_strongly_CFS}:
\begin{corollary}\label{mixed_multiple_graphs_are_RAAGedy}
  Let $(\Delta,\omega)$ be a weighted graph such that $\Delta$ is
  connected, has more than one vertex, is triangle-free and twin-free, and such that
 $\omega$ takes the value 1 only on leaves of $\Delta$, and if
 $\Delta$ is a single edge then $\omega$ does not take the value 1.  
  Then the blow-up graph $\Delta^\omega$ is triangle-free without
  separating cliques and is RAAGedy. 
\end{corollary}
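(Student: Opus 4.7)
The plan is to deduce the corollary from \fullref{einzelkind0}, which says that a triangle-free graph with no separating cliques and no unclonable singletons is RAAGedy. Setting $\Gamma:=\Delta^\omega$, we already know from \fullref{mixed_multiples_are_strongly_CFS} that $\Gamma$ is triangle-free (and strongly CFS, hence connected). So the two remaining checks are that $\Gamma$ has no separating cliques, and that every singleton of $\Gamma$ is clonable.

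For the absence of separating cliques, since $\Gamma$ is triangle-free and connected, the only candidates are a single vertex or an edge. For a potential cut vertex $(v,i)$: if $\omega(v)\geq 2$, any twin $(v,i')$ plays the same role as $(v,i)$, so removal of $(v,i)$ does not disconnect. If $\omega(v)=1$, then $v$ is a leaf of $\Delta$ by hypothesis, so $\Delta\setminus v$ is connected, and $\Gamma\setminus(v,i)=(\Delta\setminus v)^\omega$ is connected. For a potential cut edge $(v,i)\edge(u,k)$: if either of $\omega(v),\omega(u)$ is at least $2$, the unremoved twin keeps the complement connected (via a blow-up of a connected graph with all weights $\geq 1$). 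If $\omega(v)=\omega(u)=1$, then both $v$ and $u$ are leaves of $\Delta$ adjacent to each other, forcing $\Delta$ to be the single edge $vu$ with weights $1$, which is ruled out by hypothesis.

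For the absence of unclonable singletons, observe first that because $\Delta$ is twin-free we have $\Gemini(\Gamma)\cong\Delta$ with $\omega$ giving cardinalities of twin modules (\fullref{twinfree}), so singletons of $\Gamma$ are exactly vertices $(v,0)$ with $\omega(v)=1$; by hypothesis, $v$ is a leaf of $\Delta$ with unique neighbor $w$. A vertex $(u,k)$ satellites $(v,0)$ iff $\lk_\Delta(v)=\{w\}\subset\lk_\Delta(u)$, i.e.\ iff $u\in\lk_\Delta(w)$; excluding $(v,0)$ itself, the number of such ancestors is $\sum_{u\in\lk_\Delta(w)}\omega(u)-1$, and we need this to be at least $2$. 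Suppose not. Then either $\lk_\Delta(w)=\{v\}$, in which case $\Delta$ is the single edge $vw$ with $\omega(v)=1$, contradicting the hypothesis; or every $u\in\lk_\Delta(w)\setminus\{v\}$ has $\omega(u)=1$, hence is also a leaf of $\Delta$ by hypothesis, hence has $\lk_\Delta(u)=\{w\}=\lk_\Delta(v)$, making it a twin of $v$ and contradicting twin-freeness of $\Delta$.

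The main obstacle is simply the case analysis for the singleton-clonability check, where we must use both sides of the hypothesis on $\omega$ (that weight $1$ only appears on leaves, and that the single-edge case is excluded) together with twin-freeness of $\Delta$. Once these two conditions are verified, \fullref{einzelkind0} yields RAAGediness.
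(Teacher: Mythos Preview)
Your proof is correct and follows the same strategy as the paper: verify the hypotheses of \fullref{einzelkind0}. The only notable difference is that the paper dispatches the ``no separating cliques'' check in one line by invoking that strongly CFS (which is already established by \fullref{mixed_multiples_are_strongly_CFS}) implies no separating cliques, whereas you do a direct case analysis; and for clonability the paper exhibits two specific satellites (the copies of a single non-leaf neighbour $w$ of the leaf's unique neighbour, using $\omega(w)\geq 2$), while you count all of them---but the underlying use of twin-freeness to rule out extra leaf neighbours is the same.
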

\begin{proof}
  Apply \fullref{einzelkind0}.
  This requires showing that
 $\Delta^\omega$ is triangle-free without separating cliques or
 unclonable singletons.
 By \fullref{mixed_multiples_are_strongly_CFS},
$\Delta^\omega$ is triangle-free and strongly CFS, 
 and the latter implies there are no separating cliques.

Singletons of $\Delta^\omega$ come from vertices of $\Delta$ with
weight 1.
By hypothesis, any such vertex is a leaf of $\Delta$. 
  Suppose $u$ is a leaf in $\Delta$ with weight 1 and $v$ is the vertex of
  $\Delta$ adjacent to $u$.
  By hypothesis, $u\edge v$ is not all of $\Delta$.
  Since $\Delta$ is connected, $v$ has some neighbor $w\neq u$.
  The vertex $w$ is not a leaf, since if it were then $u$ and $w$ would be twins, but
  $\Delta$ is twin-free.
  Hence $\omega(w)>1$.
  In $\Delta^\omega$, $(u,0)$ is a satellite of all the vertices
  $(w,0)$, \dots, $(w,\omega(w)-1)$, of which there are at least two, 
  so $(u,0)$ is clonable.
  Thus, $\Delta^\omega$ has no unclonable singletons. 
\end{proof}
If we assume the $\Delta$ is twin-free then a
blow-up graph $\Gamma=\Delta^\omega$ is a graph
double precisely when $\omega$ takes only even values.
In \cite{CasEdl} it is shown that when $\Gamma$ contains an induced
cycle of length greater than 6 with no 2--chord, then $\Gamma$ does
not admit a FIDL--$\Lambda$.
Thus, by taking $\Delta$ to be twin-free containing a long cycle
without 2--chords and
taking $\omega$ to be uneven we get many examples of blow-up graphs
$\Delta^\omega$ to which neither the Davis-Januszkiewicz nor
Dani-Levcovitz conditions apply, but that are RAAGedy by \fullref{mixed_multiple_graphs_are_RAAGedy}.

\medskip

Finally, we give an analogue of the `near-double' construction to
describe when passing to link doubles can eliminate unclonable
singletons:
\begin{theorem}\label{einzelkind}
  A triangle-free graph $\Gamma$ without
  separating cliques is RAAGedy if any of the following are true:
  \begin{itemize}
  \item There are no unclonable singletons. 
    \item There is a vertex $v$ such that the set of unclonable
      singletons is contained in the set consisting of $v$  and its satellites.
      \item There are adjacent vertices $v$ and $w$ such that the set
        of unclonable singletons is contained in the set consisting of
        $v$ and $w$ and their satellites.
  \end{itemize}
\end{theorem}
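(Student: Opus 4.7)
The plan is to reduce the second and third bullets to the first, which is precisely \fullref{einzelkind0}, by iterating the link-doubling operation. Since \fullref{vertexdouble} realizes $W_{\double^\circ_v(\Gamma)}$ as an index-two subgroup of $W_\Gamma$, it suffices to show that one (resp.\ two) link doublings applied to a graph of the second (resp.\ third) bullet produce a graph whose associated RACG is commensurable to a RAAG. First I would check that link doubling preserves the side conditions: $\double^\circ_v(\Gamma)$ remains triangle-free (any triangle would project to one in $\Gamma$) and has no separating clique (inherited from one-endedness of $W_\Gamma$), so the only real content is controlling unclonable singletons.

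For the second bullet I would apply $\double^\circ_v$ and show directly that the result has no unclonable singletons. By \fullref{doublingmodules}, singletons of $\double^\circ_v(\Gamma)$ come from exactly two sources: singletons $u$ of $\Gamma$ with $u \in \lk(v)$ (which survive as single vertices), and singletons $u$ of $\Gamma$ outside $\st(v)$ that are not satellites of $v$ (which split into pairs $(u,0),(u,1)$). The remaining candidates---namely $v$ itself and its singleton satellites---are either deleted or absorbed into twin modules of size two. In both surviving cases the hypothesis forces $u$ to be clonable in $\Gamma$, so there exist distinct $w_1,w_2$ with $\lk_\Gamma(u)\subset\lk_\Gamma(w_i)$. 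Triangle-freeness together with $u\in\lk(v)$ forces each $w_i\in\lk(v)$; on the other hand, $u\notin\st(v)$ places no constraint beyond $w_i\neq v$. A direct link computation in $\double^\circ_v(\Gamma)$, splitting on whether $w_i\in\lk(v)$ or $w_i\notin\st(v)$, then verifies that the image of $u$ remains a satellite of at least two distinct vertices, so \fullref{einzelkind0} applies.

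For the third bullet I would again apply $\double^\circ_v$; since $w\in\lk(v)$, the vertex $w$ persists as a single vertex of $\double^\circ_v(\Gamma)$. Rerunning the singleton analysis, any unclonable singleton of $\double^\circ_v(\Gamma)$ must project to an unclonable singleton of $\Gamma$ that is neither $v$ nor a satellite of $v$, which by hypothesis restricts the candidate to be $w$ itself or a singleton satellite of $w$. Verifying via the link formulas in $\double^\circ_v(\Gamma)$---subdividing again on whether the candidate lies in $\lk(v)$ or outside $\st(v)$---shows that each such vertex becomes either $w$ or a satellite of $w$ in $\double^\circ_v(\Gamma)$, so the second bullet applies with respect to $w$ and the previous step closes the argument. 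The main obstacle throughout is the bookkeeping of satellite relations under $\double^\circ_v$: because the link of a vertex in $\double^\circ_v(\Gamma)$ depends delicately on its position relative to $\st(v)$ (with non-neighbors of $v$ splitting but neighbors getting identified), one has to run several parallel subcases, each of which is handled by \fullref{doublingmodules} combined with triangle-freeness.
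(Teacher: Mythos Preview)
Your argument is correct, but it is organized differently from the paper's. The paper first applies \fullref{modified_weighted_twin_graph} to clone every clonable singleton and shrink every large module, producing a quasiisometric $\Gamma'$ whose odd twin modules are \emph{exactly} the unclonable singletons of $\Gamma$; the three bullets then become verbatim the three cases of \fullref{recognizeneardouble}, so $\Gamma'$ is a near double and hence RAAGedy by \fullref{neardoublecommraag}. In other words, the paper clones first and link-doubles second (the latter hidden inside \fullref{recognizeneardouble}), whereas you link-double first and clone second (the latter hidden inside \fullref{einzelkind0}). Your satellite-tracking under $\double^\circ_v$ is essentially the same computation as the forward direction of \fullref{recognizeneardouble}, transported from parity of modules to clonability of singletons. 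The paper's route is shorter on the page because the bookkeeping has already been packaged into \fullref{recognizeneardouble} and \fullref{modified_weighted_twin_graph}; your route is more self-contained at the cost of redoing that bookkeeping. Both yield the same commensurability conclusion in the end (the cloning step is the only place quasiisometry-without-commensurability enters, in either order).
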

\begin{definition}
  We call a graph satisfying \fullref{einzelkind} a \emph{coarse near double}.
\end{definition}
\begin{proof}
  As in \fullref{modified_weighted_twin_graph}, without changing the
  quasiisometry type of $W_\Gamma$ we may replace $\Gamma$ by a graph
  $\Gamma'$ in which all twin modules are even except those coming
  from unclonable singletons of $\Gamma$.
The hypotheses control the relative arrangement of those unclonable
singletons and match the hypothesis of \fullref{recognizeneardouble},
so $\Gamma'$ is a near double.
Thus, $W_\Gamma$ is quasiisometric to $W_{\Gamma'}$, by
\fullref{prop:cloning}, and $W_{\Gamma'}$ is commensurable to a RAAG, by \fullref{neardoublecommraag}
\end{proof}

\begin{example}\label{ex:coarseneardoubles}
  \begin{figure}[h]
    \centering
    \includegraphics{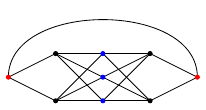}
\quad
    \includegraphics{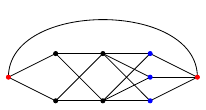}
\quad
\includegraphics{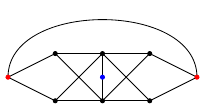}

\vspace{1cm}

\includegraphics{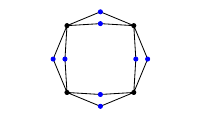}
\quad
    \includegraphics{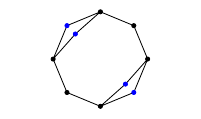}
\quad
\includegraphics{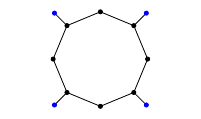}

    \caption{Some coarse near doubles $\Gamma$ (top row) and graphs
      $\Delta$ (bottom row)
      such that $W_\Gamma$ is quasiisometric to $A_\Delta$, respectively.}
    \label{fig:coarseneardouble}
  \end{figure}
The top row of \fullref{fig:coarseneardouble} shows the only three triangle-free CFS graphs with at most 9
  vertices that are not near doubles and for which Dani-Levcovitz does
  not produce a finite-index RAAG subgroup, but to which
  \fullref{einzelkind} applies.
  These are the smallest examples that we know for which $W_\Gamma$ is 
quasiisometric to a RAAG, but we do not know if $W_\Gamma$ is
commensurable to some RAAG. 

In each case there is an adjacent pair of unclonable singletons (the
extreme left and right vertices, in red) and one other odd module, which
consists of clonable vertices (blue).
Clone a vertex from this module to make it even.
The remaining odd modules are a pair of adjacent singletons.
Doubling over these two vertices produces a graph that is a double of
the corresponding graph in the bottom row.
\end{example}

\subsection{Unfolding}\label{sec:unfolding}
We introduce a new operation on graphs that induces a quasiisometry of
their respective RACGs. We first state a result, then give 
motivating examples, then prove the result. 

  \begin{proposition}\label{prop:unfold}
    Suppose $\Gamma$ is triangle-free with a
    separating join  $E\join F$.
    Let $G$ be a union of connected components of $\Gamma\setminus (E\join F)$, and let
    $H:=(\Gamma\setminus E\join F)\setminus G$.
    Let $\bar{G}$ be the union of $F$ and $G$ and the neighbors of $G$
    in $E$.
    Let $\bar{H}$ be the union of $F$ and $H$ and the neighbors of $H$
    in $E$.
    Partition $E$ as follows:
    \begin{align*}
      A&:=E\cap\bar{G}\cap\bar{H}^c=\{a_0,a_1,\dots,a_\ell\}\\
       B&:=E\cap\bar{G}^c=\{b_0,b_1,\dots,b_m\}\\
      C&:=E\cap\bar{G}\cap\bar{H}\\
    \end{align*}
    Suppose that $A$ and $B$ are nonempty and $C=\{c\}$ is a single
    vertex.
    
    Then $W_\Gamma$ is quasiisometric to $W_{\Gamma'}$, where
    $\Gamma'$ is a graph constructed as follows.
    The vertex set is the vertex set of
    $\Gamma$ plus one new vertex $d$, with $D:=\{d\}$.
    Add edges so that $(D\sqcup E)\join F\subset\Gamma'$.
    If $x\edge y$ is an edge in $\bar{G}$ then add an edge from $x$
    to $y$ in $\Gamma'$.
    If $x\edge y$ is an edge of $\bar{H}$ such that $x\neq c\neq y$
    then add an edge from $x$ to $y$ in $\Gamma'$.
    If $x\edge c$ is an edge of $\bar{H}$ then add an edge from $x$ to
    $d$ in $\Gamma'$.

The quasiisometry can be constructed so that each copy of $\Sigma_{\bar{G}}$ in
$\Sigma_\Gamma$ is sent to within uniformly bounded Hausdorff distance of a
copy of $\Sigma_{\bar{G}}$ in
$\Sigma_{\Gamma'}$, and each copy of $\Sigma_{\bar{G}}$ in
$\Sigma_{\Gamma'}$ is Hausdorff close to the image of a copy from
$\Sigma_\Gamma$. 

Furthermore, on each copy of $\Sigma_{\bar{G}}$ in
$\Sigma_\Gamma$ the quasiisometry restricts to a map that is uniformly
bounded distance from a color preserving
cubical isomorphism.

Similar statements are true for $\Sigma_{\bar{H}}$, except that
$c$--edges are sent to $d$--edges.
  \end{proposition}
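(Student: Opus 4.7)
My plan is to apply \fullref{tree_of_quasiisometries} to graph-of-groups decompositions of $W_\Gamma$ and $W_{\Gamma'}$ coming from their separating joins:
\[W_\Gamma = W_{\Gamma_1} *_{W_{E*F}} W_{\Gamma_2}\quad\text{and}\quad W_{\Gamma'} = W_{\Gamma_1''} *_{W_{(E\cup D)*F}} W_{\Gamma_2''},\]
where $\Gamma_i$ (resp.\ $\Gamma_i''$) is the induced subgraph of $\Gamma$ (resp.\ $\Gamma'$) on the $G$- or $H$-side together with the separator.  Direct comparison of these induced subgraphs reveals that $\Gamma_1''$ is $\Gamma_1$ with a new vertex $d$ joined to $F$ only, while $\Gamma_2''$ is isomorphic to $\Gamma_2 *_F a_0'$ after the relabeling $c\mapsto d,\ a_0'\mapsto c$ for any chosen $a_0\in A$; the relabeling reflects that in $\Gamma_2''$ the new vertex $d$ inherits the $\bar H$-role of $c$, while $c$ is demoted to a vertex with link $F$.

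The base quasi-isometries are then produced by \fullref{prop:cloning}.  Fix $b_0\in B$: in $\Gamma_1$ we have $\lk(b_0)=F$, and $b_0$ is a satellite of every vertex of $E\setminus\{b_0\}$; since $|E|\geq 3$ (as $A$ and $B$ are nonempty and $c\in E$), $b_0$ is clonable, and \fullref{prop:cloning} gives $\psi_1\from\Sigma_{\Gamma_1}\to\Sigma_{\Gamma_1''}$ that restricts to color-preserving cubical isomorphisms on each translate of $\Sigma_{\Gamma_1\setminus\{b_0\}}\supseteq\Sigma_{\bar G}$.  Fix $a_0\in A$: by an analogous clonability argument in $\Gamma_2$, \fullref{prop:cloning} gives a quasi-isometry $\Sigma_{\Gamma_2}\to\Sigma_{\Gamma_2*_F a_0'}$ that is color-preserving on translates of $\Sigma_{\Gamma_2\setminus\{a_0\}}\supseteq\Sigma_{\bar H}$; composing with the cubical isomorphism induced by the relabeling $c\mapsto d,\ a_0'\mapsto c$ produces $\psi_2\from\Sigma_{\Gamma_2}\to\Sigma_{\Gamma_2''}$, which on translates of $\Sigma_{\bar H}$ is color-preserving except for the swap $c\leftrightarrow d$.

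To conclude via \fullref{tree_of_quasiisometries}, one must verify its three hypotheses for the pair $(\psi_1,\psi_2)$.  Conditions (2) and (3) follow routinely from the tree-of-spaces structure internal to \fullref{prop:cloning}: the image of a translate of $\Sigma_{E*F}$ under either $\psi_i$ is a translate of $\Sigma_{(E\cup D)*F}$ up to bounded error, and the construction commutes with coset translation.  The main obstacle is condition (1), the coarse agreement of $\psi_1$ and $\psi_2$ on the edge subcomplex $\Sigma_{E*F}$.  Both restrictions take the form $\mathrm{Id}_{\Sigma_F}\times\varphi$, with $\varphi$ a \fullref{lem:coloredtree1}-style tree quasi-isometry between the regular trees $\Sigma_E$ and $\Sigma_{E\cup D}$; the two $\varphi$'s differ in which color of $\Sigma_E$ is split and which color of $\Sigma_{E\cup D}$ is designated new.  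Using the flexibility in the construction of \fullref{lem:coloredtree1} (the choice of basepoint and the labelling of the new color), one can choose the two tree quasi-isometries with a common coarse realization, exploiting that $C=\{c\}$ is a single vertex so that there is essentially one edge color---that of $c$---which behaves differently on the two sides.  Once condition (1) is established, \fullref{tree_of_quasiisometries} yields the claimed quasi-isometry, and the color-preservation and $c\leftrightarrow d$ properties of $\psi_1$ and $\psi_2$ transport to give the final statements about $\Sigma_{\bar G}$ and $\Sigma_{\bar H}$.
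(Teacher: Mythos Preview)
Your structural analysis is correct, and the two-vertex splitting you propose is a reasonable alternative to the paper's three-vertex splitting
\[W_\Gamma = W_{\bar G} *_{W_{F\join(A\cup C)}} W_{F\join E} *_{W_{F\join(B\cup C)}} W_{\bar H}.\]
However, the step you flag as ``the main obstacle''---coarse agreement of $\psi_1$ and $\psi_2$ on the edge space $\Sigma_{E\join F}$---is a genuine gap, not a routine verification. Your $\psi_1$ is built from a \fullref{lem:coloredtree1} map that treats $b_0$ as the non-static color and keeps every other $E$-color (in particular $c$) fixed, so on $\Sigma_E$ it preserves $c$-edges. Your $\psi_2$, after the relabeling $c\mapsto d$, sends $c$-edges to $d$-edges. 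In the tree $\Sigma_{E\cup D}$ a $c$-colored geodesic and a $d$-colored geodesic through the same vertex diverge linearly, so these two maps cannot coarsely agree, regardless of basepoint choices. The ``flexibility'' you invoke in \fullref{lem:coloredtree1} only lets you choose \emph{which single} color is non-static; it does not produce a map that simultaneously preserves $(A\cup C)$-colored components and sends $(B\cup C)$-colored components to $(B\cup D)$-colored components.

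That simultaneous requirement is exactly what is needed, and the paper isolates it as a separate lemma (\fullref{lem:coloredtree2}) whose proof is a genuinely new two-phase edge-slide construction, more delicate than \fullref{lem:coloredtree1}. Armed with that lemma, the paper puts the resulting map $\psi'=\mathrm{Id}_{\Sigma_F}\times\psi$ on the \emph{middle} vertex of the three-vertex splitting above; the outer base maps are then the identity on $W_{\bar G}$ and the $c\mapsto d$ isomorphism on $W_{\bar H}$, so compatibility on the two edge spaces is immediate from the two halves of \fullref{lem:coloredtree2}. Your approach could be repaired by proving \fullref{lem:coloredtree2} and plugging the resulting tree map into both of your cloning constructions, but at that point you have essentially reproduced the paper's argument with extra packaging. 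Note also that your intuition about $|C|=1$ being essential is correct: \fullref{ex:bigunfold} shows the analogue fails for $|C|>1$.
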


  \begin{example}\label{ex:simple_unfolding}
    Consider the graphs $\Gamma$ and $\Gamma'$ of \fullref{fig:simple_unfolding}.

    \begin{figure}[h]
      \centering
      \begin{subfigure}[t]{.3\textwidth}
        \centering
        \includegraphics{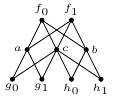}
        \caption{$\Gamma$}
        \label{fig:simple_unfolding_0}
      \end{subfigure}
      \hfill
       \begin{subfigure}[t]{.3\textwidth}
        \centering
        \includegraphics{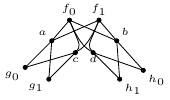}
        \caption{Intermediate}
        \label{fig:simple_unfolding_1}
      \end{subfigure}
      \hfill
      \begin{subfigure}[t]{.3\textwidth}
                \centering
        \includegraphics{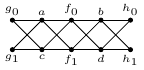}
        \caption{$\Gamma'$}
        \label{fig:simple_unfolding_2}
      \end{subfigure}
      \caption{Unfolding visualized as a continuous deformation.}
      \label{fig:simple_unfolding}
    \end{figure}
    
    $\Gamma'$ is obtained from $\Gamma$ by unfolding as in \fullref{prop:unfold}
   with $A:=\{a\}$, $B:=\{b\}$, $C:=\{c\}$,
    $E:=A\sqcup B\sqcup C$, $F:=\{f_0,f_1\}$, $G:=\{g_0,g_1\}$, and
    $H:=\{h_0,h_1\}$, so $W_\Gamma$ and $W_{\Gamma'}$ are quasiisometric.
  \end{example}
  In the previous example $\Gamma$ is a near double and admits a
  FIDL--$\Lambda$, so we already knew it was RAAGedy.
In the next example unfolding is the only way that we know to say that the graph is RAAGedy. 
\begin{example}\label{ex:iteratedcones}
  Consider the graphs $\Gamma$ and $\Gamma'$ of
  \fullref{fig:unfolding}.
  
  \begin{figure}[h]
    \centering
    \begin{subfigure}{.3\textwidth}
      \centering
      \includegraphics{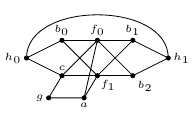}
      \caption{$\Gamma$}
      \label{fig:unfolding_Gamma}
    \end{subfigure}
      \begin{subfigure}{.3\textwidth}
      \centering
      \includegraphics{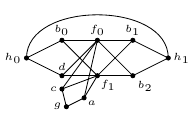}
      \caption{$\Gamma'$}
      \label{fig:unfolding_Gamma_prime}
    \end{subfigure}
      \begin{subfigure}{.3\textwidth}
      \centering
      \includegraphics{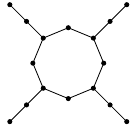}
      \caption{$\Delta$}
      \label{fig:unfolding_Delta}
    \end{subfigure}
    \caption{Unfolding example.}
    \label{fig:unfolding}
  \end{figure}

As in the previous example, the sets for \fullref{prop:unfold}  are
indicated by their lowercase vertex labels, and we conclude that $W_{\Gamma}$ is quasiisometric
to $W_{\Gamma'}$.

In $\Gamma$, vertices $h_0$, $h_1$, $b_0$, $c$, and $a$ are all unclonable singletons,
since $a$ and $b_0$ are only satellites of $c$, and vertices $h_0$, $h_1$, and
$c$ are not 
satellites at all.  The graph is not a near double.
It also contains an odd cycle, so it does not admit a FIDL--$\Lambda$.

In $\Gamma'$, $d$ and $b_0$ are twins and $c$ and $a$ are twins. 
The only unclonable singletons are $h_0$ and $h_1$, which are adjacent.
So, clone $g$ and then link double over $h_0$ and $h_1$. 
Conclude that $W_\Gamma$ and $W_{\Gamma'}$ are quasiisometric to
$A_\Delta$, for the $\Delta$ of \fullref{fig:unfolding_Delta}.
\end{example}

For the proof of \fullref{prop:unfold} we need a variation of \fullref{lem:coloredtree1}.
\begin{lemma}\label{lem:coloredtree2}
   For every $m,n\geq 0$  there is a quasiisometry $\phi$ between the $(m+n+3)$--valent tree $T_{m+n+3}$ with
    edges colored $a_0$,\dots, $a_m$, $b_0$,\dots,$b_n$, $c$ (with exactly one edge of each color at
    each vertex) and the $(m+n+4)$--valent tree $T_{m+n+4}$ with edges
    colored $a_0$,\dots, $a_m$, $b_0$,\dots,$b_n$, $c$, $d$  with the following properties:
    \begin{itemize}
      \item $\phi$ sends each 
        $\{a_0,\dots,a_m,c\}$--colored component of $T_{m+n+3}$
        within uniformly bounded Hausdorff distance of a unique
         $\{a_0,\dots,a_m,c\}$--colored component of $T_{m+n+4}$,
        and every such component of $T_{m+n+4}$ is the coarse image of
        a unique component of $T_{m+n+3}$.
        Furthermore, for each such component the quasiisometry
        restricts to a color-preserving isomorphism except at a single
        vertex.
          \item $\phi$ takes every $\{b_0,\dots,b_n,c\}$--colored component of $T_{m+n+3}$
        within uniformly bounded Hausdorff distance of a unique
         $\{b_0,\dots,b_n,d\}$--colored component of $T_{m+n+4}$,  and every such component of $T_{m+n+4}$ is the coarse image of
        a unique component of $T_{m+n+3}$.
Furthermore, for each $\{b_0,\dots,b_n,c\}$--colored component
of $T_{m+n+3}$ the quasiisometry restricts, except at a single vertex,
to an isomorphism that preserves  $b_j$--edges for each $j$  and takes $c$--edges to $d$--edges.
\end{itemize}
  \end{lemma}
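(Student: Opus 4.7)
The plan is to adapt the edge-slide construction used in the proof of \fullref{lem:coloredtree1}, taking $c$ as the color to be split into a pair $(c,d)$ and treating $a_0,\dots,a_m,b_0,\dots,b_n$ as the static colors. As in the proof of \fullref{lem:coloredtree1}, I would pick a base vertex $w_0$ in $T_{m+n+3}$ and perform two phases of edge-slides on the $c$-edges so that the resulting tree has two $c$-edges at every vertex, forming biinfinite $c$-colored geodesics, while every $a_i$-edge and $b_j$-edge is preserved. This yields an $(m+n+4)$-valent colored tree together with a bijection $\phi$ on vertices that is biLipschitz with a universal constant.

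The refinement over \fullref{lem:coloredtree1} lies in the alternating recoloring of the biinfinite $c$-paths. Rather than choosing the alternation arbitrarily, I would note that each such path consists of alternating ``original'' $c$-edges of $T_{m+n+3}$ and ``slid-in'' $c$-edges produced by the construction, and recolor the originals as $c$ and the slid-in edges as $d$. Each $\{a_0,\dots,a_m,c\}$-component of $T_{m+n+3}$ then retains all of its $a_i$- and $c$-edges, so it embeds as an $\{a_0,\dots,a_m,c\}$-component of the target tree via a color-preserving isomorphism, with the only discrepancy appearing at the component's vertex closest to $w_0$, where a single edge incidence was modified by the slide. Symmetrically, each $\{b_0,\dots,b_n,c\}$-component retains its $b_j$-edges, while its original $c$-edges are moved by the slides and then recolored as $d$, producing a $\{b_0,\dots,b_n,d\}$-component of the target tree, again with one exceptional vertex per component closest to $w_0$. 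The Hausdorff-distance bounds follow from the fact that each slid edge moves a uniformly bounded combinatorial distance.

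The main obstacle is verifying that the alternation of original and slid-in edges along each biinfinite $c$-path in the post-slide tree is compatible with the component structure, so that the choice of recoloring described above is globally consistent. Concretely, one must check that along each biinfinite $c$-path, consecutive edges really do alternate between the two types, and one must bookkeep, for each vertex $v$, which of its two $c$-edges in the intermediate tree corresponds to the direction of its outgoing $\{a,c\}$-component and which corresponds to the direction of its outgoing $\{b,c\}$-component relative to $w_0$. I expect the bulk of the proof to be this careful case analysis of the edge-slides, after which the quasiisometry bounds (matching those in \fullref{lem:coloredtree1}) and the coarse correspondence of components follow immediately.
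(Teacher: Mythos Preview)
Your approach has a genuine gap. You propose to run the construction of \fullref{lem:coloredtree1} with static colors $\{a_0,\dots,a_m,b_0,\dots,b_n\}$ and then recolor the resulting biinfinite $c$--paths, labelling ``original'' $c$--edges as $c$ and ``slid-in'' edges as $d$. But in \fullref{lem:coloredtree1} the slides \emph{relocate} existing $c$--edges rather than add new ones: every edge in the output is an original edge, possibly moved. Concretely, phase~1 (using the distinguished static color, say $a_0$) deletes the edge $vca_0\edge vca_0c$ and inserts $v\edge vca_0c$ in its place. So at the vertex $vca_0$ there is no longer any edge to its original $c$--neighbor $vca_0c$, and neither phase~2 nor any recoloring restores one. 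Your assertion that each $\{a_0,\dots,a_m,c\}$--component ``retains all of its $a_i$- and $c$-edges'' is therefore false: the $c$--edges internal to that component have been rearranged, altering its tree structure, and the restriction of your bijection cannot be a color-preserving isomorphism away from a single vertex. The ``original versus slid-in'' dichotomy you rely on is not well defined, and the obstacle you identify is structural, not just a matter of bookkeeping.

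The paper takes a different route. Its phase~1 does not slide $c$--edges at all; instead it \emph{unfolds} them, inserting at each $v$ with outgoing $c$--edge a new $d$--edge to a new vertex $vd$, and then moving the $b_j$--children of $vc$ over to $vd$ while leaving the $a_i$--children at $vc$. This makes $\phi_1$ a genuinely coarse (two-valued at each $vc$) map rather than a vertex bijection, and builds the $a$--versus--$b$ asymmetry directly into the construction rather than attempting to recover it by recoloring afterward. Phase~2 then repairs the resulting valence deficits with a donor scheme similar in spirit to \fullref{lem:coloredtree1} but with carefully chosen donor paths that avoid interference.
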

  \begin{proof}
    As in \fullref{lem:coloredtree1}, choose a base vertex $w_0$ of $T_{m+n+3}$and
    describe vertices according to colored paths. 
   
    For phase~1 of the construction, consider every vertex $v$ with an
    outgoing $c$--edge.
    Add a new edge labelled $d$ at $v$, and call its opposite vertex $vd$.
    For each $j$, delete the edge between $vc$ and $vcb_j$ and instead connect $vcb_j$ to
    $vd$ by an edge labelled $b_j$. 
    The effect is to `unfold' all of the $c$--edges, leaving the
    $a_i$--edges in place at the end of the $c$--edge and moving the
    $b_j$--edges to the  end of the new $d$--edge.
See \fullref{fig:colortree2phase1}. 
    Call the result $\phi_1$.
    It is a coarse map, in the sense that some vertices are sent to a
    set of 2 vertices at uniformly bounded distance 2 from one another. 
   $\phi_1$  is a $(3,2)$--quasiisometry.
    It may be easier to visualize the inverse of the (coarse) map in
    \fullref{fig:colortree2phase1}: it
    `folds' co-incident edges $v\edge vc$ and $v\edge vd$ together to
    make a single edge $v\edge vc$.
    This is actually a map, but is not 1 to 1 on vertices. 

    The coarse map $\phi_1$ has the additional property that
    for any $\{a_0,\dots,a_m,c\}$--component $C$ there is a unique
    color preserving isomorphism $\phi_1^C$ such that for all vertices
    $v\in C$ we have $\phi_1^C(v)\in\phi_1(v)$.
    The same is true for $\{b_0,\dots,b_n,c\}$--components, except
    that edges colored $c$ are sent to edges colored $d$.
    This is to say that at the level of trees there is not a
    canonically nice way to make choices of image points such that
    $\phi_1$ is an honest map instead of a coarse map, but at the
    level of $\{a_0,\dots,a_m,c\}$--component and
    $\{b_0,\dots,b_n,c\}$--component there is, and it is unique, so
    we will not further belabor the point, and simply speak of
    \emph{the restriction of $\phi_1$ to $C$} as a well defined map.
    So in \fullref{fig:colortree2phase1}, we would say that $\phi_1$
    sends the unique $a_0c$ (red-green) colored geodesic through the basepoint
    on the left isomorphically to the unique $a_0c$  (red-green) colored geodesic through
    the basepoint on the right, and sends the unique $bc$ (blue-green) colored
    geodesic through the basepoint on the left isomorphically to the
    unique $bd$ (blue-olive) colored geodesic through the base point
    on the right.

    \begin{figure}[h]
      \centering
     \includegraphics{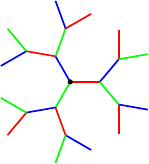}
\qquad\tikz[baseline=-40pt]\draw[thick,->] (0,0) -- ++ (1,0);\qquad
            \includegraphics{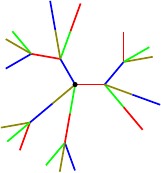}
      \caption{Phase~1 coarse map $\phi_1$, with  $a_0$ red, $b_0$ blue, $c$ green, $d$ olive}
      \label{fig:colortree2phase1}
    \end{figure}
    
    Vertices with an incoming $c$--edge have outgoing
    $\{a_0,\dots,a_m\}$--edges, but no incident
    $\{b_0,\dots,b_n,d\}$--edges.
    Vertices with an incoming $d$--edge have outgoing  $\{b_0,\dots,b_n\}$--edges, but no incident
    $\{a_0,\dots,a_m,c\}$--edges.
    All other vertices have one incident edge of each color.

    For phase~2 of the construction, define a map $\phi_2$ inductively with
    increasing distance to $w_0$, as follows. See also \fullref{fig:colortree2phase2}.
    Our map $\phi_2$ will be injective on vertices, but rearrange the
    placement of some edges.
    
    Suppose $v$ is a vertex of less than full valence (of valence
    less than $m+n+4$) and its incoming edge  is colored $c$.
    Then $v$ has no $\{b_0,\dots,b_n,d\}$--edges.
    The vertex $va_0db_0a_0$ had full
    valence at the end of phase~1, since it has incoming edge
    colored $a_0$.
    We claim it still has full valence now, so we can take all of its
    $\{b_0,\dots,b_n,d\}$--edges and donate them to $v$.
    We further claim that none of the other phase~2 moves affect an
    edge on the geodesic between $v$ and $va_0db_0a_0$.
    
    Now suppose $v$ has less than full valence with incoming
    $a_0$--edge.
    This could happen if it had full
    valence at the end of phase~1, but donated its  $\{b_0,\dots,b_n,d\}$--edges to a
    predecessor earlier in phase~2.
  The vertex $vca_0b_0a_0$  has
    full valence, because it did at the end of phase~1 and its last
    four edges are not $a_0db_0a_0$, so it was not called upon to
    donate to a vertex with incoming $c$--edge. 
Take its  $\{b_0,\dots,b_n,d\}$--edges and donate them to $v$.
    
    The construction for $v$ with incoming $d$-- or $b_0$--edges is
    similar, swapping the roles of $a_0$ and $b_0$ and those of $c$ and
    $d$.

    \begin{figure}[h]
      \centering
   \includegraphics{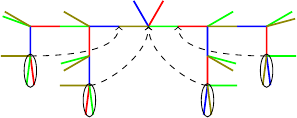}
      \caption{Choices of donors for phase~2 map.}
      \label{fig:colortree2phase2}
    \end{figure}

    Now we confirm the claim that these four types of donations do not
    interfere with one another.
    Given a vertex $u$, let $\delta(u)$
    denote its donor, if any.
    Suppose $v$ is a predecessor of $w$ and let $w'$ be the immediate
    predecessor of $w$.
    For the geodesic $[v,\delta(v)]$ to end on
    $[w,\delta(w)]$, and thus potentially have the donation to $v$
    interfere with the donation to $w$,  would require a terminal segment of
    $[v,\delta(v)]$ to coincide with an initial segment of $[w',\delta(w)]$.
    The four possible labels of
    $[v,\delta(v)]$  are: \[a_0db_0a_0\quad
      ca_0b_0a_0\quad b_0ca_0b_0 \quad db_0a_0b_0\]
    The four
    possible labels of $[w',\delta(w)]$ are:
    \[ca_0db_0a_0\quad
      a_0ca_0b_0a_0\quad db_0ca_0b_0\quad b_0db_0a_0b_0\]
    The maximal overlap between a suffix of the former and a prefix of
    the latter is a single letter, which means that
    $[v,\delta(v)]\cap[w,\delta(w)]$ is empty unless $w=\delta(v)$.
Even in this case we have arranged that the edges donated to $v$ by
$w$ are not one of the first edges on $[w,\delta(w)]$, since when $w$
donates $\{b_0,\dots,b_n,d\}$--edges, $[w,\delta(w)]$ starts with $c$,
and when $w$
donates $\{a_0,\dots,a_n,d\}$--edges, $[w,\delta(w)]$ starts with $d$.
  
    The new edges added
    connect vertices that were at distance at most 8 after phase~1, so
    the map $\phi_2$ of phase~2 is $8$--biLipschitz on vertices.

    Consider a $\{b_0,\dots,b_n,d\}$--component in the tree before phase~2,
    and suppose $w$ is its closest vertex to $w_0$.
   Suppose that during phase~2 a predecessor $v$ of $w$ took the
   $\{b_0,\dots,b_n,d\}$--edges from $w$.
   By construction, all predecessors of $v$ are already full valence,
   so no vertex of $\{b_0,\dots,b_n,d\}$--component at $w$ other than
   $w$ will be called upon to donate $\{b_0,\dots,b_n,d\}$--edges. 
   Thus, for any nontrivial geodesic word $u$ in letters
   $\{b_0,\dots,b_n,d\}$, $\phi_2(wu)=vu$.
   However, $\phi_2(w)=w$.
   So $\phi_2$ restricted to the $\{b_0,\dots,b_n,d\}$--component based at
   $w$ is a color preserving isomorphism except at the vertex $w$,
   since an isomorphism of this component would have sent $w$ to $v$. 

    Similarly, each $\{a_0,\dots,a_m,c\}$--component is sent to
    within bounded Hausdorff distance of an
    $\{a_0,\dots,a_m,c\}$--component, and the map restricts to a
    color preserving isomorphism except possibly at the unique vertex closest to $w_0$. 
  \end{proof}

  \begin{proof}[Proof of \fullref{prop:unfold}]    
    Since $\Gamma$ is triangle-free, 
    $E=A\cup B\cup C$ is an
    anticlique, so $\Sigma_E$ is a tree, as is $\Sigma_{E'}$ for $E':=A\cup B\cup C\cup D$.
    By \fullref{lem:coloredtree2} there is a quasiisometry
    $\psi\from\Sigma_{A\cup B\cup C}\to
    \Sigma_{A\cup B\cup C\cup D}$ that coarsely
    takes $\Sigma_{A\cup C}$--components to
    $\Sigma_{A\cup C}$--components, $\Sigma_{B\cup C}$--components to
    $\Sigma_{B\cup D}$--components, and on each
    such component restricts to a color preserving isomorphism except
    at a single vertex, and except for the fact that in the second case $c$--edges
    are sent to $d$--edges.
    Define:
    \begin{align*}
    \psi':=\mathrm{Id}_{\Sigma_{F}}\times\psi\from &\Sigma_{F\join E}=
      \Sigma_{F}\times\Sigma_{A\cup B\cup C}\\
      &\to \Sigma_{F}\times\Sigma_{A\cup B\cup C\cup
        D}=\Sigma_{F\join E'}  
    \end{align*}

    Because this map is just a product, the good behavior of $\psi$ on
    colored components carries over.
    Specifically, if the restriction of $\psi$ to the $(A\cup C)$--component of
    $\Sigma_E$ based at vertex $w$ sends it to the $(A\cup C)$--component of
    $\Sigma_{E'}$ based at $v$ then the restriction of $\psi$ is a
    color preserving isomorphism on that component, except at $w$ if
    $\psi(w)\neq v$, and
    the distance from $v$ to $\psi(w)$ is uniformly bounded. 
    Then $\psi'$ restricted to the $\Sigma_{F\join (A\cup
      C)}$--component of $\Sigma_{F\join E}$ based at $(1,w)$ sends it to the $\Sigma_{F\join (A\cup
      C)}$--component of $\Sigma_{F\join E'}$ based at $(1,v)$, and is
    a color preserving cubical isomorphism except along
    $\Sigma_F\times\{w\}$, which is sent to
    $\Sigma_F\times\{\psi(w)\}$ instead of $\Sigma_F\times\{v\}$.
   But these two sets are parallel at distance $d(v,\psi(w))$, which
   was uniformly bounded, so the restriction of $\psi'$ to an  $(A\cup
   C)$--component is uniformly bounded distance from a color
   preserving cubical isomorphism.

    Similar statements hold for copies of $\Sigma_{F\join (B\cup
      C)}$, except that $c$--edges change to $d$--edges. 

    According to our setup, $\Gamma=\bar G\cup\bar H$ and $\bar
    G\cap\bar H=F\cup C\subset F\join E$.
    This gives the following splittings of $W_\Gamma$ and $W_{\Gamma'}$ as graphs of
    groups, where each edge group is simply the intersection of its two
    vertex groups.
    \begin{align*}
      W_\Gamma&=W_{\bar{G}}\stackrel{W_{F\join (A\cup C)}}{\longdash}
                W_{F\join E}\stackrel{W_{F\join (B\cup C)}}{\longdash} 
                W_{\bar{H}}\\
           W_{\Gamma'}&=W_{\bar{G}}\stackrel{W_{F\join(A\cup
                        C)}}{\longdash}  W_{F\join
                        E'}\stackrel{W_{F\join (B\cup D)}}{\longdash} 
    W_{\bar{H}'}
    \end{align*}
 
    Now apply \fullref{tree_of_quasiisometries} to get a quasiisometry between $W_\Gamma$ and $W_{\Gamma'}$
    as a tree of quasiisometries with respect to the given splittings,
    with base maps:
    \begin{itemize}
    \item The identity on $W_{\bar{{G}}}$.
    \item $\psi'\from W_{F\join E}\to W_{F\join E'}$
      \item The
    isomorphism $W_{\bar{{H}}}\to W_{\bar{{H}}'}$
    fixing each $b_j$ and sending $c$ to $d$.\qedhere
  \end{itemize}
  The check that the hypotheses of \fullref{tree_of_quasiisometries}
  are satisfied is similar to the one in the proof of
  \fullref{prop:cloning}: Condition~\eqref{item:coset_bijection} has been
  established explicitly, and Conditions~\eqref{item:base_consistency}
  and \eqref{item:consistency} follow
  from the fact that $\psi'$ is uniformly bounded distance from a
  color preserving isomorphism on each coset. 
  \end{proof}
  
Here is an application of \fullref{prop:unfold}.
  \begin{proposition}\label{prop:removecutpaths}
  For every triangle-free CFS graph $\Gamma$ there is a triangle-free
  CFS graph $\Gamma'$ with no cut 2--paths, such that $W_\Gamma$ and
  $W_{\Gamma'}$ are quasiisometric. 
\end{proposition}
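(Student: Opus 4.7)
The plan is to remove cut $2$--paths one at a time using the unfolding operation of \fullref{prop:unfold}, iterating until none remain. Given a triangle-free CFS graph $\Gamma$ with a cut $2$--path $a\edge c\edge b$, I will produce a quasiisometric triangle-free CFS graph in which $\{a,b\}$ has become a cut pair, so that by definition $a\edge c\edge b$ ceases to be a cut $2$--path, and then iterate.

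The key setup is to take $F:=\{a,b\}$ and let $E\subseteq\lk(a)\cap\lk(b)$ contain $c$. Then $E\join F$ is a join subgraph of $\Gamma$ containing the cut set $\{a,b,c\}$, so it is itself a separating join. Since $\{a,b\}$ is not a cut pair while $\{a,b,c\}$ is a cut, the vertex $c$ must have neighbors in more than one component of $\Gamma\setminus(E\join F)$; I partition the components into $G$ and $H$ so that $c\in C$. When the remaining vertices of $E$ distribute so that $A$ and $B$ are both nonempty, \fullref{prop:unfold} yields a quasiisometric $\Gamma'$ containing a new vertex $d$ that absorbs $c$'s connections to the $H$-side. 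After unfolding, neither $c$ nor $d$ connects $G$ to $H$ in $\Gamma'\setminus\{a,b\}$, so $\{a,b\}$ becomes a cut pair in $\Gamma'$; a short check confirms that $\Gamma'$ remains triangle-free (the new vertex $d$ lives in the anticlique $E$ with edges only to $F$ and to former $H$-neighbors of $c$) and CFS (the join $(D\sqcup E)\join F$ contributes new squares that keep a connected full-support component of $\diag(\Gamma')$).

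When the hypothesis of \fullref{prop:unfold} is not directly satisfied---most typically when $\lk(a)\cap\lk(b)$ is too sparse for the partition into $A$, $B$, $C$ to have both $A$ and $B$ nonempty---I first preprocess $\Gamma$ by a sequence of link doublings. Each is quasiisometry-preserving by \fullref{vertexdouble}, and can be chosen to enlarge or diversify the common neighborhood of $a$ and $b$ until a valid unfolding setup becomes available. Termination of the overall procedure is tracked by the number of unordered pairs $\{a,b\}$ that serve as endpoints of some cut $2$--path in the current graph: each unfolding strictly decreases this count by converting the chosen pair into a cut pair, and any newly-introduced $2$--paths through $d$ share the same endpoints $\{a,b\}$, which are now a cut pair and hence excluded from the count by the definition of a cut $2$--path.

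The main obstacle I anticipate is the degenerate case $|\lk(a)\cap\lk(b)|=1$, where the minimal choice $E=\{c\}$ falls short of the hypothesis $|E|\geq 3$ implicit in \fullref{prop:unfold}. Here one must exhibit a sequence of link doublings (and possibly clonings) that enlarges the common neighborhood in a controlled way while preserving triangle-freeness, CFS, and the monotonicity of the termination measure. Handling this case uniformly---and ensuring that the preprocessing does not generate a worse configuration of cut $2$--paths at other vertex pairs that would spoil termination---is the technical heart of the argument.
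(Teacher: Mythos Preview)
Your overall plan---use \fullref{prop:unfold} with $F=\{a,b\}$ and $C=\{c\}$ to convert each cut $2$--path into a cut pair, then iterate---is exactly the paper's approach. But the ``main obstacle'' you anticipate is a phantom, and your proposed workaround via link doublings and clonings is both unnecessary and not actually carried out.

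The gap is this: you never establish that $A$ and $B$ are nonempty, and you treat the possibility $|\lk(a)\cap\lk(b)|=1$ as a real case requiring preprocessing. In fact the CFS hypothesis rules this out directly. The paper proves (as \fullref{components_contain_crossing_square}) that in an incomplete triangle-free CFS graph, every component of $\Gamma\setminus\{a,b,c\}$ already contains a vertex of $\lk(a)\cap\lk(b)$. The argument is short: pick vertices $x$ and $y$ on opposite sides of the cut, use the full-support component of $\diag(\Gamma)$ to connect a square containing $x$ to one containing $y$ by a chain of squares sharing diagonals, and observe that where this chain crosses the cut the shared diagonal must be $\{a,b\}$, forcing a common neighbor of $a$ and $b$ into each component. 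With this lemma in hand you set $A=\mathcal{C}\cap\lk(a)\cap\lk(b)$ and $B=\bar{\mathcal{C}}^c\cap\lk(a)\cap\lk(b)$ for a component $\mathcal{C}$, both nonempty, and apply \fullref{prop:unfold} with no preprocessing at all. Your entire discussion of link doublings to ``enlarge the common neighborhood'' is superfluous, and since you never actually show how those doublings would work or why they terminate compatibly, the proposal as written is incomplete.

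A minor point: your CFS-preservation argument via new squares in $(D\sqcup E)\join F$ is plausible but vague. The paper avoids this by noting that $W_{\Gamma'}$ is quasiisometric to $W_\Gamma$, hence has quadratic divergence, hence $\Gamma'$ is CFS.
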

The proof will require the following lemma.
\begin{lemma}\label{components_contain_crossing_square}
  Suppose $\Gamma$ is incomplete, triangle-free, and  CFS.
Suppose $C$ is a cut, either an anticlique $\{a,b\}$ or a 2--path $a\edge
c\edge b$, such that $\Gamma\setminus C$ is not connected.
Then every component of $\Gamma\setminus C$ contains a vertex in $\lk(a)\cap\lk(b)$.
\end{lemma}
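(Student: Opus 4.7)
The plan is to walk in the full-support component $D$ of $\diag(\Gamma)$ from a vertex whose support meets $K$ to a vertex whose support meets a different component of $\Gamma\setminus C$, and to show that this walk is forced to pass through the vertex $\{a,b\}\in D$. The square realizing the adjacency to $\{a,b\}$ will then exhibit a common neighbor of $a$ and $b$ inside $K$.

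The first ingredient is that every vertex of $K$ is a non-cone vertex of $\Gamma$: a cone vertex in $K$ would be adjacent to every vertex of a different component of $\Gamma\setminus C$, contradicting disconnectedness. By CFS, every vertex of $K$ therefore lies in $\supp(D)$, and likewise for any vertex $w$ of another component $K'$ of $\Gamma\setminus C$ (which exists by hypothesis). So there are vertices $\{v,v_0\},\{w,w_0\}\in D$ with $v\in K$ and $w\in K'$.

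The case $w_0\in K$ is immediate: an induced square $\{w,w_0\}\join\{u_1,u_2\}$ witnessing $\{w,w_0\}\in D$ has $w$ and $w_0$ in distinct components of $\Gamma\setminus C$, so both common neighbors $u_1,u_2$ must lie in $C$ and be non-adjacent. Inspecting the two possible forms of $C$, the only non-adjacent pair is $\{a,b\}$, so $w_0\in K\cap\lk(a)\cap\lk(b)$, and we are done.

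Otherwise $\{w,w_0\}\cap K=\emptyset$. Since $D$ is connected, choose a path in $D$ from $\{v,v_0\}$ to $\{w,w_0\}$ and let $X_i\edge X_{i+1}$ be the first edge along which the support meets $K$ on the $X_i$ side but not on the $X_{i+1}$ side. Writing $X_i=\{x,y\}$ with $x\in K$, the adjacency in $D$ yields an induced square $X_i\join X_{i+1}$, which forces $X_{i+1}\subset\lk(x)\subset K\cup C$. The assumption on $X_{i+1}$ pushes $X_{i+1}\subset C$, and the enumeration of non-adjacent pairs in $C$ again gives $X_{i+1}=\{a,b\}$. Hence $\{x,y\}\join\{a,b\}$ is an induced square, providing $x\in K\cap\lk(a)\cap\lk(b)$. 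The delicate point is the containment $\lk(x)\subset K\cup C$ coming from $K$ being a full component of $\Gamma\setminus C$; once that is noted, the transition analysis and the enumeration of non-adjacent pairs in $C$ are routine.
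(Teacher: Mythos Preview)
Your proof is correct and takes essentially the same approach as the paper: walk in the full-support component of $\diag(\Gamma)$ until the support stops meeting the given component, and observe that at the transition the adjacent diagonal is forced to equal $\{a,b\}$, exhibiting the desired common neighbor. The bookkeeping differs only cosmetically---you track whether each diagonal meets $K$, while the paper tracks whether each square in the corresponding chain meets $\Gamma\setminus(K\cup C)$---but the underlying argument is identical.
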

\begin{proof}
    Let $X$ be a component of $\Gamma\setminus C$, and let
    $\bar{X}:=X\cup C$.

  Suppose there is a vertex $\{x,y\}$ of $\diag(\Gamma)$ such that
  $x\in X$ and 
  $y\in\Gamma\setminus\bar{X}$.
  Then $x$ and $y$ are the
  diagonals of a square.
  The other diagonal of that square consists of two nonadjacent
  vertices in $\lk(x)\cap\lk(y)$, but every path from $x$ to $y$
  passes through $C$, and the only nonadjacent points of $C$ are $a$
  and $b$, so the square is $\{a,b\}\join\{x,y\}$.
In this case we are done: $x\in X\cap\lk(a)\cap\lk(b)$.

Now we show there must be such a vertex of $\diag(\Gamma)$ containing a point each from $X$ and $\Gamma\setminus\bar{X}$.
Choose any $x\in X$ and $y\in\Gamma\setminus\bar{X}$.
  Since $\Gamma$ is incomplete, triangle-free, and CFS, it has no
  cone vertices and $\diag(\Gamma)$
  contains a component whose support is all of $\Gamma$, so there is a nontrivial path in $\diag(\Gamma)$ from a vertex
with $x$ in its support to a vertex with $y$ in its support.
The path corresponds to a chain of squares $S_0,\dots,S_n$ in $\Gamma$
such that consecutive squares share a diagonal and $x\in S_0$ and
$y\in S_n$.
Let $m$ be the least index such that  $S_m$ contains a vertex of
$\Gamma\setminus\bar{X}$. Call that vertex $y'$.
If $S_m$ contains a vertex from $X$ we are done, so assume not.
This implies $m>0$
Consider $S_{m-1}$.
It is contained in $\bar{X}$, so it contains some $x'\in X$, since
$|S_{m-1}|=4$ and $|\bar{X}\setminus X|\leq 3$.
We have that $S_{m-1}$ does not contain $y'$ and $S_m$ does not
contain $x'$. 
The shared diagonal of $S_{m-1}$ and $S_m$ consists of two nonadjacent vertices in 
$\lk(x')\cap\lk(y')\subset C$, which must be $\{a,b\}$.
Conversely, $x'$ and $y'$ are then nonadjacent vertices in
$\lk(a)\cap\lk(b)$, so $\{a,b\}\join\{x',y'\}$ is a square with a
diagonal containing
vertices from $X$ and $\Gamma\setminus \bar{X}$.
\end{proof}

\begin{proof}[Proof of \fullref{prop:removecutpaths}]
  Suppose $\Gamma$ has a cut 2--path $x\edge c\edge y$.
  Take a component $\mathcal{C}$ of $\Gamma\setminus \{x,y,c\}$.
  Since $\Gamma$ is CFS it does not have separating cliques, and, by
  definition of cut 2--paths, $\{x,y\}$ is not a cut pair, so
  $\mathcal{C}$ contains vertices adjacent to $x$ and $y$ and $c$ and to no other
  vertices of $\Gamma\setminus\mathcal{C}$.
  Let $\bar{\mathcal{C}}:=\mathcal{C}\cup\{x,y,c\}$.
  By the same reasoning $\bar{\mathcal{C}}^c$ contains vertices adjacent to $x$ and
  $y$ and $c$.
  By \fullref{components_contain_crossing_square},
  $A:=\mathcal{C}\cap\lk(x)\cap\lk(y)\neq\emptyset$ and
  $B:=\bar{\mathcal{C}}^c\cap\lk(x)\cap\lk(y)\neq\emptyset$.
  Apply \fullref{prop:unfold} with $A$, $B$, $F = \{x,y\}$ and $C:=\{c\}$.
  In the resulting graph $\Gamma'$ the set $\{x,y\}$ becomes a cut
  pair.
  If $\Gamma'$ has other cut 2--paths repeat the argument until they
  have all been unfolded into cut pairs.  By construction, $\Gamma'$ is triangle-free, as a triangle in $\Gamma'$ would give a triangle in $\Gamma$ under the natural map collapsing $\Gamma'$ to $\Gamma$.  
  Finally, $W_{\Gamma'}$ has quadratic divergence, since it is quasiisometric to $W_{\Gamma}$, and $\Gamma$ is CFS, so $\Gamma'$ is CFS as well.  
\end{proof}
Illustrating \fullref{prop:removecutpaths}, 
in both \fullref{ex:simple_unfolding} and \fullref{ex:iteratedcones},
the graph $\Gamma$ contains a cut 2--path $f_0-c-f_1$ that becomes a
cut pair $\{f_0,f_1\}$ in $\Gamma'$ after unfolding.

\begin{example}\label{ex:bigunfold}
  One might wonder whether the restriction $|C|=1$ in
  \fullref{prop:unfold} is a limitation of the proof or of the
  concept.
  Here is an example with $|C|>1$ that shows the analogue of
    \fullref{prop:unfold} is not true.
    Consider the graphs $\Gamma$ and $\Gamma'$ in \fullref{fig:bigunfold}.

  \begin{figure}[h]
    \centering
    \begin{subfigure}{.4\textwidth}
      \centering
      \includegraphics{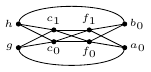}
      \subcaption{$\Gamma$}
    \end{subfigure}
    \quad
       \begin{subfigure}{.4\textwidth}
      \centering
      \includegraphics{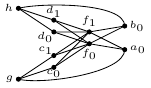}
      \subcaption{$\Gamma'$}
    \end{subfigure}
    \caption{Graphs showing that unfolding is not a quasiisometry when $|C|>1$.}
    \label{fig:bigunfold}
  \end{figure}
  
  As in \fullref{ex:iteratedcones}, the labelling of vertices in
  \fullref{fig:bigunfold}  suggests
the sets to which they belong in the statement of
\fullref{prop:unfold}. It appears that $\Gamma'$ is obtained from
$\Gamma$ by an unfolding-like operation on $F\join C$, but $W_\Gamma$ and $W_{\Gamma'}$ are
not quasiisometric, because their JSJ decompositions are incompatible,
according to \fullref{sec:jsjracg}: $W_\Gamma$ does not split over a
2--ended subgroup, since $\Gamma$ has no cut pairs or cut 2--paths,
while $W_{\Gamma'}$ splits over $W_{\{f_0,f_1\}}$.
  \end{example}

\subsection{Equivalence classes under the graph modification
  operations}\label{sec:qiclasses}
Consider the graph $\Xi$ whose vertices represent the triangle-free CFS
graphs with at most 12 vertices, with an edge between vertices if the
graph of one can be obtained from the other by either link
doubling, cloning, or unfolding.
If two graphs $\Gamma$ and $\Gamma'$ are in the same connected component
of $\Xi$, then $W_\Gamma$ and $W_{\Gamma'}$ are quasiisometric.

\begin{example}
  Consider the graphs $\Gamma$ and $\Gamma'$ shown in
  \fullref{fig:Xi_component}.
  They are contained in the same component of $\Xi$, and each is a
  local minimum in $\Xi$ with respect to number of
  vertices.

  \begin{figure}[h]
    \centering
    \begin{subfigure}{.45\textwidth}
      \centering
      \includegraphics{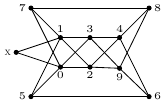}
      \caption{$\Gamma$}
      \label{fig:Xi_component_Gamma}
    \end{subfigure}
      \begin{subfigure}{.45\textwidth}
      \centering
      \includegraphics{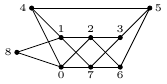}
      \caption{$\Gamma'$}
      \label{fig:Xi_component_Gamma_prime}
    \end{subfigure}
    \caption{Two graphs in the same $\Xi$ component.}
    \label{fig:Xi_component}
  \end{figure}
$\Gamma$ is not a coarse near double: 5, 6, 7, and 8 are all unclonable singletons. It does not admit a
FIDL--$\Lambda$ since it is not bipartite. 
  
$\Gamma'$ is a near double: it has a pair of adjacent singletons, 4
and 5, and one additional singleton, 8, that is a satellite of 4, so
$\double^\circ_{(5,0)}\circ\double^\circ_4(\Gamma')$ is a graph
double.

$\Gamma$ and $\Gamma'$ are connected in $\Xi$ by a cloning edge and a
link doubling edge; specifically, the graph obtained by cloning vertex
$\ten$ of $\Gamma$ is isomorphic to
$\double^\circ_{7}(\Gamma')$. 
\end{example}

\subsection{Inductive construction revisited}\label{inductive_construction}
Recall that \fullref{question:inductive_construction_raagedy} asks if
every RAAGedy, triangle-free graph without separating cliques is
constructible by coning from a square.
It seems implausible that this should be true in light of the fact
that the unfolding and cloning operations add vertices, but in fact we
can computationally verify an affirmative answer for triangle-free CFS
graph with at most 12 vertices that are known to be RAAGedy by the
results of this paper so far.
For graphs of any finite cardinality that are RAAGedy by virtue of satisfying Dani and
Levcovitz's or Davis and Januszkiewicz's conditions, the question  
has a positive answer, even if we strengthen the condition `RAAGedy'
to `commensurable to a RAAG':
\begin{proposition}
  If $\Gamma$ is a triangle-free graph without separating cliques that is
  either a graph double or admits a FIDL--$\Lambda$, then $\Gamma$ is constructible by coning from a
  square through graphs that define RACGs commensurable to RAAGs. 
\end{proposition}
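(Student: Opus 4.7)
The plan is to handle the two cases---$\Gamma$ a graph double and $\Gamma$ admitting a FIDL--$\Lambda$---separately, in each case producing an explicit cone-off sequence starting from a square that passes only through graphs whose RACGs are commensurable to RAAGs.

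For the double case, write $\Gamma \cong \double(\Delta)$; triangle-freeness transfers to $\Delta$, and connectedness of $\Gamma$ forces connectedness of $\Delta$. I proceed by induction on $|\Delta|$: the base case $\Delta \cong K_2$ yields $\Gamma$ a square, the required $\Gamma_0$. For the inductive step with $|\Delta| \geq 3$, choose a non-cut vertex $v$ of $\Delta$, set $N := \lk_\Delta(v)$, and let $\Delta' := \Delta \setminus v$. Then $\double(\Delta')$ is triangle-free, connected, free of separating cliques (every vertex and edge of a double is protected by a twin, so a double has no cut vertex or cut edge), and commensurable to $A_{\Delta'}$ by \fullref{thm:davisjanuszkiewicz}; by induction it admits a cone-off construction of the desired type. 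Extend the sequence by two further coning steps: first attach $(v, 0)$ over the anticlique $N \times \{0,1\}$, then attach $(v, 1)$ over the same set. In the intermediate graph obtained after adding only $(v, 0)$, every twin module inherited from $\double(\Delta')$ remains even, while $(v, 0)$ together with the pairs $(w, 0), (w, 1)$ coming from any twins $w$ of $v$ in $\Delta$ forms the unique odd twin module $A$. Hence by \fullref{recognizeneardouble}(\ref{item:all_odd_modules_satellites_of_one}) the intermediate graph is a near double, so it is commensurable to a RAAG by \fullref{neardoublecommraag}. Adding $(v, 1)$ restores a double, again commensurable to a RAAG by \fullref{thm:davisjanuszkiewicz}. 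Triangle-freeness and the absence of separating cliques are preserved at each step, since $(v, 0)$ and $(v, 1)$ each attach to an anticlique of size $|N \times \{0, 1\}| \geq 2$.

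For the FIDL--$\Lambda$ case, the analogous plan is to induct on $|\Gamma|$, at each step locating a vertex $v$ such that the pair $(\Gamma \setminus v, \Lambda')$---where $\Lambda'$ is obtained from $\Lambda$ by deleting its edges incident to $v$---again satisfies Dani and Levcovitz's conditions. Coning $v$ back over $\lk_\Gamma(v)$ then provides the next step of the cone-off sequence, and by induction the smaller graph is itself constructible by coning from a square. The main obstacle---and the technical heart of this case---is to exhibit a suitable removable vertex: one must verify that $\Lambda'$ still satisfies the combinatorial conditions Dani and Levcovitz impose on the interaction between $\Lambda$ and the squares of $\Gamma$. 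A natural candidate is a vertex that is peripheral with respect to $\Lambda$, for instance a leaf of a well-chosen spanning subgraph of $\Lambda$, or a vertex whose incident $\Lambda$-edges can be deleted without disturbing the remaining FIDL structure. Since the conclusion only requires that each intermediate graph define a RACG commensurable to some RAAG---not necessarily via a FIDL substructure---if a purely FIDL-preserving peeling fails at some stage, there is the fallback of passing through an intermediate that is a double or a near double, invoking \fullref{thm:davisjanuszkiewicz} or \fullref{neardoublecommraag} as needed.
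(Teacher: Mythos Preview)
Your treatment of the graph-double case is correct and essentially matches the paper's argument: both proceed by induction on $|\Delta|$, remove a non-cut vertex $v$, apply the induction hypothesis to $\double(\Delta\setminus v)$, and then cone on $(v,0)$ and $(v,1)$. Your verification that the intermediate graph is a near double is in fact slightly more careful than the paper's, which simply asserts that $(v,0)$ is the only vertex without a twin; you correctly allow for the possibility that $v$ has twins in $\Delta$, in which case $(v,0)$ lands in an odd twin module of size $>1$, but \fullref{recognizeneardouble} applies either way.

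The FIDL--$\Lambda$ case, however, has a genuine gap. You identify the right inductive scheme---peel off a vertex so that the remaining pair $(\Gamma\setminus v,\Lambda')$ is still FIDL---but you never exhibit such a vertex, and your hedged candidates (``a leaf of a well-chosen spanning subgraph of $\Lambda$'') miss the structural fact that actually makes this work: the Dani--Levcovitz conditions force $\Lambda$ itself to be a \emph{forest}. The paper (deferring to \cite{CasEdl}) uses exactly this: since $\Lambda$ is a forest it has a leaf, and one shows that removing a $\Lambda$-leaf preserves the FIDL conditions on the smaller graph. Without the forest observation there is no obvious removable vertex, and your proposed ``fallback''---hoping that an intermediate stage happens to be a double or near double---is not a valid argument, since there is no mechanism guaranteeing such a coincidence. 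So the missing idea is precisely: $\Lambda$ is a forest, hence has a leaf, and that leaf is the vertex to remove.
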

\begin{proof}
  For graphs admitting a FIDL--$\Lambda$, this is the main result of
  \cite{CasEdl}; the idea is to use the fact that $\Lambda$ is a
  forest and show that there is a $\Lambda$--leaf that can be removed
  first, and then induct  on the number of vertices. 
 Here we prove the graph double case, also by induction on the
  number of vertices. 
  
  Suppose $\Gamma$ is a graph double.
  If it is a square we are done, so suppose not.
  Then $\Gamma=\double(\Theta)$ where $\Theta$ is a connected,
  triangle-free, incomplete graph.
  Recall that the vertex set of $\Gamma$ is identified with
  $\Theta\times\{0,1\}$.
  
  Since $\Theta$ is finite it contains some vertex $v$ that is not a
  cut vertex. 
  Let $\Theta':=\Theta\setminus\{v\}$ and
  $\Gamma'=\double(\Theta')\subset\Gamma$.
  Let
  $L:=\lk_{\Gamma}((v,0))=\lk_{\Gamma}((v,1))=\lk_\Theta(v)\times\{0,1\}$.
  Let $\Gamma'':=\Gamma\setminus\{ (v,1)\}$, so that  
  $\Gamma=\Gamma''\join_L (v,1)$ and $\Gamma''=\Gamma'\join_L(v,0)$. 
  Since $\Theta'$ is connected and triangle-free, it follows that
  $\Gamma'$ and $\Gamma''$ are triangle-free, incomplete graphs
  without separating cliques.
  Since $(v,0)$ is the only vertex of $\Gamma''$ with no twin, apply \fullref{recognizeneardouble} to see $\Gamma''$ is a near
  double, so defines a RACG commensurable to a RAAG, by
  \fullref{neardoublecommraag}.
  
  Now $\Gamma$ is constructible from a square by coning through graphs
  defining RACGs commensurable to RAAGs, first by applying the induction hypothesis to
  $\Gamma'$, which is a graph double with fewer vertices than
  $\Gamma$,  then coning once to get $\Gamma''$ and  once more to get $\Gamma$.
\end{proof}

\section{Obstructions to being RAAGedy}\label{sec:negative}
We now work from the other direction to find reasons that a
right-angled Coxeter group cannot be quasiisometric to any right-angled
Artin group. 

\subsection{Minsquare versus CFS}\label{sec:minsquare}

Recall that for incomplete triangle-free graphs $\Gamma$ without separating cliques, an obstruction to $W_{\Gamma}$ being RAAGedy is provided by $\Gamma$ not being minsquare (see \fullref{raagedy_implies_minsquare}) or not being CFS (see Section~\ref{sec:firstexamples}). In fact, we show in \fullref{raagedyimpliestrongcfs}
that $\Gamma$ must be strongly CFS if $W_{\Gamma}$ is RAAGedy.  In light of these results, we now explore the relationships between these properties.

\begin{lemma}\label{minsquareintermsofdiag}
  Let $\Gamma$ be a graph.
  Minsquare subgraphs of $\Gamma$ are in bijection with collections
  $\mathcal{C}$ of connected components of $\diag(\Gamma)$ satisfying:
  \begin{enumerate}
  \item $\mathcal{C}$ is nonempty.\label{item:nonempty}
    \item  If $\{v,w\}\subset\bigcup_{C\in\mathcal{C}}\supp(C)$ and $\{v,w\}$ is a vertex
  of $\diag(\Gamma)$ contained in connected component $C_0$, then
  $C_0\in\mathcal{C}$.\label{item:square_complete}
  \item $\mathcal{C}$ is minimal with respect to inclusion among
    collections of connected components of $\diag(\Gamma)$ satisfying
    the previous two conditions. \label{item:minimality}
  \end{enumerate}
\end{lemma}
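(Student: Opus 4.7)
The plan is to set up two maps and check they are mutually inverse. Define $\Phi$ sending a minsquare subgraph $\Gamma'$ to the collection
\[\Phi(\Gamma'):=\{C\text{ a connected component of }\diag(\Gamma)\mid \supp(C)\subseteq\Gamma'\},\]
and $\Psi$ sending a collection $\mathcal{C}$ of components of $\diag(\Gamma)$ to the induced subgraph $\Psi(\mathcal{C})$ of $\Gamma$ spanned by $\bigcup_{C\in\mathcal{C}}\supp(C)$.

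First I would verify $\Phi$ is well-defined. Nonemptyness~\eqref{item:nonempty} comes from the fact that a minsquare subgraph contains an induced square, whose diagonals give a vertex of $\diag(\Gamma)$ lying in some component; square completeness of $\Gamma'$ and an induction along edges of that component (each edge is an induced square of $\Gamma$ one of whose diagonals is already in $\Gamma'$) shows the whole component has support in $\Gamma'$. Condition~\eqref{item:square_complete} is immediate from the same induction: any vertex $\{v,w\}$ of $\diag(\Gamma)$ with $\{v,w\}\subseteq\Gamma'$ lies in a component whose entire support is pushed into $\Gamma'$ by square completeness. For minimality~\eqref{item:minimality}, if some proper subcollection $\mathcal{C}'\subsetneq\Phi(\Gamma')$ satisfied \eqref{item:nonempty} and \eqref{item:square_complete}, then $\Psi(\mathcal{C}')$ would be a proper square-complete subgraph of $\Gamma'$ containing an induced square (by the well-definedness of $\Psi$, proved next), contradicting the minsquare assumption.

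Next I would verify $\Psi$ is well-defined. Nonemptyness of $\mathcal{C}$ gives a square via the support of any $C\in\mathcal{C}$, which lands in $\Psi(\mathcal{C})$ by the argument above. Square completeness follows because any diagonal $\{v,w\}\subseteq\Psi(\mathcal{C})$ lies in some component $C_0$ of $\diag(\Gamma)$, which belongs to $\mathcal{C}$ by condition~\eqref{item:square_complete}, so the adjacent vertex in $\diag(\Gamma)$ giving the other diagonal has its support in $\Psi(\mathcal{C})$. Minimality of $\Psi(\mathcal{C})$ is the subtle step: given a proper square-complete subgraph $\Gamma''\subsetneq \Psi(\mathcal{C})$ containing a square, one checks that $\Phi(\Gamma'')$ satisfies \eqref{item:nonempty} and \eqref{item:square_complete}, is a subcollection of $\mathcal{C}$ by condition~\eqref{item:square_complete}, and is \emph{proper} because any vertex of $\Psi(\mathcal{C})\setminus\Gamma''$ lies in the support of some $C\in\mathcal{C}$ with $\supp(C)\not\subseteq\Gamma''$; this contradicts~\eqref{item:minimality}.

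Finally, $\Psi\circ\Phi=\mathrm{id}$ and $\Phi\circ\Psi=\mathrm{id}$. The direction $\Phi(\Psi(\mathcal{C}))=\mathcal{C}$ is a direct unwinding using~\eqref{item:square_complete}. The direction $\Psi(\Phi(\Gamma'))=\Gamma'$ reduces to showing every vertex $v$ of a minsquare $\Gamma'$ lies in the support of some component of $\diag(\Gamma)$ whose support is contained in $\Gamma'$. I expect this to be the main obstacle, and I would handle it as follows: if $v$ lies in some induced square of $\Gamma'$, then $v$ is an endpoint of a diagonal of that square, whose pair lies in $\Gamma'$, giving the required component via square completeness; otherwise removing $v$ from $\Gamma'$ leaves all induced squares of $\Gamma'$ intact and the result is still square complete (any diagonal in $\Gamma'\setminus\{v\}$ spans a square in $\Gamma'$ not containing $v$), contradicting minsquare-ness of $\Gamma'$.
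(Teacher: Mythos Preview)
Your proposal is correct and takes essentially the same approach as the paper: set up mutually inverse maps between minsquare subgraphs and admissible collections, checking that conditions \eqref{item:nonempty}--\eqref{item:minimality} correspond to containing a square, square-completeness, and minimality. The only cosmetic differences are that your $\Phi,\Psi$ are swapped relative to the paper's naming, and you handle $\Psi\circ\Phi=\mathrm{id}$ by directly arguing that every vertex of a minsquare subgraph lies in an induced square, whereas the paper deduces the same equality from the containment $\Phi(\Psi(\Delta))\subseteq\Delta$ together with minimality of $\Delta$.
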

\begin{proof}
  If $\mathcal{C}$ is a collection of connected components of
  $\diag(\Gamma)$ and $\Delta$ is a subgraph of $\Gamma$, 
  let $\Phi(\mathcal{C})$ be the span of
  $\bigcup_{C\in\mathcal{C}}\supp(C)$ in $\Gamma$ and let $\Psi(\Delta)$ be the
  collection of connected components $C$ of $\diag(\Gamma)$ such that
  $\Delta$ contains an induced square with vertices in $\supp(C)$.
We claim that $\Phi$ and $\Psi$ give inverse bijections between the
set of minsquare subgraphs of $\Gamma$ and collections of connected
components of $\diag(\Gamma)$ satisfying the conditions in the statement of the lemma. 
  
Since $\diag(\Gamma)$ has no isolated vertices, $\mathcal{C}$ is
nonempty if and only if $\Phi(\mathcal{C})$ contains an induced square
of $\Gamma$, and $\Delta$ contains an induced square of $\Gamma$ if
and only if $\Psi(\Delta)$ is a nonempty collection.

Now we want to show that Condition~\eqref{item:square_complete}
describes square completeness. 
If $\mathcal{C}$ satisfies Condition~\eqref{item:square_complete} then whenever
$\{v,w\}\subset\Phi(\mathcal{C})$ is a vertex of $\diag(\Gamma)$
contained in a connected component $C_0$, we have $C_0\in\mathcal{C}$.
Since $C_0$ is a connected component of $\diag(\Gamma)$, every vertex 
adjacent to $\{v,w\}$ is also
contained in $C_0$, so every induced square of $\Gamma$ with
diagonal $\{v,w\}$ is contained in $\supp(C_0)\subset
\Phi(\mathcal{C})$.
Thus, $\Phi(\mathcal{C})$ is square complete.

Conversely, if $\Delta$ is a square complete subgraph of $\Gamma$ and
if $\{v,w\}\subset \Delta$ is a vertex of $\diag(\Gamma)$ contained in
connected component $C_0$, then every neighbor $\{x,y\}$ of
$\{v,w\}$, of which there is at least one, corresponds to an induced
square $\{v,w\}\join\{x,y\}$ of $\Gamma$.
By square completeness, $\{x,y\}\subset\Delta$, so
$C_0\in\Psi(\Delta)$.
Furthermore, by induction on distance to $\{v,w\}$ in $C_0$,
$\supp(C_0)\subset\Delta$.
Applying this reasoning to each component in $\Psi(\Delta)$, we see that
$\Phi(\Psi(\Delta))\subset\Delta$. 
This shows that $\Psi(\Delta)$ satisfies Condition~\eqref{item:square_complete}, as follows.  If 
$\{v,w\}\subset\bigcup_{C\in\Psi(\Delta)}\supp(C)$ is a vertex of
$\diag(\Gamma)$ contained in a component $C_0$, then  $\{v, w\} \subset \Delta$, since 
$\bigcup_{C\in\Psi(\Delta)}\supp(C)$ induces the graph $\Phi(\Psi(\Delta))\subset \Delta$, so by the first part of this paragraph, $C_0\in\Psi(\Delta)$. 

Now observe that every collection $\mathcal C$ satisfying $(2)$ fulfills $\Psi(\Phi(\mathcal{C}))=\mathcal{C}$. Indeed, $C_0\in \Psi(\Phi(\mathcal{C}))$ means that
$\Phi(\mathcal{C})$ contains an induced square with vertices in
$\supp(C_0)$, so Condition~\eqref{item:square_complete} gives
$C_0\in\mathcal{C}$, ie.\ $\Psi(\Phi(\mathcal{C})) \subseteq
\mathcal{C}$.
Conversely, if $C_0 \in \mathcal C$ then $C_0 \in \Phi(\supp(C_0))$, hence, $C_0\in \Psi(\supp(C_0))\subseteq \Psi(\Phi(\mathcal C))$.

 Finally, we show that Condition~\eqref{item:minimality} is the
 analogue of the minimality condition in the definition of minsquare. 
Suppose $\mathcal{C}$ satisfies all three conditions, and define
$\Delta:=\Phi(\mathcal{C})$.
By the previous two steps of the argument, $\Delta$ is square complete and contains a square. 
Suppose that $\Delta'$ is a square complete subgraph of $\Delta$
that contains a square.
Then $\Psi(\Delta')$ is a subcollection of $\Psi(\Delta)$
that satisfies the first two conditions, so by minimality
$\Psi(\Delta')=\Psi(\Delta)$. 
Now:
\[\Delta'\subset
  \Delta=\Phi(\mathcal{C})=\Phi(\Psi(\Delta))=\Phi(\Psi(\Delta'))\subset\Delta'\]
Thus, $\Delta$ is square complete and contains a square, and there is
no proper subgraph of $\Delta$ with both properties, so $\Delta$ is
minsquare.

Conversely, if $\Delta$ is minsquare then it is square complete and
contains a square, so $\Psi(\Delta)$ is a 
collection of connected components of $\diag(\Gamma)$ that satisfies
the first two conditions.
Suppose $\mathcal{C}$ is a subcollection of $\Psi(\Delta)$
satisfying the first two conditions.
Then $\Phi(\mathcal{C})$ is a square complete subgraph of $\Delta$ that
contains a square.
By minimality of $\Delta$, we have $\Phi(\mathcal{C})=\Delta$. 
But then $\mathcal{C}\subset\Psi(\Delta)=\Psi(\Phi(\mathcal{C}))=\mathcal{C}$.
Thus, $\Psi(\Delta)$ is minimal with respect to inclusion among
collections of components of $\diag(\Gamma)$ satisfying the first two
conditions. 
\end{proof}

Genevois \cite[Example~7.3]{Gen22} 
showed by example that the minsquare
and CFS properties are independent.
Using \fullref{minsquareintermsofdiag}, we give triangle-free examples.

\begin{example}[minsquare does not imply CFS]\label{minsquarenotcfs}
 Consider the strongly CFS graph $\Gamma_0:= \begin{tikzpicture}[baseline=5pt,scale=.5]
  \filldraw (0,0) circle (2pt) (1,0) circle (2pt) (2,0) circle (2pt)
  (3,0) circle (2pt) (0,1) circle (2pt) (1,.66) circle (2pt) (3,1)
  circle (2pt);
  \draw (0,0) node[below]{\tiny$0$}--(1,0)
  node[below]{\tiny$1$}--(2,0) node[below]{\tiny$2$}--(3,0)
  node[below]{\tiny$3$}--(3,1) node[above]{\tiny$5$}--(0,1) node[above]{\tiny$4$}--(0,0)
  (0,1)--(1,.66)node[left,yshift=-2pt]{\tiny$6$}--(1,0) (1,.66)--(3,0) (1,0)--(3,1);
  \end{tikzpicture}$, which contains an isometrically embedded path $P:=(0,1,2,3)$
that contains a pair $\{0,2\}$ of vertices that is
not the diagonal of any square and a pair $\{1,3\}$ that is. 
Construct $\Gamma$ by taking two copies of $\Gamma_0$ and
identifying the two copies of $P$ with opposite orientations:
 \[\Gamma:=\begin{tikzpicture}[baseline=-2pt,scale=.8]\tiny
\coordinate[label={[label distance=0pt] 180:$0$}] (0) at (-2,0);
\coordinate[label={[label distance=0pt] -90:$1$}] (1) at (-1,0);
\coordinate[label={[label distance=0pt] 90:$2$}] (2) at (0,0);
\coordinate[label={[label distance=0pt] 0:$3$}] (3) at (1,0);
\coordinate[label={[label distance=0pt] 90:$4$}] (4) at (-2,1);
\coordinate[label={[label distance=0pt] 225:$6$}] (5) at (-1,0.7);
\coordinate[label={[label distance=0pt] 90:$5$}] (6) at (1,1);
\coordinate[label={[label distance=0pt] -90:$7$}] (7) at (1,-1);
\coordinate[label={[label distance=0pt] 45:$9$}] (8) at (0,-0.7);
\coordinate[label={[label distance=0pt] -90:$8$}] (9) at (-2,-1);
\draw (0)--(1);
\draw (0)--(4);
\draw (0)--(8);
\draw (0)--(9);
\draw (1)--(2);
\draw (1)--(5);
\draw (1)--(6);
\draw (2)--(3);
\draw (2)--(8);
\draw (2)--(9);
\draw (3)--(6);
\draw (3)--(7);
\draw (3)--(5);
\draw (4)--(5);
\draw (4)--(6);
\draw (7)--(8);
\draw (7)--(9);
\filldraw (0) circle (1pt);
\filldraw (1) circle (1pt);
\filldraw (2) circle (1pt);
\filldraw (3) circle (1pt);
\filldraw (4) circle (1pt);
\filldraw (5) circle (1pt);
\filldraw (6) circle (1pt);
\filldraw (7) circle (1pt);
\filldraw (8) circle (1pt);
\filldraw (9) circle (1pt);
\end{tikzpicture}
\quad\quad\diag(\Gamma)=
\begin{tikzpicture}[scale=4,baseline=5pt]\tiny
\coordinate[label={[label distance=-1pt] 90:$({0, 2})$}] (0) at (.25,0);
\coordinate[label={[label distance=-1pt] -90:$({1, 9})$}] (1) at (0,-0.125);
\coordinate[label={[label distance=-1pt] 90:$({8, 9})$}] (2) at (.5,0);
\coordinate[label={[label distance=-1pt] 90:$({0, 7})$}] (3) at (0.75,0);
\coordinate[label={[label distance=-1pt] 90:$({0, 6})$}] (4) at (0,0.25);
\coordinate[label={[label distance=-1pt] 90:$({1, 4})$}] (5) at (.25,0.25);
\coordinate[label={[label distance=-1pt] 90:$({0, 5})$}] (6) at (.5,0.25);
\coordinate[label={[label distance=-1pt] 90:$({1, 8})$}] (7) at (0,0);
\coordinate[label={[label distance=-1pt] -90:$({1, 3})$}] (8) at (.5,0.125);
\coordinate[label={[label distance=-1pt] 90:$({2, 6})$}] (9) at (.75,0.25);
\coordinate[label={[label distance=-1pt] -90:$({2, 5})$}] (10) at (0.75,0.125);
\coordinate[label={[label distance=-1pt] -90:$({5, 6})$}] (11) at (0.25,0.125);
\coordinate[label={[label distance=-1pt] -90:$({2, 7})$}] (12) at (0.5,-0.125);
\coordinate[label={[label distance=-1pt] -90:$({3, 9})$}] (13) at (0.75,-0.125);
\coordinate[label={[label distance=-1pt] -90:$({3, 8})$}] (14) at (0.25,-0.125);
\coordinate[label={[label distance=-1pt] -90:$({3, 4})$}] (15) at (0,0.125);
\draw (0)--(1);
\draw (0)--(2);
\draw (0)--(7);
\draw (2)--(3);
\draw (2)--(12);
\draw (4)--(5);
\draw (5)--(6);
\draw (5)--(11);
\draw (8)--(9);
\draw (8)--(10);
\draw (8)--(11);
\draw (11)--(15);
\draw (12)--(13);
\draw (12)--(14);
\filldraw (0) circle (.25pt);
\filldraw (1) circle (.25pt);
\filldraw (2) circle (.25pt);
\filldraw (3) circle (.25pt);
\filldraw (4) circle (.25pt);
\filldraw (5) circle (.25pt);
\filldraw (6) circle (.25pt);
\filldraw (7) circle (.25pt);
\filldraw (8) circle (.25pt);
\filldraw (9) circle (.25pt);
\filldraw (10) circle (.25pt);
\filldraw (11) circle (.25pt);
\filldraw (12) circle (.25pt);
\filldraw (13) circle (.25pt);
\filldraw (14) circle (.25pt);
\filldraw (15) circle (.25pt);
\end{tikzpicture}
\]

No induced square of $\Gamma$ enters both copies of
$\Gamma_0\setminus P$, so $\diag(\Gamma)$ consists of two disjoint
copies of $\diag(\Gamma_0)$, as shown. 
Thus, $\Gamma$ is not CFS.
On the other hand, $\{0,2\}$ is in the support of the top component of
$\diag(\Gamma)$ and also appears as a vertex in the bottom
component.
Similarly, $\{1,3\}$ is in the support of the bottom component and
appears as a vertex in the top component.
Since neither component has square complete support, 
\fullref{minsquareintermsofdiag} says $\Gamma$ is minsquare.
\end{example}

\begin{example}[CFS does not imply minsquare]\label{cfsdoesnotimplyminsquare}
Consider the graphs:
\[\Gamma_0=\begin{tikzpicture}[scale=.5,baseline=15pt]\tiny
  \filldraw (0,0) circle (2pt)(1,0) circle (2pt) (2,0) circle (2pt)
  (0,1) circle (1 pt) (1,1) circle (2pt) (2,1) circle (2pt) (0,2)
  circle (2pt) (1,2) circle (2pt) (2,2) circle (2pt) (2,3) circle
  (2pt);
  \draw  (0,0)  node[left]{$7$} --(0,2)  node[left]{$1$} --(2,3)  node[right]{$0$} --(2,0)  node[right]{$9$} --(0,0)
  (0,2)  node[left]{$1$} --(2,2)  node[right]{$3$} --(0,1)  node[left]{$4$} --(2,1)  node[right]{$6$} --(0,0)
  (1,0)  node[below]{$8$} --(1,1)  node[below left]{$5$}
  --(1,2)  node[above,xshift=2pt]{$2$} ;
\end{tikzpicture}
\quad\quad\Gamma:=\begin{tikzpicture}[scale=.5,baseline=15pt]\tiny
  \filldraw (0,0) circle (2pt)(1,0) circle (2pt) (2,0) circle (2pt)
  (0,1) circle (1 pt) (1,1) circle (2pt) (2,1) circle (2pt) (0,2)
  circle (2pt) (1,2) circle (2pt) (2,2) circle (2pt) (2,3) circle
  (2pt);
  \draw  (0,0)  node[left]{$7$} --(0,2)  node[left]{$1$} --(2,3)  node[above]{$0$} --(2,0)  node[below]{$9$} --(0,0)
  (0,2)  node[left]{$1$} --(2,2)  node[right]{$3$} --(0,1)  node[left]{$4$} --(2,1)  node[right]{$6$} --(0,0)
  (1,0)  node[below]{$8$} --(1,1)  node[below left]{$5$}
  --(1,2)  node[above,xshift=2pt]{$2$} ;
 \draw (2,0) .. controls (3,0) and (3,3) .. (2,3);
\end{tikzpicture}\]

$\Gamma_0$ is (strongly) CFS and contains a path $\gamma:=(0,3,6,9)$ that is
isometrically embedded and does not contain a diagonal of any
square.
Adding the edge $0\edge 9$ to make $\Gamma$ does not disturb the CFS
property, since the new edge does not kill any square.
The square spanned by $\{0,3,6,9\}$ in $\Gamma$ does not share a diagonal with any other
square, so it is a proper minsquare subgraph of a
CFS graph.
\end{example}

\begin{example}[minsquare and CFS does not imply strongly
  CFS]\label{minsquare_and_CFS_does_not_imply_strongly_CFS}
  Let $\Gamma_0$ be as in \fullref{minsquarenotcfs}.
  Let $\Gamma_1$  be a $(6,2)$--spider with one pincer foot (recall \fullref{snakewormspider}).
  Identify the non-pincer feet of the spider with vertices
  $\{1,3,4,5,6\}$, and the two vertices of the pincer foot with
  vertices $0$ and $2$. See \fullref{fig:spider_attack}. 
  All edges of the resulting graph $\Gamma$ come from either
  $\Gamma_0$ or $\Gamma_1$.
  The spider $\Gamma_1$ is strongly CFS and contains all vertices of
  $\Gamma$, so $\Gamma$ is CFS.

  \begin{figure}[h]
    \centering
    \includegraphics{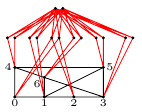}
    \caption{A spider attacking a small graph. }
    \label{fig:spider_attack}
  \end{figure}
  
We claim that $\diag(\Gamma)$ consists of one component isomorphic 
to $\diag(\Gamma_0)$, and one component consisting of $\diag(\Gamma_1)$ with
two additional leaf vertices attached, coming from the two possible
squares that use the segment $(0,1,2)$ of
$\Gamma_0$ and one of the common neighbors of $0$ and $2$ in
$\Gamma_1$.
This uses the fact that $\{0,2\}$ is not the diagonal of any square in
$\Gamma_0$, so we do not create any connection between the copies of
$\diag(\Gamma_0)$ and $\diag(\Gamma_1)$ in $\diag(\Gamma)$. 
Since $\diag(\Gamma)$ has two components, $\Gamma$ is not strongly CFS.

$\Gamma$ is minsquare because neither component of
$\diag(\Gamma)$ satisfies \fullref{minsquareintermsofdiag}.
\end{example}

\begin{corollary}\label{strongly_CFS_and_no_cone_implies_minsquare}
  Strongly CFS with no cone vertices implies minsquare.
\end{corollary}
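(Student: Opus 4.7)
The plan is to invoke \fullref{minsquareintermsofdiag} with the single collection consisting of the unique connected component of $\diag(\Gamma)$. Since $\Gamma$ is strongly CFS, $\diag(\Gamma)$ is connected; let $C$ be its sole component. The CFS condition says that $\supp(C)$ equals the set of non-cone vertices of $\Gamma$, and the assumption that $\Gamma$ has no cone vertices upgrades this to $\supp(C) = \Gamma$.

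Next I would verify the three conditions of \fullref{minsquareintermsofdiag} for the collection $\mathcal{C} := \{C\}$. Condition~\eqref{item:nonempty} is immediate since $\mathcal{C}$ contains $C$. Condition~\eqref{item:square_complete} holds vacuously in the sense that $\diag(\Gamma)$ has only one component, so any vertex $\{v,w\}$ of $\diag(\Gamma)$ is automatically contained in a component of $\mathcal{C}$. Condition~\eqref{item:minimality} is trivial: the only proper subcollection of $\mathcal{C}$ is the empty set, which fails Condition~\eqref{item:nonempty}.

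Therefore $\mathcal{C}$ corresponds, under the bijection of \fullref{minsquareintermsofdiag}, to a minsquare subgraph of $\Gamma$, namely $\Phi(\mathcal{C})$, the induced subgraph on $\supp(C) = \Gamma$. Since $\Phi(\mathcal{C}) = \Gamma$, this shows that $\Gamma$ itself is minsquare. There is no real obstacle here; the entire content of the corollary is packaged inside \fullref{minsquareintermsofdiag} once one observes that $\diag(\Gamma)$ being connected with full support forces the collection $\{C\}$ to satisfy all three defining conditions.
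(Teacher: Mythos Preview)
Your proof is correct and follows essentially the same approach as the paper's proof: both invoke \fullref{minsquareintermsofdiag} with the singleton collection $\{C\}$ where $C=\diag(\Gamma)$ is the unique component, verify its three conditions, and conclude $\Gamma=\Phi(\{C\})$ is minsquare. The paper's version is simply more terse.
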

\begin{proof}
  If $\Gamma$ is strongly CFS with no cone vertices then
  $\diag(\Gamma)$ is connected, so the lone component
  $C=\diag(\Gamma)$ has $\supp(C)=\Gamma$ and the collection $\{C\}$ satisfies the conditions
  of \fullref{minsquareintermsofdiag}, so $\Gamma=\Phi(\{C\})$ is
  minsquare. 
\end{proof}

\subsection{Stable subspaces and Morse boundaries}\label{sec:morse}
By a \emph{stable cycle} we mean a simple cycle in $\Gamma$ of length
at least 5 that is square complete. The corresponding special subgroup
is stable and 1--ended; it is virtually a hyperbolic surface group.
If $\Gamma$ is RAAGedy it cannot have stable cycles, by \fullref{Morsehyperbolic}.
Indeed, this is exactly the construction used by Behrstock
\cite{MR3966609} to give the first example of a non-RAAGedy CFS
graph (recall \fullref{fig:Behrstock}).
Since passing to iterated link double of $\Gamma$ induces a quasiisometry
on the level of RACGs, if $\Gamma$ is RAAGedy then no link double of
$\Gamma$ can contain a stable cycle.
In this subsection we address the possibility of finding stable cycles
in iterated link doubles. 

A conjecture proposed in \cite{Tra19} suggested that if a graph $\Gamma$ contains no stable cycles, then none would appear in any of its link doubles. This was disproven by a counterexample provided by Graeber et al.\ \cite{GraKarLaz21}. Their example was initially constructed by link-doubling a graph with triangles to produce a triangle-free graph without stable cycles  \cite{Karrer2021_1000129500}[Sec. 5.5]. A second link-doubling of this graph, however, resulted in the emergence of a stable cycle. Notably, none of the three graphs in this construction are CFS, prompting the question of whether a similar phenomenon could occur in CFS graphs. Furthermore, the example raised the guess that one doubling might always suffice to produce stable cycles in the triangle-free case.
As the following example shows, both assumptions are false—even within the class of triangle-free, strongly CFS graphs.

\begin{example}[Deeply buried stable
  cycle]\label{ex:buried_stable_cycle}
  \fullref{fig:gamma_buried_stable_cycle} gives a triangle-free, strongly CFS graph $\Gamma$ that has a deeply buried
  stable cycle; specifically:
  \begin{enumerate}
  \item $\Gamma$
    contains no stable cycle.
    \item $\forall v\in \Gamma$, $\double^\circ_v(\Gamma)$ contains no
      stable cycle.
      \item $\exists v,w\in \Gamma$,
        $\double^\circ_{(w,0)}(\double^\circ_v(\Gamma))$ contains a
        stable cycle. 
      \end{enumerate}
The first two claims are verified by enumerating and checking the
possibilities.
The third is achieved by doubling first over vertex 0 and then over vertex $(2,0)$.
      Consider the cycle $((9,0),0)$, $((8,0),0)$, $((4,0),1)$, $((\ten,0),1)$,
$((4,1),1)$, $((3,1),0)$, $((5,0),0)$, shown in red in
\fullref{fig:unburied_stable_cycle}.
It is induced and has a 2--chord $((4,0),1)\edge((3,0),0)\edge
((5,0),0)$, but there is no other 2--path between vertices of the
cycle that is not  a subsegment of the cycle, so it is square complete.
      \begin{figure}[h]
        \centering
        \begin{subfigure}{.3\textwidth}
          \centering
\includegraphics{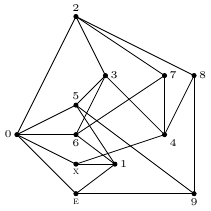}% Gamma_10105
          \subcaption{$\Gamma$}
        \label{fig:gamma_buried_stable_cycle}
      \end{subfigure}
      \quad
       \begin{subfigure}{.6\textwidth}
          \centering
\includegraphics{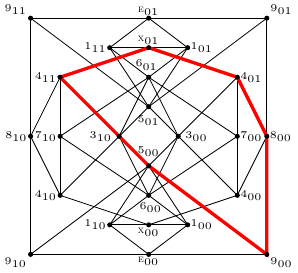}
          \subcaption{$\double^\circ_{(2,0)}(\double^\circ_0(\Gamma))$}
        \label{fig:unburied_stable_cycle}
        \end{subfigure}
        \caption{A graph with a deeply buried stable cycle for \fullref{ex:buried_stable_cycle}}
        \label{fig:buriedstablecycle}
      \end{figure}
\end{example}

We also know an example where a stable cycle appears only after
performing three link doubles, so it seems unlikely that there should
be any universal bound on how deeply stable cycles can be buried.
These examples were found by computer in our enumeration of small graphs;
we do not know how to build them on demand. 

\begin{question}
  Can stable cycles be buried arbitrarily deeply?
  That is, given $n$, does there exist a (triangle-free, strongly) CFS graph $\Gamma$
  such that for all $m<n$, no $m$--fold iterated link double of $\Gamma$ that contains
  a stable cycle, but there is an $n$--fold iterated link double of
  $\Gamma$ that does?
\end{question}

There is a notion of \emph{Morse boundary} of a group \cite{Cor17},
which is a boundary at infinity made of equivalence classes of Morse
geodesic rays.
Its topological type is quasiisometry invariant.
\begin{theorem}[{\cite{ChaCorSis23}}]
  The Morse boundary of a RAAG is totally disconnected.
\end{theorem}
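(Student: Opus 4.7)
The plan is to reduce to the case of a $1$--ended RAAG and then leverage \fullref{Morsehyperbolic}. For a general RAAG $A_\Delta$ the Grushko--Stallings--Dunwoody decomposition presents $A_\Delta$ as a free product of $1$--ended special subgroups $A_{\Delta'}$, one for each connected component $\Delta'$ of $\Delta$ with more than one vertex, together with infinite cyclic factors coming from isolated vertices. The Morse boundary of a free product of finitely generated groups is (by a known construction, eg.\ of Charney--Cordes--Murray) the union of the Morse boundaries of the vertex groups equivariantly glued along the Gromov boundary of the Bass--Serre tree, which is totally disconnected. Hence it suffices to show that the Morse boundary of each $1$--ended factor is totally disconnected.

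So assume $A_\Delta$ is $1$--ended, which forces $\Delta$ to be connected. If $|\Delta|=1$ then $A_\Delta\cong\mathbb{Z}$ has empty Morse boundary. Otherwise $\Delta$ contains at least one edge, so $A_\Delta$ contains $\mathbb{Z}^2$ and is not hyperbolic. Any Morse geodesic ray $\gamma$ in $A_\Delta$ is itself a Morse subset, so by \fullref{Morsehyperbolic} it is either coarsely dense in $A_\Delta$ or quasiisometric to a finite-valence tree. A coarsely dense Morse subset would force $A_\Delta$ itself to be hyperbolic, contradicting the presence of $\mathbb{Z}^2$, so every Morse geodesic ray lies in a bounded neighborhood of a finite-valence Morse quasitree.

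The final step is to deduce total disconnectedness of $\bdry_* A_\Delta$. Stratify the Morse boundary as $\bigcup_\mu \bdry_*^\mu A_\Delta$, where each stratum consists of equivalence classes of $\mu$--Morse rays and carries a natural metrizable topology. Given distinct points $[\gamma],[\gamma']\in\bdry_*^\mu A_\Delta$, fix a finite-valence Morse quasitree $Y$ within bounded Hausdorff distance of a representative of $[\gamma]$. Closest-point projection onto $Y$ is coarsely Lipschitz on a uniform neighborhood and extends continuously to a neighborhood of $[\gamma]$ in $\bdry_*^\mu A_\Delta$ by standard Morse-contraction arguments. The Gromov boundary of the finite-valence tree $Y$ is totally disconnected (homeomorphic to a closed subset of a Cantor set), so we can separate $\pi_Y[\gamma]$ from $\pi_Y[\gamma']$ (or from the point at infinity in $Y$, if $\gamma'$ escapes $Y$) by a clopen set and pull it back to a clopen neighborhood of $[\gamma]$ that avoids $[\gamma']$. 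Hence each stratum is totally disconnected; the direct limit topology on $\bdry_* A_\Delta$ then inherits total disconnectedness because the inclusions between strata are continuous and each stratum is already totally disconnected.

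The hard part will be the final paragraph's topological passage from the tree boundary structure to clopen separation in the ambient Morse boundary. This requires uniform Morse gauge control on a whole neighborhood of $[\gamma]$, together with the fact that nearby $\mu$--Morse rays fellow-travel long initial segments of $\gamma$ and therefore project close to $\pi_Y[\gamma]$ in the tree $Y$. A secondary subtlety is setting up the free-product reduction in the first paragraph so that gluing along the Bass--Serre tree boundary preserves the Morse boundary topology and not merely its underlying set; this is where we rely on the description of the Morse boundary of a relatively hyperbolic or tree-of-spaces decomposition.
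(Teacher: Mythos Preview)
The paper does not prove this theorem; it is stated with attribution to \cite{ChaCorSis23} and used as a black box, so there is no proof in the paper to compare against.

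Your attempt has a genuine gap. Applying \fullref{Morsehyperbolic} to a single Morse geodesic ray $\gamma$ yields only the trivial conclusion that $\gamma$ is quasiisometric to a tree, since a ray already is one. Consequently the quasitree $Y$ you produce in the third paragraph---chosen ``within bounded Hausdorff distance of a representative of $[\gamma]$''---is coarsely just $\gamma$ itself, and its Gromov boundary is a single point. Projection to $Y$ therefore cannot separate $[\gamma]$ from any other Morse boundary point $[\gamma']$: there is nothing in $\bdry Y$ to separate. To run a projection argument you would need a single Morse quasitree that simultaneously captures an open neighborhood of $[\gamma]$ in the Morse boundary, and \fullref{Morsehyperbolic} applied ray-by-ray does not furnish such an object. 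A route that does work uses a global hyperbolic model: the contact graph (equivalently, for the $2$--dimensional case, the MPRG) of a $1$--ended RAAG is a quasitree and is stability-recognizing, so each Morse-gauge stratum maps to the totally disconnected boundary of a quasitree; this is closer in spirit to the machinery of \fullref{sec:hhs}, though turning that map into a topological embedding on the stratum still requires care, and the argument in \cite{ChaCorSis23} proceeds via yet other means.
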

A stable cycle appearing in some iterated link double of $\Gamma$
implies that the Morse boundary of $W_\Gamma$ contains a circle, so is
an obstruction to $W_\Gamma$ being quasiisometric to a RAAG. 
However, as discussed in \cite{GraKarLaz21}, the  1--skeleton of a 3--cube does
not have stable cycles in any link double, but it does have circles in
its Morse boundary.
Thus, connectivity in the Morse boundary is a better obstruction to
being RAAGedy than the existence of buried stable cycles.

Fioravanti and Karrer \cite{FioKar22} show that a group has totally
disconnected Morse boundary if it splits as amalgam of groups with
totally disconnected boundaries (including, possibly, the empty set)  over a subgroup with empty
relative Morse boundary, generalizing the results in \cite{Karrer23}.
For our purposes it is enough to say that if $\Gamma$ is a thick join
then $W_\Gamma$ has empty Morse boundary, and if $\Delta<\Gamma$ is a
subgraph contained in a thick join subgraph of $\Gamma$ then the
relative Morse boundary of $W_\Delta$ in $W_\Gamma$ is empty. 
\begin{proposition}[Base sufficient criteria for t.d.\ Morse
  boundary, cf \cite{FioKar22,Karrer23}]\label{FioKar}\leavevmode
  \begin{enumerate}
  \item If $\Gamma$ is a clique or a thick join then $W_\Gamma$ has
    empty Morse boundary.
  \item If $\Gamma$ contains a subgraph $\Delta$ such that all of the
    following are true:
    \begin{enumerate}
      \item $\Gamma\setminus\Delta$ has more than one component. 
    \item $\Delta$
      is a clique or is contained in a thick join subgraph of
      $\Gamma$.
      \item For each component $C$ of $\Gamma\setminus\Delta$,
        $W_{C\cup\Delta}$ has totally disconnected Morse boundary.
      \end{enumerate}
      Then $W_\Gamma$ has totally disconnected Morse boundary. 
  \end{enumerate}
\end{proposition}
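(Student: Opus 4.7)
The plan is to treat the two parts in turn, leveraging the quoted results of Fioravanti--Karrer \cite{FioKar22} (and Karrer \cite{Karrer23}). For part (1), if $\Gamma$ is a clique then $W_\Gamma$ is finite, so its Morse boundary is trivially empty. If $\Gamma=A\join B$ is a thick join, then $W_\Gamma\cong W_A\times W_B$ is a direct product of two infinite groups. I would argue that any geodesic ray $\gamma$ in $W_\Gamma$ fails to be Morse: given an arbitrarily long initial segment $\gamma|_{[0,n]}$, one can travel out in the complementary factor and back in again, producing a quasigeodesic with the same endpoints that leaves every bounded neighborhood of $\gamma$. Hence the Morse boundary is empty.

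For part (2), the hypothesis that $\Gamma\setminus\Delta$ has more than one component means that $\Delta$ is a separating subgraph. This gives a visual splitting of $W_\Gamma$ as an iterated amalgam
\[
W_\Gamma \;=\; \underset{W_\Delta}{\ast}\, W_{C\cup\Delta},
\]
where $C$ ranges over the components of $\Gamma\setminus\Delta$ (this uses the Mihalik--Tschantz \cite{MihTsc09} visuality of Coxeter splittings). I would then invoke the Fioravanti--Karrer amalgamation theorem, which concludes that the Morse boundary of an amalgam is totally disconnected provided (i) each vertex group has totally disconnected Morse boundary, and (ii) each edge group has empty \emph{relative} Morse boundary in $W_\Gamma$. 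Condition (i) is hypothesis (c). Condition (ii) reduces to verifying that $W_\Delta$ has empty relative Morse boundary in $W_\Gamma$.

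For (ii), I would use hypothesis (b). If $\Delta$ is a clique, then $W_\Delta$ is finite and thus has empty relative Morse boundary for trivial reasons. Otherwise, $\Delta$ is contained in a thick join subgraph $A\join B$ of $\Gamma$, so $W_\Delta\leq W_A\times W_B$, and the special subgroup $W_{A\join B}$ is a direct product of two infinite groups. Any $W_\Delta$--ray can be ``pushed off'' inside $W_{A\join B}$ using the complementary factor, just as in part (1), which shows no quasigeodesic with one endpoint at the Gromov-type limit of $W_\Delta$ can be Morse. This is precisely the ``empty relative Morse boundary'' condition as formulated in \cite{FioKar22}.

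The main obstacle is matching our setup precisely to the hypotheses of the Fioravanti--Karrer amalgamation theorem---in particular, carefully checking that ``$\Delta$ contained in a thick join of $\Gamma$'' gives the correct notion of empty \emph{relative} Morse boundary (as opposed to empty Morse boundary) of $W_\Delta$ as a subgroup of the full $W_\Gamma$. Once this is confirmed, the proposition follows immediately from the cited results.
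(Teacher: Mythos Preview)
Your proposal is correct and follows essentially the same approach as the paper: the paper does not give a formal proof but instead states in the preamble that the proposition is a direct specialization of the Fioravanti--Karrer amalgamation theorem, using that a thick join yields empty (relative) Morse boundary for the edge group, which is exactly the line you take. One small remark: you do not need to invoke Mihalik--Tschantz for the visual splitting in part (2), since the amalgam decomposition $W_\Gamma = \ast_{W_\Delta} W_{C\cup\Delta}$ is immediate from the presentation once $\Delta$ separates $\Gamma$.
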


\begin{definition}\label{FioKar_decomposition_sequence}
   A \textit{decomposition-sequence} of $\Gamma$ is a rooted tree $T$ of graphs where:
   \begin{itemize}
   \item The graph associated to the root is $\Gamma$.
      \item The graph associated to a vertex equals the union of the graphs of its descendants, and the intersection of its descendants is contained in a thick join subgraph or a clique of $\Gamma$. 
    \item Every non-leaf vertex of $T$ has at least 2 descendants.
\end{itemize}
\end{definition}

\begin{corollary}[Inductive sufficient criteria for t.d.\ Morse
  boundary]\label{inductive_FioKar}
  If $\Gamma$ admits a decomposition-sequence such that the graphs
  associated to the leaves are either cliques or thick joins, then
  $W_\Gamma$ has totally disconnected Morse boundary.
\end{corollary}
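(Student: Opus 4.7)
The plan is to induct on the number of non-leaf vertices of the decomposition-sequence tree $T$. For the base case, $T$ consists of a single vertex that is both root and leaf, so $\Gamma$ itself is a clique or a thick join, and \fullref{FioKar}(1) gives an empty Morse boundary, which is vacuously totally disconnected.

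For the inductive step, let the root of $T$ have descendants $v_1, \ldots, v_k$ (with $k \geq 2$) carrying associated graphs $\Gamma_1, \ldots, \Gamma_k$. By definition, $\Gamma = \Gamma_1 \cup \cdots \cup \Gamma_k$ and the common intersection $\Delta := \Gamma_1 \cap \cdots \cap \Gamma_k$ is contained in a clique or thick join subgraph of $\Gamma$. Each subtree of $T$ rooted at $v_i$ is itself a decomposition-sequence for $\Gamma_i$ whose leaves are cliques or thick joins, with strictly fewer non-leaf vertices than $T$, so the inductive hypothesis applies to each $\Gamma_i$.

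To invoke \fullref{FioKar}(2), I would take the separating subgraph to be $\Delta$ (or its completion inside the thick join/clique containing it). The second hypothesis of \fullref{FioKar}(2) is immediate by construction. For the first, one argues that $\Gamma \setminus \Delta$ is disconnected: any edge of $\Gamma$ lies in some $\Gamma_i$, so an edge joining vertices of $\Gamma_i \setminus \Delta$ and $\Gamma_j \setminus \Delta$ with $i \neq j$ would have to lie in a single $\Gamma_m$, forcing one of its endpoints into $\Gamma_i \cap \Gamma_j$. For the third hypothesis, each component $C$ of $\Gamma \setminus \Delta$ is then contained in some $\Gamma_i \setminus \Delta$, so $C \cup \Delta \subset \Gamma_i$, and the subtree rooted at $v_i$ can be restricted to yield a decomposition-sequence of $C \cup \Delta$ whose leaves are still cliques or thick joins (intersected with $C \cup \Delta$, which preserves the thick-join/clique property). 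A second application of the inductive hypothesis gives that $W_{C \cup \Delta}$ has totally disconnected Morse boundary.

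The main obstacle I expect is the case $k > 2$, because the definition of decomposition-sequence only controls the intersection of \emph{all} descendants, not pairwise intersections $\Gamma_i \cap \Gamma_j$, which could a priori be strictly larger than $\Delta$. To finesse this I would refine $T$ into a binary tree by successively splitting $\Gamma_1 \cup \cdots \cup \Gamma_k$ into $\Gamma_1$ and $\Gamma_2 \cup \cdots \cup \Gamma_k$, and check that the new decomposition-sequence still satisfies the definition; alternatively, one can apply \fullref{FioKar}(2) iteratively, peeling off one $\Gamma_i$ at a time. A secondary bookkeeping point is verifying that the restriction of a decomposition-sequence to a subgraph $C \cup \Delta$ remains a valid decomposition-sequence, which should follow formally from the definition since both the union condition and the clique/thick-join condition are inherited by intersection with $C \cup \Delta$.
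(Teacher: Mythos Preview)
The paper states this corollary without proof, so there is no argument to compare against directly; your inductive use of \fullref{FioKar} is exactly what is intended.

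That said, the obstacles you flag are real, and neither of your proposed fixes actually works. Binarizing does not preserve the decomposition-sequence condition: the new intersection at the root is $\Gamma_1\cap(\Gamma_2\cup\cdots\cup\Gamma_k)=\bigcup_{j\ge 2}(\Gamma_1\cap\Gamma_j)$, and there is no reason for this union of pairwise intersections to sit inside a single clique or thick join of $\Gamma$. The restriction trick also fails: intersecting a thick join $A\ast B$ with $C\cup\Delta$ yields $(A\cap(C\cup\Delta))\ast(B\cap(C\cup\Delta))$, and a factor can drop to one vertex (giving a cone, not a thick join) or to zero vertices (giving an anticlique of size $\ge 2$, which is neither a clique nor a thick join). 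There is a further gap even when $k=2$: if $\Gamma_i\setminus\Delta$ has several components, your argument produces $C\cup\Delta\subsetneq\Gamma_i$, and totally disconnected Morse boundary is not inherited by special subgroups, so you cannot deduce hypothesis~(c) of \fullref{FioKar}(2) from the inductive hypothesis on $\Gamma_i$ alone.

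The cleanest repair is to read \fullref{FioKar_decomposition_sequence} as requiring the children of a vertex $v$ to be exactly the graphs $C\cup\Delta$ for $C$ ranging over the components of $\Gamma_v\setminus\Delta$ (equivalently, to require pairwise intersections of children to equal $\Delta$ and each $\Gamma_i\setminus\Delta$ to be connected). Under that reading the induction goes through immediately, and this is almost certainly the intended meaning; the definition as written is slightly too permissive for the corollary to follow verbatim from \fullref{FioKar}(2).
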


If \fullref{inductive_FioKar} is satisfied then we save ourselves the
work of iterating link doubles of $\Gamma$ and searching for stable
cycles; none exist.

\begin{corollary}\label{no_stable_cycles}
  If $\Gamma$ satisfies \fullref{inductive_FioKar} then no iterated
  link double of $\Gamma$ contains a stable cycle.
\end{corollary}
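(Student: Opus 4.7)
The plan is to chain together three facts: (i) the hypothesis gives totally disconnected Morse boundary for $W_\Gamma$; (ii) passing to an iterated link double yields a quasiisometric group, hence one with homeomorphic Morse boundary; and (iii) a stable cycle in a presentation graph forces a circle into the Morse boundary of the associated RACG, which contradicts total disconnectedness.

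First I would invoke \fullref{inductive_FioKar} directly to conclude that $W_\Gamma$ has totally disconnected Morse boundary. Next, I would use \fullref{vertexdouble} repeatedly: each link double at a vertex realizes $W_{\double^\circ_v(\Gamma)}$ as an index-2 subgroup of $W_\Gamma$, so any iterated link double $\Gamma'$ of $\Gamma$ gives a finite-index inclusion $W_{\Gamma'}\hookrightarrow W_\Gamma$, and in particular the two groups are quasiisometric. Since the homeomorphism type of the Morse boundary is a quasiisometry invariant of finitely generated groups (this is the main property we care about from \cite{Cor17}), $\partial_* W_{\Gamma'}$ is homeomorphic to $\partial_* W_\Gamma$, and is therefore totally disconnected.

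Finally, suppose for contradiction that $\Gamma'$ contains a stable cycle $C$, i.e.\ an induced cycle of length at least $5$ that is square complete. By \fullref{cor:stable} the special subgroup $W_C\leq W_{\Gamma'}$ is stable, and since $C$ is an induced cycle of length at least $5$ the Coxeter group $W_C$ is a one-ended hyperbolic group (virtually a closed hyperbolic surface group), so its Gromov boundary $\partial W_C$ contains a circle. Stability of $W_C$ in $W_{\Gamma'}$ gives a topological embedding $\partial W_C\hookrightarrow \partial_* W_{\Gamma'}$, so $\partial_* W_{\Gamma'}$ contains a circle. This contradicts the conclusion of the previous paragraph that $\partial_* W_{\Gamma'}$ is totally disconnected.

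The only nonroutine point is step (iii): knowing that the boundary of a stable subgroup embeds in the Morse boundary of the ambient group. This is a standard feature of the Morse boundary construction and is essentially the defining property that makes stability the `right' notion of convex-cocompactness, so I would simply cite it from \cite{Cor17} rather than reprove it. Everything else is a direct application of results already collected in the paper.
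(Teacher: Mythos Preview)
Your proposal is correct and follows essentially the same reasoning the paper uses. The paper does not give a formal proof of this corollary; it is treated as immediate from the sentence just above it (``A stable cycle appearing in some iterated link double of $\Gamma$ implies that the Morse boundary of $W_\Gamma$ contains a circle''), and your write-up simply spells out that implication together with the quasiisometry invariance of the Morse boundary and \fullref{inductive_FioKar}.
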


Forthcoming work of Cordes, Karrer, and Ruane will show that the
conditions of \fullref{inductive_FioKar} are also necessary.
This will imply, in the 2--dimensional case, that a RAAGedy $\Gamma$
must admit a decomposition-sequence.

\subsection{Obstructions from JSJ Decompositions}\label{sec:JSJ}

Recall from \fullref{jsj} that we know some properties of the JSJ
graph of cylinders of a RAAG.
For instance, it has no hanging vertices (\fullref{RAAGnohanging}), and rigid vertices are
special subgroups that admit no finite or 2--ended splittings (\fullref{RAAGflat}).
From the quasiisometry invariance of JSJ trees of cylinders we can
conclude that $\Gamma$ is not RAAGedy if the JSJ graph of cylinders of $W_\Gamma$
has any of the following:
\begin{itemize}
  \item A hanging vertex group. 
\item A rigid vertex group that splits over a finite or 2--ended subgroup.
  \item A rigid vertex group that is not quasiisometric to any RAAG.
\end{itemize}

Here is another obstruction that requires a little more work to justify.
\begin{lemma}[{Cf \cite[Proposition~4.44]{Edl24thesis}}]\label{raagjsj}
  Let $\Delta$ be a connected, triangle-free graph that contains a
  cut vertex $v$, and let $A_\Delta$ be the RAAG defined by $\Delta$.
  Let $\tilde v$ be the cylinder vertex in the JSJ tree of cylinders $\mathcal{T}$
  of $A_\Delta$ whose stabilizer is $A_{\st(v)}$.
    Let $\tilde r$ be a rigid vertex adjacent to $\tilde v$ in $\mathcal{T}$, such that $\tilde r$ is stabilized by
    $A_{\Delta_r}$, where $\Delta_r$ is some maximal biconnected
    subgraph of $\Delta$ containing $v$.
Then:
\begin{itemize}
\item $\tilde v$ has infinite valence in $\mathcal{T}$.
  \item If $\Delta_r$ is a single edge $v\edge w$ then $\tilde r$ has valence 2
    in $\mathcal{T}$, and its stabilizer and that of its two incident
    edges are $A_{\{v,w\}}\cong\mathbb{Z}^2$.
    \item If $\Delta_r$ is not a single edge then $\tilde r$ has
      infinite valence in $\mathcal{T}$ and the stabilizers of $\tilde
      r$ and all of its incident edges are RAAGs of rank at least
      3.
\end{itemize}
\end{lemma}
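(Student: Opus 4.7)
The plan is to compute valences in $\mathcal{T}$ and stabilizers of incident edges directly from the visual description of the JSJ graph of cylinders in Section~\ref{jsj}, using the fact that in the quotient graph of groups each vertex valence in $\mathcal{T}$ is obtained by summing, over the incident edges in the quotient, the indices of the edge groups in the vertex group.

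First, I would handle $\tilde v$. Its quotient-neighbors are the rigid vertices whose biconnected subgraphs contain $v$ (plus, possibly, leaf vertices at $v$, which behave similarly). For any such rigid neighbor $\tilde r'$ with biconnected subgraph $\Delta_{r'}$, the edge group is $A_{\st(v)\cap\Delta_{r'}}$. Because $v$ is a cut vertex, there is another maximal biconnected subgraph at $v$ containing some vertex of $\lk(v)\setminus\Delta_{r'}$, so $\st(v)\cap\Delta_{r'}\subsetneq\st(v)$. Since any proper special subgroup of a RAAG has infinite index (seen, e.g., by mapping any excluded generator to a generator of $\mathbb{Z}$), we get $[A_{\st(v)}:A_{\st(v)\cap\Delta_{r'}}]=\infty$, so $\tilde v$ has infinite valence in $\mathcal{T}$.

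Next, for the case $\Delta_r=\{v,w\}$, I would first argue that $w$ is also a cut vertex: by \fullref{RAAGflat}/Section~\ref{jsj}, a maximal biconnected subgraph defines a rigid vertex of the JSJ only if either it contains two cut vertices or it is not contained in any star of a cut vertex. Since $\Delta_r\subset\st(v)$, the first alternative must hold, forcing $w$ to be a cut vertex. Hence in the quotient, $\tilde r$ has exactly two incident edges (one to $\tilde v$, one to $\tilde w$), both with edge group $A_{\{v,w\}}=A_{\Delta_r}\cong\mathbb{Z}^2$. Since the edge groups equal the vertex group, each contributes exactly one edge in $\mathcal{T}$, giving valence $2$ with all incident groups isomorphic to $\mathbb{Z}^2$.

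Finally, for the case that $\Delta_r$ is not a single edge: since $\Delta_r$ is biconnected and triangle-free, it either has $|\Delta_r|\geq 3$ with every vertex of degree $\geq 2$, or it's a single edge. Excluding the latter, every vertex $u$ of $\Delta_r$ satisfies $|\lk(u)\cap\Delta_r|\geq 2$, so $|\st(u)\cap\Delta_r|\geq 3$, and $\lk(u)\cap\Delta_r$ is an anticlique by triangle-freeness; hence each edge group $A_{\st(u)\cap\Delta_r}\cong\mathbb{Z}\times F_k$ with $k\geq 2$ is a RAAG of rank at least $3$. For infinite valence of $\tilde r$, I again need $\st(u)\cap\Delta_r\subsetneq\Delta_r$ for each cylinder neighbor $u$; this holds because the biconnected subgraph $\Delta_r$ contains a cycle of length $\geq 4$, so $\Delta_r$ has a vertex at $\Delta_r$-distance $2$ from $u$, i.e.\ a vertex outside $\st(u)$. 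The proper special subgroup argument used for $\tilde v$ then yields infinite index, and hence infinite valence in $\mathcal{T}$. The main care point throughout is keeping track of which vertex groups/edge groups sit inside which via the visual description, and pinning down the rigid-vs-cylinder criterion in the degenerate single-edge case.
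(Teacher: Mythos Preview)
Your proposal is correct and follows essentially the same approach as the paper: both use the visual description of the JSJ graph of cylinders (edge groups are $A_{\st(u)\cap\Delta_r}$), exploit that proper special subgroups of a RAAG have infinite index to get infinite valence, and invoke triangle-freeness to find vertices of $\Delta_r$ outside $\st(u)$. The only cosmetic difference is in that last step: the paper argues that $\lk(v)\cap\Delta_r$ is an anticlique, so if $\Delta_r\subset\st(v)$ then $\Delta_r$ would be a star (not biconnected), whereas you argue via a cycle of length $\geq 4$ through $u$; both are fine. Your treatment of ``all incident edges'' of $\tilde r$ (ranging over all cut vertices $u\in\Delta_r$, not just the given $v$) is slightly more explicit than the paper's, which focuses on the edge toward $\tilde v$ and leaves the others to symmetry.
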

\begin{proof}
  Partition $\lk(v)$ into subsets $P_1$, $P_2$,... according to which component of
  $\Delta\setminus\{v\}$ each vertex belongs.
  There are at least two such parts, since $v$ is a cut vertex.
  For each $P_i$ there is a maximal biconnected subgraph $\Delta_i$
  containing $P_i$ as well as $v$, and containing no vertex from any other $P_j$. For each $i$ such that $\Delta_i$ is either not an edge or is an edge $v\edge w$ where $w$ is a cut vertex of $\Gamma$, there
 is a rigid vertex $\tilde r_i$ in $\mathcal{T}$ adjacent to
  $\tilde v$ whose stabilizer is $A_{\Delta_i}$. The
  stabilizer of the edge $\tilde e_i$ between $\tilde r_i$ and $\tilde v$ is
  $A_{\Delta_i}\cap A_{\st(v)}=A_{\{v\}\cup P_i}$.  By assumption, there is at least one rigid vertex $\tilde r_i$. For this $i$, since 
  $\lk(v)\setminus P_i$ is nonempty, $A_{\{v\}\cup P_i}$ has
  infinite index in $A_{\st(v)}$, so there are infinitely many
  distinct translates of $\tilde e_i$ in $\mathcal{T}$ incident to
  $\tilde v$.  

  Now consider a rigid vertex $\tilde r_i$ adjacent to $\tilde v$ in $\mathcal T$.  If $P_i=\{w\}$ is a singleton, then $w$
  is necessarily also a cut vertex and $\Delta_i$ is the single edge $w\edge v$.
  
  Then $\tilde r_i$ has incident edges in $\mathcal{T}$, connecting it to  $\tilde{v}$ and $\tilde w$, and $\tilde r_i$ as well as both of these edges
  are stabilized by $A_{v,w}\cong \mathbb{Z}^2$.

  If $P_i$ is not a singleton then $\Delta_i\cap\st(v)$ contains at least three
  vertices, since it contains $P_i$ and $v$, so the stabilizer of
  $\tilde r_i$ and the edge $\tilde e_i$ connecting it to $\tilde v$
  are RAAGs of rank at least 3. 

  Finally, since $\Delta$ is triangle-free, $P_i$ is an anticlique.
  This implies that the biconnected subgraph  $\Delta_i$ contains an additional vertex that is
  not in $\st(v)$, so $A_{\st(v)\cap\Delta_i}$ has infinite index in
  $A_{\Delta_i}$, which implies there are infinitely many edges in the
  orbit of $\tilde e_i$ incident to $\tilde r_i$. 
\end{proof}

\begin{corollary}\label{ZZobstruction}
  If $\Gamma$ is an incomplete, triangle-free graph with no separating
  clique and the JSJ graph of cylinders for $W_\Gamma$ contains a
  rigid vertex group that is not virtually $\mathbb{Z}^2$, but has an
  incident edge that is virtually $\mathbb{Z}^2$, then $\Gamma$ is not
  RAAGedy. 
\end{corollary}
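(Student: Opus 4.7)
The approach is to derive a contradiction from the quasiisometry invariance of the JSJ tree of cylinders together with the visible description of rigid vertices and edge stabilizers in RAAG JSJ graphs of cylinders due to Clay and Margolis (as recalled before \fullref{RAAGnohanging}). Suppose for contradiction that $W_\Gamma$ is quasiisometric to some RAAG $A_\Delta$. Since $W_\Gamma$ is one-ended (no separating clique), $A_\Delta$ is one-ended and $\Delta$ is connected with more than one vertex. By the quasiisometry invariance of the JSJ tree of cylinders recalled in \fullref{prelim:jsj}, this quasiisometry induces an isomorphism $\phi_*$ between the two JSJ trees of cylinders whose restrictions to vertex and edge spaces are uniform quasiisometries between corresponding stabilizers. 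In particular $\phi_*$ preserves the bipartition between cylinder and non-cylinder vertices, and since by \fullref{RAAGnohanging} $A_\Delta$ has no hanging vertices, $\phi_*$ sends the offending rigid vertex $\tilde r$ of $W_\Gamma$ to a rigid vertex $\tilde r'$ of $A_\Delta$, and the incident virtually $\mathbb{Z}^2$ edge $\tilde e$ to an incident edge $\tilde e'$ whose stabilizer is virtually $\mathbb{Z}^2$, while the stabilizer of $\tilde r'$ remains not virtually $\mathbb{Z}^2$.

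Next I would invoke the visible structure of the JSJ graph of cylinders of $A_\Delta$: the stabilizer of $\tilde r'$ is $A_{\Delta_r}$ for some maximal biconnected subgraph $\Delta_r$ of $\Delta$, and the stabilizer of $\tilde e'$ is $A_{\Delta_r \cap \st(v)}$ for the cut vertex $v \in \Delta_r$ corresponding to the adjacent cylinder. A special subgroup $A_S$ of a RAAG is virtually $\mathbb{Z}^2$ precisely when $S$ is a single edge, so $\Delta_r \cap \st(v) = \{v, w\}$ for some neighbor $w$ of $v$ in $\Delta_r$; in other words $v$ has degree exactly one in $\Delta_r$. But $\Delta_r$ is biconnected, so if $|\Delta_r| > 2$ then every vertex of $\Delta_r$ has degree at least two in $\Delta_r$, a contradiction. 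Therefore $\Delta_r = \{v, w\}$ is a single edge, and $A_{\Delta_r} \cong \mathbb{Z}^2$ is virtually $\mathbb{Z}^2$, contradicting our choice of $\tilde r$. This dichotomy is essentially the one reflected in the two bullet points of \fullref{raagjsj}.

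The main technical point is to check that the isomorphism $\phi_*$ really does send the offending rigid vertex and its virtually $\mathbb{Z}^2$ incident edge to a rigid vertex and a virtually $\mathbb{Z}^2$ incident edge on the RAAG side; this requires both that rigid vertices go to rigid vertices (which uses \fullref{RAAGnohanging} to rule out the image being hanging) and that the quasiisometry type of each edge stabilizer is preserved by $\phi_*$, both of which follow from the material of \fullref{prelim:jsj}. Once this passage is made, the remaining graph-theoretic step is immediate from biconnectivity of $\Delta_r$, and no triangle-freeness hypothesis on $\Delta$ is required, since the only property of RAAGs being used is that $A_S$ is virtually $\mathbb{Z}^2$ iff $S$ is an edge.
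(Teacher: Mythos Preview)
Your proof is correct and follows essentially the same approach as the paper's, which states the corollary without explicit proof as an immediate consequence of \fullref{raagjsj} together with the quasiisometry invariance of the JSJ tree of cylinders from \fullref{prelim:jsj}. You re-derive the relevant dichotomy of \fullref{raagjsj} inline (biconnectedness of $\Delta_r$ forces $\Delta_r$ to be a single edge once the edge stabilizer is virtually $\mathbb{Z}^2$) rather than citing it as a black box, and correctly observe that this particular step does not require $\Delta$ to be triangle-free, a hypothesis that the paper carries in \fullref{raagjsj} but which is not needed here since $A_S$ being virtually $\mathbb{Z}^2$ forces $S$ to be an edge in any RAAG.
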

\begin{example}
  Consider the graph $\Gamma$ of \fullref{fig:zsquaredobstruction}.
The JSJ graph of cylinders is:
\[W_\Gamma=W_{\{0,1\}\join\{2,7,8\}}\stackrel{W_{\{0,1\}\join\{2,7\}}}{\longdash}
  W_{\{0,1,2,3,4,5,6,7\}}\]
 
The vertex on the left is a cylinder; the vertex on the right is
rigid. 
The graph $\Gamma$ is not RAAGedy, by \fullref{ZZobstruction}, since
the JSJ graph of cylinders of $W_\Gamma$ has a non-virtually--$\mathbb{Z}^2$ rigid vertex group with an
incident virtually $\mathbb{Z}^2$ edge.
 \begin{figure}[h]
    \centering
    \includegraphics{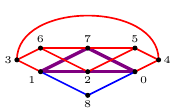}
    \caption{The blue/violet subgraph corresponds to a cylinder in the
      JSJ graph of cylinders of the RACG. The red/violet subgraph
      corresponds to a non-virtually--$\mathbb{Z}^2$ rigid
      vertex. Their intersection (violet) corresponds to a virtually--$\mathbb{Z}^2$ edge.}
    \label{fig:zsquaredobstruction}
  \end{figure}
\end{example}

We state another kind of obstruction to being RAAGedy that one
can derive from the JSJ decomposition.
We will not give a proof here, as \fullref{no_cycle_of_cuts} is a
special case of \fullref{nocycles}. 

\begin{theorem}[No cycles of cuts]\label{no_cycle_of_cuts}
  Let $\Gamma$ be a triangle-free graph without separating cliques.
  Suppose for some $n\geq 3$ there is an anticlique
  $\{a_0,\dots,a_{n-1}\}$ such that for all $i$  there is a cut $\{a_i
  - a_{(i+1)}\}$, for subscripts modulo $n$. 
Then $\Gamma$ is not RAAGedy. 
\end{theorem}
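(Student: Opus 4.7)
The plan is to show that a cycle of cuts in $\Gamma$ forces the JSJ graph of cylinders of $W_\Gamma$ to contain a cycle, and then rule out RAAGedy-ness via the quasiisometry invariance of the JSJ graph of cylinders (\fullref{sec:jsjracg}) combined with the fact, recorded in \fullref{jsj}, that the JSJ graph of cylinders of a one-ended RAAG is the block-cut tree of its presentation graph and hence always a tree.

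First I would identify the relevant cylinders: by the description in \fullref{sec:jsjracg}, each cut $\{a_i - a_{(i+1)}\}$ gives a cylinder vertex $C_i$ in the JSJ graph of cylinders of $W_\Gamma$, with vertex group $V_i = W_{\{a_i, a_{i+1}\} \join (\lk(a_i)\cap \lk(a_{i+1}))}$. These cylinders are pairwise distinct since their commensurable virtually cyclic ``axis'' subgroups, generated by products involving distinct pairs from the anticlique, are pairwise non-commensurable. Next, because $a_i$ is a Coxeter generator it has order two, hence fixes a nonempty subtree $F(a_i)$ of the Bass-Serre tree $\mathcal{T}$; since $a_i \in V_{i-1}\cap V_i$, the subtree $F(a_i)$ contains lifts of both $C_{i-1}$ and $C_i$. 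Projecting the geodesic in $F(a_i)$ between these lifts yields a path $P_i$ from $C_{i-1}$ to $C_i$ in the JSJ graph of cylinders. Concatenating produces a closed walk $W := P_0 P_1 \cdots P_{n-1}$ visiting the cylinders cyclically.

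The essential step is showing $W$ is not null-homotopic. With compatible lifts, I would compute the Bass-Serre monodromy of $W$ to be the element $g := a_0 a_1 \cdots a_{n-1}$ (up to the standard ambiguity of conjugation by the starting vertex group). Because $\{a_0,\ldots,a_{n-1}\}$ is an anticlique, no two adjacent letters commute in $W_\Gamma$, so $a_0 a_1 \cdots a_{n-1}$ is a reduced word; by Tits' solution to the word problem for Coxeter groups, every minimal-length representative of $g$ uses each generator $a_i$. For $g$ to stabilize the starting lift would require $g \in V_{n-1}$, which in turn forces each $a_i$ with $1\leq i\leq n-2$ to lie in $\{a_{n-1},a_0\}\cup(\lk(a_{n-1})\cap \lk(a_0))$; since $a_i$ is in the anticlique and distinct from $a_0,a_{n-1}$ when $n\geq 3$, this contradicts non-adjacency. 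Thus $W$ represents a nontrivial loop in the JSJ graph of cylinders of $W_\Gamma$, which must therefore contain a cycle; comparison with the tree structure for RAAGs via quasiisometry invariance completes the argument.

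The main obstacle is the monodromy computation in the third paragraph: one must carefully choose lifts of each $P_i$ and verify that the resulting deck transformation really equals the claimed product $g$ modulo the expected indeterminacy, and then cleanly rule $g$ out of $V_{n-1}$ via the word problem. Adapting the argument when some cuts are cut $2$-paths (so that vertices $c_i$ with $a_i \edge c_i \edge a_{i+1}$ enter the picture) will require replacing $g$ by the likewise reduced word $a_0 c_0 a_1 c_1 \cdots a_{n-1} c_{n-1}$ (or a suitable variant), but the same argument based on Tits' word problem and the anticlique condition should apply.
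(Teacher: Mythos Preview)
The paper does not argue via the JSJ graph of cylinders. It defers the proof, observing after \fullref{nocycles} that \fullref{no_cycle_of_cuts} is the special case of the compliant cycle obstruction with $q=n$, each $S_i=\{a_{i-1},a_i\}$ the pole of a cut (which lies in $\compliant_\Gamma$), and each $P_i=(a_i)$ a singleton path; since $S_0$ has only two vertices it is square-free, so case~\ref{subcase:hyperbolic} of \fullref{nocycles} applies at once.

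Your approach has a genuine gap. What is quasiisometry invariant is the JSJ \emph{tree} of cylinders, not its quotient graph (\fullref{prelim:jsj}): quasiisometric groups act on isomorphic trees, but $W_\Gamma\backslash\mathcal{T}$ and $A_\Delta\backslash\mathcal{T}$ need not be isomorphic, so a cycle in the former would not contradict the latter being a tree. Moreover, your monodromy computation does not exhibit such a cycle. A chosen lift of $W$ to $\mathcal{T}$ failing to close does not mean $W$ is homotopically nontrivial in the quotient graph---for instance, the walk $v_1\to v_2\to v_1$ in the single-edge graph of groups for $D_\infty$ has lifts that do not close, yet the quotient is a tree. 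In fact, with the identity-coset lifts each $\tilde C_i$ is fixed by both $a_i$ and $a_{i+1}$, so your concatenated path in $\mathcal{T}$ already closes up; equivalently, since each $a_i$ lies in a vertex group $V_i$, the element $g=a_0\cdots a_{n-1}$ dies under the surjection $W_\Gamma\to\pi_1(\text{quotient graph})$, and $W$ is null-homotopic there regardless of which lift you pick. (A separate, fixable issue: cuts may be crossed and hence contribute to hanging rather than cylinder vertices---but in that case \fullref{RAAGnohanging} finishes immediately.)
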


\section{The maximal product region graph}\label{mprg}
In this section we use the maximal product region graph to distinguish
some RACGs from RAAGs.
Recall \fullref{sec:oh}.
In \fullref{sec:mprg_connectivity} we show that RAAGedy graphs are
strongly CFS and establish connectivity properties of the MPRG.
In \fullref{sec:hhs} we show that in the strongly CFS case the MPRG is
equivariantly quasiisometric to the stability recognizing space of
Abbott, Behrstock, and Durham.
In \fullref{sec:ladders} we define ladders in the MPRG as an
obstruction to being RAAGedy, and give sufficient conditions for their
presence.
In \fullref{mprgexamples} we give examples and non-examples.

\subsection{Connectivity properties of the MPRG}\label{sec:mprg_connectivity}
In this section we establish connectivity properties of the maximal
product region graph. As a corollary:
\begin{theorem}\label{raagedyimpliestrongcfs}
  If $\Gamma$ is a triangle-free, RAAGedy graph with no
  separating clique then it is strongly CFS.
\end{theorem}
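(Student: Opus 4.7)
The plan is to obtain this as a direct corollary of the three main ingredients that \fullref{sec:mprg_connectivity} develops (or is announced to develop): Oh's quasiisometric invariance of the MPRG \fullref{mpgqiinvariant}; the statement that $\mprg(A_\Delta)$ is connected for any 1--ended RAAG $A_\Delta$ (the ``MPRG of a RAAG is a connected quasitree'' content announced for this section); and the equivalence, for RACGs, between $\Gamma$ being strongly CFS and $\mprg(W_\Gamma)$ being connected.

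Assuming those inputs, the argument is brief. First observe that $W_\Gamma$ is 1--ended, since the absence of a separating clique in $\Gamma$ prevents any splitting of $W_\Gamma$ over a finite subgroup. Next, since $W_\Gamma$ is RAAGedy, pick a RAAG $A_\Delta$ with $W_\Gamma$ quasiisometric to $A_\Delta$; as the number of ends is a quasiisometry invariant, $A_\Delta$ is also 1--ended, which forces $\Delta$ to be connected with at least two vertices. The RAAG-MPRG connectivity result then yields that $\mprg(A_\Delta)$ is connected; \fullref{mpgqiinvariant} applied to the given quasiisometry transfers this to a connected $\mprg(W_\Gamma)$; and the strong-CFS/MPRG-connectivity equivalence concludes that $\Gamma$ is strongly CFS.

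One subtlety in this assembly is verifying that the Hausdorff-close bijection on maximal standard product subcomplexes supplied by \fullref{Oh} actually promotes to a graph isomorphism of MPRGs, i.e., that it preserves the adjacency relation. This should follow from \fullref{qi_preserves_coarse_intersection} together with the fact that MPRG adjacency records precisely that the coarse intersection contains a standard product subcomplex, which is a condition stable under quasiisometry. The real work, however, is not in the present bookkeeping but in the two preparatory facts it calls on --- especially the characterization of strong CFS in terms of MPRG connectivity, which will require relating connected components of $\mprg(W_\Gamma)$ to the combinatorics of $\diag(\Gamma)$ via maximal thick joins, using \fullref{icuricracg} and \fullref{diagonaljoins}. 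The present theorem is essentially the advertised payoff of that analysis.
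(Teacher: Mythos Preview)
Your proposal is correct and follows essentially the same route as the paper: the paper's proof is precisely to combine \fullref{mpgqiinvariant}, \fullref{mprg_is_quasitree} (connectivity of the MPRG for a 1--ended, 2--dimensional RAAG), and \fullref{stronglycfsimpliesmprgconnected} (the strongly CFS/MPRG-connectivity equivalence). Your extra remarks on 1--endedness and on why the bijection of \fullref{Oh} respects adjacency are reasonable elaborations, but note that applying \fullref{stronglycfsimpliesmprgconnected} requires $\Gamma$ to already be CFS, which follows from the Dani--Thomas divergence result recalled at the start of \fullref{sec:firstexamples}.
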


The proof of \fullref{raagedyimpliestrongcfs} is to apply 
quasiisometry invariance of the MPRG (\fullref{mpgqiinvariant})  and
compare the following result (\fullref{mprg_is_quasitree}, which
says the MPRG of a 1--ended, 2--dimensional RAAG is connected),  to the subsequent
\fullref{stronglycfsimpliesmprgconnected} about connectivity of the
MPRG for RACGs. 

\fullref{raagedyimpliestrongcfs} appeared as
\cite[Proposition~4.52]{Edl24thesis}.
The proof here is similar.
\cite{BehCicFal25} asserts that it is possible to
deduce the same result without the triangle-free hypothesis using
results from \cite{BehHagSis21}.
\begin{theorem}[{\cite[Corollary~4.9]{Oh22}}]\label{mprg_is_quasitree}
  The MPRG of a 1--ended,  irreducible, 2--dimensional RAAG 
  is an unbounded quasitree. In particular, it is connected.
\end{theorem}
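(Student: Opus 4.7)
The plan is to prove the three assertions (connectedness, unboundedness, quasitree) separately, leveraging the visibility description of $\ric_\Delta$ as the graph of maximal joins in $\Delta$ (\fullref{icuricraag}) and the cocompactness of the $A_\Delta$-action on MPRG with fundamental domain $\ric_\Delta$ (\fullref{ric_is_the_fundamental_domain}).

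For \textbf{connectedness} I first show $\ric_\Delta$ itself is connected. Given two maximal joins $J,J'$ in $\Delta$, pick edges $e_0\subset J$ and $e_n\subset J'$; since $\Delta$ is connected, we can extend these to an edge-path $e_0, e_1, \dots, e_n$ in $\Delta$ with consecutive edges $e_i,e_{i+1}$ sharing a vertex $u_{i+1}$. By triangle-freeness, the star $\{u_{i+1}\}\join\lk(u_{i+1})$ is itself a join and contains both $e_i$ and $e_{i+1}$; extending to a maximal join $J_i^\ast$ yields a path $J\edge J_0^\ast\edge J_1^\ast\edge\cdots\edge J_{n-1}^\ast\edge J'$ in $\ric_\Delta$, with consecutive maximal joins sharing the edge $e_i$ of $\Delta$. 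To promote this to connectedness of MPRG, note that every generator $v$ of $A_\Delta$ lies in some maximal join (since $\Delta$ is connected with at least two vertices, $v$ lies in an edge, which lies in a maximal join), so $v$ setwise stabilizes the corresponding $\Salvetti_J$ and fixes the corresponding vertex of $\ric_\Delta\subset\mathrm{MPRG}$. Thus every $A_\Delta$-translate of $\ric_\Delta$ meets the original in at least one vertex, and connectedness of MPRG follows.

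For \textbf{unboundedness}, irreducibility of $\Delta$ forces every maximal join to be a proper subgraph, so each vertex stabilizer of MPRG is a proper special subgroup of $A_\Delta$ of infinite index, and the $A_\Delta$-orbit of any vertex of MPRG is infinite. Once the quasitree property is established, the MPRG is quasi-isometric to a tree carrying an infinite isometric orbit, which forces infinite diameter.

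For the \textbf{quasitree property} I would target the bottleneck characterization from the preliminaries. The crucial structural input is \fullref{raagtrivialprojection}: in a RAAG the closest-point projection from one standard subcomplex to another is either constant or has unbounded image. This strict dichotomy prevents ``wide'' coarse intersections between distinct maximal product regions and constrains the way in which geodesics in MPRG can branch. \textbf{The main obstacle} is converting this local rigidity into a uniform bottleneck constant: one must show that for every sufficiently long geodesic in MPRG, a bounded neighborhood of the midpoint separates the endpoints, which requires ruling out long loops. I would attempt this through a disk-diagram argument in $\Salvetti_\Delta$ translating a would-be loop into a cycle of commutation relations forced to be bounded, or else appeal to the hierarchically hyperbolic structure of 2-dimensional RAAGs to control detours around candidate bottleneck vertices via its system of projections.
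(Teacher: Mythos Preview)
The paper does not prove this theorem itself; it is cited from Oh~\cite[Corollary~4.9]{Oh22}. However, immediately after stating it the paper develops \fullref{1bottleneck} and \fullref{cor:separatinggeodesics}, which contain (and make precise) the mechanism behind the quasitree claim. So the relevant comparison is between your sketch and those lemmas.

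Your connectedness argument is fine and essentially the same as what the paper does elsewhere (compare \fullref{stronglycfsimpliesmprgconnected}): show $\ric_\Delta$ is connected via stars of vertices along an edge path in $\Delta$, then propagate to $\mprg_\Delta$ using that each generator fixes a vertex of $\ric_\Delta$. Your unboundedness argument is also correct.

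The gap is in the quasitree step. You correctly flag it as the main obstacle, but your proposed attacks (disk diagrams, HHS projections) are not the mechanism used, and you do not identify the actual idea. The key insight of \fullref{1bottleneck} is that \emph{walls of $\Salvetti_\Delta$ produce separating stars in $\mprg_\Delta$}. Concretely: for a generator $a$, there is a unique maximal join $\lk(a)\join\{b\in\Delta\mid \lk(a)\subset\lk(b)\}$ containing $\st(a)$; call the corresponding vertex $\sigma(\wall_a)\in\ric_\Delta$. Any maximal standard product region that contains an edge dual to $\wall_a$ must contain a square of the form $\{a\}\join\{b\}$ with $b\in\lk(a)$, hence shares a flat with $\Salvetti_{\st(a)}$, hence is adjacent to $\sigma(\wall_a)$ in $\mprg_\Delta$. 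Now use the shadowing construction (paths in $\mprg_\Delta$ shadowed by paths in $\Salvetti_\Delta$): if $g,h$ lie on opposite sides of $\wall_a$, any path in $\mprg_\Delta$ from $g\Act\ric_\Delta$ to $h\Act\ric_\Delta$ shadows to a path crossing $\wall_a$, so must pass through $\st(\sigma(\wall_a))$. This gives the $1$--bottleneck property directly, with no need for disk diagrams. The projection dichotomy of \fullref{raagtrivialprojection} is not the tool here; it is the wall structure of the CAT(0) cube complex that does the work.
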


 Recall \fullref{def:ric}: Denote the MPRG by  $\mprg_\Gamma$ and let $\ric_\Gamma=W_\Gamma\backslash\mprg_\Gamma$.
 Each vertex $v$ of $\ric_\Gamma$ corresponds to a maximal thick join
 $J_v$ of $\Gamma$, and $W_{J_v}$ is the stabilizer of $v$ for
 $W_\Gamma\act \mprg_\Gamma$, since it is the stabilizer of
 $\Davis_{J_v}$ for $W_\Gamma\act \Davis_\Gamma$. 

\begin{proposition}\label{stronglycfsimpliesmprgconnected}
  Let $\Gamma$ be a triangle-free CFS graph.
  The following are equivalent:
  \begin{itemize}
  \item $\mprg_\Gamma$ is connected.
  \item $\ric_\Gamma$ is connected.
  \item $\diag(\Gamma)$ is connected.
    \item $\Gamma$ is strongly CFS.
  \end{itemize}
\end{proposition}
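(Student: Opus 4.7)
The plan is to prove (4) $\Leftrightarrow$ (3) $\Leftrightarrow$ (2) $\Leftrightarrow$ (1). The first equivalence is by definition, since $\Gamma$ is already assumed CFS.

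For (2) $\Leftrightarrow$ (3), I will identify $\ric_\Gamma$ with the graph of maximal thick joins of $\Gamma$ via \fullref{icuricracg}. Triangle-freeness implies that each diagonal $\{x,y\}$ in $\diag(\Gamma)$ is contained in a unique maximal thick join $J(\{x,y\}) = A \join B$ with $\{x,y\} \subset A$: any such join has $B \subset \lk(x) \cap \lk(y)$, and maximality forces $B = \lk(x) \cap \lk(y)$ and $A = \bigcap_{b \in B}\lk(b)$. Any edge $\{x,y\} \edge \{z,w\}$ of $\diag(\Gamma)$ comes from an induced square $\{x,y\} \join \{z,w\}$ contained in both $J(\{x,y\})$ and $J(\{z,w\})$, so these two maximal thick joins are equal or share a square and are thus adjacent in $\ric_\Gamma$; this gives (3) $\Rightarrow$ (2). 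Conversely, by \fullref{diagonaljoins} each maximal thick join $J = A \join B$ contributes the connected subgraph $\binom{A}{2} \join \binom{B}{2}$ to $\diag(\Gamma)$; two maximal thick joins sharing a square also share that square's two diagonals, so the corresponding subgraphs of $\diag(\Gamma)$ meet; and these subgraphs cover $\diag(\Gamma)$, giving (2) $\Rightarrow$ (3).

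For (1) $\Leftrightarrow$ (2), the direction (1) $\Rightarrow$ (2) is immediate from \fullref{weakconvexity}, which projects any path in $\mprg_\Gamma$ to a path in $\ric_\Gamma$. For the converse, since $\mprg_\Gamma$ is the $W_\Gamma$-orbit of $\ric_\Gamma$ it suffices to verify that $v$ and $sv$ lie in the same component of $\mprg_\Gamma$ for every generator $s \in \Gamma$ and every $v \in \ric_\Gamma$. In any case where $\diag(\Gamma)$ is nonempty, triangle-freeness combined with CFS rules out cone vertices (a triangle-free graph with a cone vertex is a star, which has no squares and so fails CFS nontrivially); so each $s$ lies in some induced square, hence in some maximal thick join $J_s$, which is a vertex of $\ric_\Gamma$ stabilized by $s$. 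A path from $v$ to $J_s$ in $\ric_\Gamma \subset \mprg_\Gamma$, concatenated with its $s$-translate, which still terminates at $s J_s = J_s$, produces a path in $\mprg_\Gamma$ from $v$ to $sv$. An induction on word length then yields connectivity of $\mprg_\Gamma$. When $\diag(\Gamma)$ is empty, all four conditions hold vacuously.

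The main obstacle is the bookkeeping in (2) $\Leftrightarrow$ (3), where the map from diagonals to maximal thick joins is many-to-one; the uniqueness of $J(\{x,y\})$ forced by triangle-freeness is what makes the correspondence match up cleanly in both directions.
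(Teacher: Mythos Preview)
Your argument for (1) $\Leftrightarrow$ (2) and for (2) $\Rightarrow$ (3) is essentially the same as the paper's. However, your argument for (3) $\Rightarrow$ (2) contains a false claim that leaves a genuine gap.

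The uniqueness assertion is wrong: a diagonal $\{x,y\}$ need not lie on one side of a \emph{unique} maximal thick join. For a concrete counterexample, take $\Gamma$ on vertices $x,y,z,a,b,c$ with $\lk(x)=\lk(y)=\{a,b,c\}$ and $\lk(z)=\{a,b\}$. Then both $\{x,y\}\join\{a,b,c\}$ and $\{x,y,z\}\join\{a,b\}$ are maximal thick joins with $x,y$ on one side. Your reasoning ``maximality forces $B=\lk(x)\cap\lk(y)$'' fails because enlarging $B$ requires the new vertex to be adjacent to \emph{all} of $A$, not just to $x$ and $y$.

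Your explicit formula $B=\lk(x)\cap\lk(y)$, $A=\bigcap_{b\in B}\lk(b)$ does define a specific maximal thick join $J(\{x,y\})$, and your argument correctly shows that $J$ sends edges of $\diag(\Gamma)$ to adjacent-or-equal vertices of $\ric_\Gamma$. But this only proves that the \emph{image} of $J$ is connected when $\diag(\Gamma)$ is. You have not shown that $J$ is surjective (it is not, in the counterexample above), nor that every vertex of $\ric_\Gamma$ is adjacent to something in the image---and without the uniqueness claim there is no reason this should be automatic. The fix is short: any maximal thick join $M=A\join B$ shares the square $\{a,a'\}\join\{b,b'\}$ with $J(\{a,a'\})$ for any $a,a'\in A$ and $b,b'\in B$, so $M$ is equal or adjacent to a vertex in the image of $J$; hence connectivity of the image implies connectivity of $\ric_\Gamma$. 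The paper sidesteps this issue by arguing via a bijection between connected components of $\ric_\Gamma$ and of $\diag(\Gamma)$, using the maps $\phi$ and $\psi$ of \fullref{diagonaljoins} in both directions, rather than relying on a vertex-level map that must then be shown to have dense image.
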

\begin{proof}
Since $\Gamma$ is CFS, being strongly CFS is equivalent to
$\diag(\Gamma)$ being connected.
In addition, $\Gamma$ is triangle-free, so every vertex belongs to some
square, hence to some maximal thick join subgraph.
Thus, every generator $s$ of $W_\Gamma$ fixes at least one vertex of
$\ric_\Gamma$, so $\ric_\Gamma\cap s\Act\ric_\Gamma\neq\emptyset$.
By induction on word length, for every word $w$ whose $i$--th prefix
is $w_i$ there is a chain
$\ric_\Gamma=w_0\Act\ric_\Gamma,\dots,w_n\Act\ric_\Gamma=w\Act\ric_\Gamma$ such that $w_i\Act\ric_\Gamma\cap
w_{i+1}\Act\ric_\Gamma\neq \emptyset$.
If $\ric_\Gamma$ is connected this implies $\mprg_\Gamma$ is connected.
Conversely, \fullref{weakconvexity} implies that if $\mprg_\Gamma$ is
  connected then $\ric_\Gamma$ is too.

The proof is completed by establishing a bijection between connected
components of $\ric_\Gamma$ and connected components of $\diag(\Gamma)$.

By  \fullref{diagonaljoins}, we can define  maps $\phi$ from thick
joins of $\Gamma$ to joins of $\diag(\Gamma)$ by $\phi(A\join
B):=\binom{A}{2} \join \binom{B}{2}$ and $\psi$ from joins of
$\diag(\Gamma)$ to thick joins of $\Gamma$ by $\psi(A\join B):=
\supp(A)\join \supp(B)$.
Consider the two compositions: $\psi\circ\phi(A\join
B)=\psi(\binom{A}{2}\join\binom{B}{2})=A\join B$ and
$\phi\circ\psi(A\join
B)=\phi(\supp(A)\join\supp(B))=\binom{\supp(A)}{2}\join\binom{\supp(B)}{2}$,
which is a join subgraph of $\diag(\Gamma)$ containing $A\join B$. 

By \fullref{icuricracg}, a vertex $v\in\ric_\Gamma$ corresponds to a maximal
thick join  $J_v\subset\Gamma$, and $v\edge w$ is an edge of $\ric_\Gamma$
when $J_v$ and $J_w$ contain a common square. 
If $v\edge w$ is an edge of $\ric_\Gamma$ then $\phi(J_v)$ and $\phi(J_w)$
are joins in $\diag(\Gamma)$ with at least one edge in common, so
$\phi$ takes components of $\ric_\Gamma$ into components
of $\diag(\Gamma)$.

Conversely, if $\{a,b\}\edge\{c,d\}$ is an edge in $\diag(\Gamma)$
then $\psi(\{a,b\}\join\{c,d\})=\{a,b\}\join\{c,d\}$ is a square in
$\Gamma$, and the set of maximal thick joins containing
$\{a,b\}\join\{c,d\}$ is a nonempty clique in $\ric_\Gamma$.
 Moreover, if $\{a,b\}\edge\{e,f\}$ is another edge in $\diag(\Gamma)$
 then
 $\psi(\{a,b\}\join\{\{c,d\},\{e,f\}\})=\{a,b\}\join\binom{\{c,d,e,f\}}{2}$
 is a thick join containing  $\{a,b\}\join\{c,d\}$ and
 $\{a,b\}\join\{e,f\}$, so the set of maximal thick joins containing
 it is a nonempty clique in the intersection of the clique of those
 containing $\{a,b\}\join\{c,d\}$ and the clique of those containing
 $\{a,b\}\join\{e,f\}$.
 Thus, $\psi$ takes connected components of $\diag(\Gamma)$ into
 connected components of $\ric_\Gamma$.

 Finally, by the observation on compositions of $\phi$ and $\psi$, we
 have that $\phi$ and $\psi$ induce inverse bijections between
 connected components of $\ric_\Gamma$ and connected components of
 $\diag(\Gamma)$. 
\end{proof}

In the next results we will need to go between paths in the maximal
product region graph and paths in the square complex.
Let $\Upsilon$ be a triangle-free graph, let $G_\Upsilon$ be the RACG
or RAAG presented by $\Upsilon$, let $\Sigma_\Upsilon$ be its Davis
complex or universal cover of its Salvetti complex, respectively.
Let $\ric_\Upsilon=G_\Upsilon\backslash\mprg_\Upsilon$ be the
fundamental domain for the action on the MPRG. 
 Let $\gamma\from [0,L]\to\mprg_\Upsilon$ be a combinatorial path.
        We construct a path $\gamma'$ in $\Sigma_\Gamma$ \emph{shadowing} it
        as follows.
        For each $i\in [0,L)$, the maximal standard product regions $\gamma(i)$ and
        $\gamma(i+1)$ intersect in a standard product region, by
        definition of $\mprg_\Upsilon$.
        Further, $\ric_\Upsilon$ corresponds to maximal standard product
        regions of $\Sigma_\Upsilon$ containing the vertex 1.
        Choose vertices $a$ in the maximal standard product region
        $\gamma(0)$ and $b$ in the maximal standard product region
        $\gamma(L)$. 
        Let $\gamma'_0$ be a path in $\Sigma_\Upsilon$ starting at
        $a$, contained in the maximal standard product region $\gamma(0)$,
        and ending in $\gamma(0)\cap\gamma(1)$.
        For $i+1\in (0,L)$, let $\gamma'_{i+1}$ be a path in
        $\Sigma_\Upsilon$ that starts where $\gamma'_i$ ended, is
        contained in $\gamma(i+1)$, and ends in $\gamma(i+1)\cap\gamma(i+2)$.
        Let $\gamma'_L$ be a path in $\Sigma_\Upsilon$ contained in
        $\gamma(L)$ that starts at
        the end of $\gamma'_{L-1}$ and ends at $b$.
        Then the concatenation $\gamma'$ of the $\gamma'_i$ is a path
        from $a$ to $b$ in $\Sigma_\Upsilon$ composed of subsegments
        that are contained in product regions corresponding to
        successive vertices of $\gamma$.

 The next results \fullref{1bottleneck} and
 \fullref{cor:separatinggeodesics} are implicit in \cite{Oh22}, but we
 need more precise statements than what appear there explicitly. 
 \begin{lemma}[{cf.\ \cite[Section~4.1]{Oh22},\cite[Theorem~2.35]{Edl24thesis}}]\label{1bottleneck}
   Let $\Delta$ be a connected, triangle-free, incomplete graph without a cut
   vertex, and that is not a join.
   Let $\Salvetti_\Delta$ be the universal cover of the Salvetti
   complex of the RAAG $A_\Delta$, and let 
   $\ric_\Delta=A_\Delta\backslash\mprg_\Delta$ be the fundamental
   domain of its action on its maximal standard product region graph as in \fullref{def:ric}.
   Let $\wall_a$ be the wall in
   $\Salvetti_\Delta$ dual to the edge $1\edge a$ for $a\in\Delta$.
   Let $\sigma(\wall_a)\in\ric_\Delta\cap a\Act\ric_\Delta\subset\mprg_\Delta$ be the vertex corresponding to
   the maximal standard product region
   $\Sigma_{\lk(a)}\times\Sigma_{\{b\in\Delta\mid
     \lk(a)\subset\lk(b)\}}$.
   If $g$ and $h$ are separated by $\wall_a$ in $\Salvetti_\Delta$
   then $g\Act\ric_\Delta$ and $h\Act\ric_\Delta$ are separated by $\st(\sigma(\wall_a))$
   in $\mprg_\Delta$.
    \end{lemma}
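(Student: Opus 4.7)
The plan is to shadow paths in $\mprg_\Delta$ by paths in the $1$-skeleton of $\Sigma_\Delta$, where $\wall_a$ provides an obvious geometric barrier, and then to recognize that any maximal product region containing a dual edge of $\wall_a$ must lie in the star of $\sigma(\wall_a)$. I would begin by choosing $\ric_\Delta$ so that every maximal standard product region it represents contains the basepoint $1 \in \Sigma_\Delta$; then every region in $g\Act\ric_\Delta$ contains $g$ and every region in $h\Act\ric_\Delta$ contains $h$. For any combinatorial path $\gamma = (\gamma(0), \ldots, \gamma(L))$ in $\mprg_\Delta$ between these two sets, the shadowing construction produces a path $\gamma'$ in the $1$-skeleton of $\Sigma_\Delta$ from $g$ to $h$ decomposed as $\gamma'_0\cdots\gamma'_L$ with $\gamma'_i \subset \gamma(i)$. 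Because $g$ and $h$ lie in opposite halfspaces of $\wall_a$, some edge $e$ of $\gamma'$ must be dual to $\wall_a$, and $e$ belongs to some $\gamma(i) =: P$; the task reduces to showing $P \in \st(\sigma(\wall_a))$.

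The first main observation is that $\sigma(\wall_a)$ itself contains every edge of $\Sigma_\Delta$ dual to $\wall_a$. Setting $D(a) := \{b \in \Delta \mid \lk(a) \subset \lk(b)\}$, so that $\sigma(\wall_a) = \Sigma_{\lk(a) \cup D(a)}$ as a based standard subcomplex, the stabilizer $\mathrm{Stab}(\wall_a) = A_{\lk(a)}$ is contained in $\mathrm{Stab}(\sigma(\wall_a)) = A_{\lk(a) \cup D(a)}$, so $\sigma(\wall_a)$ is $\mathrm{Stab}(\wall_a)$-invariant. Since $\mathrm{Stab}(\wall_a)$ acts simply transitively on the dual edges of $\wall_a$ and $\sigma(\wall_a)$ contains the dual edge $1 \edge a$, it contains every dual edge of $\wall_a$, including $e$.

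Finally I would prove $P \in \st(\sigma(\wall_a))$. Write $P = s\,\Sigma_{A \cup B}$ for a maximal thick join $A \join B$ of $\Delta$ with $a \in A$. The condition $e \in P$, together with $a \in A \cup B$, places $s$ in some coset of $A_{A \cup B} = \mathrm{Stab}(P)$ meeting $A_{\lk(a)}$, so after replacing $s$ within its coset we may assume $s \in A_{\lk(a)} = \mathrm{Stab}(\wall_a) \subset \mathrm{Stab}(\sigma(\wall_a))$, and then $\sigma(\wall_a) = s\,\Sigma_{\lk(a) \cup D(a)}$. Triangle-freeness of $\Delta$ forces $B \subset \lk(a)$, $A \cap \lk(a) = \emptyset$, and $B \cap D(a) = \emptyset$, while the definitions directly give $D(a) \subset A$. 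Using that intersections of based standard subcomplexes satisfy $\Sigma_{S_1}\cap\Sigma_{S_2} = \Sigma_{S_1\cap S_2}$, we get $P \cap \sigma(\wall_a) = s\,\Sigma_{D(a) \cup B}$; since $D(a)$ and $B$ are both nonempty and $D(a)\join B$ is a join in $\Delta$, this intersection carries the structure of a standard product region. Hence $P$ either equals $\sigma(\wall_a)$ or is adjacent to $\sigma(\wall_a)$ in $\mprg_\Delta$, and in either case $P \in \st(\sigma(\wall_a))$.

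The most delicate point will be verifying that $s\,\Sigma_{D(a)\cup B}$ genuinely qualifies as a \emph{standard} product region in the sense of \fullref{def:standard}---that is, that each of $\Sigma_{D(a)}$ and $\Sigma_B$ is a topologically nontrivial, leaf-free graph in the Salvetti complex---rather than merely a standard subcomplex. This is where the standing hypotheses that $\Delta$ is connected, triangle-free, incomplete, without a cut vertex, and not a join have to be marshalled to preclude degenerate factors. Everything else is routine bookkeeping with standard subcomplexes, wall stabilizers, and coset representatives.
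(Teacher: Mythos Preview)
Your proposal is correct and follows essentially the same architecture as the paper: shadow a path in $\mprg_\Delta$ by a path in $\Salvetti_\Delta$, locate an edge $e$ dual to $\wall_a$ inside some maximal product region $P$ along the way, and then show $P\in\st(\sigma(\wall_a))$.

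The only notable difference is in how you establish adjacency. You compute the entire intersection $P\cap\sigma(\wall_a)=s\,\Sigma_{D(a)\cup B}$ and then verify that $D(a)\join B$ is a genuine standard product region, which is what you correctly flag as the delicate step. The paper instead takes a shortcut: since $P$ is a product, the edge $e=\ell(1\edge a)$ sits in a square of $P$ whose other side has some label $b\in\lk(a)$, and then the single $2$-flat $\ell\,\Sigma_{\{a,b\}}\subset\Sigma_{\st(a)}\subset\sigma(\wall_a)$ already lies in $P\cap\sigma(\wall_a)$. One flat is enough to witness an edge of $\mprg_\Delta$, so the paper never needs to identify the full intersection or invoke maximality of $A\join B$ to pin down $D(a)\subset A$. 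Your route gives more information; the paper's is shorter and avoids exactly the verification you anticipated as most delicate.
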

   \begin{proof}
     There is a unique maximal join subgraph $\lk(a)\join\{b\in\Delta\mid
     \lk(a)\subset\lk(b)\}$  of
     $\Delta$ containing $\st(a)$, so $\sigma(\wall_a)\in\ric_\Delta$ is
     well-defined.
     Furthermore, since $a\in\st(a)$, this vertex is $a$--invariant,
     so $\sigma(\wall_a)\in\ric_\Delta\cap a\Act\ric_\Delta$.
     
     The set of edges dual to the wall $\wall_a$ is $A_{\lk(a)}
     (1\edge a)$.
     For $\ell\in A_{\lk(a)}$, any standard product region containing an edge $\ell(1\edge a)$
     contains a square $\ell(1\edge a \times 1\edge b)$ for some
     $b\in\lk(a)$, thus contains the flat $\ell\Sigma_{\{a,b\}}\subset
     \Sigma_{\st(a)}$.
     Thus, any maximal standard product region distinct from
     $\sigma(\wall_a)$ that contains an edge
     dual to $\wall_a$ is adjacent to
     $\sigma(\wall_a)$ in $\mprg_\Delta$.

A path $\gamma$ in $\mprg_\Delta$ from $g\Act\ric_\Delta$ to $h\Act\ric_\Delta$ can be shadowed by a
path $\gamma'$ in $\Salvetti_\Delta$ from $g$ to $h$, which must cross
$\wall_a$, by hypothesis, so $\gamma$ contains a vertex corresponding
to a maximal standard product region that contains an edge dual to $\wall_a$.
These are all in $\st(\sigma(\wall_a))$, so
$\st(\sigma(\wall_a))$ separates $g\Act\ric_\Delta$ from $h\Act\ric_\Delta$ in $\mprg_\Delta$.
   \end{proof}

As a corollary we make precise the way in which $\mprg_\Delta$ is a
quasitree. If it were actually a tree we would have the statement that
for every combinatorial geodesic $\gamma\from [0,L]\to\mprg_\Delta$
and every $t\in [0,L]$, the vertices $\gamma(0)$ and $\gamma(L)$ are
not in a common component of $\mprg_\Delta\setminus\{\gamma(t)\}$:
either $\gamma(t)$ coincides with one of the other two or it separates
them. The actual situation is weaker in two ways:
\begin{itemize}
\item We will need stars of vertices to separate $\mprg_\Delta$, not just single vertices.
\item $\ric_\Delta\subset\mprg_\Delta$ could be highly connected, so
  if $\gamma\cap\st(\gamma(t))\subset g\Act\ric_\Delta$ then it might
  be possible to detour around $\st(\gamma(t))$ in $g\Act\ric_\Delta$.
    It will turn out that the bottlenecks appear at transition points
    between different translates of $\ric_\Delta$, so instead of
   looking at $\gamma(t)$ we should shift to such a transition point
   on $\gamma$ near to $\gamma(t)$, where $\text{`near'}\approx \diam(\ric_\Delta)$.
\end{itemize}
 \begin{corollary}\label{cor:separatinggeodesics}
  With notation as in \fullref{1bottleneck}, 
  let $\gamma\from [0,L]\to \mprg_\Delta$ be a 
  combinatorial geodesic. 
For every $t\in [0,L]$ there is a vertex $v\in\mprg_\Delta$ such that
$\gamma(0)$ and $\gamma(L)$ are not contained in the same connected
component of $\mprg_\Delta\setminus\st(v)$, and such that $d_{\mprg_\Delta}(v,\gamma(t))\leq \diam(\ric_\Delta)+2$.
\end{corollary}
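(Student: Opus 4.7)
The plan is to produce the separating vertex as $v=\sigma(\wall)$ for a well-chosen wall $\wall$ of $\Salvetti_\Delta$, and then invoke \fullref{1bottleneck}. For this to succeed I need two things: first, that $\wall$ separates some vertex $a\in\gamma(0)$ from some vertex $b\in\gamma(L)$ in $\Salvetti_\Delta$ (so that \fullref{1bottleneck} guarantees $\st(v)$ separates $a\Act\ric_\Delta\supset\gamma(0)$ from $b\Act\ric_\Delta\supset\gamma(L)$ in $\mprg_\Delta$); second, that the dual edge of $\wall$ lives in a product region $\gamma(i)$ with $|i-t|\le 1$. The second condition forces the closeness bound, as follows. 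If the edge dual to $\wall$ has an endpoint $g$ lying in the product region $\gamma(i)$, then $\wall$ is the $g$-translate of some $\wall_s$ with $s\in J_{v_i}$, so $\sigma(\wall)=g\sigma(\wall_s)\in g\Act\ric_\Delta$; since $\gamma(i)$ is also a vertex of $g\Act\ric_\Delta$, we get $d_{\mprg_\Delta}(\sigma(\wall),\gamma(i))\le\diam(\ric_\Delta)$, and therefore $d_{\mprg_\Delta}(v,\gamma(t))\le\diam(\ric_\Delta)+|i-t|\le\diam(\ric_\Delta)+1$; the remaining unit of slack will absorb the length-zero subsegment case below.

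To locate such a wall I shadow $\gamma$ by a path $\gamma'$ in $\Salvetti_\Delta$. Choose intersection vertices $p_i\in\gamma(i)\cap\gamma(i+1)$ for $0\le i<L$, choose $a=p_{-1}\in\gamma(0)$ and $b=p_L\in\gamma(L)$, and let $\gamma'_i$ be a combinatorial geodesic in the convex subcomplex $\gamma(i)$ from $p_{i-1}$ to $p_i$. For $L=0$ the statement is immediate, taking $v=\gamma(0)$; so assume $L\ge 1$, and choose the $p_i$ so that $a\ne b$. If the middle subsegment $\gamma'_t$ has at least one edge $e$, the wall $\wall$ dual to $e$ has its dual edge in $\gamma(t)$, giving the desired closeness with $|i-t|=0$. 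If $\gamma'_t$ is a single vertex (which happens only when $p_{t-1}=p_t$ is a common vertex of $\gamma(t-1)\cap\gamma(t)\cap\gamma(t+1)$), then I use an edge $e$ of $\gamma'_{t-1}$ or $\gamma'_{t+1}$ instead, which lies in $\gamma(t\pm 1)$; this accounts for the additional $+1$.

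The main obstacle is showing the chosen $\wall$ really separates $a$ and $b$, i.e.\ is crossed by $\gamma'$ an odd number of times. The cleanest route is to replace $\gamma'$ with an honest combinatorial geodesic from $a$ to $b$ in $\Salvetti_\Delta$ while still guaranteeing that it crosses some edge whose dual wall has an endpoint in (or adjacent to) $\gamma(t)$. I plan to argue this via the bridge lemma (\fullref{bridge}) applied to the convex subcomplexes $\gamma(0)$, $\gamma(t)$, $\gamma(L)$: closest-point projection gives canonical geodesic segments, and any wall separating $\pi_{\gamma(t)}(a)$ from $\pi_{\gamma(t)}(b)$ inside $\gamma(t)$ also separates $a$ and $b$ in $\Salvetti_\Delta$. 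Since the MPRG-geodesic $\gamma$ does not shortcut through $\gamma(t)$ — concretely, a wall whose dual edge sat in both $\gamma(t)$ and in some far-away $\gamma(j)$ would produce, via the first paragraph, two vertices $\sigma(\wall)$ close to both $\gamma(t)$ and $\gamma(j)$, contradicting that $\gamma$ is a geodesic — this projection is non-trivial, and a wall separating the two projections supplies the required $\wall$.

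The expected difficulty is the bookkeeping in the previous paragraph: ensuring that the wall located inside $\gamma(t)$ via the bridge construction is actually one that separates $a$ and $b$, without re-crossings elsewhere along the geodesic. I expect this reduces to a standard halfspace argument, namely that the halfspace of $\wall$ containing $\pi_{\gamma(t)}(a)$ also contains $a$ (and similarly for $b$), because combinatorial gate projection preserves halfspaces. Once that is in hand the whole proof is a short assembly of \fullref{1bottleneck}, the shadow construction, and the closeness bound from the first paragraph.
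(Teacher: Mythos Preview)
Your closeness argument in the first paragraph is correct, and the shadow construction is standard. The gap is in the third paragraph: to find a wall crossing $\gamma(t)$ that separates $a$ from $b$ you need $\pi_{\gamma(t)}(a)\neq\pi_{\gamma(t)}(b)$, and your justification does not establish this. Your parenthetical observation---that a wall crossing both $\gamma(t)$ and a far-away $\gamma(j)$ would put $\sigma(\wall)$ close to both, contradicting that $\gamma$ is geodesic---is valid, but it only says that no single wall crosses both $\gamma(t)$ and a distant $\gamma(j)$. It does not prevent $\pi_{\gamma(t)}(\gamma(0))$ and $\pi_{\gamma(t)}(\gamma(L))$ from being the \emph{same} single vertex, which by \fullref{raagtrivialprojection} is precisely what happens when each has bounded coarse intersection with $\gamma(t)$. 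In that situation no wall crossing $\gamma(t)$ separates $a$ from $b$, and your construction produces nothing; the fallback to $\gamma'_{t\pm1}$ does not help, since the same obstruction can occur there.

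The paper sidesteps this by not insisting that the wall lie inside $\gamma(t)$. It picks group elements $g,h$ with $\gamma(0)\in g\Act\ric_\Delta$, $\gamma(L)\in h\Act\ric_\Delta$, translates so that $\gamma(t)\in\ric_\Delta$, and forms the \emph{median} $m$ of $\{1,g,h\}$ in $\Salvetti_\Delta$. The wall $\wall_{m,g}$ dual to the first edge of a geodesic from $m$ toward $g$ automatically separates $g$ from $h$ (minority-side reasoning), so $v:=\sigma(\wall_{m,g})\in m\Act\ric_\Delta$ gives the separating star via \fullref{1bottleneck}. The distance bound is obtained by a second, independent use of the same idea: the wall $\wall_{m,1}$ separates $1$ from $\{g,h\}$, hence $\st(\sigma(\wall_{m,1}))$ separates $\ric_\Delta\ni\gamma(t)$ from both endpoints of $\gamma$; since $\gamma$ is geodesic it can spend at most two steps in this star, forcing $d(\gamma(t),m\Act\ric_\Delta)\le 2$ and hence $d(\gamma(t),v)\le\diam(\ric_\Delta)+2$. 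This reversal---first locate the median, then show $\gamma(t)$ is near it---is exactly what avoids the case your argument cannot handle.
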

\begin{proof}
  We may assume that no translate of $\ric_\Delta$ contains both
  $\gamma(0)$ and $\gamma(L)$, since otherwise choosing $v:=\gamma(0)$
  satisfies the corollary vacuously. 

 Given $t$, by translating by the $A_\Delta$ action, if necessary, we may assume
$\gamma(t)\in \ric_\Delta$.
Take any $g,h\in A_\Delta$ with 
$\gamma(0)\in g\Act\ric_\Delta$ and $\gamma(L)\in h\Act\ric_\Delta$.
By assumption, $g\neq h$.
The 1--skeleton of a CAT(0) cube complex is a median graph, so there
exists a unique vertex $m$ of $\Salvetti_\Delta$ that is the median of
$\{1,g,h\}$.
This implies that, with respect to $m$, every
wall $\wall$ has a `majority side' containing $m$ and a complementary `minority
side'  containing at most one of  $1$, $g$, and $h$, counted with multiplicity.

We will use the fact that the map $\sigma$ can be extended $A_\Delta$--equivariantly to all walls of
$\Salvetti_\Delta$.
Furthermore, for distinct $b,c\in A_\Delta$ let $\wall_{b,c}$ be any
wall dual to the first edge of any geodesic from $b$
to $c$ in $\Salvetti_\Delta$.
Then  $\sigma(\wall_{b,c})\in
b\Act\ric_\Delta$, which follows from equivariance and the fact that
$\sigma(\wall_a)\in\ric_\Delta\cap a\Act\ric_\Delta$ for all $a\in \Delta$.

We estimate that
$d_{\mprg_\Delta}(\gamma(t),m\Act\ric_\Delta)\leq 2$.
If $m=1$ then the distance is 0, so the estimate holds.
Otherwise, let $\wall_{m,1}$ be a wall dual to the first edge on a
geodesic from $m$ to $1$ in $\Sigma_\Delta$.
We have $\sigma(\wall_{m,1})\in m\Act\ric_\Delta$.
Since $1$ is on the minority side of $\wall_{m,1}$, vertices $g$ and $h$ are on
the majority side, 
so $\wall_{m,1}$ separates $1$ from $\{g,h,m\}$ in
$\Salvetti_\Delta$.
By \fullref{1bottleneck}, $\st(\sigma(\wall_{m,1}))$ separates
$\ric_\Delta$ from both $g\Act\ric_\Delta$ and $h\Act\ric_\Delta$.
Thus, $t\geq t_0:=\min\{t'\mid \gamma(t')\in \st(\sigma(\wall_{m,1}))
\}$ and $t\leq t_1:=\max\{t'\mid \gamma(t')\in \st(\sigma(\wall_{m,1}))
\}$.
Since $\gamma$ is
geodesic, $t_1-t_0\leq 2$, so 
$d_{\mprg_\Delta}(\gamma(t),\{\gamma(t_0),\gamma(t_1)\})\leq 1$.
This gives $d_{\mprg_\Delta}(\gamma(t),m\Act\ric_\Delta)\leq d_{\mprg_\Delta}(\gamma(t),\sigma(\wall_{m,1}))\leq 2$.

Suppose $m\neq g$ and let $\wall_{m,g}$ be a wall in
$\Sigma_\Delta$ dual to the first edge of some geodesic from $m$ to
$g$.
Then it suffices to take $v:=\sigma(\wall_{m,g})\in m\Act\ric_\Delta$,
as follows.
The desired distance bound is satisfied, since:
 \[d_{\mprg_\Delta}(v,\gamma(t))\leq \diam(m\Act\ric_\Delta)+d(m\Act\ric_\Delta,\gamma(t))\leq\diam(\ric_\Delta)+2\]

By definition, $g$ is on the minority side of $\wall_{m,g}$, so $\wall_{m,g}$
separates $g$ from $h$ in
$\Salvetti_\Delta$, which, by \fullref{1bottleneck}, implies  $\st(\sigma(\wall_{m,g}))$ separates
$g\Act\ric_\Delta$ from $h\Act\ric_\Delta$.
Since $\gamma(0)\in g\Act\ric_\Delta$ and $\gamma(L)\in
h\Act\ric_\Delta$, vertices $\gamma(0)$ and $\gamma(L)$ are not
contained in a common component of $\mprg_\Delta\setminus\st(v)$.

If $m=g\neq h$ apply the same argument for
$v:=\sigma(\wall_{m,h})$.
\end{proof}
\begin{lemma}\label{ricraagdiameter}
With notation as in \fullref{1bottleneck},   $\diam(\ric_\Delta)\leq\diam(\Delta)+2$.
\end{lemma}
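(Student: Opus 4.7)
The plan is to build an explicit short path in $\ric_\Delta$ between two arbitrary maximal join subgraphs $J_u, J_v$ of $\Delta$ by tracking a geodesic path through $\Delta$ itself, using edges of that geodesic (together with some ``turn'' edges at each vertex) to produce intermediate maximal joins that are pairwise edge-sharing in $\Delta$, hence adjacent in $\ric_\Delta$ by visibility (\fullref{icuricraag}).

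Concretely, I would start by choosing any $a \in J_u$ and $b \in J_v$ and a geodesic $a = a_0, a_1, \ldots, a_n = b$ in $\Delta$ with $n = d_\Delta(a,b) \le \diam(\Delta)$. For each interior index $1 \le i \le n-1$, the 2--path $a_{i-1}\edge a_i \edge a_{i+1}$ is geodesic in the triangle-free graph $\Delta$, so $a_{i-1}$ and $a_{i+1}$ are non-adjacent and $\{a_i\}\join\{a_{i-1},a_{i+1}\}$ is a join subgraph of $\Delta$. Pick any maximal join $J_i^*$ containing it; then $J_i^*$ contains both edges $a_{i-1}\edge a_i$ and $a_i\edge a_{i+1}$, so consecutive joins $J_i^*$ and $J_{i+1}^*$ share the edge $a_i\edge a_{i+1}$.

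The key step is handling the two endpoints, where we must transition from $J_u$ (resp.\ $J_v$) into the chain. Since $J_u$ is a join containing $a_0$, there is a vertex $b_u \in J_u$ adjacent to $a_0$ in $\Delta$. If $b_u = a_1$ the edge $a_0\edge a_1$ already lies in $J_u$ and I can set $J_0^* := J_u$; otherwise triangle-freeness forces $b_u$ and $a_1$ to be non-adjacent, so $\{a_0\}\join\{b_u,a_1\}$ is a join subgraph, contained in some maximal join $J_0^*$ which now shares the edge $a_0\edge b_u$ with $J_u$ and the edge $a_0\edge a_1$ with $J_1^*$. I would perform the symmetric construction at $a_n$ to obtain $J_n^*$ linking $J_{n-1}^*$ to $J_v$ via a chosen $b_v \in J_v$ adjacent to $a_n$. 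This produces the path
\[
J_u \;\longdash\; J_0^* \;\longdash\; J_1^* \;\longdash\; \cdots \;\longdash\; J_{n-1}^* \;\longdash\; J_n^* \;\longdash\; J_v
\]
in $\ric_\Delta$ of length at most $n+2 \le \diam(\Delta)+2$, with the understanding that any coincidences among successive vertices only shorten it. The degenerate case $n=0$ (when $a=b$ lies in $J_u \cap J_v$) is handled analogously using a single intermediate join containing $\{a\}\join\{b_u,b_v\}$, giving a bound of $2 \le \diam(\Delta)+2$.

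The main obstacle is the endpoint transitions: inside $\Delta$, two maximal joins can contain a common vertex without containing a common edge, so I cannot simply route through vertex-incidences along the geodesic. Triangle-freeness is what rescues the construction, both at interior vertices (forcing $a_{i-1},a_{i+1}$ to be non-adjacent and hence $\{a_i\}\join\{a_{i-1},a_{i+1}\}$ to be a genuine join) and at the endpoints (forcing $b_u$ and $a_1$ to be non-adjacent whenever they are distinct, so that the ``turn'' join at $a_0$ exists). Without this rigidity one would be forced to pay two steps per edge of the geodesic and obtain only a $2\diam(\Delta)$ bound.
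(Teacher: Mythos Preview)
Your proof is correct and follows essentially the same approach as the paper: route a geodesic in $\Delta$ through a chain of maximal joins, paying one extra step at each endpoint. The paper streamlines the argument slightly by taking, at each $\gamma(i)$, a maximal join containing the whole star $\st(\gamma(i))$ rather than just the 2--path $\{a_i\}\join\{a_{i-1},a_{i+1}\}$; since any edge of $J_r$ incident to $\gamma(0)$ then automatically lies in $J_0$, the endpoint transitions require no case analysis on whether $b_u=a_1$.
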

\begin{proof}
  Pick vertices $r$ and $s$ in $\ric_\Delta$ with $d(r,s)=\diam(\ric_\Delta)$.
  They correspond to maximal joins $J_r$ and $J_s$ of $\Delta$.
  Take a shortest geodesic $\gamma\from [0,L] \to\Delta$ from a vertex
  in $J_r$ to a vertex in $J_s$.
  For each integer $i\in [0,L]$ there is a maximal join $J_i$
  containing $\st(\gamma(i))$, and $J_i\cap J_{i+1}$ contains the edge
  $\gamma(i)\edge\gamma(i+1)$, so $J_i$ and $ J_{i+1}$ correspond to
  vertices of
  $\ric_\Delta$ whose distance is at most 1.
  Furthermore, $J_r$ contains $\gamma(0)$, so it has edges in common
  with $J_0$, so $r$ is at distance at most 1 in $\ric_\Delta$ from
  the vertex corresponding to $J_0$.
  Similarly, $s$ is distance at most 1 from the vertex of
  $\ric_\Delta$ corresponding to $J_L$.
  Thus, $\diam(\ric_\Delta)\leq L+2\leq \diam(\Delta)+2$.
\end{proof}
\begin{corollary}\label{no1bottleneck}
  If $\mprg$ is a graph containing sequences of vertices $(x_i)$ and $(y_i)$ such
  that $d(x_i,y_i)\stackrel{i\to\infty}{\longrightarrow}\infty$ and for all sufficiently large $i$ there
  does not exist a vertex $v$ at distance at least 3 from each of $x_i$
  and $y_i$ whose star separates $x_i$ and $y_i$, then $\mprg$ is not the
  MPRG of an irreducible, 1--ended, 2--dimensional RAAG. 
\end{corollary}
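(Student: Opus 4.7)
The plan is to argue by contraposition: assume $\mprg$ is realized as $\mprg_\Delta$ for a RAAG $A_\Delta$ satisfying the listed hypotheses, and then use the earlier bottleneck machinery to produce, for every pair of far-apart vertices, a separating star near the midpoint of a geodesic between them. This will directly contradict the hypothesis on $(x_i)$, $(y_i)$ for large $i$.

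First, I would unpack what the three adjectives give us about $\Delta$. Being 1--ended forces $\Delta$ to be connected; being 2--dimensional forces $\Delta$ to be triangle-free; and being irreducible means $\Delta$ is not a join. Since $A_\Delta$ is 1--ended, $\Delta$ has no cut vertex (a cut vertex produces a splitting over a proper special subgroup giving infinitely many ends unless both sides are a single vertex, which is excluded here). In particular, $\Delta$ satisfies the standing hypotheses of \fullref{1bottleneck} and \fullref{cor:separatinggeodesics}. Set $D := \diam(\ric_\Delta) + 2$; this is finite by \fullref{ricraagdiameter} since $\Delta$ is a finite graph.

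Next, for each $i$ choose a combinatorial geodesic $\gamma_i \from [0, L_i] \to \mprg_\Delta$ from $x_i$ to $y_i$, where $L_i = d(x_i, y_i)$, and let $m_i := \lfloor L_i/2 \rfloor$. Applying \fullref{cor:separatinggeodesics} at $t = m_i$ produces a vertex $v_i \in \mprg_\Delta$ whose star separates $x_i$ from $y_i$ and which satisfies $d(v_i, \gamma_i(m_i)) \leq D$. By the triangle inequality,
\[
d(v_i, x_i) \;\geq\; d(\gamma_i(m_i), x_i) - D \;=\; m_i - D,
\]
and similarly $d(v_i, y_i) \geq L_i - m_i - D$. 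Since $L_i \to \infty$, both $m_i - D$ and $L_i - m_i - D$ tend to infinity, so for all sufficiently large $i$ we have $d(v_i, x_i) \geq 3$ and $d(v_i, y_i) \geq 3$. This contradicts the hypothesis that eventually no such separating vertex exists.

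I do not expect any serious obstacle here; the statement is essentially packaging \fullref{cor:separatinggeodesics} (which already gives a vertex whose star separates and which is close to any prescribed point on the geodesic) together with the uniform bound of \fullref{ricraagdiameter} and the quasiisometry invariance of the MPRG (\fullref{mpgqiinvariant}) — although strictly speaking the statement here is phrased at the level of a single graph, so one does not even need the quasiisometry invariance for the proof itself, only for the practical use of the criterion later on. The only place where some care is needed is ensuring that the extraction of the group-theoretic hypotheses on $\Delta$ from the word ``irreducible, 1--ended, 2--dimensional'' really puts us in the setting of the bottleneck lemmas; this is standard but worth stating explicitly.
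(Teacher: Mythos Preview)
Your approach is essentially the paper's: apply \fullref{cor:separatinggeodesics} at the midpoint of a geodesic from $x_i$ to $y_i$, use the uniform bound from \fullref{ricraagdiameter}, and conclude that for large $i$ the separating vertex is at distance at least $3$ from both endpoints. The paper phrases this contrapositively (bounding $d(x,y)$ by $2\diam(\Delta)+12$ when no such vertex exists), but the content is identical.

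One factual error to fix: your claim that $A_\Delta$ being 1--ended forces $\Delta$ to have no cut vertex is false. A cut vertex $v$ of $\Delta$ gives a splitting of $A_\Delta$ over $A_{\{v\}}\cong\mathbb{Z}$, not over a finite group, so it has no bearing on the number of ends; for instance, the path on three vertices has a cut vertex but the corresponding RAAG is 1--ended. The paper's own proof simply does not address the ``no cut vertex'' hypothesis appearing in \fullref{1bottleneck}, so this is not a point of divergence in the arguments; but you should not offer an incorrect justification for it.
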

\begin{proof}
Suppose $\Delta$ is a finite, connected, triangle-free graph and
$x$ and $y$ are vertices in $\mprg_\Delta$ that are not separated by the star of any vertex
that is not in the 2--neighborhood of one of them.
By \fullref{cor:separatinggeodesics}, if $d(x,y)$ is large enough then there is an approximate midpoint $m$ of $x$
and $y$ whose star separates $x$ from $y$, and together with
\fullref{ricraagdiameter} it follows that 
 $d(m,\{x,y\})\geq
 d(x,y)/2-\diam(\Delta)-4$.
 But $d(m,\{x,y\})\leq 2$ by hypothesis, so $d(x,y)\leq 2\diam(\Delta)+12$.
 For any fixed $\Delta$, eventually
 $d(x_i,y_i)\stackrel{i\to\infty}{\longrightarrow}\infty$ exceeds this
 bound, so $\mprg\not\cong\mprg_\Delta$.
\end{proof}

Oh \cite[Lemma~4.13]{Oh22} characterizes cut vertices of the MPRG
    of a RAAG $A_\Delta$:
   a cut vertex of $\mprg_\Delta$ contained in
    $\ric_\Delta$ is either a cut vertex of $\ric_\Delta$ or is fixed
    by an element of $A_\Delta$ that does not fix any of its
    neighbors in $\ric_\Delta$. 
    \fullref{cut_vertex_in_mprg} is the analogous result for a RACG $W_\Gamma$.
    It is more complicated because $W_\Gamma\act\mprg_\Gamma$ is more
    complicated than $A_\Delta\act\mprg_\Delta$.
   Specifically, nonadjacent vertices in $\ric_\Gamma$ can have
    common elements in their stabilizers, and $\Gamma$ may contain edges that do not belong to any
    maximal thick join. Both of these phenomenon give rise to loops in $\mprg_\Gamma$ that are
    not visible in $\ric_\Gamma$. This observation will be key in \fullref{sec:ladders}.

      \begin{lemma}\label{cut_vertex_in_mprg}
        Let $\Gamma$ be a triangle-free strongly CFS graph that is not
        a join.
       One of the following conditions hold if and only if
       $v\in\ric_\Gamma$ is a cut vertex of $\mprg_\Gamma$. 
       \begin{enumerate}
         \item There are induced subgraphs $R_0$ and $R_1$ of
           $\ric_\Gamma$ properly containing $\{v\}$ with $R_0\cap R_1=\{v\}$ and
           $\mathrm{Edges}(\ric_\Gamma)=\mathrm{Edges}(R_0)\cup\mathrm{Edges}(R_1)$,
           and such that for $i\in\{0,1\}$ and $\Gamma_i$ defined to be the subgraph of $\Gamma$
           spanned by $\bigcup_{u\in R_i}J_u$, we have
           $\Gamma_0\cap\Gamma_1=J_v$ and 
           $\mathrm{Edges}(\Gamma)=\mathrm{Edges}(\Gamma_0)\cup\mathrm{Edges}(\Gamma_1)$.\label{item:ric_not_in_one_component}
        \item There is a vertex $s$ of $\Gamma$ such that $\st(s)\subset J_v$ and
          $s$  is not contained in any
          other maximal thick join.\label{item:ric_in_one_component}
        \end{enumerate}
      \end{lemma}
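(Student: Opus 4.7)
The plan is to prove the biconditional in both directions. Conditions (1) and (2) each furnish a different mechanism by which $v$ separates $\mprg_\Gamma$---condition (1) via a visible graph-of-groups splitting and its Bass-Serre tree, and condition (2) via a single wall in the Davis complex---and these mechanisms will also guide the forward implication.

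For the backward direction under (1), the hypotheses $\Gamma_0\cap\Gamma_1=J_v$ together with the edge-covering condition give a visible amalgamated splitting $W_\Gamma\cong W_{\Gamma_0}\ast_{W_{J_v}}W_{\Gamma_1}$. The associated Bass-Serre tree $T$ turns $\Sigma_\Gamma$ into a tree of spaces whose vertex spaces are $W_\Gamma$-translates of $\Sigma_{\Gamma_0}$ and $\Sigma_{\Gamma_1}$ and whose edge spaces are translates of $\Sigma_{J_v}$. Every maximal standard product region of $\Sigma_\Gamma$ is either a translate of $\Sigma_{J_v}$ (corresponding to a translate of $v$ in $\mprg_\Gamma$ and living in an edge space of $T$) or contained in a unique vertex space. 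I would then define a map $\tau$ sending each vertex of $\mprg_\Gamma$ to the corresponding vertex or edge of $T$, check that adjacency in $\mprg_\Gamma$ implies incidence in $T$, and observe that $\tau(u_0)=W_{\Gamma_0}$ and $\tau(u_1)=W_{\Gamma_1}$ for any $u_i\in R_i\setminus\{v\}$; any walk in $T$ between these distinct vertices must traverse the unique edge $W_{J_v}=\tau(v)$ joining them, forcing every $\mprg_\Gamma$-path from $u_0$ to $u_1$ through $v$.

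For the backward direction under (2), the hypothesis $\st(s)\subset J_v$ gives $W_{\lk(s)}\subset W_{J_v}$, so every edge $g\edge gs$ dual to the wall $\wall_s$ of $\Sigma_\Gamma$, with $g\in W_{\lk(s)}$, lies inside $\Sigma_{J_v}$. Because $s$ belongs to no other maximal thick join, no other maximal standard product region contains an $s$-labelled edge, and hence $\wall_s$ crosses only the single maximal product region $\Sigma_{J_v}$, that is, only the vertex $v$ of $\mprg_\Gamma$. For any neighbor $u\ne v$ of $v$ in $\ric_\Gamma$, the hypothesis gives $s\notin J_u$, so $s\Act u$ is a distinct vertex of $\mprg_\Gamma$ whose product region lies on the opposite side of $\wall_s$ from $u$'s. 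Any path in $\mprg_\Gamma$ between $u$ and $s\Act u$ shadows a path in $\Sigma_\Gamma$ that must cross $\wall_s$, and such a crossing can only occur inside $\Sigma_{J_v}$, forcing the path through $v$.

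For the forward direction, suppose $v$ is a cut vertex of $\mprg_\Gamma$. By \fullref{weakconvexity}, paths in $\mprg_\Gamma$ project to walks in $\ric_\Gamma$ while preserving endpoints, so components of $\ric_\Gamma\setminus\{v\}$ inject into components of $\mprg_\Gamma\setminus\{v\}$. If $\ric_\Gamma\setminus\{v\}$ is disconnected, I would take the induced partition $R_0,R_1$, define $\Gamma_i$ as in (1), and verify $\Gamma_0\cap\Gamma_1=J_v$ contrapositively by showing that a shared thick-join edge outside $J_v$ produces joins $J_{u_0},J_{u_1}$ in distinct $R_i$ whose product regions supply, via \fullref{projection_of_standard_is_standard}, a path in $\mprg_\Gamma\setminus\{v\}$ bridging $R_0$ and $R_1$. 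If $\ric_\Gamma\setminus\{v\}$ is connected but $v$ still cuts $\mprg_\Gamma$, the extra separation must come from the $W_{J_v}$-action on neighbors of $v$: running the argument of (2) in reverse, I would locate a generator $s\in J_v$ whose wall $\wall_s$ is confined to $\Sigma_{J_v}$, yielding $\st(s)\subset J_v$ and forcing $s$ to lie in no other maximal thick join. The main obstacle will be organizing this case analysis so that whenever the candidate graph decomposition coming from a cut in $\ric_\Gamma$ fails the compatibility in (1), the failure itself produces a generator satisfying (2); a related subtlety is handling edges of $\Gamma$ lying in no maximal thick join when verifying the edge-covering condition.
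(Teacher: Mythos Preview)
Your backward direction is correct and matches the paper's argument closely: the Bass--Serre tree for condition~(1) and the wall $\wall_s$ for condition~(2) are exactly the tools used.

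The forward direction, however, contains a genuine gap. The inference you draw from \fullref{weakconvexity}---that components of $\ric_\Gamma\setminus\{v\}$ inject into components of $\mprg_\Gamma\setminus\{v\}$---is false. The projection of an edge path in $\mprg_\Gamma$ to $\ric_\Gamma$ preserves endpoints and fixes cells already in $\ric_\Gamma$, but an intermediate vertex $gv\notin\ric_\Gamma$ on a path avoiding $v$ will project to $v$. So two $\ric_\Gamma$--components can merge in $\mprg_\Gamma\setminus\{v\}$; indeed, the paper exhibits exactly such merging paths (via $s\Act\gamma$ when $s\in\Gamma_0\cap\Gamma_1\setminus J_v$, and via the concatenation $s_0\Act\gamma,\,s_1s_0\Act\gamma,\,s_1\Act\gamma$ for an edge $s_0\edge s_1$ not covered by $\Gamma_0$ or $\Gamma_1$). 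Consequently your case split ``$\ric_\Gamma\setminus\{v\}$ disconnected vs.\ connected'' is the wrong dichotomy: the first case does not by itself put you in a position to verify~(1), and your contrapositive argument there would produce a path bridging $R_0$ and $R_1$ in $\mprg_\Gamma\setminus\{v\}$ without contradicting that $v$ is a cut vertex.

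The paper's dichotomy is instead whether $v$ separates $\ric_\Gamma$ \emph{in $\mprg_\Gamma$}. When it does, $R_0$ and $R_1$ are defined using the $\mprg_\Gamma$--components (not the $\ric_\Gamma$--components), and then the two failure modes of~(1) each yield a contradictory bridging path. When it does not, one argues that some $w\Act\ric_\Gamma$ is separated from $\ric_\Gamma$ by $v$, and shows by induction on $|w|$ that this forces some generator $s\in J_v$ to satisfy~(2); the delicate subcase is when $s$ fixes only $v$ but has a neighbour $t\in\Gamma$ with $t\notin J_v$ fixing some $u_t\neq v$, handled by splicing $t\Act\gamma$ and $st\Act\gamma$. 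Your sketch ``run the argument of~(2) in reverse'' does not capture this, and your plan to have a failure of~(1) produce a generator for~(2) is not how the two cases interact.
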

      \begin{proof}
               Since $\Gamma$ is a triangle-free strongly CFS graph
               that is not a join,  $\ric_\Gamma$ and $\mprg_\Gamma$ are connected and are not
        single vertices, by \fullref{stronglycfsimpliesmprgconnected}.

        Suppose $v\in\ric_\Gamma$ is a cut vertex of $\mprg_\Gamma$.
        If $v$ separates $\ric_\Gamma$ in $\mprg_\Gamma$
        then choose a complementary component and take  $R_0$ to be the subgraph of $\ric_\Gamma$ spanned by the
        union of $\{v\}$ and vertices of $\ric_\Gamma$ in that connected
        component of $\mprg_\Gamma\setminus\{v\}$.
        Let $R_1$ be the subgraph of $\ric_\Gamma$ spanned by $v$ and
        $\ric_\Gamma\setminus R_0$.
        By construction, $\{v\}\subsetneq R_0$, $\{v\}\subsetneq R_1$,
        $R_0\cap R_1=\{v\}$ and $\mathrm{Edges}(\ric_\Gamma)=\mathrm{Edges}(R_0)\cup\mathrm{Edges}(R_1)$.

        There are two ways for \eqref{item:ric_not_in_one_component}
        to fail, and we show that from either of them we can produce a contradictory
        path that connects $R_0$ to $R_1$ in $\mprg_\Gamma$ while
        avoiding $v$.
        We conclude that $v$ separating
$\ric_\Gamma$ in $\mprg_\Gamma$ implies
\eqref{item:ric_not_in_one_component}.

The first way for \eqref{item:ric_not_in_one_component} to fail is if
there exists
         $s\in \Gamma_0\cap\Gamma_1\setminus J_v$. Then $s\Act v\neq v$
        but $s$ fixes vertices in $R_0\setminus\{v\}$ and
        $R_1\setminus\{v\}$.
        Let $\gamma$ be a shortest path in $\ric_\Gamma$ from the
        fixed set of $s$ in $R_0$ to the fixed set of $s$ in $R_1$.
        Since $v$ separates $R_0$ from
        $R_1$ in $\mprg_\Gamma$, $\gamma$ goes through
        $v$.
        The path $s\Act\gamma$ does not go through $v$, since $s\Act
        v\neq v$, and it has the same endpoints as $\gamma$, so it connects $R_0$ to $R_1$ in
        $\mprg_\Gamma$ and avoids $v$.

        The other way for \eqref{item:ric_not_in_one_component} to
        fail is if there is an edge of $\Gamma$ from a vertex $s_0\in
        \Gamma_0\setminus \Gamma_1$ to a vertex
        $s_1\in\Gamma_1\setminus\Gamma_0$.
        By construction, this means there are vertices $u_i\in
        R_i\setminus \{v\}$ such that $s_i\in J_{u_i}\setminus J_v$.
        Furthermore, $s_0,s_1\notin J_v$ implies $W_{\{s_0,s_1\}}\cap
        W_{J_v}=\{1\}$. 
        Let $\gamma$ be a minimal length path in $\ric_\Gamma$
        from $u_0$ to $u_1$.
        The concatenation of  $s_0\Act\gamma$, $s_1s_0\Act\gamma=s_0s_1\Act\gamma$, and
        $s_1\Act\gamma$ connects $R_0$ and $R_1$ in $\mprg_\Gamma$ and
        avoids $v$.

        Suppose $v$ does not separate $\ric_\Gamma$ in
        $\mprg_\Gamma$.
       Then there is some
        translate $w\Act\ric_\Gamma$ such that $\ric_\Gamma\cap
        w\Act\ric_\Gamma=\{v\}$ and $v$ separates
        $\ric_\Gamma$ from $w\Act\ric_\Gamma$
        in $\mprg_\Gamma$.
        We induct on the word length of $w$ after considering what
        happens for generators. 
        Consider $s\in J_v$ not satisfying
        \eqref{item:ric_in_one_component}, so either $s$ fixes a
        vertex $u_s\in \ric_\Gamma\setminus\{v\}$, or $s$ only fixes
        $v$ but is adjacent in
        $\Gamma$ to a vertex $t$ that fixes $u_t\in
        \ric_\Gamma\setminus\{v\}$ but does not fix $v$.
        We claim in both cases that $v$ does not separate
        $\ric_\Gamma$ and $s\Act\ric_\Gamma$
        in $\mprg_\Gamma$.
        In the first case $v\neq u_s\in\ric_\Gamma\cap s\Act\ric_\Gamma$.
        In the second case take a shortest path $\gamma$ in $\ric_\Gamma$ from
        $v$ to $u_t$.
        Then $t\Act\gamma$ contains $u_t$ and $t\Act v$ but not $v$, and
        $st\Act\gamma$ contains $s\Act u_t\in s\Act\ric_\Gamma\setminus\{v\}$ and
        $st\Act v=ts\Act v=t\Act v$ but not $s\Act v=v$.
        Thus, there is a path in $\mprg_\Gamma$ from
        $s\Act\ric_\Gamma\setminus\{v\}$ to $\ric_\Gamma\setminus\{v\}$
        that avoids $\{v\}$.
        Now write $w$ as a minimal length word $s_1\cdots s_n$ for
        $s_i\in J_v$.
        If every $s_i\in J_v$ fails to satisfy
        \eqref{item:ric_in_one_component} then every pair
        $\ric_\Gamma$ and $s_i\Act\ric_\Gamma$
        are not separated by $v$ in $\mprg_\Gamma$.
        But then $s_1\cdots s_i\Act\ric_\Gamma$ is not separated from
        $s_1\cdots s_is_{i+1}\Act\ric_\Gamma$ by $v$ in $\mprg_\Gamma$, so all of $\ric_\Gamma\setminus\{v\}$,
        $s_1\Act\ric_\Gamma\setminus\{v\}$,\dots,$s_1\cdots
        s_n\Act\ric_\Gamma\setminus\{v\}=w\Act\ric_\Gamma\setminus\{v\}$ are
        in the same component of $\mprg_\Gamma\setminus\{v\}$,
        contradicting the choice of $w$.
        Thus, there is some $s\in J_v$ satisfying
        \eqref{item:ric_in_one_component}.

        In the other direction we suppose
        \eqref{item:ric_not_in_one_component} or
        \eqref{item:ric_in_one_component} and produce a cut
        vertex  $v$  of $\mprg_\Gamma$.

        First suppose  \eqref{item:ric_not_in_one_component}.
        Consider the splitting
        $W_\Gamma=W_{\Gamma_0}*_{W_{J_v}}W_{\Gamma_1}$.
        Let $T$ be its Bass-Serre tree.
        Edges of $T$ belong to a single orbit and they are in bijection
        with  cosets of $W_{J_v}$.
        There are two orbits of vertices in $T$ corresponding to
        cosets of $W_{\Gamma_0}$ and $W_{\Gamma_1}$.
        We define a
        $W_\Gamma$--equivariant map $\phi\from
       W_\Gamma\times\ric_\Gamma\to
       \mathrm{Vertices}(T)\cup\mathrm{Edges}(T)$  and then check it
       actually defines a map on $\mprg_\Gamma=W_\Gamma\Act\ric_\Gamma$. 
        \[\phi(w\Act u):=
          \begin{cases}
            \text{the vertex } wW_{\Gamma_0}&\text{if } u\in R_0\setminus\{v\}\\
            \text{the vertex } wW_{\Gamma_1}&\text{if } u\in
            R_1\setminus \{v\}\\
            \text{the edge } wW_{J_v}&\text{if
            }u=v
          \end{cases}
        \]
        Suppose $u\in R_0\setminus \{v\}$ and $w'\!\Act u=w\Act u$.
        Then $w'\in
        wW_{J_u}\subset wW_{\Gamma_0}$, so
        $\phi(w\Act u)=wW_{\Gamma_0}=w'W_{\Gamma_0}=\phi(w'\!\Act u)$.
        Similar arguments hold for $R_1\setminus \{v\}$ and $v$, so
        $\phi$ is well-defined on vertices of $\mprg_\Gamma$.

        Every edge in $\mprg_\Gamma$ is a translate of one in
        $\ric_\Gamma$ (recall \fullref{weakconvexity}), so it can be written $w\Act u_0\edge w\Act u_1$ where
        $u_0\edge u_1\subset \ric_\Gamma$. 
        If $u_0,u_1$ are both in $R_0\setminus\{v\}$ or both in
        $R_1\setminus\{v\}$ then they map to the same vertex of $T$. 
       If $u\in R_i$ is adjacent to $v$ then
       $\phi(v)=1W_{J_v}$ is an edge incident to the
       vertex $\phi(u)=1W_{\Gamma_i}$.
       By hypothesis, there are no edges between a vertex of
       $R_0\setminus\{v\}$ and a vertex of $R_1\setminus\{v\}$.
       Conclude that edge paths in $\mprg_\Gamma$ are sent by $\phi$ to
       connected subsets of $T$.
        
        Consider the edge of $T$ corresponding to the coset
        $1W_{\Gamma_0\cap\Gamma_1}$.
        Its two vertices are $1W_{\Gamma_0}$ and $1W_{\Gamma_1}$.
        Consider any path in $\mprg_\Gamma$ from a vertex of
        $R_0\setminus\{v\}$ to a
        vertex of
        $R_1\setminus\{v\}$.
        Its $\phi$--image is a connected set in $T$ that contains
        $1W_{\Gamma_0}$ and $1W_{\Gamma_1}$, so  it contains a vertex
        $w\Act u$ such that $\phi(w\Act u)$ is the edge $1W_{J_v}$.
        The definition of $\phi$ requires  $u=v$ and
        $wW_{J_v}=1W_{J_v}$, so
        $w\in W_{J_v}$, which  gives $w\Act v=v$.
        Thus, every path in $\mprg_\Gamma$ from
        $R_0\setminus\{v\}$ to $R_1\setminus\{v\}$ passes through $v$,
        so $v$ is a cut vertex of $\mprg_\Gamma$.

        Now suppose  \eqref{item:ric_in_one_component}.
        Let $\gamma\from [0,L]\to\mprg_\Gamma$ be a combinatorial path from 
        $\ric_\Gamma\setminus\{v\}$ to 
        $s\Act \ric_\Gamma\setminus\{v\}$, and let $\gamma'$ be a path
        from 1 to $s$ in $\Davis_\Gamma$ shadowing it.
        Since $\gamma'$ starts and ends on opposite sides of the wall $\wall_s$
        dual to the edge $1\edge s$, it contains some edge crossing
        $\wall_s$.
        The condition of \eqref{item:ric_in_one_component} saying $s$
        is not contained in any other maximal thick join implies that
        $\Davis_{J_v}$ is the only maximal standard product subcomplex
        containing the edge $1\edge s$. 
        The condition that $\lk(s)\subset J_v$ implies that every
        square containing the edge $1\edge s$ is contained in
        $\Davis_{J_v}$.
        It follows that every edge crossing $\wall_s$ is contained in
        $\Davis_{J_v}$ and not in any other maximal standard product
        subcomplex, so $\gamma'$ can only cross $\wall_s$ if $v\in\gamma$. Thus, $v$ is a cut vertex of $\mprg_\Gamma$.
      \end{proof}

\subsection{Relation to hierarchical hyperbolic structures}\label{sec:hhs}
This section is predominantly to relate the maximal product region
graph to other results in the literature, but, having done this, 
\fullref{cor:stability_recognizing} gives us an easy way to guarantee
that the orbit map gives a quasiisometric embedding of certain
subgroups of $W_\Gamma$ into $\mprg_\Gamma$, which will be useful in
the next subsection. 

The definition of hierarchically hyperbolic spaces and groups (HHS/HHG) is
complicated, and we will not repeat it; see \cite{BehHagSis17,
  BehHagSis17ad,MR3956144,BehHagSis21}.
There is a hyperbolic graph coming from the hierarchical
hyperbolic structure, and quasiisometries between HHSs induce
quasiisometries of these hyperbolic graphs.
We will show that if $\Gamma$ is strongly CFS then $\mprg_\Gamma$ is
$W_\Gamma$--equivariantly quasiisometric to the HHS graph for $W_\Gamma$.
For RAAGs the HHS graph is a quasitree and the MPRG is a quasitree
with bottleneck constant 1. 
In the next subsection we construct wide ladders in the MPRGs of
certain RACGs.
This allows the possibility that they are still quasitrees, but the bottleneck
constant is at least half the width of the ladder, so must be larger than 1.
To conclude that these groups are not quasiisometric to a RAAG we really
need the finer control that quasiisometries induce \emph{isomorphisms}
of MPRGs, not just that they induce quasiisometries between the HHS graphs.

For RAAGs the relevant hyperbolic graph from the standard HHS
structure is just the contact graph.
For RACGs some modifications must be made. 
Abbott, Behrstock, and Durham \cite[Theorems~A,B]{AbbBehDur21} show that
a HHG $G$ admits an action on a hyperbolic
space $ABD(G)$ with the following properties.
The structure of $ABD(W_\Gamma)$ will be
described in the course of the proof of \fullref{mprgcshhg}. 
\begin{itemize}
\item $G\act ABD(G)$ is a largest acylindrical action.
  \item In certain cases, including when $G$ is RACG, $G\act ABD(G)$
    is universal, in the sense that every
    generalized loxodromic element of $G$ acts loxodromically in this particular action.
    \item $ABD(G)$ is a stability recognizing space:  a finitely generated subgroup $H$ of $G$ is stable if and
only if any orbit map of $H$ into $ABD(G)$ is a quasiisometric
embedding. 
\end{itemize}

\begin{theorem}\label{mprgcshhg}
  Let $\Gamma$ be a triangle-free, strongly CFS graph.
  Then $\mprg_\Gamma$ is $W_\Gamma$--equivariantly
  quasiisometric to $ABD(W_\Gamma)$.
\end{theorem}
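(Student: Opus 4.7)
The plan is to identify $ABD(W_\Gamma)$ explicitly with a coned-off Davis complex and then construct an equivariant quasiisometry to $\mprg_\Gamma$. Since $W_\Gamma$ acts on $\Davis_\Gamma$ as a hierarchically hyperbolic group whose standard HHG structure comes from the cube complex, the work of Abbott--Behrstock--Durham identifies $ABD(W_\Gamma)$ (up to equivariant quasiisometry) as a space of the following form: take $\Davis_\Gamma$ and equivariantly cone off each maximal standard product subcomplex. Under the triangle-free, strongly CFS hypothesis, the ``big'' HHS domains whose product regions must be collapsed are precisely the maximal thick joins of $\Gamma$, i.e.\ the vertices of $\ric_\Gamma$. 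I would begin by making this identification precise, either by a direct computation inside the standard HHS factor system of $W_\Gamma$ or by verifying that the coned-off space satisfies ABD's axiomatic characterization of the largest acylindrical (and in the RACG case, universal) action.

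With $ABD(W_\Gamma)$ modelled as $\Davis_\Gamma$ with a cone point $c_P$ added for each maximal standard product region $P$, define an equivariant map $\phi\from\mprg_\Gamma\to ABD(W_\Gamma)$ by sending the vertex corresponding to $P$ to $c_P$. Equivariance is immediate from the definitions and \fullref{ric_is_the_fundamental_domain}. For the Lipschitz bound: an edge $P\edge Q$ in $\mprg_\Gamma$ means $P\cap Q$ is a nontrivial standard product subregion, so any vertex $x\in P\cap Q$ has $d_{ABD}(c_P,x)=d_{ABD}(c_Q,x)=1$, giving $d_{ABD}(\phi(P),\phi(Q))\le 2$. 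Coarse density follows from strong CFS: by \fullref{stronglycfsimpliesmprgconnected}, every vertex of $\Gamma$ lies in some maximal thick join, so every vertex of $\Davis_\Gamma$ lies in some maximal product region $P$, and hence is within distance $1$ of $\phi(P)$ in $ABD(W_\Gamma)$.

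The heart of the proof is the quasiisometric lower bound. Given a geodesic $\alpha$ in $ABD(W_\Gamma)$ between $\phi(P_0)$ and $\phi(P_n)$, I would convert it into a path in $\mprg_\Gamma$ as follows: subdivide $\alpha$ at successive cone points $c_{P_0},c_{P_1},\dots,c_{P_n}$ used (together with uncollapsed Davis-complex edges between them), and use the shadowing construction from \fullref{sec:mprg_connectivity} in reverse. Each segment of $\alpha$ between consecutive cone points $c_{P_i},c_{P_{i+1}}$ corresponds, up to bounded distance, to a short combinatorial path in $\Davis_\Gamma$ connecting $P_i$ to $P_{i+1}$. Any vertex on this path belongs to some maximal product region, and the resulting sequence of regions gives a path in $\mprg_\Gamma$ whose length is bounded by a uniform affine function of the length of $\alpha$. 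Conversely, a path in $\mprg_\Gamma$ from $P$ to $Q$ can be shadowed in $\Davis_\Gamma$ and yields an upper bound $d_{ABD}(\phi(P),\phi(Q))\le L\cdot d_{\mprg_\Gamma}(P,Q)+A$ already established by the Lipschitz step.

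The main obstacle will be the first paragraph: identifying $ABD(W_\Gamma)$ with the cone-off of $\Davis_\Gamma$ over maximal product regions. This requires unpacking ABD's construction at the level of the HHS structure on $W_\Gamma$ and verifying that under the triangle-free, strongly CFS assumption, the ``big'' set of domains coincides with the maximal thick joins of $\Gamma$; this is essentially a combinatorial claim about which HHS domains are not contained in a proper product subregion. Once this identification is in hand, the remaining work is the bookkeeping described above. An alternative path to the same conclusion—which might be cleaner but requires more setup—would be to verify directly that $W_\Gamma\act\mprg_\Gamma$ is an acylindrical action on a hyperbolic graph and is universal, and then invoke the uniqueness (up to equivariant quasiisometry) of the largest universal acylindrical action from \cite{AbbBehDur21}.
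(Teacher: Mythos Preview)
Your approach is essentially the same as the paper's: factor through the intermediate space $\X_2$ obtained from $\Davis_\Gamma$ by coning off each maximal standard product region, and argue separately that $ABD(W_\Gamma)\simeq\X_2$ and $\X_2\simeq\mprg_\Gamma$. Two points of comparison are worth flagging. First, your ``main obstacle'' is real: the ABD construction cones off every domain $S$ contained in a product region, not only the maximal ones, so $ABD(W_\Gamma)$ has many more cone vertices than $\X_2$; the paper handles this by characterizing those $S$ explicitly (they are exactly the convex subcomplexes contained in some standard product region) and then defining a coarse inverse $\bar\iota$ sending each extra $c_S$ to the set of cone vertices of maximal product regions containing $S$. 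Second, for the $\X_2\leftrightarrow\mprg_\Gamma$ step the paper avoids your reverse-shadowing lower bound entirely by writing down an explicit coarse inverse $\bar\phi$ (sending $x\in\Davis_\Gamma$ to the finite set of maximal product regions containing it) and invoking \fullref{inversecoarselipimpliesqi}; this is cleaner than tracking geodesics through a mix of cone vertices and uncollapsed Davis edges.
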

\begin{proof}
  Let $\X_0:=\Davis_\Gamma$, $\X_1:=ABD(W_\Gamma)$, $\X_2$ be the graph
  obtained from $\X_0$ by coning off each maximal standard product
  region, and $\X_3:=\mprg_\Gamma$.
  We will construct $W_\Gamma$--equivariant quasiisometries:
  \[\X_3\stackrel{\phi}{\to}\X_2\stackrel{\iota}{\to}\X_1\]
  
 The standard HHS structure on $W_\Gamma$ is $(\X_0,
 \mathfrak{S})$, where $\mathfrak{S}$ is the projection closure
 of hyperplane carriers, and for $S\in\mathfrak{S}$, the hyperbolic
 space $CS$ associated to $S$ is its contact graph.
 To construct $ABD(G)$, Abbott, Behrstock, and Durham modify this HHS
 structure \cite[Theorem~3.7]{AbbBehDur21} by replacing the top level
 hyperbolic space as follows.
 Start with $\X_0$.
 For $S\in \mathfrak{S}$ not the maximal element, if there exists $T,U\in\mathfrak{S}$
 with $S\subset T$ and $T\perp U$ with both $CT$ and $CU$ of infinite
 diameter, then cone off $S$ by adding a new vertex $c_S$ attached to
 each vertex of $S$.
 The resulting space is $\X_1:=ABD(W_\Gamma)$.

The inclusion map $\iota\from\X_2\to\X_1$ is 
$W_\Gamma$--equivariant and Lipschitz.
The idea for showing it is a quasiisometry is that while there may be more cone vertices in $\X_1$, the
extra ones are coning off subsets of $\X_0$  that were already coned off in $\X_2$,
so they are not making much difference. 
To see this precisely, we will define a coarse inverse $\bar\iota\from
\X_1\to\X_2$ to be the
identity on $\X_0$ and extend it to $\X_1\setminus\X_0$.

First we characterize $S$ such that there is a cone vertex $c_S\in\X_1\setminus\X_0$.
Such a cone vertex comes from a convex subcomplex $S$ of some hyperplane carrier, for a
hyperplane dual to edges labelled by some $s\in \Gamma$.
Up to the $W_\Gamma$--action, we may assume $1\in S\subset \Davis_{\st(s)}$
Any $T$ containing $S$ is a convex subcomplex of the full hyperplane carrier  $\Davis_{\lk(s)}\times\Davis_{s}=\Davis_{\st(s)}$ with  $T\subset \Davis_{\tau\cup\{s\}}\subset\Davis_{\st(s)}$, where
$\tau\cup\{s\}$ is the set of edge labels that occur in $T$.
Now, $\Davis_{\tau\cup\{s\}}$ has unbounded associated hyperbolic space
when $W_\tau$ is infinite and not a product, which, since $\Gamma$ is
triangle-free, is simply the case that $\tau$ has at least two
vertices. 
In the other direction, if $U\perp T$ then $U\perp
\Davis_{\tau\cup\{s\}}$, but, by triangle-freeness, the biggest subcomplex perpendicular to
$\Davis_{\tau\cup\{s\}}$ is $\Davis_\upsilon$, where $\upsilon$ is the
set of common neighbors of $\tau$. 
This has infinite associated hyperbolic space when $\upsilon$ has more
than one vertex.
Thus, we have a cone vertex $c_S\in\X_1$ when there exists a subset of
$\lk(s)$ that contains the non-$s$ edge labels in $S$, has at least 2
vertices, and has at least one common neighbor other than $s$.
This is equivalent to saying that $S$ is contained in a standard product region.

For $c_S\in\X_1\setminus\X_0$ define $\bar\iota(c_S)$ to be the set of cone vertices $c_{S'}$ of $\X_2$ such
that $S'$ is a maximal standard product region containing $S$.
Then $\bar\iota\circ\iota=Id_{\X_2}$, and we have
$\emptyset\neq \lk(c_S)\subset \bigcap_{c_{S'}\in \iota\circ\bar\iota(c_S)} \lk(c_{S'})$, which implies that  $\iota\circ\bar\iota$
is at distance at most 2 from $Id_{\X_1}$ and that distances between points
in $\X_0$ are the same in $\X_1$ as in $\X_2$.
Now apply \fullref{inversecoarselipimpliesqi} to see that $\iota$ and
$\bar\iota$ are inverse quasiisometries. 

Define:
\begin{align*}
  \phi&\from\X_3\to\X_2\from S\mapsto c_S\\
  \bar\phi&\from \X_2\to\X_3:
  \begin{cases}
    c_S\mapsto S&\text{ for }c_S\in \X_2-\X_0\\
    x\mapsto \{ S\mid x\in S\}&\text{ for } x\in\X_0
  \end{cases}
\end{align*}
By construction ,  $\phi$ is $W_\Gamma$--equivariant. 
If distinct maximal standard product regions $S_1$ and $S_2$ intersect in a
standard flat $S_0$ then in $\X_1$ there are cone vertices $c_{S_i}$
with $\emptyset\neq\lk(c_{S_0})\subset\lk(c_{S_1})\cap\lk(c_{S_2})$,
so $d_{\X_2}(\phi(S_1),\phi(S_2))=d_{\X_2}(c_{S_1},c_{S_2})=2$.
Since $\X_3$ is connected, by
\fullref{stronglycfsimpliesmprgconnected}, this implies $\phi$ is 2--Lipschitz.

The maximal standard product regions containing 1 correspond to
maximal nontrivial joins in $\Gamma$.
There are finitely many of these, and there do exist some, since the
CFS property implies $\Gamma$ contains a square.
Thus, $\bar\phi(1)$ is a non-empty set that is finite.
Since $\X_3$ is connected, finite sets have finite diameter.
By definition, $\bar\phi$ is $W_\Gamma$--equivariant, so $\bar\phi$ is
a well defined map from $\X_2$ to nonempty subsets of $\X_3$ of
uniformly bounded diameter.
When $x\in\X_0$ and $c_S\in\X_2\setminus\X_0$ is an adjacent cone
vertex then $x\in S$, so $\bar\phi(c_S)\in \bar\phi(x)$. 
Similarly, the CFS property implies that for adjacent vertices in
$\X_0$, say across an edge labelled $s$, there is a standard 2--flat
containing both, since the vertex $s\in\Gamma$ is in the support of a
square of $\Gamma$.
Thus, adjacent vertices in $\X_0\subset\X_2$ have intersecting
$\bar\phi$--images.
It follows that $\bar\phi$ is $(\diam\bar\phi(1),\diam\bar\phi(1))$--coarsely Lipschitz.
Clearly, $\bar\phi\circ\phi=Id_{\X_3}$.
The map $\phi\circ\bar\phi$ agrees with $Id_{\X_2}$ on $\X_2\setminus\X_0$ and
sends $x\in\X_0$ to the set of its cone vertex neighbors,
which are all adjacent to $x$.
Apply \fullref{inversecoarselipimpliesqi}.
\end{proof}

\begin{corollary}\label{cor:stability_recognizing}
   Let $\Gamma$ be triangle-free and  strongly CFS.
   Then $\mprg_\Gamma$ is a hyperbolic graph, and it is a stability
   recognizing space for $W_\Gamma$. 
 \end{corollary}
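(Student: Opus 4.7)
The plan is to derive the corollary essentially as a formal consequence of \fullref{mprgcshhg} combined with the properties of $ABD(W_\Gamma)$ stated from \cite{AbbBehDur21}.

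First, I would observe that $ABD(W_\Gamma)$ is a hyperbolic space on which $W_\Gamma$ acts (indeed a largest acylindrical action, universal for RACGs). By \fullref{mprgcshhg}, the graph $\mprg_\Gamma$ is $W_\Gamma$-equivariantly quasiisometric to $ABD(W_\Gamma)$. Since Gromov hyperbolicity is preserved under quasiisometry between geodesic spaces, $\mprg_\Gamma$ is itself hyperbolic. This handles the first assertion.

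For the stability-recognizing claim, let $H\leq W_\Gamma$ be a finitely generated subgroup. Fix a $W_\Gamma$-equivariant quasiisometry $\Phi\from \mprg_\Gamma\to ABD(W_\Gamma)$ from \fullref{mprgcshhg}, and fix basepoints $x\in\mprg_\Gamma$ and $y:=\Phi(x)\in ABD(W_\Gamma)$. For each $h\in H$ we have $\Phi(h\Act x)=h\Act y$, so the two orbit maps $H\to\mprg_\Gamma$, $h\mapsto h\Act x$, and $H\to ABD(W_\Gamma)$, $h\mapsto h\Act y$, are related by the quasiisometry $\Phi$. Since quasiisometric embeddings are preserved under pre- and post-composition with quasiisometries, the orbit map of $H$ into $\mprg_\Gamma$ is a quasiisometric embedding if and only if the orbit map into $ABD(W_\Gamma)$ is. The Abbott--Behrstock--Durham theorem tells us the latter holds precisely when $H$ is stable in $W_\Gamma$, so the same characterization passes to $\mprg_\Gamma$.

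There are no hard steps here; the only thing to verify carefully is that the equivariance of $\Phi$ is genuine (not merely coarse equivariance), which is exactly what was built into the statement of \fullref{mprgcshhg}. Thus the proof reduces to a short paragraph citing \fullref{mprgcshhg} and \cite[Theorems~A,B]{AbbBehDur21}.
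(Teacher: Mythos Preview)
Your proposal is correct and matches the paper's intended argument: the corollary is stated without proof precisely because it follows immediately from \fullref{mprgcshhg} together with the cited properties of $ABD(W_\Gamma)$ from \cite{AbbBehDur21}, exactly as you outline. Your care about genuine (not merely coarse) equivariance is justified by the construction in the proof of \fullref{mprgcshhg}, though as you implicitly note, even coarse equivariance would suffice here.
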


 \begin{corollary}\label{cor:stable_subgroup_implies_mprg_not_quasitree}
      Let $\Gamma$ be triangle-free and  strongly CFS.
If $W_\Gamma$ contains a 1--ended stable subgroup
   then $\mprg_\Gamma$ is not a quasitree. 
 \end{corollary}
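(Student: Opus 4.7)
The plan is to leverage $\fullref{cor:stability_recognizing}$, which gives us that $\mprg_\Gamma$ is a hyperbolic stability recognizing space for $W_\Gamma$, and then to derive a contradiction at the level of Gromov boundaries. Let $H < W_\Gamma$ be a 1--ended stable subgroup. Since stability of a finitely generated subgroup of the geodesic space $\Davis_\Gamma$ is equivalent to being Morse together with the intrinsic Cayley graph of $H$ being hyperbolic (see \fullref{prelim:morsestable}), $H$ is a hyperbolic group. Being infinite and 1--ended rules out the elementary cases, so $H$ is non-elementary, meaning its Gromov boundary $\partial H$ is connected and contains uncountably many points.

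Next, I would use \fullref{cor:stability_recognizing} to conclude that any orbit map $H \to \mprg_\Gamma$ is a quasi-isometric embedding. Suppose, for contradiction, that $\mprg_\Gamma$ is a quasitree, and fix a quasi-isometry $\psi \from \mprg_\Gamma \to T$ to a tree $T$. Composing the orbit map with $\psi$ yields a quasi-isometric embedding $H \to T$ between hyperbolic geodesic metric spaces.

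To finish, I would invoke the standard fact that a quasi-isometric embedding between hyperbolic geodesic spaces extends to a topological embedding of Gromov boundaries. This produces a topological embedding $\partial H \hookrightarrow \partial T$. Since $\partial T$ is totally disconnected and $\partial H$ is connected, the image of $\partial H$ must be a single point, contradicting the injectivity of the boundary embedding together with the fact that $\partial H$ has more than one point. I expect no serious obstacle in the argument; the only place needing mild care is citing the correct version of the boundary extension theorem, which holds for general (not necessarily proper) hyperbolic geodesic metric spaces and so applies to both $H$ and $T$ directly.
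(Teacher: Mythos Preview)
Your argument is correct and is exactly the intended deduction the paper leaves implicit: the corollary is stated without proof because it follows directly from \fullref{cor:stability_recognizing} via the observation that a 1--ended hyperbolic group cannot quasiisometrically embed into a tree. Your boundary argument spells this out cleanly.
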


\subsection{Ladders}\label{sec:ladders}
In this subsection we introduce an obstruction to a graph being the
MPRG of a RAAG by the presence of a `ladder', which will be a 2--ended
subgraph whose ends are not separated by stars of vertices, allowing
us to apply \fullref{no1bottleneck}.
While this formulation is conceptually clear, it is a large-scale
geometric condition in a locally infinite graph, so it is not
tangible.
We therefore develop explicit, computer verifiable conditions, described purely
in terms of the presentation graph $\Gamma$, that are sufficient to
imply the geometric condition.

The idea for building a ladder is to find a highly connected subset $\quadrat$ of the
fundamental domain
$\ric_\Gamma=W_\Gamma\backslash\mprg_\Gamma$ and a pair of generators $r$ and $s$ of $W_\Gamma$
such that $s\Act\quadrat\cap\quadrat$ and $r\Act\quadrat\cap\quadrat$
are large enough.
Then the `ladder' will be $\langle r,s\rangle\Act\quadrat$.
The precise conditions for $r$, $s$, and $\quadrat$ appear in \fullref{thm:ladders}.

\begin{lemma}\label{lem:key-lem}
  Suppose $\mprg$ is a graph that contains a sequence of connected
  subgraphs $\Theta_i\subset \mprg$ such that:
  \begin{itemize}
  \item $\diam_\mprg(\Theta_i)\geq i$
    \item For all $i$ and all $v\in\mprg$, $\Theta_i\setminus\st(v)$ has exactly one non-singleton component. 
  \end{itemize}  
Then
  $\mprg$ is not the MPRG of an irreducible, one-ended,
  2--dimensional  RAAG.
\end{lemma}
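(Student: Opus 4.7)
The strategy is to invoke \fullref{no1bottleneck}, so the entire task reduces to extracting from the hypotheses a sequence of pairs $(x_i, y_i)$ with $d_\mprg(x_i, y_i) \to \infty$ and verifying that for each such pair there is no vertex $v$ at distance at least $3$ from both $x_i$ and $y_i$ whose star separates them in $\mprg$.

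For the first requirement, the plan is to use the diameter hypothesis directly: for each $i$ pick vertices $x_i, y_i \in \Theta_i$ with $d_\mprg(x_i, y_i) = \diam_\mprg(\Theta_i) \geq i$, so $d_\mprg(x_i, y_i) \to \infty$.

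The key step is the second requirement. Suppose, toward a contradiction, that some $v \in \mprg$ has $d_\mprg(v, x_i) \geq 3$ and $d_\mprg(v, y_i) \geq 3$, and $\st(v)$ separates $x_i$ from $y_i$ in $\mprg$. Because $\Theta_i$ is connected and contained in $\mprg$, any path in $\Theta_i$ from $x_i$ to $y_i$ is a path in $\mprg$, so it must pass through $\st(v)$; hence $x_i$ and $y_i$ lie in distinct components of $\Theta_i \setminus \st(v)$. By the second hypothesis, at most one of these components contains more than one vertex, so at least one of $x_i$ and $y_i$, say $x_i$, is an isolated vertex of $\Theta_i \setminus \st(v)$. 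Since $i \geq 1$ forces $\Theta_i$ to have at least two vertices and $\Theta_i$ is connected, $x_i$ has at least one neighbor $w \in \Theta_i$, and isolation of $x_i$ in $\Theta_i \setminus \st(v)$ forces $w \in \st(v)$. Then $d_\mprg(x_i, v) \leq d_\mprg(x_i, w) + d_\mprg(w, v) \leq 2$, contradicting $d_\mprg(x_i, v) \geq 3$.

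Having ruled out such a separating $v$ for all sufficiently large $i$, \fullref{no1bottleneck} yields the conclusion that $\mprg$ cannot be the MPRG of an irreducible, one-ended, $2$--dimensional RAAG. The only subtle point is the interplay between the local structural hypothesis on $\Theta_i \setminus \st(v)$ (controlling which components can be big) and the global separation in $\mprg$; this is resolved simply by the observation that connectivity of $\Theta_i$ transports any $\mprg$--separation into a $\Theta_i$--separation.
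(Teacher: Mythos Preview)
Your proof is correct and follows essentially the same approach as the paper's: pick $x_i,y_i\in\Theta_i$ realizing the diameter, argue that if $d(v,x_i)\geq 3$ then $x_i$ has a neighbor in $\Theta_i\setminus\st(v)$ and hence lies in the unique non-singleton component (you phrase this as a contradiction, the paper as a contrapositive), and then invoke \fullref{no1bottleneck}. The extra sentence you include about transporting a $\mprg$--separation into a $\Theta_i$--separation is a reasonable clarification that the paper leaves implicit.
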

\begin{proof}
  Pick $x_i,y_i\in\Theta_i$ realizing the diameter.
  Since $\Theta_i$ is connected, $x_i$ has neighbors, and if $d(x_i,v)>2$ then no neighbor of $x_i$ is in $\st(v)$, so $x_i$
  is contained in the unique non-singleton component of
  $\Theta_i\setminus\st(v)$.
  The same is true for $y_i$, so $x_i$ and $y_i$ are not separated by
  $\st(v)$ except possibly if $d(v,\{x_i,y_i\})\leq 2$.
  Apply \fullref{no1bottleneck}.
\end{proof}

Our prototypical example for \fullref{lem:key-lem} is to construct an
infinite graph $\Theta$  by taking a square $\quadrat$ of side length at least
three contained in $\ric_\Gamma=W_\Gamma\backslash\mprg_\Gamma$, such that there are nonadjacent $r$ and $s$ in $\Gamma$
that act as reflections fixing opposite sides of $\quadrat$.
The $\Theta_i$ of \fullref{lem:key-lem} are an increasing nested
sequence of 
consecutive translates of $\quadrat$, with $\Theta$ as their infinite
union. See \fullref{fig:protoladder}.
\begin{figure}[h]
  \centering
  \begin{tikzpicture}[scale=.5]\tiny
    \draw  (-2.5,.5)--(3.5,.5) (-2.5,-.5)--(3.5,-.5);
    \begin{scope}\foreach \i in {-2,...,3}{
     \draw (\i,-.5)--(\i,.5);
   }\end{scope}
 \draw[thick, blue] (0,-.5)--(1,-.5)node [midway,above]
 {$\quadrat$}--(1,.5)--(0,.5)--cycle;
 \draw[<->] (-.2,.6) .. controls (0,.7) and (0,.7) .. (.2,.6) node [midway,above]
 {$r$};
  \draw[<->] (.8,.6) .. controls (1,.7) and (1,.7) .. (1.2,.6) node [midway,above]
 {$s$};
  \end{tikzpicture}
  \caption{Prototypical ladder.}
  \label{fig:protoladder}
\end{figure}
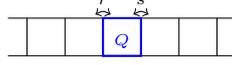

A variant  is shown in \fullref{fig:ladder}.
The variant highlights the reason that in \fullref{lem:key-lem} we say
that the graph minus a star has one non-singleton component, rather
than saying the graph minus a star is connected.
The ladder of  \fullref{fig:ladder} does obstruct $\mprg$ from being the
MPRG of a RAAG, because even though it is separated by the star of a
vertex $v$ (red), its ends are not.
\begin{figure}[h]
    \centering
    \includegraphics{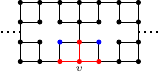}
    \caption{A non-prototypical example of a ladder}
    \label{fig:ladder}
\end{figure}

\begin{definition}[wide ladder]\label{def:wideladder}
    A \textit{wide ladder} $L$ in a graph $\mprg$ is a graph satisfying:
    \begin{enumerate}[(a)]
        \item There is a connected graph $\mathcal{C}$,
          each vertex of which is associated to a connected subgraph
          $\quadrat$ of $\mprg$, 
          and the subgraphs associated with adjacent vertices of $\mathcal C$ intersect. 
           $L$ is a graph of infinite diameter that is the union of these
           subgraphs.\label{item:chunks}    
        \item  The ladder \emph{has wide rungs}, in the sense that if
          $\quadrat$ and $\quadrat'$ are adjacent in $\mathcal{C}$,
          then $\diam_\mprg(\quadrat \cap \quadrat')\geq
          3$.
      \label{item:widerungs}
          \item For all $\quadrat\in\mathcal{C}$ and all vertices
            $v\in\mprg$, there is exactly one non-singleton component of
            $\quadrat\setminus\st(v)$.

            \label{item:chunkshardtoseparate}
           
  \item  There do not exist adjacent  $\quadrat$ and $\quadrat'$
    in $\mathcal{C}$ and $v\in \mprg$ such that every vertex of
    $(\quadrat\cap\quadrat')\setminus\st(v)$ is a singleton component of either
    $\quadrat\setminus\st(v)$ or $\quadrat'\setminus\st(v)$. See \fullref{fig:forbiddenconfiguration}.\label{item:forbiddenconfiguration}
      \end{enumerate}
    \end{definition}

    \begin{figure}[h]
      \centering      
      \makebox[.5\textwidth][c]{%
       \begin{tikzpicture}\tiny
        \coordinate[label={180:$v$}] (0) at (-.5,0);
        \coordinate[label={90:$$}] (1) at (0,1);
        \coordinate (2) at (-1,1);
        \coordinate (3) at (-1,0);
         \coordinate (4) at (-1,-1);
         \coordinate[label={-90:$$}] (5) at (0,-1);
        \coordinate (6) at (0,-.5);
        \coordinate (7) at (0,0);
        \coordinate (8) at (0,.5);
        \coordinate (9) at (1,1);
        \coordinate (10) at (1,0);
         \coordinate (11) at (1,-1);
        \coordinate (13) at (0,-1);
        \coordinate (14) at (0,0);
        \coordinate (15) at (0,1);
         \draw[blue,thick]
         (1)--(2)--(3)--(4)--(5) (3)..controls
         (-.5,-.5) and (-.5,-.5)..(7);
         \draw[red,thick]
         (1)--(9)--(10)--(11)--(5);
         \draw[violet,thick] (5)--(6)--(7)--(8)--(1);
         \draw (8)--(0)--(2) (6)--(0)--(4);
        \filldraw (0) circle (1pt) (1) circle (1pt) (2) circle (1pt)
        (3) circle (1pt) (4) circle (1pt) (5) circle (1pt) (6) circle
        (1pt) (7) circle (1pt) (8) circle (1pt) (9) circle (1pt) (10)
        circle (1pt) (11) circle (1pt) (13) circle
        (1pt) (14) circle (1pt) (15) circle (1pt);
      \end{tikzpicture}
      \hfill
      \begin{tikzpicture}\tiny
        \coordinate[label={45:$v$}] (0) at (0,0);
        \coordinate[label={90:$$}] (1) at (0,1);
        \coordinate (2) at (-.5,.5);
        \coordinate (3) at (-.5,0);
         \coordinate (4) at (-.5,-.5);
         \coordinate[label={-90:$$}] (5) at (0,-1);
         \coordinate (6) at (-.75,-.5);
        \coordinate (7) at (-.75,0);
        \coordinate (8) at (-.75,.5);
        \coordinate (9) at (.5,.5);
        \coordinate (10) at (.5,0);
         \coordinate (11) at (.5,-.5);
        \coordinate (13) at (.75,-.5);
        \coordinate (14) at (.75,0);
        \coordinate (15) at (.75,.5);
         \draw[blue,thick]
         (1)--(2)--(3)--(4)--(5)--(6)--(7)--(8)--(1);
         \draw[red,thick] (1)--(9)--(10)--(11)--(5)--(13)--(14)--(15)--(1);
         \draw (8)--(0)--(2) (11)--(0)--(13);
        \filldraw (0) circle (1pt) (1) circle (1pt) (2) circle (1pt)
        (3) circle (1pt) (4) circle (1pt) (5) circle (1pt) (6) circle
        (1pt) (7) circle (1pt) (8) circle (1pt) (9) circle (1pt) (10)
        circle (1pt) (11) circle (1pt) (13) circle
        (1pt) (14) circle (1pt) (15) circle (1pt);
      \end{tikzpicture}
      }  
      \caption{In each picture $\quadrat$ is blue, $\quadrat'$ is red, any edges in
their intersection are violet. These satisfy  \fullref{def:wideladder}~\ref{item:widerungs} and
        \ref{item:chunkshardtoseparate} but not
        \ref{item:forbiddenconfiguration}, and there is a vertex $v$
        whose star separates their union into multiple non-singleton components.}
      \label{fig:forbiddenconfiguration}
    \end{figure}
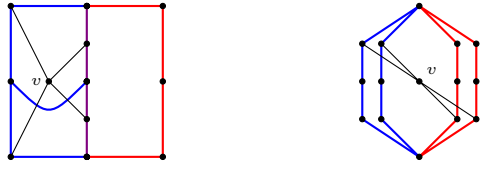

\begin{lemma} \label{lem:wide-ladders}
  A graph containing a wide ladder is not the MPRG of a RAAG. 
   \end{lemma}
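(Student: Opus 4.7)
The plan is to deduce this from \fullref{lem:key-lem} by extracting, from a wide ladder $L\subset\mprg$ with chunk system $\mathcal{C}$, a sequence of connected subgraphs $\Theta_i\subset L$ with $\diam_\mprg(\Theta_i)\to\infty$ such that for every $v\in\mprg$, $\Theta_i\setminus\st(v)$ has exactly one non-singleton component.

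Since $L$ has infinite diameter in $\mprg$ by property (a), for each $i$ I can choose $x_i,y_i\in L$ with $d_\mprg(x_i,y_i)\geq i$, lying in chunks $\quadrat_i^-,\quadrat_i^+\in\mathcal{C}$. Connectedness of $\mathcal{C}$ supplies a path $\quadrat_i^-=\quadrat_{i,0},\ldots,\quadrat_{i,n_i}=\quadrat_i^+$ in $\mathcal{C}$, and I set $\Theta_i:=\bigcup_j\quadrat_{i,j}$. Each chunk is connected and consecutive chunks intersect by (a), so $\Theta_i$ is connected; since $x_i,y_i\in\Theta_i$, we have $\diam_\mprg(\Theta_i)\geq i$.

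The substantive step is the star-separation condition. For each chunk $\quadrat$ in $\Theta_i$, property (c) picks out a unique non-singleton component $\quadrat^\sharp$ of $\quadrat\setminus\st(v)$. For adjacent chunks $\quadrat,\quadrat'$, property (b) gives $\diam_\mprg(\quadrat\cap\quadrat')\geq 3$, while $\st(v)$ has $\mprg$-diameter at most $2$; hence $(\quadrat\cap\quadrat')\setminus\st(v)$ is nonempty. Property (d) then produces a vertex in this intersection that is non-singleton in both $\quadrat\setminus\st(v)$ and $\quadrat'\setminus\st(v)$, so $\quadrat^\sharp\cap(\quadrat')^\sharp\neq\emptyset$. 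Chaining along the path shows $U:=\bigcup_j\quadrat_{i,j}^\sharp$ is connected in $\Theta_i\setminus\st(v)$. Conversely, any non-singleton component $C$ of $\Theta_i\setminus\st(v)$ contains an edge, which lies in some chunk $\quadrat_{i,j}$; its endpoints avoid $\st(v)$ and are joined in $\quadrat_{i,j}\setminus\st(v)$, so they sit in the non-singleton component $\quadrat_{i,j}^\sharp\subset U$. Thus $C$ meets $U$ and must equal the component containing $U$, giving uniqueness. Now apply \fullref{lem:key-lem}.

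The only real obstacle is this gluing of local non-singleton components into a single global one; properties (b)--(d) are tailored precisely to enable it, with (b) preventing chunk overlaps from being absorbed into $\st(v)$, (c) making the local target unique, and (d) ruling out exactly the failure mode depicted in \fullref{fig:forbiddenconfiguration} in which an overlap consists only of vertices that are singletons in at least one of the two adjacent chunks.
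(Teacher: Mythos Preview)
Your proof is correct and follows essentially the same route as the paper's: reduce to \fullref{lem:key-lem} by taking $\Theta_i$ to be a union of chunks along a finite connected piece of $\mathcal{C}$, then use (c) and (d) to glue the local non-singleton components $\quadrat^\sharp$ into a single connected set $U$, and argue that any non-singleton component of $\Theta_i\setminus\st(v)$ must meet $U$. The paper frames the gluing as an induction on the number of chunks, whereas you do it in one pass along a path; your uniqueness argument (any edge of $\Theta_i\setminus\st(v)$ lies in some chunk, hence in that chunk's $\quadrat^\sharp\subset U$) is equivalent to the paper's observation that vertices outside $U$ are singletons.
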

   \begin{proof}
     Let $\mprg$ be a graph containing a wide ladder $L$ defined by
     $\mathcal C$ as in \fullref{def:wideladder}.
The goal is to show that $L$ satisfies the hypotheses of
\fullref{lem:key-lem}.
If there is a sequence of vertices $c_i$ of $\mathcal{C}$ such that the
corresponding graphs $\quadrat_{c_i}$ have diameters that grow without
bound then we can apply \fullref{lem:key-lem} with
$\Theta_i=\quadrat_{c_i}$.
Otherwise, we choose nested connected subgraphs $\mathcal{C}_i$ of
$\mathcal{C}$ such that the graphs $\Theta_i:=\bigcup_{c\in
  \mathcal{C}_i}\quadrat_c$ have diameters growing without bound.
This is possible since $L=\bigcup_{c\in
  \mathcal{C}}\quadrat_c$ has infinite diameter. 
We show by induction on the size of the subgraph of $\mathcal{C}$ that
such a $\Theta_i$ satisfies the condition that it has a unique
non-singleton complementary component for every star. 

Suppose $\quadrat$ and $\quadrat'$ are adjacent elements in $\mathcal{C}$
and $v$ is some vertex of $\mprg$.
Condition~\ref{item:forbiddenconfiguration} says for every $v\in
\mprg$ there is a $u\in(\quadrat\cap\quadrat')\setminus\st(v)$ such that $u$ is not a
singleton component of $\quadrat\setminus\st(v)$ and $u$ is not a
singleton component of $\quadrat'\setminus\st(v)$.
Condition~\ref{item:chunkshardtoseparate} says
$\quadrat\setminus\st(v)$ and $\quadrat'\setminus\st(v)$ have unique
non-singleton components $U$ and $U'$, respectively, so $u\in U\cap
U'$.
Thus, $U\cup U'$ is
connected.
Every vertex of $(\quadrat\cup\quadrat')\setminus (U\cup
U'\cup\st(v))$ is a singleton component of
$(\quadrat\cup\quadrat')\setminus\st(v)$, so $U\cup U'$  is the unique
non-singleton component of 
$(\quadrat\cup\quadrat')\setminus\st(v)$.

Now, suppose that every connected subset $\mathcal{A}\subset\mathcal{C}$ of size
at most $n$ has the property that for all $v\in\mprg$,
$\bigcup_{a\in\mathcal{A}}\quadrat_a\setminus\st(v)$ has a unique
non-singleton component containing the non-singleton component of
$\quadrat_a\setminus\st(v)$ for all $a\in\mathcal{A}$. 
Add one more graph  $\quadrat$ such that the corresponding vertex of
$\mathcal{C}$ is adjacent to $a_0\in\mathcal{A}$.
For any $v\in\mprg$, $\quadrat\cup\quadrat_{a_0}\setminus\st(v)$ and
$\bigcup_{a\in\mathcal{A}}\quadrat_a\setminus\st(v)$ have unique
non-singleton components, and they both contain the non-singleton
component of $\quadrat_{a_0}\setminus\st(v)$, so
$\quadrat\cup\bigcup_{a\in\mathcal{A}}\quadrat_a\setminus\st(v)$ has a
unique non-singleton component.
   \end{proof}

Next we would like to give sufficient conditions in terms of the graph
structure of $\Gamma$ for $\mprg_\Gamma$ to contain a wide ladder.
In the proof we will need the following lemma.
\begin{lemma}\label{starintersectionwithfundamentaldomain}
Let $\Gamma$ be an incomplete, triangle-free graph without a separating
clique.
  Suppose $Z=\{z_1,\dots,z_k\}\subset\st(v')\cap\ric_\Gamma$ for some
  $v'\in\mprg_\Gamma\setminus\ric_\Gamma$.
Let $v$ be the unique vertex in $W_\Gamma\Act v'\cap\ric_\Gamma$.  Then
$Z\subset\st(v)$ and $\big(\bigcap_{i=1}^k J_{z_i}\big)\setminus J_v\neq\emptyset$. 
\end{lemma}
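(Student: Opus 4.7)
The plan is to translate the MPRG adjacency into geometry inside $\Davis_\Gamma$, then read off both conclusions from the preliminary results on standard subcomplexes.

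Fix $g\in W_\Gamma$ with $v'=gv$, so the product region for $v'$ is $g\Davis_{J_v}$. Uniqueness of $v$ in its orbit forces $g\notin W_{J_v}$, the stabilizer of $v$. For each $i$, the edge $z_i\edge v'$ in $\mprg_\Gamma$ means that $\Davis_{J_{z_i}}\cap g\Davis_{J_v}$ is a standard product region. Nonempty intersection gives $g\in W_{J_{z_i}}W_{J_v}$, and writing $g=a_ib_i$ with $a_i\in W_{J_{z_i}}$ and $b_i\in W_{J_v}$ lets one identify this intersection as $a_i\Davis_{J_{z_i}\cap J_v}$. In particular $J_{z_i}\cap J_v$ admits a thick join decomposition, so $\Davis_{J_{z_i}}\cap\Davis_{J_v}=\Davis_{J_{z_i}\cap J_v}$ is itself a standard product region. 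This produces the edge $z_i\edge v$ (or the equality $z_i=v$) in $\mprg_\Gamma$, proving $Z\subset\st(v)$.

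For the second conclusion, I would use the bridge between $\Davis_{J_v}$ and $g\Davis_{J_v}$ together with labels of separating walls. Since $g\notin W_{J_v}$, these two subcomplexes are disjoint, so the bridge has positive length. Let $T$ denote the set of generators appearing in a minimal length representative of the double coset $W_{J_v}gW_{J_v}$. By \fullref{bridge}, $T$ is exactly the set of labels of walls transecting the bridge, i.e.\ the walls that strictly separate $\Davis_{J_v}$ from $g\Davis_{J_v}$ in $\Davis_\Gamma$. By \fullref{projection_of_standard_is_standard} applied with $S_0=S_1=J_v$, the set $T$ is nonempty and contains an element not in $J_v$.

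To finish, observe that every such separating wall must cross $\Davis_{J_{z_i}}$: by the first paragraph $\Davis_{J_{z_i}}$ meets both $\Davis_{J_v}$ and $g\Davis_{J_v}$, so it has vertices on both sides of every wall separating the two. A wall of type $t$ crosses a standard subcomplex $\Davis_S$ only when $t\in S$, so $T\subset J_{z_i}$ for every $i$, hence $T\subset\bigcap_i J_{z_i}$; combined with $T\setminus J_v\neq\emptyset$ this yields $(\bigcap_i J_{z_i})\setminus J_v\neq\emptyset$. The most delicate bookkeeping is in the first paragraph, where one must verify that ``the intersection of two product regions is a \emph{standard product region}'' really forces $J_{z_i}\cap J_v$ to be a thick join (not merely to contain one), so that translating by $a_i^{-1}\in W_{J_{z_i}}$ yields a standard product region inside $\Davis_{J_{z_i}}\cap\Davis_{J_v}$ rather than a subcomplex that only contains one.
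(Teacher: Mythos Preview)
Your proof is correct, and both conclusions are established by genuinely different mechanisms than in the paper.

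For $Z\subset\st(v)$, the paper simply invokes \fullref{weakconvexity}: the single edge $z_i\edge v'$ projects to an edge (or a vertex) in $\ric_\Gamma$ fixing $z_i$, hence lands on $z_i\edge v$. Your argument instead computes $\Davis_{J_{z_i}}\cap g\Davis_{J_v}=a_i\Davis_{J_{z_i}\cap J_v}$ and reads off that $J_{z_i}\cap J_v$ is a thick join from the fact that this subcomplex is a standard product region. This is correct --- the intersection of two standard subcomplexes is itself standard, and a standard subcomplex $\Davis_S$ is a product of two infinite trees only when $S$ is a thick join --- but it is re-deriving the content of \fullref{weakconvexity} in this special case.

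For the second conclusion the two proofs diverge more substantially. The paper applies the Helly property for convex subcomplexes to the family $\{g_1\Davis_{J_v},\Davis_{J_{z_1}},\dots,\Davis_{J_{z_k}}\}$, obtaining a common vertex $w$; then any reduced expression for $w$ uses only letters from each $J_{z_i}$ but must use at least one letter outside $J_v$, since $w\notin\Davis_{J_v}$. Your route instead looks at the set $T$ of wall types separating $\Davis_{J_v}$ from $g\Davis_{J_v}$: since each $\Davis_{J_{z_i}}$ meets both sides, every such wall crosses $\Davis_{J_{z_i}}$, forcing its type into $J_{z_i}$; and \fullref{projection_of_standard_is_standard} supplies an element of $T\setminus J_v$. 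Both arguments are clean; yours avoids Helly but trades it for the bridge machinery, while the paper's version is a slightly more direct translation of ``there is a common point that witnesses the nonempty difference.'' The final caveat you flag about thick joins is not an actual obstruction: the intersection of two convex standard subcomplexes is again standard, so the product structure on it forces the join structure on the label set.
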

\begin{proof}
By \fullref{weakconvexity},  for each $i$ there exists $g_i\in
  W_\Gamma$ such that  $g_i^{-1}\Act z_i=z_i$ and $g_i^{-1}\Act v'=v$.
  This shows $Z\subset\st(v)$.
  Furthermore, $g_iv=v'$ for each $i$, so $g_i\Davis_{J_v}$ is the maximal standard product region of $\Davis_\Gamma$ corresponding to $v'\in\mprg_\Gamma$. In particular, $g_i\Davis_{J_v}=g_1\Davis_{J_v}$ for all $i$.

Since $\ric_\Gamma$ corresponds to the maximal standard product
regions of $\Davis_\Gamma$ containing the identity vertex,
$\emptyset\neq \Davis_{J_v}\cap\bigcap_i\Davis_{J_{z_i}}$.
Thus, for all $i$,  $\emptyset\neq
g_i(\Davis_{J_{z_i}}\cap\Davis_{J_v})=\Davis_{J_{z_i}}\cap g_1\Davis_{J_{v}}$.
  The Helly Property for CAT(0) cube complexes says that since the
  convex subcomplexes
  $\{g_1\Davis_{J_{v}},\Davis_{J_{z_1}},\dots,\Davis_{J_{z_k}}\}$
  pairwise have nonempty intersection, their mutual intersection
  contains a vertex $w\in\Davis_\Gamma$.
  Now, $w\in\Davis_{J_{z_i}}$ implies that every minimal
  expression of $w$ as a word in $W_\Gamma$  uses only generators from $J_{z_i}$.
  On the other hand, $g_1\Davis_{J_v}$ and $\Davis_{J_v}$ are
  disjoint, so $w\notin \Davis_{J_v}$, which means that $w$ cannot be
  written using only generators from $J_v$.
  Thus, every minimal expression of $w$ uses only generators from
  $\bigcap_iJ_{z_i}$, and uses at least one that is not in $J_v$.
\end{proof}

   \begin{theorem} \label{thm:ladders}
     Let $\Gamma$ be 
     triangle-free and strongly CFS.
  Suppose there exist $r,s\in\Gamma$ and maximal thick joins
  $J_0,\dots,J_{n-1}$ of $\Gamma$ satisfying the following conditions:
  \begin{enumerate}
    \item The graph $\quadrat$ with a vertex $q_i$ for each $J_i$, and such
      that $q_i$ and $q_j$ span an edge when $J_i\cap J_j$ contains a
      square, is connected.\label{item:Jgraph}
\item Let $J$ be any maximal thick join of $\Gamma$.
  Let $I$ be a subset of $\{q_i\mid J\cap J_{q_i} \text{ contains a
    square}\}$ that is either the whole set or is a subset for which
  $\big(\bigcap_{q_i\in I}J_{q_i}\big)\setminus J\neq\emptyset$.
  Then  
    $\quadrat\setminus I$ has exactly one non-singleton
  component, $B$, and $\diam_\quadrat(\quadrat\setminus B)\leq
  2$.\label{item:wellconnected}
   \item The vertices $r$ and $s$ are not adjacent and not contained
      in a common  thick join of $\Gamma$.\label{item:dihedralaction}
    \item There  are indices
      $a_r,\,b_r,\,a_s,\,b_s\in\{0,\dots,n-1\}$, necessarily distinct, with:
      \begin{itemize}
      \item $r\in J_{a_r}\cap J_{b_r}$
        \item $J_{a_r}$ and $J_{b_r}$ do not share a square, and no
          maximal thick join of $\Gamma$ shares a square with both
          of them.
        \item $s\in J_{a_s}\cap J_{b_s}$
        \item $J_{a_s}$ and $J_{b_s}$ do not share a square, and no
          maximal thick join of $\Gamma$ shares a square with both
          of them. 
      \end{itemize}\label{item:widerungaction}
   
\end{enumerate}
Then $\mprg_\Gamma$ contains a wide ladder. Consequently, $\Gamma$ is not RAAGedy.
\end{theorem}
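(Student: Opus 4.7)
The plan is to exhibit a wide ladder in $\mprg_\Gamma$ and invoke \fullref{lem:wide-ladders}. View $\quadrat$ as the induced subgraph of $\ric_\Gamma \subset \mprg_\Gamma$ on the vertices corresponding to $J_0, \dots, J_{n-1}$ (which has the edges described in condition (1)). The chunks of the ladder will be the translates $w\quadrat$ for $w$ in the dihedral subgroup $D := \langle r, s \rangle$, organized by $\mathcal{C}$, the Cayley graph of $D$ with respect to $\{r, s\}$, which is a bi-infinite path. Set $L := \bigcup_{w \in D} w\quadrat$, and observe that adjacent chunks $w\quadrat$ and $wr\quadrat$ meet at $w\cdot\mathrm{Fix}_\quadrat(r) \supseteq \{wq_{a_r}, wq_{b_r}\}$ (and similarly for $s$), so $\mathcal{C}$ indexes an overlapping family as required by \fullref{def:wideladder}\eqref{item:chunks}.

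To establish infinite diameter of $L$, use the stability of $D$: condition (3) says $\{r,s\}$ is an anticlique whose vertices are not diagonals of any square, so $\{r,s\}$ is square-complete in $\Gamma$ with no induced square, and therefore by \fullref{cor:stable} the special subgroup $W_{\{r,s\}} \cong D_\infty$ is stable. \fullref{cor:stability_recognizing} then gives that the orbit map $D \to \mprg_\Gamma$ is a quasiisometric embedding, so $D \cdot q_{a_r} \subset L$ is unbounded. For the wide rung condition~\eqref{item:widerungs}, it suffices by equivariance to show $d_{\mprg_\Gamma}(q_{a_r}, q_{b_r}) \geq 3$. Condition (4) bullet~1 prevents them from being adjacent; a purported common neighbor $v \in \ric_\Gamma$ would give a maximal thick join $J_v$ sharing squares with both $J_{a_r}$ and $J_{b_r}$, contradicting condition (4) bullet~2, and for $v \notin \ric_\Gamma$ applying \fullref{starintersectionwithfundamentaldomain} to $Z = \{q_{a_r}, q_{b_r}\}$ produces $w \in \ric_\Gamma$ with $Z \subset \st(w)$, yielding the same contradiction. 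The argument for $s$ is analogous.

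For condition~\eqref{item:chunkshardtoseparate} (the ``unique non-singleton component'' condition), apply condition (2) of the hypothesis with $I := \st(v) \cap \quadrat$. When $v \in \ric_\Gamma$, the set $I$ is precisely the ``whole set'' $\{q_i : J_v \cap J_{q_i} \text{ contains a square}\}$, so condition (2) applies with $J := J_v$. When $v \notin \ric_\Gamma$, \fullref{starintersectionwithfundamentaldomain} provides $w \in \ric_\Gamma$ with $I \subseteq \st(w) \cap \quadrat$ and $\bigcap_{q_i \in I} J_{q_i} \not\subset J_w$, so the second alternative of condition (2) applies with $J := J_w$. The conclusion of condition (2)—a unique non-singleton component $B$ of $\quadrat \setminus I$ with $\diam_\quadrat(\quadrat \setminus B) \leq 2$—is exactly what \eqref{item:chunkshardtoseparate} demands.

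The hard part will be condition~\eqref{item:forbiddenconfiguration}, which requires, for each $v \in \mprg_\Gamma$ and each adjacent pair $\quadrat, r\quadrat$, a vertex of $(\quadrat \cap r\quadrat)\setminus \st(v)$ that lies in both the non-singleton component $B$ of $\quadrat \setminus \st(v)$ and the non-singleton component $B'$ of $r\quadrat \setminus \st(v)$. Since $d_\quadrat(q_{a_r}, q_{b_r}) \geq d_\mprg(q_{a_r}, q_{b_r}) \geq 3$ while $\diam_\quadrat(\quadrat \setminus B) \leq 2$ (and analogously in $r\quadrat$), at least one of $q_{a_r}, q_{b_r}$ lies in $B$ and at least one lies in $B'$; when these coincide we are done. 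The residual case is that $q_{a_r} \in B \setminus B'$ and $q_{b_r} \in B' \setminus B$ (with neither in $\st(v)$). Here the singleton hypotheses force every $\quadrat$-neighbor of $q_{b_r}$ into $\st(v) \cap \quadrat$ and, by transporting via the $r$-action, every $\quadrat$-neighbor of $q_{a_r}$ into $\st(rv) \cap \quadrat$. Applying \fullref{starintersectionwithfundamentaldomain} to $v$ and $rv$ separately, and comparing these constraints with the ``whole set'' applications of condition (2) for $J = J_{a_r}$ and $J = J_{b_r}$ (which describe $\quadrat \setminus \st(q_{a_r})$ and $\quadrat \setminus \st(q_{b_r})$), should produce a maximal thick join sharing squares with both $J_{a_r}$ and $J_{b_r}$, contradicting condition (4) bullet~2. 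Once \eqref{item:forbiddenconfiguration} is established, \fullref{lem:wide-ladders} shows $\mprg_\Gamma$ is not the MPRG of a one-ended $2$-dimensional RAAG, and the quasiisometry invariance of the MPRG (\fullref{mpgqiinvariant}) then implies $\Gamma$ is not RAAGedy.
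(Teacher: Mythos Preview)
Your overall structure matches the paper's proof closely: the chunk graph is the Cayley line of $D=\langle r,s\rangle$, stability of $D$ gives infinite diameter, condition~(4) gives wide rungs, and condition~(2) together with \fullref{starintersectionwithfundamentaldomain} handles \fullref{def:wideladder}\ref{item:chunkshardtoseparate}. The gap is in your endgame for condition~\ref{item:forbiddenconfiguration}.

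In the residual case you have $\lk_\quadrat(q_{b_r})\subset\st(v)$ and $\lk_\quadrat(q_{a_r})\subset\st(rv)$. Your proposal to ``produce a maximal thick join sharing squares with both $J_{a_r}$ and $J_{b_r}$'' does not follow from this data: knowing that the $\ric_\Gamma$--representative $J_{v'}$ shares a square with each $\quadrat$--neighbour of $q_{a_r}$ is not the same as $J_{v'}$ sharing a square with $J_{a_r}$ itself, and the ``whole set'' applications of condition~(2) for $J=J_{a_r}$, $J=J_{b_r}$ do not bridge this gap. So condition~(4) bullet~2 is not the obstruction being violated here.

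The paper's move is simpler and uses the part of condition~(2) you have not yet exploited, namely the diameter bound $\diam_\quadrat(\quadrat\setminus B)\le 2$. The key observation is that $v$ and $rv$ lie in the \emph{same} $W_\Gamma$--orbit, so they have the \emph{same} representative $v'\in\ric_\Gamma$. Applying \fullref{starintersectionwithfundamentaldomain} (or arguing via \fullref{weakconvexity} edge by edge) then gives $\lk_\quadrat(q_{a_r})\cup\lk_\quadrat(q_{b_r})\subset\st(v')$, so both $q_{a_r}$ and $q_{b_r}$ are singleton components of $\quadrat\setminus\st(v')$. Now the diameter bound from condition~(2) forces $d_\quadrat(q_{a_r},q_{b_r})\le 2$, contradicting $d_{\mprg_\Gamma}(q_{a_r},q_{b_r})\ge 3$. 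This same contradiction also disposes of the symmetric subcase where both $q_{a_r}$ and $q_{b_r}$ are singletons on the same side.
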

\begin{proof}
By the description in Item~\eqref{item:Jgraph}, we may regard
$\quadrat$ as an induced subgraph of $\ric_\Gamma$, which itself is induced
in $\mprg_\Gamma$, by \fullref{weakconvexity}.

Item~\eqref{item:dihedralaction} says $W_{\{r,s\}}$ is an infinite
dihedral group that does not act elliptically on $\mprg_\Gamma$. 
By \fullref{cor:stable} it is stable, so by
\fullref{cor:stability_recognizing} its orbit map
into $\mprg_\Gamma$ is a quasiisometric embedding.
We will take $\mathcal{C}$ to be the Cayley graph of $W_{\{r,s\}}$
with respect to $\{r,s\}$,
which is a line, and associate to vertex $g$ the subgraph
$g\Act\quadrat$ in $\mprg_\Gamma$.
The ladder $L:=\bigcup_{g\in W_{\{r,s\}}}g\Act\quadrat$ is unbounded,
since the orbit map of $W_{\{r,s\}}$ is a quasiisometric embedding. 
So far, we
have shown that condition \ref{item:chunks} of 
\fullref{def:wideladder} is satisfied.
Item~\eqref{item:widerungaction} gives 
condition \ref{item:widerungs}, since it implies $3\leq
d_{\ric_\Gamma}(q_{a_r},q_{b_r})$ and \fullref{weakconvexity} implies
$d_{\ric_\Gamma}(q_{a_r},q_{b_r})= d_{\mprg}(q_{a_r},q_{b_r})$, and analogously for $s$.

To show that condition~\ref{item:chunkshardtoseparate} is satisfied,
we claim that the intersection of the star of a vertex of $\mprg_\Gamma$ with
$\quadrat$ is one of the sets $I$ as described in
Item~\eqref{item:wellconnected}.
When the vertex is $v\in \ric_\Gamma$, then $\st(v)\cap\quadrat=\{q_i\mid J_v\cap J_i\text{
  contains a square}\}$.
When the vertex is  $v'\notin\ric_\Gamma$, let $\{v\}=W_\Gamma\Act v'\cap\ric_\Gamma$; then
\fullref{starintersectionwithfundamentaldomain} says $I:=\st(v')\cap\quadrat\subset\{q_i\mid J_v\cap J_i\text{
  contains a square}\}$
and that $\big(\bigcap_{i\in I} J_{q_i}\big)\setminus J_v\neq\emptyset$. 

Finally, we  show that condition~\ref{item:forbiddenconfiguration}
of  \fullref{def:wideladder} is satisfied.
Neighboring elements of $\mathcal{C}$ differ by the action of $r$ or $s$, so, up to a
symmetric argument and the group action, it suffices to consider that
the adjacent translates of condition~\ref{item:forbiddenconfiguration}
are 
$\quadrat$ and $r\Act\quadrat$.
We know that $\quadrat\cap r\Act\quadrat$ contains $q_{a_r}$ and
$q_{b_r}$, so the negation of
condition~\ref{item:forbiddenconfiguration} requires, up to reversing the roles of $q_{a_r}$ and $q_{b_r}$, that there is a
vertex $v\in\mprg_\Gamma$ such that either $q_{a_r}$ and $q_{b_r}$ are
both singleton components of $\quadrat\setminus\st(v)$, or  $q_{a_r}$ is a singleton component of $\quadrat\setminus\st(v)$
and $q_{b_r}$ is a singleton component of
$r\Act\quadrat\setminus\st(v)$.
Suppose the latter is
true.
Let $v'$ be the unique vertex
in  $W_\Gamma\Act v \cap
\ric_\Gamma$.
For $w\in\lk_\quadrat(q_{a_r})\subset\st(v)$, either
$w=v$ or there is
an element of $W_\Gamma$ taking the edge $w\edge v$ to $w\edge v'$ in $\ric_\Gamma$ by \fullref{weakconvexity}.
Similarly, for $w'\in\lk_{r\Act\quadrat}(q_{b_r})\subset \st(v)$,
either $w'=v$ and $r\Act w'=r\Act v=v'$ or there is an element of $W_\Gamma$ taking the edge $w'\edge v$ to
$r\Act w'\edge v'$ in $\ric_\Gamma$.
Thus, $\lk_\quadrat(q_{a_r})\cup\lk_\quadrat(q_{b_r})\subset\st(v')$,
so $q_{a_r}$ and $q_{b_r}$ are singleton components of
$\quadrat\setminus\st(v')$.
The  second claim of
Item~\eqref{item:wellconnected} forces a contradiction:\[  d_Q(q_{a_r}, q_{b_r})\leq
  2<3\leq d_\mprg(q_{a_r},q_{b_r})\leq d_Q(q_{a_r}, q_{b_r})\qedhere\]
\end{proof}

The original ladder example in \cite{Edl24thesis} was constructed by hand
directly for the graph $\Gamma$ of \fullref{ex:bricks}. 
\fullref{thm:ladders} applies to an iterated link double  of $\Gamma$.

\subsection{Examples}\label{mprgexamples}
\begin{example}\label{ex:bricks}
  \fullref{bricksgraph} depicts a graph $\Gamma$ with its maximal thick join
  subgraphs shadowed in different colors.
  The $\ric_\Gamma$ for $W_\Gamma\act\mprg_\Gamma$ is shown in \fullref{bricksmprg}.
  \begin{figure}[h]
    \centering
    \begin{subfigure}[b]{.4\textwidth}
          \centering
    \includegraphics{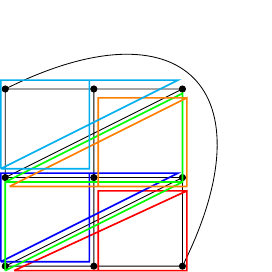}
  \caption{$\Gamma$}
  \label{bricksgraph}
    \end{subfigure}
    \begin{subfigure}[b]{.4\textwidth}
          \centering
    \includegraphics{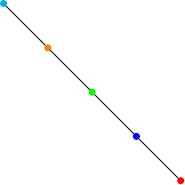}
  \caption{$\ric_\Gamma$}
    \label{bricksmprg}
  \end{subfigure}
  \caption{The graph $\Gamma$ of \fullref{ex:bricks} with its
    maximal thick joins colored and
    $\ric_\Gamma=W_\Gamma\backslash\mprg_\Gamma$ with vertices of
    matching colors.}
  \label{fig:bricks}
\end{figure}

There is one edge
of $\Gamma$
that does not belong to any thick join, and each of its endpoints
belongs to a unique maximal join, corresponding to the two ends of
$\ric_\Gamma$.
The orbit of $\ric_\Gamma$ by the action of the order 4 subgroup represented by that edge
makes a 16--cycle in $\mprg_\Gamma$.
We can see this more clearly by passing to the finite-index subgroup
obtained by link doubling over the two vertices of the extraordinary
edge.
The 16--cycle $\quadrat$ is the fundamental domain for the action of the subgroup
on $\mprg_\Gamma$, as seen in  \fullref{fig:brickssubgroupmprg}.
Notice that all vertices with first coordinate 2 belong to five
consecutive maximal join subgraphs; pick two on opposite sides, say $r:=2_{00}$ and $s:=2_{11}$.
Then $r$, $s$, and $\quadrat$ satisfy \fullref{thm:ladders}.
\begin{figure}[h]
  \centering
  \raisebox{-6ex}{
    \includegraphics{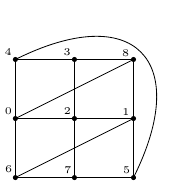}}
  $=$
  \raisebox{-7.5ex}{
\includegraphics{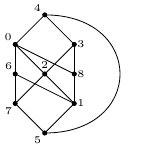}}
$\stackrel{\double^\circ_{(50)}\circ \double^\circ_4}{\longrightarrow}$
\raisebox{-11.5ex}{
\includegraphics{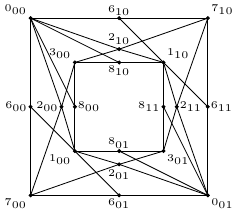}}
  \caption{Link doubling of $\Gamma$ in example \fullref{ex:bricks}.}
  \label{fig:bricksdoubled}
\end{figure}

\begin{figure}[h]
  \centering
  \includegraphics{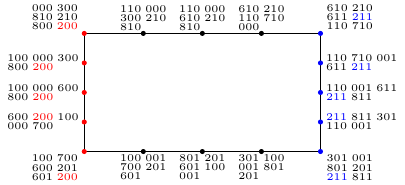}
  \caption{Fundamental domain for the action of the subgroup on the MPRG of
    \fullref{ex:bricks}.}
  \label{fig:brickssubgroupmprg}
\end{figure}

 Note also that $W_\Gamma$ is strongly CFS, has no  2--ended
  splittings, contains no compliant cycle as in \fullref{nocycles},
  and has totally disconnected Morse boundary, by \fullref{FioKar}, so
  contains no 1--ended stable subgroups. The ladder is the only way we
  know to say $\Gamma$ is not RAAGedy.  
\end{example}

\begin{example}\label{ex:Gamma73}
  \fullref{fig:secondnine} shows the other 9--vertex graph that is strongly CFS, has no  2--ended
  splittings, contains no compliant cycle as in \fullref{nocycles},
  and has totally disconnected Morse boundary, by \fullref{FioKar}, so
  contains no 1--ended stable subgroups. It has an edge not contained
  in a thick join. Link double over the two vertices of this edge, as
  in the previous example. 
  \begin{figure}[h]
    \centering
    $\Gamma:=$
    \raisebox{-10pt}{\includegraphics{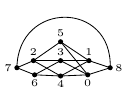}}
$\stackrel{\double^\circ_{(80)}\circ \double^\circ_7}{\longrightarrow}$
\raisebox{-62pt}{\includegraphics{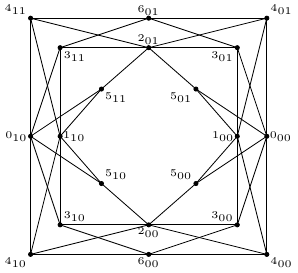}}
    \caption{The $\Gamma$ of \fullref{ex:Gamma73} and an
      iterated link double.}
    \label{fig:secondnine}
  \end{figure}

The fundamental domain for the action of the finite-index subgroup is
shown in \fullref{fig:secondninemprg}.
Take $\quadrat$ to be the entire fundamental domain.
There are choices  $r=2_{01}$ and $s=2_{00}$ that satisfy
\fullref{thm:ladders} for this $\quadrat$, so $\Gamma$ is not RAAGedy.

\begin{figure}[h]
  \centering
  \includegraphics{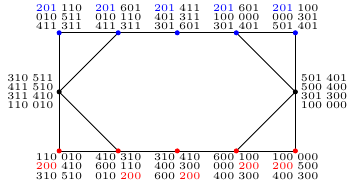}
  \caption{Fundamental domain of the MPRG in \fullref{ex:Gamma73}.}
  \label{fig:secondninemprg}
\end{figure}

In this example we cannot take $\quadrat$ to be just the outer
boundary of the fundamental domain as drawn in  \fullref{fig:secondninemprg} because that would not be an induced
subgraph, and we also cannot take just the interior octagon to be
$\quadrat$, since then neighboring translates of
$\quadrat$ would only intersect in  paths of length 2.
\end{example}

\section{Compliant cycles}\label{sec:compliant_cycles}

In this section we will show that certain cycles of subgraphs in $\Gamma$
give an obstruction to being RAAGedy.
Recall \fullref{raagtrivialprojection}: In RAAGs there is a dichotomy,
the closest point projection of one standard subcomplex to another has
diameter either zero or infinite, while in RACGs it is possible to have
finite, nonzero projection diameter.
We show that for some RACGs it is possible to build cycles 
$X_0$,\dots,$X_{m-1}$ of standard subcomplexes such that consecutive
pairs are close, the projection of any single $X_i$ to $X_0$ is small,
and the projection of $\cup_{i=1}^{m-1}X_i$ to $X_0$ is
large.
Then we would like to say that if such a RACG were quasiisometric to a
RAAG we could derive a contradiction, based on the dichotomy for
projection diameters in RAAGs.
This is accomplished in \fullref{nocycles}.
For the plan to make sense we need to know that these particular standard subcomplexes
$X_i$ in the RACG are sent by quasiisometry close to standard
subcomplexes of the RAAG.
This additional hypothesis is formalized in the first subsection,
where we bootstrap from the quasiisometry invariance of maximal
standard product subcomplexes to define a class of `compliant'
subcomplexes. 

\subsection{Compliant sets and subcomplexes}
The reader should refresh their memory of the background material on
the coarse geometry of standard subcomplexes of RAAGs and RACGs
established in \fullref{sec:catzero} and
\fullref{sec:coarse_geometry_standard}.
We will use it now.

The next statement defines compliant sets of the presentation graph,
and corresponding compliant subcomplexes of $\Sigma_\Upsilon$.
In terms of subcomplexes, the first two bullet points say the
collection is closed under adding or removing a finite direct factor.
Recalling \fullref{projection_of_standard_is_standard}, \ref{item:projection} says the collection is closed under projection, which includes intersection in the special
case that $T=\emptyset$.
Finally, \ref{item:pole} says the collection is closed under passing to the $\mathbb{R}$ factor of a
$\mathrm{tree}\times\mathbb{R}$ subcomplex.
\begin{definition}\label{def:constructible_compliant}
Let $G_\Upsilon$ be a 2--dimensional, 1--ended RACG or RAAG, and let
  $\Sigma_\Upsilon$ be its Davis complex or the universal cover of its
  Salvetti complex, respectively.
We recursively define strata $\compliant^n_\Upsilon$ of subsets of
 the vertex set of $\Upsilon$, inductively extend to higher strata, and finally take 
  $\compliant_\Upsilon:=\bigcup_{n=0}^\infty\compliant_\Upsilon^n$.
  
Seed $\compliant^0_\Upsilon$ by taking the collection of subsets of
vertices of $\Upsilon$ that consists of 
  the empty set and the vertex set of each maximal (thick, in the RACG case) join of $\Upsilon$.

Recursive~Phase: Having defined some subsets of
  $\compliant^n_\Upsilon$, join and `unjoin' spherical\footnote{A
    subset of vertices of the presentation graph of a Coxeter group is
  \emph{spherical} if the special subgroup they generate is
  finite. For RACGs this happens if and only if the vertex set is a
  clique. For RAAGs no nontrivial special subgroup is finite, so this
  phase does not apply.} factors; that is, if $S_0\join  S_1\subset\Upsilon$ and $S_0$ is spherical then:
  \begin{itemize}
  \item  If $S_1\in \compliant_\Upsilon^n$ set $\compliant_\Upsilon^n:=\compliant_\Upsilon^n\cup\{S_0\sqcup S_1\}$.
    \item  If $S_0\sqcup S_1\in \compliant_\Upsilon^n$ set $\compliant_\Upsilon^n:=\compliant_\Upsilon^n\cup\{S_1\}$.
  \end{itemize}
Repeat until $\compliant_\Upsilon^n$ stabilizes, which
happens in finitely many steps since $\Upsilon$ only has finitely many
subgraphs.

Inductive~Phase: Seed $\compliant_\Upsilon^{n+1}$ by taking 
  $\compliant_\Upsilon^{n}$ and all subsets of the following forms:
  \begin{enumerate}[(a)]
  \item If $S_0,S_1\in\compliant^n_\Upsilon$ and $T$ is a set of
    vertices of $\Upsilon$ such that either $T=\emptyset$ or
    $T\not\subset S_0$ and $T\not\subset S_1$,
    then $S_0\cap S_1\cap\bigcap_{t\in T}\lk(t)\in\compliant^{n+1}_\Upsilon$.\label{item:projection}
    \item If $S_0\sqcup S_1\in \compliant^n_\Upsilon$ with
      $S_0\join S_1\subset\Upsilon$ such that $\Sigma_{S_0}$ is a line
      and $\Sigma_{S_1}$ is a bushy tree then 
      $S_0\in\compliant^{n+1}_\Upsilon$.\label{item:pole}
    \end{enumerate}
    Perform the Recursive~Phase, joining and unjoining spherical
    factors to elements of $\compliant_\Upsilon^{n+1}$ until $\compliant_\Upsilon^{n+1}$ stabilizes.
        
        For $S\in\compliant_\Upsilon$, let its \emph{index} be $\mathrm{ind}(S):=\min\{n\mid
        S\in \compliant_\Upsilon^n\}$.
        
We call $\compliant_\Upsilon$ the \emph{compliant subsets} of
$\Upsilon$ and call the standard subcomplexes $g\Sigma_S$, for $g\in G_\Upsilon$
and $S\in\compliant_\Upsilon$,  the \emph{compliant subcomplexes} of
$\Sigma_\Upsilon$. 
\end{definition}

\begin{proposition}\label{qi_compliant}
  For all $L$, $A$, $N$, and $n$ there exists $C_n$ with the following
  property. 
  Let $G_\Upsilon$ be a 2--dimensional, 1--ended RACG or RAAG, and let
  $\Sigma_\Upsilon$ be its Davis complex or the universal cover of its
  Salvetti complex, respectively, and define $G_{\Upsilon'}$ and
  $\Sigma_{\Upsilon'}$ similarly.
  Suppose $|\Upsilon|\leq N$. 
  For every $g\in
  G_\Upsilon$, $S\in \compliant^n_\Upsilon$, and
  $(L,A)$--quasiisometry $\phi\from
  \Sigma_\Upsilon\to\Sigma_{\Upsilon'}$ 
  there exists $g'\in G_{\Upsilon'}$ and $S'\in\compliant^n_{\Upsilon'}$ such that
  $d_{Haus}(\phi(g\Sigma_S),g'\Sigma_{S'})\leq C_n$.

  Furthermore, $\compliant_\Upsilon=\compliant_\Upsilon^{2N}$, so we can
  take $C:=\max\{C_n\mid 0\leq n\leq 2N\}$ as a uniform Hausdorff
  distance bound for all of $\compliant_\Upsilon$.
\end{proposition}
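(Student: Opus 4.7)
The plan is to proceed by induction on $n$, defining $C_n$ recursively and tracking how each rule of \fullref{def:constructible_compliant} pushes constants forward through the quasiisometry $\phi$.

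\textbf{Base case $(n=0)$.} The empty set is trivial. When $S$ is the vertex set of a maximal (thick) join of $\Upsilon$, the subcomplex $g\Sigma_S$ is a maximal standard product subcomplex by \fullref{icuricraag} and \fullref{icuricracg}. Then \fullref{Oh} (Oh's theorem) produces $g'$ and $S'$ with $S'$ the vertex set of a maximal (thick) join of $\Upsilon'$ and $d_{Haus}(\phi(g\Sigma_S),g'\Sigma_{S'})$ bounded by a constant depending only on $L$ and $A$; take this as $C_0$.

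\textbf{Recursive Phase.} If $S_0 \join S_1 \subset \Upsilon$ with $S_0$ spherical (a clique of size at most $N$), then \fullref{joinclique} and \fullref{cor:joinclique} give $d_{Haus}(\Sigma_{S_1}, \Sigma_{S_0 \sqcup S_1}) \leq N$. So the $\phi$-images of $g\Sigma_{S_1}$ and $g\Sigma_{S_0 \sqcup S_1}$ are within Hausdorff distance at most $LN + A$ of one another, and the same kind of inequality holds on the $\Upsilon'$ side. Hence adding or removing a spherical factor to a compliant set only changes the approximating compliant set in $\Upsilon'$ by at most a spherical factor (of clique number at most $N$), absorbing an additive error of $LN+A$ into $C_n$.

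\textbf{Inductive Phase (a): projection.} Given $S_0, S_1 \in \compliant^n_\Upsilon$, set $X_i := g_i \Sigma_{S_i}$. By \fullref{projection_of_standard_is_standard} together with \fullref{coarse_intersection_in_cube_cpx}, $g_0 \Sigma_{S_0 \cap S_1 \cap \bigcap_{t \in T}\lk(t)}$ represents $X_0 \stackrel{c}{\cap} X_1$, where $T$ is precisely the letter set of a minimal representative of $g_0^{-1}g_1$ in the double coset. By \fullref{qi_preserves_coarse_intersection}, $\phi(X_0 \stackrel{c}{\cap} X_1)$ is coarsely equivalent to $\phi(X_0) \stackrel{c}{\cap} \phi(X_1)$. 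The inductive hypothesis locates $\phi(X_0)$ and $\phi(X_1)$ within uniformly bounded Hausdorff distance of compliant subcomplexes $g'_0 \Sigma_{S'_0}$ and $g'_1 \Sigma_{S'_1}$ of $\Sigma_{\Upsilon'}$. Applying \fullref{projection_of_standard_is_standard} on the $\Upsilon'$ side expresses their coarse intersection as $g'_0 \Sigma_{S'_0 \cap S'_1 \cap \bigcap_{t' \in T'}\lk(t')}$ for a canonically associated $T'$. Crucially, the conclusion of \fullref{projection_of_standard_is_standard} that either $T' = \emptyset$ or $T' \not\subset S'_0$ and $T' \not\subset S'_1$ matches the compliance hypothesis on rule (a), so the resulting set is in $\compliant^{n+1}_{\Upsilon'}$.

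\textbf{Inductive Phase (b): pole.} This is where the main difficulty lies. Suppose $S_0 \sqcup S_1 \in \compliant^n_\Upsilon$ with $S_0 \join S_1 \subset \Upsilon$, $\Sigma_{S_0}$ a line and $\Sigma_{S_1}$ a bushy tree. By induction, $\phi(g\Sigma_{S_0 \sqcup S_1})$ is within bounded Hausdorff distance of $g' \Sigma_{S'}$ for some $S' \in \compliant^n_{\Upsilon'}$. Since $\Sigma_{S'}$ is quasiisometric to the product $\Sigma_{S_0} \times \Sigma_{S_1}$ and standard subcomplexes of 2-dimensional CAT(0) cube complexes have transparent product structure, $S'$ splits as $S'_0 \sqcup S'_1$ with $S'_0 \join S'_1 \subset \Upsilon'$, $\Sigma_{S'_0}$ a line, and $\Sigma_{S'_1}$ a bushy tree; hence rule (b) in $\Upsilon'$ yields $S'_0 \in \compliant^{n+1}_{\Upsilon'}$. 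The hard step is to verify that the particular line-factor coset $g \Sigma_{S_0}$ maps close to some translate $h' \Sigma_{S'_0}$. The plan is to characterize line factors in $\mathbb R \times (\text{bushy tree})$ coarsely, as the maximal bi-infinite quasigeodesics along which the tree-factor coordinate remains bounded, a condition preserved by any quasiisometry of such a product (alternatively, as coarse axes of the center of the line factor's stabilizer, which commutes with the tree-factor subgroup). Establishing and controlling the constants in this coarse characterization will be the heart of the argument, and is what I expect to be the main technical obstacle.

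\textbf{The bound $\compliant_\Upsilon = \compliant^{2N}_\Upsilon$.} Each element of $\compliant_\Upsilon$ is determined by its vertex set, a subset of at most $N$ vertices. A more careful inspection of the recursive and inductive phases (each of which either adds or removes a spherical factor, takes a projection shrinking the vertex set, or isolates a line factor of a product) shows that at most $2N$ levels of the construction are required before no new subsets appear; the process then stabilizes. Taking $C := \max\{C_n : 0 \leq n \leq 2N\}$ gives the uniform bound.
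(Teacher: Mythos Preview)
Your overall inductive structure matches the paper's proof, and your treatment of the base case, the Recursive Phase, and Inductive Phase~(a) is essentially the same as theirs. The genuine gap is in Inductive Phase~(b). You correctly identify this as the crux, but your proposed characterization of the line factor---as a quasigeodesic along which the tree coordinate stays bounded, or as a coarse axis of the center---is not yet an argument, and making it one would require proving from scratch that such a characterization survives an arbitrary quasiisometry of $\mathbb{R}\times(\text{bushy tree})$. The paper instead invokes a known result: in $(\text{bounded valence bushy tree})\times\mathbb{R}$, the $\mathbb{R}$--factor is the unique coarse equivalence class of \emph{separating} quasiline, and quasiisometries preserve this by Papasoglu's work on quasiisometry invariance of JSJ-type splittings \cite{Pap05}. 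This reduces what you flagged as ``the heart of the argument'' to a one-line citation.

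Your justification of $\compliant_\Upsilon=\compliant_\Upsilon^{2N}$ is also too loose. In the RAAG case the operations strictly shrink the compliant set, so stabilization in $N$ steps is clear. In the RACG case, however, the Recursive Phase can \emph{increase} $|S|$ by adjoining a cone vertex, so ``each step shrinks the vertex set'' is false. The paper handles this by arguing that a cone on a set $S$ produced in level $n{+}1$ can have the same size as the sets $S_0,S_1$ it came from only in a very constrained situation (when $|S_0|=|S_1|=|S|+1$ and neither has a cone vertex), and that such an $S'$ already has a cone vertex so cannot recur as an $S_0$ or $S_1$ at a later stage; hence sizes strictly decrease over any two consecutive levels, giving the bound $2N$.
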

\begin{proof}
  Let $C_0'$ be the constant of   \fullref{Oh}  with respect to $L$
  and $A$. 
  Let $C_0:=\max\{C_0',2L+A\}$.
  Consider $g\Sigma_S$ for $g\in G_\Upsilon$ and $S\in
  \compliant_\Upsilon^0$.
  If $S$ is the vertex set of a maximal (thick) join of $\Upsilon$
  then  \fullref{Oh}   and \fullref{Oh:standard} combine to say there
  exist $g'\in G'$ and $S'$ the vertex set of a maximal (thick) join
  of $\Upsilon'$ such that $d_{Haus}(\phi(g\Sigma_S),g'\Sigma_{S'})\leq C'_0\leq C_0$.
  By definition $S'\in\compliant_{\Upsilon'}^0$.
  If $S$ is such that $G_S$ is finite then the diameter of $\Sigma_S$
  is 0 if $G_\Upsilon$ is a RAAG or at most 2 if $G_\Upsilon$ is a
  2--dimensional RACG.
  Thus, for any $g'\in G_{\Upsilon'}$ such that
  $g'\Sigma_\emptyset\in\phi(g\Sigma_S)$ we have $\emptyset
  \in\compliant_{\Upsilon'}^0$ and 
  $d_{Haus}(\phi(g\Sigma_S),g'\Sigma_\emptyset)\leq 2L+A\leq C_0$.
  In the RAAG case this is all of $\compliant_\Upsilon^0$.
  In the RACG case it is also all of $\compliant_\Upsilon^0$, since
  the triangle-free hypothesis implies that we cannot add a cone
  vertex to a maximal thick join. 
  
  Now induct on index: supposing the proposition is true for all indices up to
  and including $n$, we extend it to $n+1$.  

Induction step \ref{item:projection} corresponds to projection between
subcomplexes.
Suppose $S:=S_0\cap S_1\cap\bigcap_{t\in
  T}\lk(t)\in\compliant_\Upsilon^{n+1}$ for some $S_0,S_1\in
\compliant_\Upsilon^n$ and $T$ either empty or not contained in either
of $S_0$ or $S_1$.
If $T=\emptyset$ define $h=1\in G_\Upsilon$.
Then $S=S_0\cap S_1$, so
$\Sigma_S=\Sigma_{S_0}\cap\Sigma_{S_1}=\pi_{\Sigma_{S_0}}(h\Sigma_{S_1})$. 
If $T\neq\emptyset$ then there exists $t_0\in T\setminus S_0$ and
$t_1\in T\setminus S_1$.
Let $h\in G_\Upsilon$ be any shortest word that begins with $t_0$ and
ends with $t_1$ and uses every letter in $T$ at least once.
So $|h|=|T|$ if $|T|=1$ or $t_0\neq t_1$, and $|h|=|T|+1$ if $|T|>1$
but $t_0=t_1$.
Again we have $\Sigma_S=\pi_{\Sigma_{S_0}}(h\Sigma_{S_1})$.
Define $h_0:=1\in G_\Upsilon$ and $h_1:=h$. 

By the induction hypothesis, for every $g\in G_\Upsilon$ and every
$(L,A)$--quasiisometry $\phi\from
\Sigma_\Upsilon\to\Sigma_{\Upsilon'}$ there are
$S_0',S_1'\in\compliant_{\Upsilon'}^n$ and $g_0',g_1'\in
G_{\Upsilon'}$ such that:
\[d_{Haus}(\phi(gh_i\Sigma_{S_i}),g_i'\Sigma_{S_i'})\leq C_n\]

Let $g'$ be a shortest element of $G_{S_0'}((g_0')^{-1}g_1')G_{S_1'}$, so
$|g'|=d_{\Sigma_{\Upsilon'}}(g_0'\Sigma_{S_0'},g_1'\Sigma_{S_1'})$
and if $g'$ is nontrivial then it begins with a letter not in $S_0'$ and ends with a letter not
in $S_1'$.
Let $T'$ be the letters appearing in $g'$, so that for $S':=S_0'\cap
S_1'\cap\bigcap_{t\in T'}\lk(t)\in \compliant_{\Upsilon'}^{n+1}$ we
have $g_0'\Sigma_{S'}=\pi_{g_0'\Sigma_{S_0'}}(g_1'\Sigma_{S_1'})$.

\fullref{coarse_intersection_in_cube_cpx} says
$g\Sigma_S=g\pi_{\Sigma_{S_0}}(h\Sigma_{S_1})\ceq(g\Sigma_{S_0}\cint
gh\Sigma_{S_1})$, and \fullref{qi_preserves_coarse_intersection} says $\phi$ sends this
coarse intersection 
to within bounded Hausdorff distance of $\phi(g\Sigma_{S_0})\cint
\phi(gh\Sigma_{S_1})$.
But using the induction hypothesis and
\fullref{coarse_intersection_in_cube_cpx} gives:
\begin{align*}
  \phi(g\Sigma_{S_0})\cint
  \phi(gh\Sigma_{S_1})&\ceq g_0'\Sigma_{S_0'}\cint g_1'\Sigma_{S_1'}\\
                     &\ceq \pi_{g_0'\Sigma_{S_0'}}(g_1'\Sigma_{S_1'})\\
  &=g_0'\Sigma_{S'}
\end{align*}
Moreover, the coarse equivalences provided by
\fullref{coarse_intersection_in_cube_cpx} and
\fullref{qi_preserves_coarse_intersection} depend on $L$ and $A$ and
the distances between the sets of the coarse intersections, so on 
$d(g\Sigma_{S_0},gh\Sigma_{S_1})=|h|\leq |T|+1\leq
|\Upsilon|+1$ and on:
\[d(g_0'\Sigma_{S_0'},g_1'\Sigma_{S_1'})\leq
  2C_n+d(\phi(g\Sigma_{S_0}),\phi(gh\Sigma_{S_1}))\leq
  2C_n+A+L(|\Upsilon|+1)\]
Thus,  $d_{Haus}(\phi(g\Sigma_S),g'\Sigma_{S'})$ is bounded by a
constant $C_{n+1}'$ depending on $L$, $A$, $C_n$, and
$|\Upsilon|$, hence on $L$, $A$, $N$, and $n$.

For induction step \ref{item:pole}, 
  suppose that $S_0\in\compliant_\Upsilon^{n+1}$ is obtained from
  $S\in\compliant_\Upsilon^n$ by virtue of $S$ decomposing as $S=S_0\sqcup S_1$, with $S_0\join S_1\subset\Upsilon$ where 
  $\Sigma_{S_0}$ is line and $\Sigma_{S_1}$ is a bushy tree.
  By the induction hypothesis, for every $g\in G_\Upsilon$ and every $(L,A)$--quasiisometry $\phi\from
  \Sigma_\Upsilon\to\Sigma_{\Upsilon'}$ there are $g'\in
  G_{\Upsilon'}$ and $S'\in\compliant^n_{\Upsilon'}$ such that $d_{Haus}(\phi(g\Sigma_{S}),g'\Sigma_{S'})\leq C_n$.
  Then
  $\pi_{g'\Sigma_{S'}}\circ\phi|_{g\Sigma_{S}}$
  is a quasiisometry from a $\text{(bounded valence bushy
    tree)}\times\mathbb{R}$ to $g'\Sigma_{S'}$ whose quasiisometry
  constants depend only on $L$, $A$, and $C_n$.
  Thus, $\Davis_{S'}$ is also a $\text{(bounded valence bushy
    tree)}\times\mathbb{R}$, which implies $S'=S_0'\sqcup S_1'$, where
  $S_0'\join S_1'\subset\Upsilon'$, with 
  $\Sigma_{S_0'}$ a line and $\Sigma_{S_1'}$ a bushy tree.
  Thus, $S_0'\in\compliant_{\Upsilon'}^{n+1}$.
  Furthermore, 
  since the $\mathbb{R}$--factor defines the only coarse equivalence
  class of separating quasiline in a $\text{(bounded valence bushy
    tree)}\times\mathbb{R}$, it follows from  \cite{Pap05} that up to
  post-composition by an element of $G_{S_1'}$, we have that $g\Sigma_{S_0}$ is sent $C''_{n+1}$--Hausdorff close to
  $g'\Sigma_{S_0'}$, with $C''_{n+1}$ depending on $L$, $A$, and $C_n$,
  hence on $L$, $A$, $N$, and $n$.

  In the RAAG case this is all of $\compliant_\Upsilon^{n+1}$.
  In the RACG case the Recursive~Phase allows for the possibility of
  taking joins with spherical subsets, which, since $n+1>0$ and
  $\Upsilon$ is triangle-free, simply means adding or removing a cone
  vertex. 
\fullref{cor:joinclique} says 
that if $S_0\join S_1\subset\Upsilon$ and $S_0'\join
  S_1\subset\Upsilon$ with $\Davis_{S_0}$ and $\Davis_{S_0'}$ finite
  then $\Davis_{S_1}$, $\Davis_{S_0\join S_1}$,  $\Davis_{S_0'\join
    S_1}$ are pairwise at Hausdorff distance at most 2.
  Suppose we know that one of these sets is in
  $\compliant_\Upsilon^{n+1}$ by one of the previous constructions, so its $\phi$--image is
$\max\{C_{n+1}',C_{n+1}''\}$ close to some $g'\Davis_{S'}$ for $g'\in
G_{\Upsilon'}$ and $S'\in\compliant_{\Upsilon'}^n$.
Then the
$\phi$--images of the other two are within
Hausdorff distance $2L+A+\max\{C_{n+1}',C_{n+1}''\}$ of the same
$g'\Davis_{S'}$. 

We have shown it suffices to take
$C_{n+1}:=2L+A+\max\{C_{n+1}',C_{n+1}''\}$.

For the further statement, in a RAAG there are no nontrivial spherical
subsets so there is no Recursive~Phase, only operations \ref{item:projection} and
\ref{item:pole} of the Inductive~Phase of \fullref{def:constructible_compliant}  apply, and these both
decrease the size of the sets involved, 
so the sequence $\compliant_\Upsilon^0\subset \compliant_\Upsilon^1\subset\cdots$
stabilizes after a number of steps bounded above by the size of 
largest set in $\compliant_\Upsilon^0$, which is the largest join in
$\Upsilon$, whose size is at most $|\Upsilon|\leq N$.
In a RACG we can add a cone vertex, increasing the size of a compliant
set without changing its index.
For operation \ref{item:pole}, it takes at least three generators to make a bushy tree, so
if $\Sigma_{S_0}$ is a line and $\Sigma_{S_1}$ is a bushy tree and
$\Sigma_{S_0*S_1}$ is compliant then $S_0$ is compliant and contains fewer elements
than $S_0\sqcup S_1$, and a cone on $S_0$ is also compliant and
contains fewer elements than $S_0\sqcup S_1$.

Now consider the case that $S_0,S_1\in\compliant^n_\Upsilon$ and
$S:=S_0\cap S_1\cap\bigcap_{t\in T}\lk(t)\in\compliant^{n+1}_\Upsilon\setminus\compliant^n_\Upsilon$
and $S'=S\join\{c\}$ is a cone on $S$.
Since $\Upsilon$ is triangle-free, $S$ is an anticlique. 
Suppose $|S'|\geq |S_0|$ and $|S'|\geq |S_1|$.
Neither of these can be strict, since $S\notin\compliant_\Upsilon^n$
implies $S\subsetneq S_0$ and $S\subsetneq S_1$. 
So $S_0=S\cup\{a_0\}$ and $S_1=S\cup\{a_1\}$.
Furthermore, $S\notin\compliant_\Upsilon^n$ implies $a_i$ is not a
cone vertex of $S_i$, which implies $S_i$ contains no cone vertex,
since $S$ is an anticlique. 
Thus, we entertain the notion that $|S'|=|S_0|=|S_1|$, but such an
inconvenience does not 
propagate any further, since this $S'$ having a
cone vertex means it cannot be the $S_0$ or
$S_1$ in an iterate of this paragraph.
Thus, we conclude that the size of a set in
$\compliant_\Upsilon^{n+2}$ is strictly less than the sets of
$\compliant_\Upsilon^n$ from which it is derived. Thus, 
$\compliant_\Upsilon=\compliant_\Upsilon^{2N}$.
\end{proof}

\begin{remark}
  One might guess a more general definition of `compliant
subcomplex' as one that is sent uniformly Hausdorff close to a
standard subcomplex by any quasiisometry. 
\fullref{def:constructible_compliant} represents the only examples we
know that satisfy this property, but the conclusion of
\fullref{qi_compliant} is stronger: images are not just
close to standard, they are close to the specific standard subcomplexes coming from
$\compliant_{\Upsilon'}$. 
\end{remark}

\begin{remark}
  There is a sense in which the class of products of trees is
  quasiisometrically rigid \cite{MR1934017}, but this involves
  projection to the factors.
  If $S_0\sqcup S_1\in \compliant_\Upsilon$ with $S_0\join
  S_1\subset\Upsilon$,  $\Sigma_{S_0}$ a bushy tree, and 
  $\Sigma_{S_1}$ a tree,  then we cannot
  automatically conclude $S_0\in\compliant_\Upsilon$.
  For example, the
  automorphism $x\mapsto xz$, $y\mapsto y$, $z\mapsto z$ of the RAAG $F_2\times\mathbb{Z}=\langle
  x,y\rangle\times\langle z\rangle$ does not send the standard
  subcomplex $\Salvetti_{\{x,y\}}$ close to a standard subcomplex. 
\end{remark}

\subsection{Coarse geometry of compliant cycles}
We would like the next result to say that if a configuration
of compliant subcomplexes exists in a RACG then it is not RAAGedy, but
the actual outcome is more subtle: it says if the configuration exists
and if there is a quasiisometry to a RAAG then there exists a
subcomplex $X'$ with a particular set of properties. The trailing
corollary says if there is no such subcomplex then the group was not
RAAGedy.
In \fullref{nocycles} we will translate these conditions to $\Gamma$,
with one set of conditions describing the configuration of compliant
sets, and a second set of conditions implying that the mystery subcomplex
$X'$ does not exist. If both sets of conditions are true then the group is not RAAGedy.
\begin{theorem}[Compliant cycles]\label{general_nocycles}
    Let $\Gamma$ be an incomplete triangle-free graph
    without separating cliques.
    Suppose there exists $B\geq 0$ such that for every sufficiently large $r$ there are compliant
    subcomplexes $X_0$, $X_1$, \dots, $X_{n-1}$ of the Davis complex $\Davis_\Gamma$, for some $n\geq 3$,
    such that for all $0\leq i<n$ there is a vertex $b_i\in X_i\cap\bar\nbhd_B(X_{(i+1)\mod n})$ and the following conditions are satisfied:
    \begin{enumerate}[(i)]
       \item $d(b_0,b_{n-1})\geq r$.\label{item:wide_base}
      \item $X_0$ has bounded coarse intersection with every other
        $X_i$.\label{item:bounded_intersection}
      \end{enumerate}
           If  $\Gamma$ is RAAGedy, then for all sufficiently large $r$ there is a quasiisometry $\psi\from\Davis_\Gamma\to \Davis_\Gamma$ taking $X_0$ to
          within bounded Hausdorff distance of a compliant subcomplex
          $X'$ that contains a quasigeodesic edge path $\bar\gamma'$ such that: 
          \begin{enumerate}
              \item $\bar\gamma'$ is contained in a bounded neighborhood of $X_0$.
              \item $\bar\gamma'$ comes close to $b_{i_0}$ for some $1\leq i_0\leq n-2$.
          \end{enumerate} 
    Furthermore, the quasiisometry constants of $\psi$, the
          quasigeodesic constants of $\bar\gamma'$,
          $d_{Haus}(X',\psi(X_0))$, and $d(\bar\gamma',b_{i_0})$ are
          independent of $r$.
        \end{theorem}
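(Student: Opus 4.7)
The plan is to assume $\Gamma$ is RAAGedy, so there is an $(L,A)$-quasiisometry $\phi\from\Davis_\Gamma\to\Salvetti_\Delta$ with $(L,A)$-coarse inverse $\bar\phi$, and transport the compliant cycle to the RAAG. Applying \fullref{qi_compliant} to each $X_i$ produces compliant subcomplexes $X_i'\subset\Salvetti_\Delta$ with $d_{Haus}(\phi(X_i),X_i')\leq C$ for a uniform $C$ depending only on $L$, $A$, and $|\Gamma|$. By \fullref{qi_preserves_coarse_intersection} applied to hypothesis~\eqref{item:bounded_intersection}, the coarse intersections $X_0'\cint X_i'$ are bounded for $i\neq 0$; then by \fullref{raagtrivialprojection} the closest-point projection $\pi_{X_0'}$ is constant on each $X_i'$, with image a single point $p_i'\in X_0'$.

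\textbf{Long projection chain and choice of $i_0$.} Since $b_i\in X_i\cap\bar\nbhd_B(X_{(i+1)\!\!\!\mod n})$, the image $\phi(b_i)$ lies within a uniform distance of both $X_i'$ and $X_{i+1}'$, so $\pi_{X_0'}(\phi(b_i))$ is within a uniform bound $K$ of both $p_i'$ and $p_{i+1}'$. Hence consecutive points $p_i',p_{i+1}'$ are within $2K$ of one another. Meanwhile $\phi(b_0)$ is close to $X_0'$ (since $b_0\in X_0$) and $\phi(b_{n-1})$ is close to $X_0'$ (since $b_{n-1}\in\bar\nbhd_B(X_0)$), so $\pi_{X_0'}(\phi(b_0))\approx\phi(b_0)$ and $\pi_{X_0'}(\phi(b_{n-1}))\approx\phi(b_{n-1})$; hypothesis~\eqref{item:wide_base} then forces the whole chain $\pi_{X_0'}(\phi(b_0)), p_1', \ldots, p_{n-1}', \pi_{X_0'}(\phi(b_{n-1}))$ to traverse a distance at least $r/L-A-O(1)$ along $X_0'$. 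In particular $n$ must grow linearly with $r$. Using the chain, select an intermediate index $i_0\in\{1,\dots,n-2\}$ whose corresponding projection $p_{i_0+1}'$ lies in the middle of the chain, and argue that for such $i_0$ the point $\phi(b_{i_0})$ itself is within a uniform distance $D$ of $X_0'$: this uses the rigidity of projections in the RAAG together with the fact that only finitely many parallelism classes of standard subcomplexes are available, so the $X_i'$ must thread along $X_0'$ in a controlled way.

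\textbf{Constructing $\psi$, $X'$, and $\bar\gamma'$.} Given $i_0$ as above, work inside $\Salvetti_\Delta$ to find an element $h\in A_\Delta$ such that $h\cdot X_0'$ is parallel to $X_0'$ and passes within uniform distance of $\phi(b_{i_0})$; existence of such $h$ follows because $\phi(b_{i_0})$ is within distance $D$ of $X_0'$ and the stabilizer of $X_0'$ together with the commuting elements in $A_\Delta$ act with bounded quotient on the set of parallel translates passing within any fixed distance. Define $Y':=h\cdot X_0'$, a compliant subcomplex of $\Salvetti_\Delta$ at uniformly bounded Hausdorff distance $D'$ from $X_0'$, and with unbounded coarse intersection with $X_0'$. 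Set $\psi:=\bar\phi\circ h\circ\phi$, a self-quasiisometry of $\Davis_\Gamma$ whose constants depend only on $L,A$. Then $\psi(X_0)\ceq\bar\phi(Y')$, which by \fullref{qi_compliant} applied to $\bar\phi$ is at uniformly bounded Hausdorff distance of a compliant subcomplex $X'\subset\Davis_\Gamma$. Let $\bar\gamma'$ be the $\bar\phi$-image of a combinatorial geodesic segment in $Y'$ passing through (a point close to) $\phi(b_{i_0})$, long enough to satisfy the quasigeodesic conclusion. Since $Y'\subset\bar\nbhd_{D'}(X_0')$, the segment projects under $\bar\phi$ into a bounded neighborhood of $X_0$; since the segment contains $\phi(b_{i_0})$, its $\bar\phi$-image comes uniformly close to $b_{i_0}$. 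All of $D$, $D'$, the QI constants, and the quasigeodesic constants are bounded in terms of $L,A,B,C$ and $|\Gamma|$, hence independent of $r$.

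\textbf{Main obstacle.} The conceptually clean part is the projection argument (Steps 1--3), which mechanically exploits the RAAG dichotomy from \fullref{raagtrivialprojection}. The heart of the work is twofold: first, establishing in the RAAG that some intermediate $\phi(b_{i_0})$ must in fact lie uniformly close to $X_0'$ (not only its projection); and second, constructing $Y'$ as a translate of $X_0'$ realizing this closeness, so that pulling back under $\bar\phi$ yields an honest compliant subcomplex of $\Davis_\Gamma$ that contains a long quasigeodesic passing near both $X_0$ and $b_{i_0}$. Both steps require careful bookkeeping of the uniform constants coming from \fullref{qi_compliant} and the bridge structure from \fullref{bridge}, in order to ensure the final estimates are independent of $r$.
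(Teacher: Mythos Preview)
Your setup and the first part of the projection argument are correct and match the paper: push the $X_i$ to compliant $Y_i\subset\Salvetti_\Delta$ via \fullref{qi_compliant}, and use \fullref{raagtrivialprojection} to see that each $Y_i$ ($i\neq 0$) projects to a single vertex of $Y_0$.

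The gap is the second half of your Step~2. You assert that for some intermediate $i_0$ the point $\phi(b_{i_0})$ itself lies within a uniform distance $D$ of $X_0'$. This is false, and in fact the theorem is designed precisely for the situation where every intermediate $b_i$ is at distance $\geq r$ from $X_0$ (see the remark immediately following the theorem statement), so $\phi(b_{i_0})$ is at distance roughly $r/L$ from $Y_0$. Your justification (``finitely many parallelism classes'', ``thread along $X_0'$ in a controlled way'') does not supply a bound: the $Y_i$ can and do wander arbitrarily far from $Y_0$ while still projecting to single points. Consequently your Step~3 also breaks: the $Y'$ you want, a translate of $Y_0$ at uniformly bounded Hausdorff distance containing $\phi(b_{i_0})$, need not exist.

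What the paper does instead is this. Build paths $\delta_i\subset Y_i$ and short connectors $\epsilon_i$ (each of length $\leq 2C+LB+A$) between them, and project the concatenation $\epsilon_0+\delta_1+\epsilon_1+\cdots+\delta_{n-1}+\epsilon_{n-1}$ to $Y_0$. Each $\delta_i$ collapses to a point, and $\epsilon_0$, $\epsilon_{n-1}$ are too short to cover $d(\delta_0^+,\delta_0^-)\gtrsim r/L$, so some intermediate $\epsilon_{i_0}$ must have \emph{nontrivial} projection to $Y_0$. That means $\epsilon_{i_0}$ contains an edge $e'$ parallel (via \fullref{commutator_labelling}) to an edge $e\subset Y_0$: there is $b\in A_\Delta$ with $e'=be$ and $b$ commuting with the label $a$ of $e$. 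Now set $Y':=bY_0$ and $\psi:=\bar\phi\circ(b\cdot)\circ\phi$. The point is not that $Y'$ is Hausdorff-close to $Y_0$ (the distance $|b|$ is typically of order $r$), but that $Y'$ contains the bi-infinite $a$-colored geodesic $\gamma'$ through $e'$, which is parallel to an $a$-colored geodesic $\gamma\subset Y_0$. Pulling $\gamma'$ back gives the quasigeodesic $\bar\gamma'\subset X'$; it sits in a finite (possibly $r$-dependent) neighborhood of $X_0$, witnessing unbounded coarse intersection, yet $e'\in\epsilon_{i_0}$ is within $|\epsilon_{i_0}|\leq 2C+LB+A$ of $\phi(b_{i_0})$, which makes $d(\bar\gamma',b_{i_0})$ uniformly bounded. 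The parallelism of a single edge, not proximity of $\phi(b_{i_0})$ to $Y_0$, is the mechanism you are missing.
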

In applications we will arrange that for all $1\leq i\leq n-2$,
$d(b_i,X_0)\geq r$, since if some $b_i$ is close to $X_0$ we could
take $\psi=\mathrm{Id}_{\Davis_\Gamma}$, and the theorem would be vacuous. 
        \begin{corollary}\label{cor:nocycles}
          Suppose the hypotheses of \fullref{general_nocycles} are
          satisfied and there does not exist a quasiisometry
          $\psi\from\Davis_\Gamma\to \Davis_\Gamma$ taking $X_0$ to
          within bounded Hausdorff distance of a compliant subcomplex
          that has unbounded coarse intersection with $X_0$ and comes
          within the required distance of some $b_i$.
        Then $\Gamma$ is not RAAGedy.
      \end{corollary}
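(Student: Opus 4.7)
This corollary is essentially the contrapositive of Theorem \ref{general_nocycles}, so my plan is to assume $\Gamma$ is RAAGedy, invoke the theorem to obtain a quasiisometry $\psi$ and compliant subcomplex $X'$ satisfying relatively strong properties, and then verify that this $(\psi, X')$ already satisfies the weaker properties whose nonexistence is hypothesized in the corollary, yielding the desired contradiction.

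Concretely, I would first assume the hypotheses of Theorem \ref{general_nocycles} hold and suppose for contradiction that $\Gamma$ is RAAGedy. The theorem then produces, for each sufficiently large $r$, a quasiisometry $\psi_r \colon \Davis_\Gamma \to \Davis_\Gamma$ and a compliant subcomplex $X'_r$ with $d_{Haus}(\psi_r(X_0), X'_r) \leq C$, containing a quasigeodesic edge path $\bar\gamma'_r \subset X'_r$ that lies in $\bar\nbhd_D(X_0)$ and passes within distance $D'$ of some $b_{i_0(r)}$ with $1 \leq i_0(r) \leq n-2$. All of the constants $C, D, D'$ and the quasigeodesic constants of $\bar\gamma'_r$ are uniform in $r$ by the concluding ``Furthermore'' clause of the theorem.

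Next I would verify that each such $(\psi_r, X'_r)$ satisfies both properties whose failure is hypothesized in the corollary. That $X'_r$ comes within the required distance of some $b_i$ is immediate: take $b_i := b_{i_0(r)}$, and note that $\bar\gamma'_r \subset X'_r$ passes within $D'$ of $b_{i_0(r)}$. To establish that $X'_r$ has unbounded coarse intersection with $X_0$, I would observe that $\bar\gamma'_r$ sits in $X'_r \cap \bar\nbhd_D(X_0)$ and, being a quasigeodesic with uniform constants connecting two points whose separation grows linearly in $r$, has length comparable to $r$. Hence for every threshold $R > 0$, one can choose $r$ large enough that $\bar\nbhd_R(X'_r) \cap \bar\nbhd_R(X_0)$ has diameter exceeding any prescribed bound, witnessing that $X_0 \cint X'_r$ is unbounded.

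These two properties make $(\psi_r, X'_r)$ precisely the type of quasiisometry whose nonexistence is asserted in the corollary's hypothesis, contradicting that hypothesis and forcing $\Gamma$ to be non-RAAGedy. The only mildly subtle point---really a bookkeeping matter rather than a serious obstacle---is justifying that the length of $\bar\gamma'_r$ genuinely grows with $r$. This is implicit in the theorem's construction: $\bar\gamma'_r$ arises as the coarse pullback of a long concatenated path traversing the compliant cycle in the RAAG image of $\Davis_\Gamma$, whose endpoints correspond under the ambient quasiisometry to $b_0$ and $b_{n-1}$ with $d(b_0, b_{n-1}) \geq r$, so $\bar\gamma'_r$ has length at least comparable to $r/L - A$ for uniform quasiisometry constants $L, A$. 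Once this uniform growth is confirmed, the contradiction is complete and the corollary follows.
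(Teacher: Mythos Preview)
Your overall strategy---assume RAAGedy, apply \fullref{general_nocycles}, and check that the resulting $(\psi,X')$ violates the corollary's nonexistence hypothesis---is exactly right, and is what the paper intends (the paper gives no separate proof of the corollary; it is regarded as immediate from the theorem).

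There is, however, a genuine gap in your verification that $X_0\cint X'_r$ is unbounded. You argue that $\bar\gamma'_r$ has length comparable to $r$, and hence that $\bar\nbhd_R(X'_r)\cap\bar\nbhd_R(X_0)$ can be made to have arbitrarily large diameter by taking $r$ large. But ``arbitrarily large as $r$ varies'' is not the same as ``unbounded for a fixed $r$''; the coarse intersection $X_0\cint X'_r$ is a property of a single pair of subcomplexes, and a large-but-finite diameter is still bounded. Your final paragraph compounds this by misidentifying $\bar\gamma'$: it is not the pullback of the concatenated path around the compliant cycle with endpoints near $b_0$ and $b_{n-1}$; in the proof of the theorem it is the $\bar\phi$-image (projected into $X'$) of a \emph{bi-infinite} standard geodesic $\gamma'$ in the RAAG, parallel to a geodesic $\gamma\subset Y_0$.

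Once you know $\bar\gamma'$ is bi-infinite, the unboundedness of $X_0\cint X'$ is immediate: $\bar\gamma'\subset X'$ lies in a uniformly bounded neighborhood of $X_0$, so an unbounded subset of $X'$ sits in $\bar\nbhd_D(X_0)$. Alternatively, you can rescue your quantitative approach by invoking the dichotomy for standard subcomplexes: by \fullref{projection_of_standard_is_standard} the set $\pi_{X_0}(X')$ is a standard subcomplex of the triangle-free Davis complex, and such a subcomplex has diameter at most $2$ or is infinite; so once $\bar\gamma'_r$ forces $\diam\pi_{X_0}(X'_r)>2$, the coarse intersection is automatically unbounded. Either route closes the gap.
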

 \begin{proof}[Proof of \fullref{general_nocycles}]
    Suppose that $\phi\from W_\Gamma\to A_\Delta$ and $\bar\phi\from
    A_\Delta\to W_\Gamma$ are coarse inverse $(L,A)$--quasiisometries
    between $W_\Gamma$ and some RAAG $A_\Delta$.
    Let $C$ be the constant of \fullref{qi_compliant} for this $L$ and
    $A$ and $N=\max\{|\Gamma|,|\Delta|\}$. 

Assume $r$ is large compared to $A$, $B$, $C$, and $L$; specifically,
$r>L(6C+4A+3LB)$ is the estimate that will be needed later.
Choose $X_i$ and $b_i$ with respect to this $r$.

Since each $X_i$ is compliant, by \fullref{qi_compliant} there is a compliant subcomplex $Y_i$ of
$\Salvetti_\Delta$ at Hausdorff distance at most $C$ from
$\phi(X_i)$.
Let $\delta_i$ be a path in $Y_i$ from a point $\delta_i^-$ of $Y_i$ closest to
$\phi(b_{(i-1)\mod n})$ to a point $\delta_i^+$ of $Y_i$ closest to
$\phi(b_{i})$.
Let $\epsilon_i$ be a geodesic from $\delta_i^+$ to $\delta_{(i+1)\mod
  n}^-$.
Note that $|\epsilon_i|\leq 2C+LB+A$ for all $i$, and that $\delta_0$ and
the concatenation
$\epsilon_0+\delta_1+\epsilon_1+\delta_2+\epsilon_2+\cdots
+\delta_{n-1}+\epsilon_{n-1}$ are paths with the same endpoints
$\delta_0^+$ and $\delta_0^-$.
From the assumptions $d(b_0,b_{n-1})\geq r$ and $d(b_{n-1},X_0)\leq B$
we get an estimate:
\[d(\delta_0^+,\delta_0^-)\geq d(\phi(b_0),\phi(b_{n-1}))-LB-A-2C\geq r/L-LB-2A-2C\]

\begin{figure}[h]
  \centering
 \raisebox{-10ex}{\includegraphics{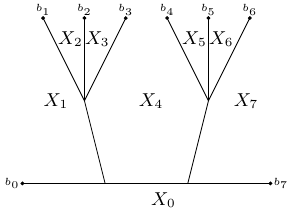}}
  $\stackrel{\phi}{\longrightarrow}$
 \raisebox{-10ex}{\includegraphics{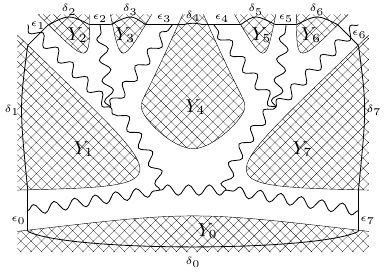}}

 \caption{Quasiisometry carrying compliant cycle into a RAAG. Each
   compliant set $X_i$ on the left, visualized as complementary
   regions of a tree in the plane,  is sent by $\phi$ close to a
   compliant subcomplex $Y_i$.
   Consecutive $Y_i$, $Y_{i+1}$ may no longer come $B$--close to each other, but they both
   come close to $\phi(b_i)$, so the path $\epsilon_i$ crossing
   between them is uniformly short.} 
  \label{fig:compliantqi}
\end{figure}

Combinatorial closest point projection to $Y_0$ in $\Salvetti_\Delta$
is a Lipschitz, hence combinatorial, map, so its sends the
concatenation of $\delta$ and $\epsilon$ paths to an edge path in $Y_0$
from $\delta_0^+$ to $\delta_0^-$.
Since $X_i\cint X_0$ is bounded, so is $Y_i\cint Y_0$, but for
standard subcomplexes of a RAAG this means that $\pi_{Y_0}(Y_i)$ is a
single vertex.
Thus, for all $i\neq 0$,  $\pi_{Y_0}(\delta_i)$ is a single vertex.

Since we assumed $r>L(6C+4A+3LB)$, we have:
\[d(\delta_0^-,\delta_0^+)>2(2C+LB+A)\geq
|\epsilon_0|+|\epsilon_{n-1}|\geq |\pi_{Y_0}(\epsilon_0)|+|\pi_{Y_0}(\epsilon_{n-1})|\]
This means that $\pi_{Y_0}(\epsilon_0)$ and $\pi_{Y_0}(\epsilon_{n-1})$ alone are not long
enough to reach from $\delta_0^+$ to $\delta_0^-$, so 
 $\pi_{Y_0}(\delta_1+\epsilon_1+\cdots+\epsilon_{n-2}+\delta_{n-1})$ is a
 nontrivial edge path in $Y_0$.
 All of the $\delta_i$ project to
single vertices, so there exists some $1\leq i_0\leq n-2$ such that $\epsilon_{i_0}$ has nontrivial
projection to $Y_0$.
Thus, there are parallel edges
$e\in Y_0$ and $e'\in \epsilon_{i_0}$. 

Recalling \fullref{commutator_labelling}, 
let $b\in A_\Delta$ be represented by a word labelling a geodesic from
$e$ to $e'$.
The fact that $e$ and $e'$ are parallel means that that they have the
same label $a$, and that $a$ and $b$
commute.
Up to translation, we may assume that $e$ is the edge between the vertices labelled $1$ and $a$ in $\Sigma_{\Delta}$, in which case, $e' = be$. 
Let $Y':=bY_0$.
Since it is standard, $Y'$ contains the entire standard geodesic $\gamma'$
containing $e'$, just as $Y_0$ contains the entire standard geodesic $\gamma$
containing $e$, and these geodesics are parallel, since every edge in
both geodesics is labelled $a$ and we have an element $b$ commuting
with $a$ realizing the parallel translation. 

Compliance of subcomplexes is preserved by group translation, by
definition, so applying \fullref{qi_compliant} to $Y'$ gives a compliant subcomplex
$X'$ of $\Davis_\Gamma$ at Hausdorff distance at most $C$ from
$\bar\phi(Y')$.
Define $\psi:=\bar\phi\circ (b\cdot)\circ\phi$, where $b\cdot$ is
left-multiplication by $b$, so that $\psi\from
\Davis_\Gamma\to\Davis_\Gamma$ is a quasiisometry taking $X_0$ within
bounded Hausdorff distance of $X'$.
The quasiisometry constants of $\psi$ only depend on $L$ and $A$, while
$d_{Haus}(\psi(X_0),X')$ depends only on $L$, $A$, and $C$.

Push the vertices of $\gamma'$ first to $\Davis_\Gamma$ via
$\bar\phi$, and then into $X'$ via $\pi_{X'}$.
Since $X'$ is convex and $d_{Haus}(X',\bar\phi(Y'))\leq C$, a standard
connect-the-dots argument says there is a quasigeodesic edge path
$\bar\gamma'$ contained in $X'$ whose quasigeodesic constants depend
only on $L$, $A$, and $C$, and that is bounded Hausdorff distance from
$\bar\phi(\gamma')$ and $\bar\phi(\gamma)$, hence contained in a bounded neighborhood of $X_0\ceq\bar\phi(Y_0)$.

The following estimate shows that  $d(\bar\gamma',b_{i_0})$ is bounded above, independent of $r$:
  {\allowdisplaybreaks\begin{align*}
                        d(\bar\gamma',b_{i_0})
                        &\leq d(\pi_{X'}(\bar\phi(\gamma')),b_{i_0})\\
                                     &\leq C+d(\bar\phi(\gamma'),b_{i_0})\\
                     &\leq C+A+d(\bar\phi(\gamma'),\bar\phi\phi(b_{i_0}))\\
                     &\leq C+2A+Ld(\gamma',\phi(b_{i_0}))\\
                     &\leq C+2A+LC+Ld(\gamma',\delta_{i_0}^+)\\
                     &\leq  C+2A+LC+L|\epsilon_{i_0}|\\
                                     &\leq  C+2A+LC+L(2C+LB+A)\qedhere
  \end{align*}}
\end{proof}

  \subsection{Graphical criteria}

  The next result gives practical, graphical criteria for applying
  \fullref{general_nocycles}.
  The point is that conditions
  \eqref{item3:paths}--\eqref{item3:szerolinks} imply the hypotheses of
  \fullref{general_nocycles}, but conditions 
  \ref{subcase:hyperbolic}--\ref{subcase:no_match} say that there is
  no possible target subcomplex for a quasiisometry as in
  \fullref{cor:nocycles}, so $\Gamma$ cannot be RAAGedy.

\begin{theorem}\label{nocycles}
  Let $\Gamma$ be an incomplete triangle-free graph without separating
  cliques, and let $\compliant_\Gamma$ be the compliant subsets of
  $\Gamma$, as in \fullref{def:constructible_compliant}.
Suppose there exist,  for $q\geq 1$ and all $0\leq i\leq q-1$,  sets $S_i\in\compliant_\Gamma$ and paths
  $P_i:=(a_{i,0},a_{i,1},\dots,a_{i,\ell(i)-1})$ in $\Gamma$, containing
  $\ell(i)\geq 1$ vertices, such that  all of
  the following hold (for $S_j$, $P_j$, and $a_{j,k}$
  we always implicitly take $j$ mod $q$ and $k$ mod $\ell(j)$):
   \begin{enumerate}
        \item The paths $P_i$ are disjoint, and $P:=\bigcup_jP_j$ is an
          induced subgraph of $\Gamma$.\label{item3:paths}
          \item $\forall i$, $S_{i}\cap
            P=\{a_{i-1,\ell(i-1)-1},a_{i,0}\}$, which are the last vertex of
            $P_{i-1}$ and the first
            vertex of $P_i$.\label{item3:spintersections}
                     \item If $q=1$ then $P_0\not\subset S_0$.\label{item3:free}
            \item $\forall i\neq 0$, $S_i\cap S_0$ is a
              clique. \label{item3:sintersections}
              \item $\forall (i,j)\notin \{(0,0),(q-1,\ell(q-1)-1)\}$,
                $\lk(a_{i,j})\cap S_0$ is a clique.\label{item3:szerolinks}
    \end{enumerate}
   Assume that no proper subset of $S_0$ belongs to $\compliant_\Gamma$ and
  contains the first and last vertices $a_{0,0}$ and
  $a_{q-1,\ell(q-1)-1}$ of $P$.
       Let $\Gamma_0$ be the subgraph of $\Gamma$ induced by
       $S_0$.
       If either of the following are true then $\Gamma$ is not RAAGedy:
  \begin{enumerate}[(a)]
    \item $\Gamma_0$ is square-free.\label{subcase:hyperbolic}
    \item No $S'\in\compliant_\Gamma$ spans a subgraph $\Gamma'$ 
       satisfying all of the following conditions:
       \begin{enumerate}[label=(\Roman*)]
           \item $\Gamma_0\cap\Gamma'$ is incomplete.\label{item:gamma_prime_cap_gamma_zero_incomplete}
              \item $\Gamma'\cap P$ is incomplete.\label{item:gamma_prime_cap_P_incomplete}
               \item $\{a_{0,0},a_{q-1,\ell(q-1)-1}\}\nsubset\Gamma'$.\label{item:end_pair}
               \item There exists a quasiisometry $\psi\from
             \Davis_\Gamma\to\Davis_\Gamma$  with $\psi(\Davis_{\Gamma_0})\ceq\Davis_{\Gamma'}$. \label{item:gamma_prime_qi_to_gamma_zero}
          \end{enumerate}\label{subcase:no_match} 
  \end{enumerate}
\end{theorem}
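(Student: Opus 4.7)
The plan is to apply \fullref{general_nocycles} (equivalently \fullref{cor:nocycles}) by building a cycle of compliant subcomplexes from the combinatorial data $(S_i, P_i)$, then showing that the conclusion of \fullref{general_nocycles}---the existence of a quasiisometry $\psi$ and compliant subcomplex $X'$ with a controlled bi-infinite quasigeodesic---is incompatible with hypothesis (a) or (b).

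First I would construct the cycle. Set $X_0 := \Davis_{S_0}$, and inductively choose $X_i$ as a left translate of $\Davis_{S_i}$: by condition \eqref{item3:spintersections} both endpoints $a_{i-1,\ell(i-1)-1}$ and $a_{i,0}$ of the bridge at stage $i$ lie in $S_i$, and by condition \eqref{item3:paths} the path $P_i$ reads a reduced word $w_i = a_{i,0}\cdots a_{i,\ell(i)-1}$ in $W_\Gamma$. Place the bridges $b_i$ at these transitions, so each $b_i$ is within distance $\ell(i)$ of $X_{(i+1)\bmod q}$. To achieve $d(b_0, b_{q-1}) \geq r$, inflate the cycle element by an internal $W_{S_j}$-word of length comparable to $r$ for some $j$; the standing hypothesis together with conditions \eqref{item3:free} and \eqref{item3:paths} supplies the required infinite-order translation. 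Then verify hypothesis (ii) of \fullref{general_nocycles} via \fullref{projection_of_standard_is_standard}: $X_0 \cint X_i$ equals the standard subcomplex $\Davis_{S_0 \cap S_i \cap \bigcap_{t \in T}\lk(t)}$ for $T$ the set of generators in the bridge word; condition \eqref{item3:sintersections} makes $S_0 \cap S_i$ a clique, and condition \eqref{item3:szerolinks} constrains the contribution of $T$, jointly forcing boundedness.

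Next, assume $\Gamma$ is RAAGedy and apply \fullref{general_nocycles} to obtain $\psi$ and $X'$. Post-composing $\psi$ with a left translation (which preserves compliance), we may assume $X' = \Davis_{\Gamma'}$ for some $S' \in \compliant_\Gamma$, establishing \ref{item:gamma_prime_qi_to_gamma_zero}. For \ref{item:gamma_prime_cap_gamma_zero_incomplete}: the bi-infinite quasigeodesic $\bar\gamma' \subset X'$ lying in a bounded neighborhood of $X_0$ forces $X' \cint X_0$ to be unbounded; by \fullref{projection_of_standard_is_standard} this coarse intersection is $\Davis_{S_0 \cap S' \cap \bigcap_{t' \in T'}\lk(t')}$, so $\Gamma_0 \cap \Gamma'$ is incomplete. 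For \ref{item:gamma_prime_cap_P_incomplete}: $\bar\gamma'$ approaches $b_{i_0}$, which by construction lies uniformly close to vertices of $P_{i_0}$; since $\bar\gamma'$ is bi-infinite and must be supported on generators of $\Gamma'$, the set $\Gamma' \cap P$ cannot be a clique. For \ref{item:end_pair}: if both endpoints of $P$ were in $\Gamma'$, then the compliant subset $S_0 \cap S' \cap \bigcap_{t' \in T'}\lk(t') \subset S_0$ would contain both, forcing equality with $S_0$ by the standing hypothesis; \fullref{cor:coarse_intersection_of_standard} would then give $X' \ceq X_0$ modulo a clique factor, contradicting $\bar\gamma'$'s excursion near $b_{i_0}$ (arranged in the construction to sit at nontrivial distance from $X_0$). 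Together these four derivations contradict the hypothesis of case (b), proving the theorem under (b).

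For case (a), I would reduce to (b): if $\Gamma_0$ is square-free then Moussong's criterion makes $\Davis_{\Gamma_0}$ $\delta$-hyperbolic, so through $\psi$ the quasiisometric $\Davis_{\Gamma'}$ is also hyperbolic and $\Gamma'$ is square-free. In a hyperbolic standard subcomplex, a quasigeodesic close to $X_0$ shadows a genuine bi-infinite geodesic of $X_0$, placing that geodesic coarsely inside $X' \cint X_0 = \Davis_{S_0 \cap S' \cap \bigcap_{t'}\lk(t')}$; hyperbolic rigidity then forces this standard subcomplex to carry enough of $\Gamma_0$ to include both endpoints of $P$, whereupon the standing hypothesis and \fullref{cor:coarse_intersection_of_standard} yield the same contradiction with \ref{item:end_pair} as in case (b). The main obstacle is the bookkeeping in the cycle construction, specifically controlling the set $T$ in \fullref{projection_of_standard_is_standard}: $T$ depends on the precise cycle element, and keeping $S_0 \cap S_i \cap \bigcap_{t \in T}\lk(t)$ a clique for \emph{every} $i$, not just adjacent ones, is precisely where condition \eqref{item3:szerolinks} is needed. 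A secondary obstacle is making the case~(a) reduction rigorous, converting the qualitative statement ``unbounded coarse intersection in a hyperbolic standard subcomplex'' into the algebraic conclusion that both endpoints of $P$ lie in $\Gamma'$.
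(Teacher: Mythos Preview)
Your cycle construction has a fatal gap. With only $q$ subcomplexes $X_0,\dots,X_{q-1}$ and ``inflation'' inside some $W_{S_j}$, the intermediate points $b_1,\dots,b_{q-2}$ sit at distance from $X_0$ bounded by the fixed path-lengths $\ell(i)$, independent of $r$. The conclusion of \fullref{general_nocycles} then says only that $\bar\gamma'$ comes close to some $b_{i_0}$ which is \emph{already} close to $X_0$, so you learn nothing. The paper's construction is quite different: it views $P$ as a subgraph of a cycle $C_m$, checks via an Euler-characteristic count (using hypotheses \eqref{item3:paths}--\eqref{item3:free}) that $W_P$ is virtually nonabelian free, and uses the planar embedding of $\Davis_P$ as a punctured surface. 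The bicolored boundary geodesics of $\Davis_P$ are exactly the pieces that embed into translates of the $\Davis_{S_j}$. One then takes an $r$--tubular neighborhood of a length-$r$ segment of the base geodesic $\gamma_0\subset X_0$; its many boundary components (their number grows with $r$) are the $X_i$, and the points $b_i$ are where these components exit the neighborhood---by construction at distance exactly $r$ from $X_0$. Hypotheses \eqref{item3:sintersections} and \eqref{item3:szerolinks} are then applied via a wall-separation argument (not just via $S_0\cap S_i$) to get bounded coarse intersection for all of these $X_i$, including the many that are translates of $\Davis_{S_0}$ itself.

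Your endgame also diverges from the paper and has gaps. The paper does not prove \ref{item:end_pair} independently; instead it shows $\Gamma'$ satisfies \ref{item:gamma_prime_cap_gamma_zero_incomplete}, \ref{item:gamma_prime_cap_P_incomplete}, \ref{item:gamma_prime_qi_to_gamma_zero} (the argument for \ref{item:gamma_prime_cap_P_incomplete} uses three specific walls $\wall_1,\wall_2,\wall_3$ dual to the first edges of a geodesic in $\Davis_P$ from $\gamma_0$ to $b_{i_0}$, forcing two nonadjacent $P$--labels into $S'$), and then under hypothesis \ref{subcase:no_match} concludes that \ref{item:end_pair} \emph{fails}, i.e.\ both endpoints lie in $\Gamma'$. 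Minimality then gives $S_0\subset S'$, and since $z_1\in S'\cap P\setminus S_0$, hypothesis \eqref{item3:szerolinks} forces $S_0$ to lie almost entirely in one side of the join determined by $\lk(z_1)$, so $\Gamma_0$ is square-free: case \ref{subcase:no_match} reduces to case \ref{subcase:hyperbolic}, not the other way around. The final contradiction in case \ref{subcase:hyperbolic} is obtained directly: hyperbolicity lets one replace $\bar\gamma'$ by a genuine geodesic $\gamma''\subset X'$ which, for $r$ large, must enter $\wall_1^-$ while having both ends in $\wall_1^+$, impossible since walls are convex. Your attempt to derive $X'\ceq X_0$ from $S_0\subset S'$ via \fullref{cor:coarse_intersection_of_standard} does not work: that corollary needs $\Davis_{S_0}\ceq g\Davis_{S'}$ as a hypothesis, which you have not established.
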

The minimality condition on $S_0$ is justified because a proper subset of
$S_0$ belonging to $\compliant_\Gamma$ and containing
$\{a_{0,0},a_{q-1,\ell(q-1)-1}\}$ satisfies hypotheses \eqref{item3:paths}--\eqref{item3:szerolinks}  for the
same choices of $S_i$ and $P_i$.

In \fullref{sec:not_same_orbit}  we give some ways to rule out condition
  \ref{item:gamma_prime_qi_to_gamma_zero}.
       
\begin{proof}
 Let $C_m$ be the cycle graph of length $m\geq 3$.
 Its commutator complex is a
  closed surface, since it is a connected square complex such that the
  link of every vertex is a circle.
  Its Euler characteristic is $2^m-m2^{m-1}+m2^{m-2}=2^{m-2}(4-m)$.
Think of  $P=\sqcup_iP_i$ as a subgraph of $C_m$ for  
$m:=\sum_{i=0}^{q-1}\ell(i)$, where the vertices $a_{i,j}$ are
ordered lexicographically, so that for each $i$ there is an edge 
$a_{i,\ell(i)-1}\edge a_{i+1,0}$ of $C_m$ that is not an edge of $P$. 
Thus, the commutator complex of $P$ is homotopy equivalent to the
commutator complex of $C_m$ after puncturing each square labelled by a
commutator $[a_{i,\ell(i)-1},a_{i+1,0}]$.
There are $q2^{m-2}$ such squares, so the Euler characteristic of the commutator complex of $P$ is
$2^{m-2}(4-m-q)$.
We claim this is a negative number, so $W_P$ is a virtually a nonAbelian
free group.
 To see this, consider that the alternative is that either $q=1$ and
 $\ell(0)\leq 3$ or $q=2$ and $\ell(0)=\ell(1)=1$.
 If $q=1$ then 
 Hypothesis~\eqref{item3:free} says $\ell(0)>2$, but it cannot be that
 $q=1$ and $\ell(0)=3$ because this would either give a triangle in
 $\Gamma$ or contradict Hypothesis~\eqref{item3:szerolinks}.
 We cannot have $q=2$ and $\ell(0)=\ell(1)=1$ because this would
 either violate Hypothesis~\eqref{item3:paths}, if $a_{0,0}$ and $a_{1,0}$ are
 adjacent, or Hypothesis~\eqref{item3:sintersections} if not.

$\Davis_P$ is the universal cover of a closed surface with some open
faces removed, so $\Davis_P$ admits a planar embedding in which all of its vertex links are copies of $P$ ordered as
in $C_m$ or its reverse, and whose boundary components are the bicolored geodesics
whose colors are  $a_{i,\ell(i)-1}a_{i+1,0}$, for each $i$; that is,
the boundary components are the lifts of the boundaries of the missing
faces.

If $\{a\}=P_i$ is an isolated vertex of $P$ then edges of $\Davis_P$
labelled $a$ belong to two different components of $\bdry\Davis_P$, one
bicolored with $a$ and the last vertex of $P_{i-1}$ and one bicolored
with $a$ and the first vertex of $P_{i+1}$.
If $a_{i,0}$ is the first vertex of a non-singleton component $P_i$
of $P$ then an edge $e$ of $\Davis_P$ labelled $a_{i,0}$ is contained in a
unique component of $\bdry\Davis_P$ and is a face of a unique square
whose sides are colored $a_{i,0}$ and $a_{i,1}$.
The opposite face of this square is the unique edge of $\Davis_P$
parallel to $e$.

Fix an identity vertex $1$ of $\Davis_P$, let $\gamma_0$ be the
$a_{0,0}a_{q-1,\ell(q-1)-1}$ bicolored geodesic through 1,
parameterized by arclength with $\gamma_0(0)=1$. 
For any $r>0$, consider the following set, where the overbar means closure in the 1--skeleton:
\[\hat\Davis_P:=\bar\nbhd_r(\gamma_0)\cap\overline{\pi_{\gamma_0}^{-1}(\gamma_0(0,r))}\]
This should be imagined as an $r$--tubular
neighborhood in $\Davis_P$ of the subsegment of $\gamma_0$ of length
$r$ starting at 1.
We specify a collection of components of $\bdry\Davis_P$ that 
contains all of the vertices of $\hat\Davis_P$. First, include every
boundary component of $\Davis_P$ that contains an edge in $\hat\Davis_P$. Then, for each vertex $x$ of $\hat\Davis_P$ that is not contained in one of these boundary components, choose any one of the components of $\bdry\Davis_P$ containing $x$. Clockwise with respect to the planar embedding, starting from $\gamma_0$,  consecutively number these boundary components $\gamma_0$, $\gamma_1$,\dots,$\gamma_{n-1}$. Orient each boundary component $\gamma_i$ accordingly, i.e.~choose a parametrization of $\gamma_i$ so that the vertex at which $\gamma_i$ enters $\hat \Sigma_P$ appears, in the clockwise ordering, before the vertex at which $\gamma_i$ leaves $\hat \Sigma_P$. Let $b_i$ be the last vertex of $\gamma_i$ in $\hat \Sigma_P$. Every vertex of $\Davis_P$ lies on some boundary component. Thus, the consecutive ordering of the boundary components $\gamma_i$ and the choice of the vertices $b_i$ yield $B:=2\geq d(b_i,\gamma_{i+1})$. See \fullref{fig:examplecompliant}
and \fullref{fig:treelike}.

Next we argue that all the $b_i$ with $i \notin \{0, n-1\}$ are at distance $r$ from $\gamma_0$. More specifically we show that the first vertex of $\gamma_1 \cap \hat \Sigma_P$ is $1$-close to $\gamma_0$, the last vertex  $b_{n-1}$ of $\gamma_{n-1} \cap \hat \Sigma_P$ is $1$-close to $\gamma_0$ and all the other endpoints of $\gamma_i \cap \hat \Sigma_P$ with $i \neq 0$ are at distance $r$ from $\gamma_0$. 

The region $\hat\Davis_P$ is bounded, so each geodesic $\gamma_i$
intersects it in a bounded subinterval (possibly a single vertex).
By construction, the extreme vertices of $\hat\Davis_P$ are those that
project to either $\gamma_0(0)$ or $\gamma_0(r)$ and those that are at
distance $r$ from $\gamma_0$. 
In particular, each endpoint of each $\gamma_i\cap\hat\Davis_P$ either is at
distance $r$ from $\gamma_0$ or projects to one of the endpoints of $\gamma_0\cap\hat\Davis_P$.
By construction, if $\gamma_0(0)\in\pi_{\gamma_0}(\gamma_i)$ then $\pi_{\gamma_0}(\gamma_i)$
contains the edge $\gamma_0(0,1)$.
This edge is either not a face of a square of $\Davis_P$, in which
case it is also contained in $\gamma_1$, or it is a face of a unique
square whose opposite face is a boundary edge, so is contained in
$\gamma_1$. Thus, $\gamma_1$ is the only $\gamma_i$ for $i\neq 0$
with $\gamma_0(0)\in\pi_{\gamma_0}(\gamma_i)$. 
Similarly, $\gamma_{n-1}$ is the only $\gamma_i$ for $i\neq 0$ such
that $\gamma_0(r)\in\pi_{\gamma_0}(\gamma_i)$.
Accordingly,  the first vertex of $\gamma_1 \cap \hat \Sigma_P$ and
$b_{n-1}$ are $1$-close to $\gamma_0$ and any other endpoint of any
$\gamma_i \cap \hat \Sigma_P$ with $i \neq 0$ leaves the bounded
region $\hat\Davis_P$ through a vertex at distance $r$ from
$\gamma_0$.

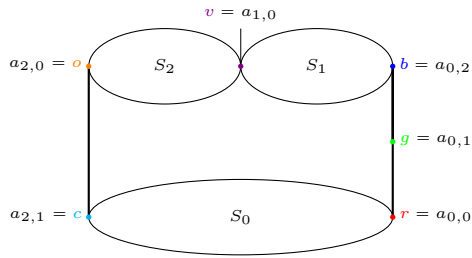
\begin{figure}[h]
  \centering
  \begin{tikzpicture}\tiny
    \draw (0,0) ellipse (2cm and .5cm) node {$S_0$};
    \draw (1,2) ellipse (1cm and .5cm) node {$S_1$};
    \draw (-1,2) ellipse (1cm and .5cm) node {$S_2$};
      \coordinate[label={[label distance=0pt] 90:${\color{violet}v}=a_{1,0}$}] (v) at (0,2.5);
      \draw[ultra thin] (0,2)--(v);
      \coordinate[label={[label distance=0pt]
        0:${\color{red}r}=a_{0,0}$}] (r) at (2,0);
      \coordinate[label={[label distance=0pt]
        0:${\color{green}g}=a_{0,1}$}] (g) at (2,1);
               \coordinate[label={[label distance=0pt] 0:${\color{blue}b}=a_{0,2}$}] (b) at (2,2);
      \coordinate[label={[label distance=0pt]
        180:$a_{2,0}={\color{orange}o}$}] (o) at (-2,2);
      \coordinate[label={[label distance=0pt]
        180:$a_{2,1}={\color{cyan}c}$}] (c) at (-2,0);
      \draw[thick] (r)--(b)--(g) (o)--(c);
               \fill[color=cyan] (c) circle (1pt);
    \fill[color=violet] (0,2) circle (1pt);
               \fill[color=orange] (o) circle (1pt);
                          \fill[color=blue] (b) circle (1pt);
               \fill[color=green] (g) circle (1pt);
               \fill[color=red] (r) circle (1pt);
  \end{tikzpicture}
  \caption{Example cycle $S_0$, $P_0$, $S_1$, $P_1$, $S_2$, $P_2$.}
  \label{fig:examplecompliant}
\end{figure}
\begin{figure}[h]
  \centering
  \includegraphics{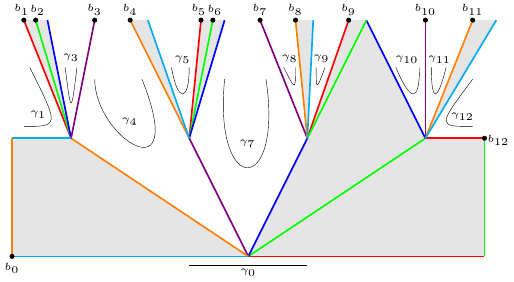}
  \caption{$\hat\Davis_P$ for $r=2$ and  $P$ as in
    \fullref{fig:examplecompliant}.}
  \label{fig:treelike}
\end{figure}

The inclusion of $\Davis_P$ into $\Davis_\Gamma$ is an
isometric embedding, by Hypothesis~\eqref{item3:paths}.
Hypothesis~\eqref{item3:spintersections} implies that for each
$\gamma_i$ there is a unique translate $X_i$ of one of the
$\Davis_{S_j}$ that contains $\gamma_i$, since for each $j$ only
$S_j$ contains both $a_{j,0}$ and $a_{j-1,\ell(j-1)-1}$. 
This also implies that $X_i \cap \Davis_P = \gamma_i$.
Furthermore, $b_i\in X_i$ and $B\geq d(b_i,\gamma_{i+1})\geq d(b_i,X_{i+1})$ and 
$d(b_0,b_{n-1})$ is either $r$ or $r+1$.
We have also arranged the $b_i$ for $i\notin\{0,n-1\}$ to be far from
$X_0$, since $X_0\cap\Davis_P=\gamma_0$ and
$d_{\Davis_P}(b_i,\gamma_0)=r$ imply $d_{\Davis_\Gamma}(b_i,X_0)=r$. 
This follows because $X_0\cap \Davis_P\neq \emptyset$, so the
projection of the convex subcomplex $\Davis_P$ to the convex subcomplex $X_0$ is just their intersection, which is
$\gamma_0$, so the closest point of $X_0$ to $b_i$ is a point of
$\gamma_0$.

To see that the hypotheses of \fullref{general_nocycles} are
satisfied, it remains to show that  $X_0$ has bounded coarse
intersection with every other $X_i$.

By Hypothesis~\eqref{item3:spintersections}, if $\wall$ is a wall dual
to an edge colored by some $a_{j,k}$ that is not on $\gamma_i$ then $\wall$ is not dual to any edge in $X_i$. 

First, suppose $\gamma_0$ and $\gamma_i$ are disjoint, so there is a
nontrivial shortest $\Davis_P$--path connecting them.
Let $\wall$ be the wall dual to the first edge of that path, let $\wall^+$ be the halfspace of $\wall$ containing
$1$, let $\wall^-$ be the complementary halfspace.
Then $X_0\subset \wall^+$.
By \fullref{bridge}, $\wall$ separates $X_0$ and $X_i$, so $X_i\subset \wall^-$.
By Hypothesis~\eqref{item3:szerolinks}, the label of $\wall$ commutes
with at most one edge label of $X_0$.
It then follows from \fullref{coarse_intersection_in_cube_cpx} and  \fullref{commutator_labelling},
that $X_0\cint \wall^-$ is bounded, so $X_0\cint X_i$ is bounded.

If $\gamma_0$ and $\gamma_i$ are distinct but not disjoint, then $X_i$ is
a translate of $\Davis_{S_j}$ for some $j\neq 0$, in which case $X_0\cint X_i$
is bounded as a consequence of Hypothesis~\eqref{item3:sintersections}. 

We have shown the hypotheses of 
\fullref{general_nocycles} are satisfied, so 
if $\Gamma$ is RAAGedy then there is a quasiisometry $\psi\from
\Davis_\Gamma\to \Davis_\Gamma$ taking $X_0$ to within bounded
Hausdorff distance of a compliant subcomplex $X'$ containing a
quasigeodesic edge path $\bar\gamma'$ that is contained in a bounded neighborhood of
$X_0$ and also comes close to $b_{i_0}$ for some $1\leq i_0\leq n-2$. 
We will show that either of 
Hypotheses~\ref{subcase:hyperbolic} or
\ref{subcase:no_match} leads to a contradiction, so $\Gamma$ cannot
have been RAAGedy.

Suppose  $X'=w\Davis_{S'}$ for some
$w\in W_\Gamma$ and $S'\in\compliant_\Gamma$, and let $\Gamma'$ be the
subgraph of $\Gamma$ induced by $S'$.
Since $\psi\from \Davis_\Gamma\to\Davis_\Gamma$ takes $X_0$ to within
bounded Hausdorff distance of $X'$ and they are both convex, hence
undistorted, $X_0$ and $X'$ are quasiisometric.
Adjusting by the group action, we have a quasiisometry
$(w^{-1}\cdot)\circ\psi:\Davis_\Gamma\to\Davis_\Gamma$ taking
$\Davis_{\Gamma_0}$ Hausdorff close to $\Davis_{\Gamma'}$, so $\Gamma'$ satisfies \ref{item:gamma_prime_qi_to_gamma_zero}.

The fact that $X'$ and $X_0$ have unbounded coarse intersection,
since their coarse intersection contains $\bar\gamma'$, means
$\Gamma_0\cap\Gamma'$ is incomplete. 
So $\Gamma'$ satisfies \ref{item:gamma_prime_cap_gamma_zero_incomplete}.

\fullref{general_nocycles} gives us that $d(\bar\gamma',b_{i_0})$ is
bounded, independent of $r$, so by taking $r$ large we can make 
$r-d(\bar\gamma',b_{i_0})$ as large as we like.
Assume $r-d(\bar\gamma',b_{i_0})\geq 3$.
Consider a shortest path $\zeta$
in $\Davis_P$ from $\gamma_0$ to $b_{i_0}\in \gamma_{i_0}$.
Let $\wall_1$, $\wall_2$, and $\wall_3$ be the walls
dual to the first three edges of $\zeta$.
For $k\in\{1,2,3\}$, let $z_k$ be the
generator labelling $\wall_k$, and let $\wall_k^-$ be the halfspace of
$\wall_k$ containing $b_{i_0}$.

By minimality of $\zeta$, $z_1\notin S_0$, so $X_0\subset \wall_1^+$.
If $\wall_1$ and $\wall_2$ cross then $z_1$ and $z_2$ commute and the
first two edges of $\zeta$ travel along the boundary of a square with
opposite side pairs labelled by $z_1$ and $z_2$.
By replacing the first two edges of $\zeta$ by the other two edges of
this square we get a path $\zeta'$ with the same endpoints and length as $\zeta$ that crosses
$\wall_2$ first and then $\wall_1$.
If $\wall_1$ crosses both $\wall_2$ and $\wall_3$ then $z_1$ commutes
with $z_2$ and $z_3$ and since $\Davis_P$ is $2$-dimensional, $z_2$
cannot commute with $z_3$.
Thus, up to exchanging  $\wall_1$ and $\wall_2$ we may assume that
either $\wall_1$ does not cross $\wall_2$ or $\wall_1$ crosses $\wall_2$ but not $\wall_3$. 

In the first case $X_0\subset \wall_1^+\subset\wall_2^+$.
As argued previously, $X_0$ has bounded coarse intersection with
$\wall_1^-$.
Likewise $X_0\cint \wall_2^-$ is bounded, since
$\wall_2^-\subset\wall_1^-$.
In particular, both ends of $\bar \gamma'$ are contained in
$\wall_1^+$, as otherwise we would have an unbounded subset of
$\wall_1^-$ contained in a bounded neighborhood of $X_0$,
contradicting that their coarse intersection is bounded.
We conclude that  $\bar\gamma'$ enters $\wall_1^+$. However, as $\bar \gamma'$ comes close to $b_{i_0}$, it enters $\wall_2^-$ as well. Accordingly, $\bar \gamma'$  crosses walls $\wall_1$ and $\wall_2$. 
Since $\bar\gamma'\subset X'$, the set $S'$ contains $z_1$ and $z_2$, with $z_1\in P\setminus S_0$
and $z_2\in P$ not adjacent to $z_1$.

In the second case we reach the same conclusions for $z_1$ and $z_3$. In either case, $S'$ contains $z_1$ and a non-adjacent vertex ($z_2$ or $z_3$), so 
$\Gamma'\cap P$ is incomplete. Thus
$\Gamma'$ satisfies \ref{item:gamma_prime_cap_P_incomplete}.

If Hypothesis~\ref{subcase:no_match}  is true then since $\Gamma'$
satisfies \ref{item:gamma_prime_cap_gamma_zero_incomplete},
\ref{item:gamma_prime_cap_P_incomplete}, and
\ref{item:gamma_prime_qi_to_gamma_zero}, it does not satisfy
\ref{item:end_pair}, so
$\{a_{0,0},a_{q-1,\ell(q-1)-1}\}\subset\Gamma'$. Since $S'\in\compliant_\Gamma$, the minimality condition on $S_0$ demands $S_0\cap S'=S_0$. So $S_0 \subseteq S'$ is contained in a join of two
anticliques $\Theta_0\join\Theta_1$.  
Suppose $z_1\in\Theta_1$.
Since $z_1\in (S'\cap P)\setminus S_0$,
Hypothesis~\eqref{item3:szerolinks} implies
$S_0\cap\lk(z_1)$ is at most one vertex.
But $S_0\cap\lk(z_1)=S_0\cap\Theta_0$, so the anticlique $\Theta_1$
contains all but at most one vertex of $S_0$.
This shows Hypothesis~\ref{subcase:hyperbolic} is true.

If Hypothesis~\ref{subcase:hyperbolic} is true then $X_0$ and
$X'$ are hyperbolic.

There are only finitely many isometry types of standard
   subcomplex in $\Davis_\Gamma$, so there is a uniform bound on
   hyperbolicity constants that occur, independent of $r$.
The quasigeodesic constants of $\bar\gamma'$ are also independent of
$r$. 
Thus, there is a stability constant, independent of $r$, 
   bounding the distance between $\bar\gamma'$ and a
   geodesic $\gamma''\subset X'$ asymptotic to it.
   Taking $r-d(\bar\gamma',b_{i_0})$ larger than this stability constant forces
the geodesic $\gamma''$ to enter $\wall_1^-$, but it still has both
ends in $\wall_1^+$, since $\bar\gamma'$ does. 
This is a contradiction: walls are convex, so $\gamma''$ cannot
cross from $\wall_1^+$ to $\wall_1^-$ and back to $\wall_1^+$.
\end{proof}

Observe that \fullref{no_cycle_of_cuts} is a consequence of
\fullref{nocycles} and \fullref{qi_compliant}, with the poles of
the cuts being the compliant sets and their intersections the
(singleton) connecting paths.

\subsection{Subcomplexes not in the same quasiisometry
  orbit}\label{sec:not_same_orbit}
For $W_\Gamma$ a RACG and $\Gamma'$ an induced subgraph
of $\Gamma$, let $\llbracket\Gamma'
\rrbracket$ denote the quasiisometry type of $\Davis_{\Gamma'}$.
Similarly, if $S$ is a set of vertices of $\Gamma$, let $\llbracket S\rrbracket$
denote the quasiisometry type of the special subgroup defined by $S$. 

We first state a result that we will use to rule out~\ref{item:gamma_prime_qi_to_gamma_zero} of Hypothesis~\ref{subcase:no_match} in certain applications of~\fullref{nocycles}. 
Recall that this condition says there exists a quasiisometry $\psi\from \Davis_\Gamma\to \Davis_{\Gamma}$ with $\psi(\Davis_{\Gamma_0})\ceq\Davis_{\Gamma'}$.
In other words, we wish to show that $\Davis_{\Gamma_0}$ and $\Davis_{\Gamma'}$ are not in the same orbit of the quasiisometry group of $\Davis_\Gamma$ for its action on coarse equivalence classes of subsets of $\Davis_\Gamma$.

\fullref{cor:ric_qi_obstruction} will be a
corollary of the subsequent, more general, \fullref{qi_implications}.
  \begin{corollary}\label{cor:ric_qi_obstruction}
    Let $\Gamma$ be an incomplete triangle-free graph without
    separating cliques, and let $\Davis_\Gamma$ be the Davis complex
    of $W_\Gamma$.
    Suppose $\phi\from \Davis_\Gamma\to\Davis_\Gamma$ is a
    quasiisometry.
Suppose $S_0$ and $T_0$ are 
    vertex sets of maximal thick joins of $\Gamma$ such that
    $\phi(\Davis_{S_0})\ceq \Davis_{T_0}$. 
Then $\llbracket
    S_0\rrbracket=\llbracket T_0\rrbracket$ and for every neighbor $S_2$ of
    $S_0$ in $\ric_\Gamma$ there is a neighbor $T_2$ of $T_0$ in
    $\ric_\Gamma$ such that $\llbracket S_2\rrbracket=\llbracket
    T_2\rrbracket$ and $\llbracket S_0\cap S_2\rrbracket=\llbracket
    T_0\cap T_2\rrbracket$.
  \end{corollary}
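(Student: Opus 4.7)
The plan is to prove the corollary directly using the quasiisometry invariance of the maximal standard product region graph (Theorem~\ref{Oh}) together with the control we have over which maximal product subcomplexes can be coarsely equivalent to each other in a RACG setting. First I would observe that $\Davis_{S_0}$ and $\Davis_{T_0}$ are maximal standard product regions, so Theorem~\ref{Oh} produces a bijection $\phi_*$ between maximal standard product regions with $d_{Haus}(\phi(\Davis_{S_0}),\phi_*(\Davis_{S_0}))\le C$. Combined with the hypothesis $\phi(\Davis_{S_0})\ceq\Davis_{T_0}$, this forces $\phi_*(\Davis_{S_0})\ceq\Davis_{T_0}$. Now Corollary~\ref{cor:coarse_intersection_of_standard} says that any maximal product region coarsely equivalent to $\Davis_{T_0}$ is of the form $g\Davis_{T_0'}$ with $T_0=T_0'\join C$ for a clique $C$ and $g$ centralizing $T_0'$. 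Since $\Gamma$ is triangle-free and $T_0$ is a \emph{maximal thick} join, a short argument shows $T_0$ has no cone vertices (otherwise we could enlarge the join), hence $C=\emptyset$, $T_0'=T_0$, and the centralizer of $W_{T_0}$ is trivial modulo $W_{T_0}$ itself; thus $g\in W_{T_0}$ and $\phi_*(\Davis_{S_0})=\Davis_{T_0}$. Then $\phi$ restricts (up to Hausdorff close approximation) to a quasiisometry between the convex subcomplexes $\Davis_{S_0}$ and $\Davis_{T_0}$, via closest point projection and \fullref{inversecoarselipimpliesqi}, giving $\llbracket S_0\rrbracket=\llbracket T_0\rrbracket$.

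For the claim about neighbors, let $S_2$ be a neighbor of $S_0$ in $\ric_\Gamma$, so $\Davis_{S_0}$ and $\Davis_{S_2}$ are adjacent in $\mprg_\Gamma$. Applying $\phi_*$ yields an edge in $\mprg_\Gamma$ from $\Davis_{T_0}$ to $\phi_*(\Davis_{S_2})$. I would then invoke Lemma~\ref{weakconvexity} to translate this single edge back to $\ric_\Gamma$: there exists $g\in W_\Gamma$ such that $g\Davis_{T_0}\edge g\phi_*(\Davis_{S_2})$ lies in $\ric_\Gamma$, and since $\Davis_{T_0}$ is already in $\ric_\Gamma$, the invariance clause of Lemma~\ref{weakconvexity} gives $g\Davis_{T_0}=\Davis_{T_0}$, so $g\in W_{T_0}$. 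Define $T_2$ so that $\Davis_{T_2}=g\phi_*(\Davis_{S_2})$; then $T_2$ is a neighbor of $T_0$ in $\ric_\Gamma$. Replacing $\phi$ by $\phi':=g\circ\phi$, which is still a quasiisometry with $\phi'(\Davis_{S_0})\ceq\Davis_{T_0}$, we now have $\phi'(\Davis_{S_2})\ceq\Davis_{T_2}$ as well. Applying the argument of the first paragraph to $(S_2,T_2)$ gives $\llbracket S_2\rrbracket=\llbracket T_2\rrbracket$.

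Finally, for the intersection, I would note that $\Davis_{S_0}$ and $\Davis_{S_2}$ are convex subcomplexes both containing the identity vertex (as $S_0,S_2\in\ric_\Gamma$), so $\Davis_{S_0}\cap\Davis_{S_2}=\Davis_{S_0\cap S_2}$, and likewise $\Davis_{T_0}\cap\Davis_{T_2}=\Davis_{T_0\cap T_2}$. By Lemma~\ref{qi_preserves_coarse_intersection},
\[ \phi'(\Davis_{S_0\cap S_2})\;\ceq\;\phi'(\Davis_{S_0})\cint\phi'(\Davis_{S_2})\;\ceq\;\Davis_{T_0}\cint\Davis_{T_2}\;=\;\Davis_{T_0\cap T_2}, \]
where the last equality again uses that both are convex and share the identity. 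Since $\Davis_{S_0\cap S_2}$ and $\Davis_{T_0\cap T_2}$ are convex (hence undistorted) subcomplexes and $\phi'$ coarsely identifies them, they are quasiisometric, yielding $\llbracket S_0\cap S_2\rrbracket=\llbracket T_0\cap T_2\rrbracket$.

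The main obstacle is the translation step in the middle paragraph: Theorem~\ref{Oh} gives a bijection of maximal product regions in $\mprg_\Gamma$, but the statement of the corollary demands a neighbor inside the fundamental domain $\ric_\Gamma$. Bridging this requires careful use of the group action together with the fact that the stabilizer of a vertex of $\ric_\Gamma$ corresponding to a maximal thick join is exactly $W_{T_0}$ (provable from Corollary~\ref{cor:coarse_intersection_of_standard} and the triangle-free maximality analysis outlined above), so that the translation needed to reach $\ric_\Gamma$ fixes $\Davis_{T_0}$ setwise. The other steps are relatively routine once the framework of maximal standard product regions and coarse intersections is in place.
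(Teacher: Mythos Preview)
Your argument is correct and reaches the conclusion, but it takes a different route from the paper's. The paper derives this corollary as a direct specialisation of the more general \fullref{qi_implications}: set $S_1:=S_0$, note that being a maximal thick join means being a non-clique element of $\compliant_\Gamma^0$, and feed $S_0,S_1,S_2$ into the lemma. The lemma then delivers $T_1,T_2\in\compliant_\Gamma$ of index~$0$ (hence maximal thick joins), with $T_0\subset T_1$ forced by the no-cone-vertex observation, so maximality of $T_0$ gives $T_0=T_1$ and $\llbracket S_0\cap S_2\rrbracket=\llbracket T_1\cap T_2\rrbracket=\llbracket T_0\cap T_2\rrbracket$.

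Your approach sidesteps the compliant-set machinery entirely: you work directly with Oh's bijection $\phi_*$ on the MPRG, use \fullref{weakconvexity} plus the fundamental-domain property of $\ric_\Gamma$ to land the image edge inside $\ric_\Gamma$ while fixing $\Davis_{T_0}$, and then finish with \fullref{qi_preserves_coarse_intersection}. This is more elementary and self-contained for the specific statement, since it never invokes \fullref{qi_compliant} or the index bookkeeping of \fullref{qi_implications}. The paper's route, by contrast, is designed so that the same lemma also handles compliant sets of higher index, which is what is actually needed in the applications of \fullref{nocycles}; the corollary here is just the index-$0$ shadow.

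One small wording issue: your sentence ``the centralizer of $W_{T_0}$ is trivial modulo $W_{T_0}$ itself; thus $g\in W_{T_0}$'' is not quite the right statement. What \fullref{cor:coarse_intersection_of_standard} actually gives is that every generator appearing in a minimal double-coset representative of $g$ is adjacent to all of $T_0$; since $\Gamma$ is triangle-free and $T_0$ is a thick join, no such vertex exists, so that representative is trivial and $g\in W_{T_0}$. The conclusion $\phi_*(\Davis_{S_0})=\Davis_{T_0}$ is correct, but the justification should be phrased this way.
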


  \begin{lemma}\label{qi_implications}
    Let $\Gamma$ be an incomplete triangle-free graph without
    separating cliques, and let $\Davis_\Gamma$ be the Davis complex
    of $W_\Gamma$.
    Suppose $\phi\from \Davis_\Gamma\to\Davis_\Gamma$ is a quasiisometry.
    Suppose $S_0,S_1,S_2\in\compliant_\Gamma$ such that
$S_0\subset S_1$, and each of $S_0$ and $S_1\cap S_2$ contains vertex sets of squares of $\Gamma$.
Suppose $\phi(\Davis_{S_0})\ceq
\Davis_{T_0}$.
Then $T_0\in\compliant_\Gamma$ and there exist
$T_1,T_2\in\compliant_\Gamma$ such that for $i\in\{0,1,2\}$:
\begin{itemize}
\item Some translate of $\Davis_{T_i}$
is coarsely equivalent to $\phi(\Davis_{S_i})$.
\item $\llbracket {S_i}\rrbracket=\llbracket
  {T_i}\rrbracket$
\item $\mathrm{ind}(S_i)=\mathrm{ind}(T_i)$
  \item $T_0\cap T_1$ is either all of $T_0$ or all but a cone vertex
    of $T_0$.
      \item $\llbracket S_1\cap S_2\rrbracket=\llbracket T_1\cap T_2\rrbracket$
    \end{itemize}
  \end{lemma}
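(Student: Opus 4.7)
The strategy will be to produce candidate sets by invoking \fullref{qi_compliant} on each $\phi(\Davis_{S_i})$, then adjust using \fullref{cor:coarse_intersection_of_standard}, exploiting triangle-freeness of $\Gamma$ to control clique/cone-vertex discrepancies. First, apply \fullref{qi_compliant} three times to produce $T_i' \in \compliant_\Gamma$ and $g_i' \in W_\Gamma$ such that $g_i'\Davis_{T_i'}$ is within uniformly bounded Hausdorff distance of $\phi(\Davis_{S_i})$ and $\mathrm{ind}(T_i') = \mathrm{ind}(S_i)$, so $\llbracket T_i'\rrbracket = \llbracket S_i\rrbracket$. For $T_0$, combine $g_0'\Davis_{T_0'} \ceq \phi(\Davis_{S_0}) \ceq \Davis_{T_0}$ and apply \fullref{cor:coarse_intersection_of_standard} to get $V$ and cliques $C, C'$ with $T_0 = V \join C$ and $T_0' = V \join C'$. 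Because $S_0$ contains a square, so do $T_0$ and $T_0'$ (by quasiisometry invariance of flats and $2$-dimensionality), so $V$ is not a clique, and triangle-freeness forces $|C|, |C'| \leq 1$. The Recursive Phase of \fullref{def:constructible_compliant} is exactly the operation of adding or removing spherical join factors without changing index, so $T_0 \in \compliant_\Gamma$ with $\mathrm{ind}(T_0) = \mathrm{ind}(S_0)$ and $\llbracket T_0\rrbracket = \llbracket S_0\rrbracket$.

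Next I would set $T_1 := T_1'$. Since $\Davis_{S_0} \subseteq \Davis_{S_1}$, $\Davis_{T_0}$ sits in a bounded neighborhood of a suitable translate $g\Davis_{T_1}$. The gate projection $\pi_{g\Davis_{T_1}}(\Davis_{T_0})$ is, by \fullref{projection_of_standard_is_standard}, a standard subcomplex $gh\Davis_R$ with $R \subseteq T_1$, and by \fullref{coarse_intersection_in_cube_cpx} it is coarsely equivalent to $\Davis_{T_0}$. A second application of \fullref{cor:coarse_intersection_of_standard}, together with triangle-freeness and $T_0$ containing a square, yields $T_0 = W_0 \join D$ and $R = W_0 \join D'$ with $|D| \leq 1$; if $D = \{d\}$ then $d$ is a cone vertex of $T_0$. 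Since $W_0 \subseteq R \subseteq T_1$, we get $T_0 \cap T_1 \supseteq W_0$ and $T_0 \setminus T_1 \subseteq D$, which is exactly the stated dichotomy.

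For $T_2$ and the intersection QI type, set $T_2 := T_2'$ and observe that $S_1 \cap S_2 \in \compliant_\Gamma$ by operation \ref{item:projection} of \fullref{def:constructible_compliant} with $T = \emptyset$. Apply \fullref{qi_compliant} once more to produce $U \in \compliant_\Gamma$ and $g_U$ with $g_U\Davis_U \ceq \phi(\Davis_{S_1 \cap S_2})$ and $\llbracket U\rrbracket = \llbracket S_1 \cap S_2\rrbracket$. By \fullref{qi_preserves_coarse_intersection}, \fullref{coarse_intersection_in_cube_cpx}, and \fullref{projection_of_standard_is_standard}, the coarse intersection $g_1'\Davis_{T_1} \cint g_2'\Davis_{T_2}$ is coarsely represented by a translate of $\Davis_{T_1 \cap T_2 \cap \bigcap_{t \in T^*}\lk(t)}$ for some generator set $T^*$, and is coarsely equivalent to $g_U\Davis_U$. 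A further application of \fullref{cor:coarse_intersection_of_standard} together with triangle-freeness (using non-sphericity of $U$, which follows from $S_1 \cap S_2$ containing a square) identifies $U$ with $T_1 \cap T_2 \cap \bigcap_{t \in T^*}\lk(t)$ up to a single cone vertex.

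The main obstacle I expect is promoting the equality $\llbracket U\rrbracket = \llbracket T_1 \cap T_2 \cap \bigcap_{t \in T^*}\lk(t)\rrbracket$ to $\llbracket U\rrbracket = \llbracket T_1 \cap T_2\rrbracket$, since the link-restricted set could a priori be strictly smaller than $T_1 \cap T_2$. The plan is to argue that any $v \in T_1 \cap T_2$ failing the link conditions either coincides with a cone vertex of $T_1 \cap T_2$ (contributing only a finite join factor that preserves QI type) or else would have to be joined to the non-spherical core of $T_1 \cap T_2 \cap \bigcap_{t \in T^*}\lk(t)$, creating a triangle in $\Gamma$ and contradicting triangle-freeness. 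A final appeal to the Recursive Phase of \fullref{def:constructible_compliant} would then close the chain $\llbracket T_1 \cap T_2\rrbracket = \llbracket U\rrbracket = \llbracket S_1 \cap S_2\rrbracket$.
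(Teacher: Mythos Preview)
Your approach is essentially the same as the paper's---produce the $T_i$ via \fullref{qi_compliant}, then control intersections using \fullref{projection_of_standard_is_standard} and \fullref{cor:coarse_intersection_of_standard}---and the first several paragraphs are correct. The gap is in your resolution of what you rightly call the ``main obstacle.''

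Your stated plan for promoting $\llbracket T_1\cap T_2\cap\bigcap_{t\in T^*}\lk(t)\rrbracket$ to $\llbracket T_1\cap T_2\rrbracket$ does not work: there is no mechanism forcing a vertex $v\in (T_1\cap T_2)\setminus\bigcap_{t\in T^*}\lk(t)$ either to be a cone vertex of $T_1\cap T_2$ or to be adjacent to everything in the core. Nothing in the setup relates $v$ to the core at all---$v$ merely fails to be adjacent to some $t\in T^*$, and $t$ lies outside $T_1\cup T_2$.

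The correct argument, which you already have all the ingredients for, is to show $T^*=\emptyset$ outright. If $T^*\neq\emptyset$, pick any $t\in T^*$; then $T_1\cap T_2\cap\bigcap_{s\in T^*}\lk(s)\subset\lk(t)$, and since $\Gamma$ is triangle-free, $\lk(t)$ is an anticlique. An anticlique spans a tree in the Davis complex, so $\llbracket T_1\cap T_2\cap\bigcap_{s\in T^*}\lk(s)\rrbracket$ is point, line, or bushy tree. But you already know this set is (up to a cone vertex) your $U$, and $\llbracket U\rrbracket=\llbracket S_1\cap S_2\rrbracket$ contains a $2$--flat since $S_1\cap S_2$ contains a square. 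Contradiction. Hence $T^*=\emptyset$ and the link restriction is vacuous. The same observation also handles the analogous issue for $T_0\cap T_1$ (using $S_0$ containing a square), which streamlines your third paragraph: one does not need the separate hands-on projection argument there, though yours is valid.
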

  \begin{proof}
For any sets $A$ and $B$ of vertices of $\Gamma$, if  $\phi(\Davis_{A})\ceq w\Davis_{B}$ then
$\pi_{\Davis_{B}}\circ (w^{-1}\cdot)\circ\phi|_{\Davis_{A}}\from \Davis_{A}\to
\Davis_{B}$ is a quasiisometry, so $\llbracket
A\rrbracket=\llbracket B\rrbracket$.
  By \fullref{qi_compliant}, for all $i$ there
  exist $w_i\in W_\Gamma$ (and we take $w_0=1$) and $T_i\in\compliant_\Gamma$ with
  $\mathrm{ind}(S_i)=\mathrm{ind}(T_i)$ and
  $\phi(\Davis_{S_i})\ceq w_i\Davis_{T_i}$, which implies $\llbracket
 S_i\rrbracket=\llbracket T_i\rrbracket$.

We may assume that all of the $S_i$ and $T_i$ have no cone vertex,
since removing a cone vertex does not change the coarse equivalence
class of the special subgroup, the index in $\compliant_\Gamma$, or the fact that
$S_0$ and $S_1\cap S_2$ contain vertex sets of  squares. 

As in the proof of \fullref{qi_compliant}, for $V_{ij}$ defined as
the set of generators occurring in minimal length elements of
$W_{S_i}(w_i^{-1}w_j)W_{S_j}$ we have:
\[\phi(\Davis_{S_i\cap S_j})\ceq
\pi_{w_i\Davis_{T_i}}(w_j\Davis_{T_j})=w_i\Davis_{T_i\cap T_j\cap
  \bigcap_{v\in V_{ij}}\lk(v)}\]
Thus, $\llbracket S_i\cap S_j\rrbracket=\llbracket T_i\cap T_j\cap
  \bigcap_{v\in V_{ij}}\lk(v)\rrbracket$. 
Since $\Gamma$ is triangle-free, links of vertices are anticliques,
so if $V_{ij}\neq\emptyset$ then $T_i\cap
  T_j\cap\bigcap_{v\in V_{ij}}\lk(v)$ is an anticlique and $\llbracket T_i\cap T_j\cap
  \bigcap_{v\in V_{ij}}\lk(v)\rrbracket$ is one of `point', `line', or
  `bushy tree'. 
But  $\Davis_{S_0}=\Davis_{S_0\cap S_1}$ and $\Davis_{S_1\cap S_2}$ contain 2--flats,
so $\llbracket S_i\cap S_j\rrbracket$ is not one of
  these, so
$V_{01}$ and $V_{12}$ are empty, which gives
$\llbracket S_0\rrbracket=\llbracket S_0\cap
S_1\rrbracket=\llbracket T_0\cap T_1\rrbracket=\llbracket T_0\rrbracket$ and $\llbracket
S_1\cap S_2\rrbracket=\llbracket T_1\cap T_2\rrbracket$.

Finally, $S_0\subset S_1$ implies
$\Davis_{T_0}\ceq\pi_{\Davis_{T_0}}(w_1\Davis_{T_1})=\Davis_{T_0\cap
  T_1\cap \bigcap_{v\in V_{01}}\lk(v)}=\Davis_{T_0\cap T_1}$.
By \fullref{joinclique} and the assumption that $T_0$ has no cone
vertex, $T_0=T_0\cap T_1$.
  \end{proof}

  \begin{proof}[Proof of \fullref{cor:ric_qi_obstruction}]
    Being a maximal thick join is the same as being a non-clique of $\compliant_\Gamma^0$.
    Suppose there is a $T_0$ that is in the quasiisometry orbit
    of $S_0$. 
    Take $S_1:=S_0$ and any
    neighbor $S_2$ of $S_0$ in $\ric_\Gamma$, which shares a square
    with $S_0$ by definition of $\ric_\Gamma$.
    Apply \fullref{qi_implications}  to get $T_1$ and $T_2$, which
    are maximal thick joins since $S_1$ and $S_2$ were.
    Since $T_0$ has no cone vertex, $T_0\subset T_1$, but $T_0$ is
    maximal, so $T_0=T_1$ and:
    \[\llbracket S_0\cap S_2\rrbracket=\llbracket S_1\cap S_2\rrbracket=\llbracket T_1\cap T_2\rrbracket=\llbracket T_0\cap T_2\rrbracket\qedhere\]
  \end{proof}

There are some other ways to rule out standard subcomplexes
being in the same quasiisometry orbit.
Instead of only considering single neighbor intersection quasiisometry types as in
\fullref{cor:ric_qi_obstruction}, one can consider the pattern of
intersection of $\Davis_{S_0}$ with all of its neighbors in
$\mprg_\Gamma$.
Something like this is done in \cite{raagedyii}.
In another direction, we can use automorphism orbits of $\mprg_\Gamma$
to distinguish maximal product regions. 
For instance, if there are two  maximal standard product regions and
one gives a cut vertex of $\mprg_\Gamma$ and the other does not then a
quasiisometry of $\Davis_\Gamma$ cannot take one
to the other.
Recall that we characterized cut vertices in \fullref{cut_vertex_in_mprg}.
    
\subsection{Examples}\label{sec:compliant_examples}
We give some example applications of \fullref{nocycles}.
\fullref{ex:cycleofcuts} gives some of the smallest triangle-free
strongly CFS graphs with compliant cycles. It turns out that all three
are already known to be non RAAGedy for other reasons.
After that we will give examples highlighting different aspects
of the theorem. 

\begin{example}\label{ex:cycleofcuts}
  \fullref{fig:compliantcycles} shows the smallest triangle-free CFS graphs with
  compliant cycles, which are therefore not RAAGedy, by
  \fullref{nocycles}.
  In each of these examples, $S_0$, $S_1$,  and $S_2$ are the three possible pairs of red
  vertices and the 
  $P_i$ are single red vertices that are the intersections of consecutive
  $S_i$, so $P:=\sqcup_iP_i$ is an anticlique. 
  In the first two the $S_i$ are cut pairs. In the third they are poles of maximal suspensions. 
 \begin{figure}[h]
   \centering
   \begin{subfigure}{.3\textwidth}
        \centering
    \includegraphics{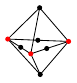}% Gamma_17
\subcaption{Planar with cycle of cuts.}
\label{fig:planarcycleofcuts}
\end{subfigure}
\quad
\begin{subfigure}{.3\textwidth}
     \centering
  \includegraphics{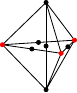}% Gamma_43
\subcaption{Nonplanar with cycle of cuts.}
\label{fig:nonplanarcycleofcuts}
\end{subfigure}
\quad
\begin{subfigure}{.3\textwidth}
       \centering
 \includegraphics{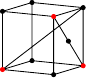}% Gamma_59
\subcaption{Nonplanar,  with cycle of suspension poles.}
\label{fig:nonplanarcycleofpoles}
   \end{subfigure}
   \caption{Some CFS graphs with compliant cycles where the $S_i$ are
     pairs and $P$ is an anticlique.}
   \label{fig:compliantcycles}
 \end{figure}
 
The graph in \fullref{fig:planarcycleofcuts} is planar, so the fact
that it is not RAAGedy could have also been deduced 
by applying a theorem of Nguyen and Tran \cite{NguTra19}.
The graph in \fullref{fig:nonplanarcycleofcuts} is nonplanar, so
Nguyen-Tran does not apply, but it can be shown to be non-RAAGedy by
\fullref{no_cycle_of_cuts}. 
The graph in \fullref{fig:nonplanarcycleofpoles} has a JSJ graph of
cylinders with a single cylinder and single non-virtually
$\mathbb{Z}^2$ rigid subgroup connected by a virtually $\mathbb{Z}^2$
edge group, so is not RAAGedy by \fullref{raagjsj}.
\end{example}

\begin{example}% Gamma_67
  Consider the graph $\Gamma$ of \fullref{fig:Gamma67}.
  The hypotheses of \fullref{nocycles} are satisfied for the path
  ${\color{green}P}=P_0:=(0,4,5,1)$ and the compliant set:
  \[{\color{red}S_0}:=\{2\}\join\{0,1\}=\{2,6\}\join\{0,1,7\}\cap\{2,8\}\join\{0,1,3\}\in\compliant_\Gamma^1\qedhere\]
  \begin{figure}[h]
    \centering
    \includegraphics{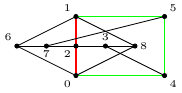}
\caption{An example $\Gamma$ such that \fullref{nocycles} can be
  satisfied by a single compliant subset $S_0$ and a single path $P=P_0$.}
\label{fig:Gamma67}
\end{figure}
\end{example}
\begin{example}\label{ex:Gamma2511}
  Consider the graph $\Gamma$ of \fullref{fig:Gamma2511}.
  By computer search, there is no single compliant set $S_0$ and connected
  $P$ satisfying the hypotheses of \fullref{nocycles}.
  For example, consider the maximal thick join
  ${\color{red}S_0}:=\{0,3\}\join\{1,7,8,9\}$.
  We would need a path $P$ whose endpoints are nonadjacent vertices of
  $S_0$ and whose interior vertices have link intersecting $S_0$ in a
  clique. The possible interior vertices are 4, 5, and
  $\ten$, which would mean the endpoints of $P$ would be adjacent
  vertices 0 and 9.

Instead, consider the maximal thick join
${\color{blue}S_1}:=\{0,2,6\}\join\{1,4,5\}$, which intersects $S_0$
in a clique, and paths 
  ${\color{green}P_0}:=(9,\ten,4)$ and
  $P_1:=(1)$.
 
  \begin{figure}[h]
    \centering
    \includegraphics{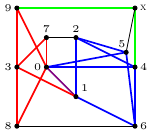}
    \caption{A graph with no compliant cycle with connected $P$, but
      having a compliant cycle consisting of two
      compliant sets and two connecting paths.}
    \label{fig:Gamma2511}
  \end{figure}
  In this example, no other method discussed in this paper works in order to show that it is not RAAGedy.
\end{example}

In the next two examples we use the considerations of \fullref{sec:not_same_orbit}  rule out condition \ref{item:gamma_prime_qi_to_gamma_zero} of \fullref{nocycles}.

\begin{example}\label{ex:Gamma375}
  Consider the graph $\Gamma$ in \fullref{fig:Gamma375}.
  Take ${\color{red}S_0}:=\{0,5\}\join\{2,6,7\}$, which is a maximal
  thick join that is a  non-square suspension.
  Consider $T:=\{2,6\}\join\{0,4,5\}$ and
  $T':=\{0,4\}\join\{1,2,3,6\}$, which are the two maximal thick joins that intersect $S_0$ in a
  non-clique.
  Both are non-square suspensions.
  The set of distinct pairs $(\llbracket
         \Upsilon\rrbracket,\llbracket
         \Upsilon\cap S_0\rrbracket)$, where $\Upsilon$ ranges
         over the neighbors of $S_0$ in $\ric_\Gamma$ consists of a
         single pair $(\text{tree}\times\text{line},\mathbb{E}^2)$.
         For $T$ we get the same result, but for $T'$ the answer is
         different: $T'$ has a neighbor in $\ric_\Gamma$ such that $(\llbracket
         \Upsilon\rrbracket,\llbracket
         \Upsilon\cap T'\rrbracket)=(\text{tree}\times\text{tree},\text{tree}\times\text{line})$,
         coming from $\Upsilon=\{1,2,3\}\join\{0,4,9\}$.
         Thus, we can see by \fullref{cor:ric_qi_obstruction} that $\Davis_{S_0}$ and $\Davis_{T'}$ are
         not  in the same orbit under the quasiisometry group of
         $\Davis_\Gamma$.
         Thus, we look for a compliant cycle based on $S_0$ such that
         $P$ avoids the vertex $\{4\}= T\setminus S_0$, but we are not forced
         to avoid $\{1,3\}=T'\setminus S_0$.
         ${\color{green}P}=P_0=(2,9,1,8,7)$ works.

        In this example, no other method discussed in this paper works in order to show that it is not RAAGedy.
  \begin{figure}[h]
    \centering
    \includegraphics{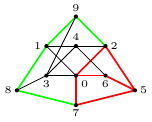}
    \caption{A graph with a compliant cycle where we need to recognize
    different quasiisometry orbits to verify the hypotheses of
    \fullref{nocycles} are satisfied.}
    \label{fig:Gamma375}
  \end{figure}
\end{example}

\begin{example}
  Consider the graph $\Gamma$ of \fullref{fig:cutvertex}.
Let $S_0:=\{2,3\}\join \{0,1,4,5\}$ and  
$S':=\{6,7\}\join\{4,5,8,9,14,15\}$, which are maximal thick joins, and
let $P:=(0,14,12,\elf,13,15,1)$ be a path.
For this $S_0$ and $P$, conditions
\eqref{item3:paths}--\eqref{item3:szerolinks} of \fullref{nocycles}
are satisfied.
Condition \ref{subcase:hyperbolic} is not true; $S_0$ contains
squares.
To conclude that $\Gamma$ is not RAAGedy we show that condition
\ref{subcase:no_match} is satisfied.
The only potential problem is $S'$, which fulfills
\ref{item:gamma_prime_cap_gamma_zero_incomplete}-\ref{item:end_pair}
of \ref{subcase:no_match},
so we need to show that \ref{item:gamma_prime_qi_to_gamma_zero} is
false by showing that $\Davis_{S_0}$ and $\Davis_{S'}$ are not in the
same orbit of the quasiisometry group of
$\Davis_\Gamma$. 

In this example \fullref{qi_implications} does not help, but \fullref{cut_vertex_in_mprg} does.

    \begin{figure}[h]
    \centering
    \raisebox{-9.5ex}{
    \includegraphics{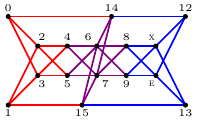}}
\quad\quad
\raisebox{-9.5ex}{
  \includegraphics{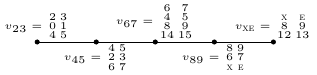}
   }
\caption{A graph $\Gamma$ and $\ric_\Gamma$ such that $\ric_\Gamma$
  contains a cut vertex of $\mprg_\Gamma$. The corresponding splitting
  of $\Gamma$ is illustrated by the red/violet and blue/violet subgraphs.
The cut vertex property is used to distinguish quasiisometry orbits of
maximal product regions in the
  construction of a compliant cycle.}
\label{fig:cutvertex}
\end{figure}
  Take $R_0$ and $R_1$ to be subgraphs of $\ric_\Gamma$ induced by
  vertex sets $\{v_{23}, v_{45},v_{67}\}$ and
  $\{v_{67},v_{89},v_{\ten\elf}\}$, respectively.
  As in \fullref{cut_vertex_in_mprg} for $i\in\{0,1\}$ let $\Gamma_i$
  be the subgraph of $\Gamma$ induced by $\bigcup_{v\in R_i}J_v$, so
  that $\Gamma_0$ is the red/violet subgraph and $\Gamma_1$ is the
  violet/blue subgraph, where the violet subgraph $J_{v_{67}}$ is their
  intersection.
  According to \fullref{cut_vertex_in_mprg}, the only cut vertices of
  $\mprg_\Gamma$ are those in the $W_\Gamma$--orbit of $v_{67}$.
  Quasiisometries of $\Davis_\Gamma$ induce automorphisms of
  $\mprg_\Gamma$, which preserve cut vertices, so  the
  $W_\Gamma$--orbit of $v_{67}$ coincides with its orbit under the
  action of the quasiisometry group of $\Davis_\Gamma$. 
Since $S_0$ corresponds to $v_{23}$ and $S'$ corresponds to $v_{67}$
we conclude that $\Davis_{S_0}$ and $\Davis_{S'}$ are not in the same orbit
of the quasiisometry group of $\Davis_\Gamma$. Thus,  \ref{item:gamma_prime_qi_to_gamma_zero} fails.
\end{example}

Sometimes passing to a link double is necessary to satisfy
\fullref{nocycles}:
\begin{example}\label{ex:Gamma8959}
  For the graph $\Gamma$ on the left of \fullref{fig:Gamma8959}, a
  computer search finds no configurations satisfying \fullref{nocycles}.
  After passing to  $\double^\circ_7(\Gamma)$ there is a good choice: ${\color{red}S_0}:=\{3_0,4_0\}\join\{0_0,2_0,5_0,8_0\}$ and
${\color{green}P}=P_0:=(0_0,\elf_0,\ten_1,6_1,9_1,2_0)$.
  \begin{figure}[h]
    \centering
    \raisebox{-7.5ex}{
    \includegraphics{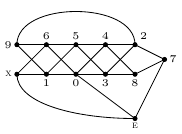}}
$\stackrel{\double^\circ_7}{\longrightarrow}$
\raisebox{-7.5ex}{
  \includegraphics{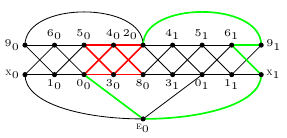}
   }
\caption{An example where taking a link double is necessary to satisfy
  the conditions of
  \fullref{nocycles}.}
\label{fig:Gamma8959}
\end{figure}
\end{example}

\begin{example}[Compliant cycle vs Morse boundary.]
 The graph of  \fullref{fig:planarcycleofcuts} has a compliant
 cycle, but its Morse boundary is totally disconnected, by
 \fullref{inductive_FioKar}.

 The graph of \fullref{fig:stable_no_compliant} displays a stable
 cycle (red), so its Morse boundary contains circles, but it has no
 compliant cycle (even after passing to a link double).
 \begin{figure}[h]
   \centering
   \includegraphics{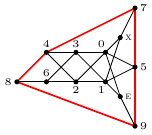}
   \caption{Graph with a stable cycle but no compliant cycle.}
   \label{fig:stable_no_compliant}
 \end{figure}
\end{example}

\section{Further questions}\label{sec:furtherexamples}

\begin{question}
 Quasiisometry versus commensurability:
  \begin{itemize}
  \item We have only shown that cloning and unfolding produce groups
    quasiisometric to the one we started with. Are they actually
    commensurable?
\item Does there exist a RACG that is not commensurable
  to any RAAG, but is quasiisometric to some RAAG?
  \end{itemize}
\end{question}
\begin{question}
  There are many other finite-index subgroups of RACGs than the
   ones we have considered. Would using these give any new results
   about commenurability between RACGs or between RACGs and RAAGs?
 \end{question}
 \begin{question}
 Generalize the dimension restrictions:
  \begin{itemize}
  \item Is Oh's theorem true when there are no 3--quasiflats (which is
    the case that $\Gamma$ is icosahedron-free)?
\item If yes, do all of our arguments generalize to this case?
\item Oh's theorem is not true in full generality in higher
  dimensions. What additional hypotheses on $\Gamma$ or $W_\Gamma$
  would we need to make to make it be true?
  Compare Huang assuming
  $\mathrm{Out}(A_\Delta)$ is finite. 
\end{itemize}
\end{question}
  \begin{question}
So far, we can answer RAAGedy/non-RAAGedy for all of the 533 triangle-free CFS
graphs with at most 10 vertices, and all but the following 8 of the
3405 
with 11 vertices. Which of them are RAAGedy?

$\Gamma_{1435}=\begin{tikzpicture}[scale=2,baseline=-10pt]\tiny
\coordinate[label={[label distance=0pt] -90:$0$}] (0) at (0,0);
\coordinate[label={[label distance=0pt] -90:$1$}] (1) at (0,-0.2);
\coordinate[label={[label distance=0pt] -45:$2$}] (2) at (-0.2,-0.2);
\coordinate[label={[label distance=0pt] 45:$3$}] (3) at (-0.2,0);
\coordinate[label={[label distance=0pt] 135:$4$}] (4) at (0.2,0);
\coordinate[label={[label distance=0pt] -135:$5$}] (5) at (0.2,-0.2);
\coordinate[label={[label distance=0pt] 180:$6$}] (6) at (-0.4,.2);
\coordinate[label={[label distance=0pt] 0:$7$}] (7) at (0.4,.2);
\coordinate[label={[label distance=0pt] 0:$8$}] (8) at (0.4,-0.4);
\coordinate[label={[label distance=0pt] 180:$9$}] (9) at (-0.4,-0.4);
\coordinate[label={[label distance=0pt] 90:$\ten$}] (10) at (0,0.2);
\draw (0)--(2);
\draw (0)--(3);
\draw (0)--(4);
\draw (0)--(5);
\draw (0)--(10);
\draw (1)--(2);
\draw (1)--(3);
\draw (1)--(4);
\draw (1)--(5);
\draw (2)--(6);
\draw (2)--(9);
\draw (3)--(6);
\draw (3)--(9);
\draw (4)--(7);
\draw (4)--(8);
\draw (5)--(7);
\draw (5)--(8);
\draw (6)--(10);
\draw (7)--(10);
\draw (8)--(9);
\filldraw (0) circle (.5pt);
\filldraw (1) circle (.5pt);
\filldraw (2) circle (.5pt);
\filldraw (3) circle (.5pt);
\filldraw (4) circle (.5pt);
\filldraw (5) circle (.5pt);
\filldraw (6) circle (.5pt);
\filldraw (7) circle (.5pt);
\filldraw (8) circle (.5pt);
\filldraw (9) circle (.5pt);
\filldraw (10) circle (.5pt);
\end{tikzpicture}$
\quad
$\Gamma_{1441}=\begin{tikzpicture}[scale=2,baseline=-5pt]\tiny
\coordinate[label={[label distance=0pt] -90:$0$}] (0) at (-0.1,-.1);
\coordinate[label={[label distance=0pt] 90:$1$}] (1) at (-0.1,0.1);
\coordinate[label={[label distance=0pt] -90:$2$}] (2) at (0.1,-.1);
\coordinate[label={[label distance=0pt] 135:$3$}] (3) at (0.1,0.1);
\coordinate[label={[label distance=0pt] 90:$4$}] (4) at (0.3,0.3);
\coordinate[label={[label distance=0pt] -90:$5$}] (5) at (0.075,-0.3);
\coordinate[label={[label distance=0pt] 90:$6$}] (6) at (0.3,0);
\coordinate[label={[label distance=0pt] 45:$7$}] (7) at (-0.3,0.1);
\coordinate[label={[label distance=0pt] 0:$8$}] (8) at (0.5,0);
\coordinate[label={[label distance=0pt] 90:$9$}] (9) at (-0.5,0.3);
\coordinate[label={[label distance=0pt] -90:$\ten$}] (10) at (-0.30,-.1);
\draw (0)--(2);
\draw (0)--(3);
\draw (0)--(5);
\draw (0)--(7);
\draw (0)--(10);
\draw (1)--(2);
\draw (1)--(3);
\draw (1)--(5);
\draw (1)--(7);
\draw (1)--(10);
\draw (2)--(4);
\draw (2)--(6);
\draw (3)--(4);
\draw (3)--(6);
\draw (4)--(8);
\draw (4)--(9);
\draw (5)--(8);
\draw (6)--(8);
\draw (7)--(9);
\draw (9)--(10);
\filldraw (0) circle (.5pt);
\filldraw (1) circle (.5pt);
\filldraw (2) circle (.5pt);
\filldraw (3) circle (.5pt);
\filldraw (4) circle (.5pt);
\filldraw (5) circle (.5pt);
\filldraw (6) circle (.5pt);
\filldraw (7) circle (.5pt);
\filldraw (8) circle (.5pt);
\filldraw (9) circle (.5pt);
\filldraw (10) circle (.5pt);
\end{tikzpicture}$
\quad
$\Gamma_{1454}=\begin{tikzpicture}[scale=2,baseline=-3pt]\tiny
\coordinate[label={[label distance=0pt] -90:$0$}] (0) at (0,-0.1);
\coordinate[label={[label distance=0pt] 90:$1$}] (1) at (0,.1);
\coordinate[label={[label distance=0pt,yshift=1pt] -135:$2$}] (2) at (0.2,-0.1);
\coordinate[label={[label distance=0pt,yshift=-1pt] 135:$3$}] (3) at (0.2,.1);
\coordinate[label={[label distance=0pt] -180:$4$}] (4) at (-0.2,0);
\coordinate[label={[label distance=0pt] 0:$5$}] (5) at (0.4,-0.1);
\coordinate[label={[label distance=0pt] 180:$6$}] (6) at (-0.3,0.4);
\coordinate[label={[label distance=0pt] 180:$7$}] (7) at (-0.3,-0.4);
\coordinate[label={[label distance=-2pt] -45:$8$}] (8) at (-0.2,-0.2);
\coordinate[label={[label distance=-2pt] 45:$9$}] (9) at (-0.2,0.2);
\coordinate[label={[label distance=0pt] 0:$\ten$}] (10) at (0.4,.1);
\draw (0)--(2);
\draw (0)--(3);
\draw (0)--(4);
\draw (0)--(8);
\draw (0)--(9);
\draw (1)--(2);
\draw (1)--(3);
\draw (1)--(4);
\draw (1)--(8);
\draw (1)--(9);
\draw (2)--(5);
\draw (2)--(10);
\draw (3)--(5);
\draw (3)--(10);
\draw (4)--(6);
\draw (4)--(7);
\draw (5)--(7);
\draw (6)--(9);
\draw (6)--(10);
\draw (7)--(8);
\filldraw (0) circle (.5pt);
\filldraw (1) circle (.5pt);
\filldraw (2) circle (.5pt);
\filldraw (3) circle (.5pt);
\filldraw (4) circle (.5pt);
\filldraw (5) circle (.5pt);
\filldraw (6) circle (.5pt);
\filldraw (7) circle (.5pt);
\filldraw (8) circle (.5pt);
\filldraw (9) circle (.5pt);
\filldraw (10) circle (.5pt);
\end{tikzpicture}$

$\Gamma_{2708}=\begin{tikzpicture}[scale=2,baseline=-3pt]\tiny
\coordinate[label={[label distance=0pt] 90:$0$}] (0) at (-0.2,0.1);
\coordinate[label={[label distance=0pt] -90:$1$}] (1) at (-0.2,-0.1);
\coordinate[label={[label distance=0pt] 0:$2$}] (2) at (0.2,-0.1);
\coordinate[label={[label distance=0pt] 90:$3$}] (3) at (0,0.1);
\coordinate[label={[label distance=0pt] -90:$4$}] (4) at (0.05,-0.4);
\coordinate[label={[label distance=0pt] 0:$5$}] (5) at (0.2,0.1);
\coordinate[label={[label distance=0pt,xshift=1pt] -90:$6$}] (6) at (0,-0.10);
\coordinate[label={[label distance=0pt] 180:$7$}] (7) at (-0.4,-0.10);
\coordinate[label={[label distance=-3pt] -135:$8$}] (8) at (-0.05,-0.2);
\coordinate[label={[label distance=0pt] 90:$9$}] (9) at (0.01,0.4);
\coordinate[label={[label distance=0pt] 180:$\ten$}] (10) at (-0.4,0.1);
\draw (0)--(3);
\draw (0)--(6);
\draw (0)--(7);
\draw (0)--(8);
\draw (0)--(10);
\draw (1)--(3);
\draw (1)--(6);
\draw (1)--(7);
\draw (1)--(8);
\draw (1)--(10);
\draw (2)--(3);
\draw (2)--(4);
\draw (2)--(6);
\draw (2)--(9);
\draw (3)--(5);
\draw (4)--(5);
\draw (4)--(7);
\draw (4)--(8);
\draw (5)--(6);
\draw (5)--(9);
\draw (9)--(10);
\filldraw (0) circle (.5pt);
\filldraw (1) circle (.5pt);
\filldraw (2) circle (.5pt);
\filldraw (3) circle (.5pt);
\filldraw (4) circle (.5pt);
\filldraw (5) circle (.5pt);
\filldraw (6) circle (.5pt);
\filldraw (7) circle (.5pt);
\filldraw (8) circle (.5pt);
\filldraw (9) circle (.5pt);
\filldraw (10) circle (.5pt);
\end{tikzpicture}$
\quad
$\Gamma_{2722}=\begin{tikzpicture}[scale=2,baseline=-3pt]\tiny
\coordinate[label={[label distance=0pt] -90:$0$}] (0) at (0.04,-0.1);
\coordinate[label={[label distance=0pt] 90:$1$}] (1) at (0.04,0.1);
\coordinate[label={[label distance=0pt] 90:$2$}] (2) at (-0.20,-.02);
\coordinate[label={[label distance=0pt] -163.42:$3$}] (3) at (-0.49,-0.3);
\coordinate[label={[label distance=0pt] -90:$4$}] (4) at (0.16,-0.1);
\coordinate[label={[label distance=0pt] 90:$5$}] (5) at (0.15,0.1);
\coordinate[label={[label distance=-1pt] 45:$6$}] (6) at (-0.20,0.15);
\coordinate[label={[label distance=-1pt] -45:$7$}] (7) at (-0.20,-0.15);
\coordinate[label={[label distance=0pt] -21.17:$8$}] (8) at (0.46,-0.3);
\coordinate[label={[label distance=0pt] 19.21:$9$}] (9) at (0.45,0.3);
\coordinate[label={[label distance=0pt] 151.92:$\ten$}] (10) at (-0.49,0.3);
\draw (0)--(2);
\draw (0)--(4);
\draw (0)--(5);
\draw (0)--(6);
\draw (0)--(7);
\draw (1)--(2);
\draw (1)--(4);
\draw (1)--(5);
\draw (1)--(6);
\draw (1)--(7);
\draw (2)--(3);
\draw (2)--(10);
\draw (3)--(6);
\draw (3)--(7);
\draw (3)--(8);
\draw (4)--(8);
\draw (4)--(9);
\draw (5)--(8);
\draw (5)--(9);
\draw (6)--(10);
\draw (9)--(10);
\filldraw (0) circle (.5pt);
\filldraw (1) circle (.5pt);
\filldraw (2) circle (.5pt);
\filldraw (3) circle (.5pt);
\filldraw (4) circle (.5pt);
\filldraw (5) circle (.5pt);
\filldraw (6) circle (.5pt);
\filldraw (7) circle (.5pt);
\filldraw (8) circle (.5pt);
\filldraw (9) circle (.5pt);
\filldraw (10) circle (.5pt);
\end{tikzpicture}$
\quad
$\Gamma_{3002}=\begin{tikzpicture}[scale=2,baseline=-3pt]\tiny
\coordinate[label={[label distance=0pt] 180:$0$}] (0) at (-0.30,0.125);
\coordinate[label={[label distance=0pt] 180:$1$}] (1) at (-0.30,-0.125);
\coordinate[label={[label distance=0pt] -90:$2$}] (2) at (0.02,-0.3);
\coordinate[label={[label distance=2pt] 0:$3$}] (3) at (0.02,0);
\coordinate[label={[label distance=0pt] 90:$4$}] (4) at (0.01,0.3);
\coordinate[label={[label distance=0pt] 90:$5$}] (5) at (0.03,0.125);
\coordinate[label={[label distance=0pt] -90:$6$}] (6) at (0.02,-0.125);
\coordinate[label={[label distance=0pt] 0:$7$}] (7) at (0.26,0.125);
\coordinate[label={[label distance=0pt] 0:$8$}] (8) at (0.41,0.25);
\coordinate[label={[label distance=0pt] 0:$9$}] (9) at (0.25,-0.125);
\coordinate[label={[label distance=0pt] 0:$\ten$}] (10) at (0.42,-0.25);
\draw (0)--(2);
\draw (0)--(3);
\draw (0)--(4);
\draw (0)--(5);
\draw (0)--(6);
\draw (1)--(2);
\draw (1)--(3);
\draw (1)--(4);
\draw (1)--(5);
\draw (1)--(6);
\draw (2)--(9);
\draw (2)--(10);
\draw (3)--(7);
\draw (3)--(9);
\draw (4)--(7);
\draw (4)--(8);
\draw (5)--(7);
\draw (5)--(8);
\draw (6)--(9);
\draw (6)--(10);
\draw (8)--(10);
\filldraw (0) circle (.5pt);
\filldraw (1) circle (.5pt);
\filldraw (2) circle (.5pt);
\filldraw (3) circle (.5pt);
\filldraw (4) circle (.5pt);
\filldraw (5) circle (.5pt);
\filldraw (6) circle (.5pt);
\filldraw (7) circle (.5pt);
\filldraw (8) circle (.5pt);
\filldraw (9) circle (.5pt);
\filldraw (10) circle (.5pt);
\end{tikzpicture}$

$\Gamma_{3632}=\begin{tikzpicture}[scale=2,baseline=4pt]\tiny
\coordinate[label={[label distance=0pt] 180:$0$}] (0) at (-0.2,0);
\coordinate[label={[label distance=0pt,xshift=-2pt] -90:$1$}] (1) at (0.2,0);
\coordinate[label={[label distance=0pt] 90:$2$}] (2) at (0.2,0.333);
\coordinate[label={[label distance=0pt] 90:$3$}] (3) at (-0.2,0.25);
\coordinate[label={[label distance=0pt] 90:$4$}] (4) at (-0.2,0.5);
\coordinate[label={[label distance=0pt] 90:$5$}] (5) at (0.2,0.5);
\coordinate[label={[label distance=0pt] 0:$6$}] (6) at (0.4,0.125);
\coordinate[label={[label distance=0pt] 90:$7$}] (7) at (0.2,.167);
\coordinate[label={[label distance=0pt] 0:$8$}] (8) at (0.4,0.375);
\coordinate[label={[label distance=0pt] -112.00:$9$}] (9) at (0.1,-0.25);
\coordinate[label={[label distance=0pt] -180:$\ten$}] (10) at (-0.45,0);
\draw (0)--(1);
\draw (0)--(2);
\draw (0)--(5);
\draw (0)--(7);
\draw (0)--(9);
\draw (1)--(3);
\draw (1)--(4);
\draw (1)--(6);
\draw (1)--(8);
\draw (2)--(3);
\draw (2)--(4);
\draw (2)--(6);
\draw (2)--(8);
\draw (3)--(5);
\draw (3)--(7);
\draw (3)--(10);
\draw (4)--(5);
\draw (4)--(7);
\draw (4)--(10);
\draw (5)--(8);
\draw (6)--(7);
\draw (6)--(9);
\draw (9)--(10);
\filldraw (0) circle (.5pt);
\filldraw (1) circle (.5pt);
\filldraw (2) circle (.5pt);
\filldraw (3) circle (.5pt);
\filldraw (4) circle (.5pt);
\filldraw (5) circle (.5pt);
\filldraw (6) circle (.5pt);
\filldraw (7) circle (.5pt);
\filldraw (8) circle (.5pt);
\filldraw (9) circle (.5pt);
\filldraw (10) circle (.5pt);
\end{tikzpicture}$
\quad
$\Gamma_{3794}=
\begin{tikzpicture}[scale=2,baseline=-3pt]\tiny
\coordinate[label={[label distance=0pt] -179.99:$0$}] (0) at (-.25,-0.00);
\coordinate[label={[label distance=0pt] -90:$1$}] (1) at (.5,0.25);
\coordinate[label={[label distance=0pt] -90:$2$}] (2) at (-0.00,-0.2);
\coordinate[label={[label distance=0pt] 0:$3$}] (3) at (.5,0.4);
\coordinate[label={[label distance=0pt] -90:$4$}] (4) at (-0.0,-0.00);
\coordinate[label={[label distance=0pt] 89.49:$5$}] (5) at (0.00,0.2);
\coordinate[label={[label distance=0pt] 0:$6$}] (6) at (1,-0.2);
\coordinate[label={[label distance=-2pt] 45:$7$}] (7) at (.5,-0);
\coordinate[label={[label distance=0pt] 0:$8$}] (8) at (.5,-0.2);
\coordinate[label={[label distance=0pt] 180:$9$}] (9) at (-0.00,-0.40);
\coordinate[label={[label distance=0pt] 180:$\textsc{x}$}] (10) at (-0.0,0.4);
\draw (0)--(2);
\draw (0)--(4);
\draw (0)--(5);
\draw (0)--(9);
\draw (0)--(10);
\draw (1)--(2);
\draw (1)--(4);
\draw (1)--(5);
\draw (1)--(6);
\draw (1)--(10);
\draw (2)--(3);
\draw (2)--(7);
\draw (2)--(8);
\draw (3)--(4);
\draw (3)--(5);
\draw (3)--(6);
\draw (3)--(10);
\draw (4)--(7);
\draw (4)--(8);
\draw (5)--(7);
\draw (5)--(8);
\draw (6)--(7);
\draw (6)--(9);
\draw (8)--(9);
\filldraw (0) circle (.5pt);
\filldraw (1) circle (.5pt);
\filldraw (2) circle (.5pt);
\filldraw (3) circle (.5pt);
\filldraw (4) circle (.5pt);
\filldraw (5) circle (.5pt);
\filldraw (6) circle (.5pt);
\filldraw (7) circle (.5pt);
\filldraw (8) circle (.5pt);
\filldraw (9) circle (.5pt);
\filldraw (10) circle (.5pt);
\end{tikzpicture}$
\end{question}

%---------------------------------------
% End of main body of article
%---------------------------------------

\bibliographystyle{hypersshort}
\bibliography{RAAGedyRACGs}
\end{document}